\documentclass[12pt,a4paper]{amsart}

\usepackage{amssymb,amsfonts,amsthm,a4,amsmath}

\newtheorem{thm}{Theorem}[section]
\newtheorem{cor}[thm]{Corollary}
\newtheorem{lem}[thm]{Lemma}
\newtheorem{clai}[thm]{Claim}
\newtheorem{prop}[thm]{Proposition}
\theoremstyle{definition}
\newtheorem{defn}[thm]{Definition}
\theoremstyle{remark}
\newtheorem{rem}[thm]{Remark}
\numberwithin{equation}{section}

\hyphenation{quasi-isome-tri-cally}

\newcommand{\eps}{\varepsilon}
\newcommand{\Cone}{\textnormal{Cone}}
\newcommand{\Precone}{\textnormal{Precone}}
\newcommand{\SL}{\textnormal{SL}}
\newcommand{\F}{\mathbf{F}}
\newcommand{\scu}{\textnormal{scu}}
\newcommand{\wid}{\textnormal{wid}}

\title{Dehn function and asymptotic cones of Abels' group}
\author{Yves Cornulier, Romain Tessera}
\date{March 3, 2013}
\subjclass[2010]{Primary 20F65; Secondary 20F69, 20F16, 20F05, 22D05, 51M25}
\thanks{Y.~C.~was partly supported by A.N.R. project GSG 12-{\sc BS}01-0003-01; R.~T.~was supported by A.N.R.\ project AGORA ({\sc ANR-09-BLAN-0059}) and A.N.R.\ project GGAA ({\sc ANR-10-BLAN-0116})}


\begin{document}

\baselineskip=16pt

\maketitle

\begin{abstract}
We prove that Abels' group over an arbitrary nondiscrete locally compact field has a quadratic Dehn function. As applications, we exhibit connected Lie groups and polycyclic groups whose asymptotic cones have uncountable abelian fundamental group. We also obtain, from the case of finite characteristic, uncountably many non-quasi-isometric finitely generated solvable groups, as well as peculiar examples of fundamental groups of asymptotic cones. 
\end{abstract}

\section{Introduction}

Let $R$ be a commutative ring. We consider the following solvable group, introduced by Abels \cite{Ab1}.

\begin{equation}A_4(R)=\left\{\left(\begin{array}{cccccc}
1 & x_{12} & x_{13} & x_{14}\\
0 &  t_{22} & x_{23} & x_{24}\\
0 & 0 & t_{33}  & x_{34} \\
0 & 0 & 0 & 1
\end{array}\right)\label{abeq}:\;x_{ij}\in R;\;t_{ii}\in R^\times.\right\}\end{equation}
Observe that if we denote by $Z(R)$ the subgroup generated by unipotent matrices whose only nonzero off-diagonal entry is $x_{14}$, then $Z(R)$ is central in $A_4(R)$. Abels' initial motivation was to exhibit a finitely presented group with an infinitely generated central subgroup, namely $A_4(\mathbf{Z}[1/p])$. In particular, its quotient by the central subgroup $Z(\mathbf{Z}[1/p])$ is not finitely presented, showing that the class of finitely presented solvable groups is not stable under taking quotients.

\subsection*{Abels' group over locally compact fields}

Finite presentability of $A_4(\mathbf{Z}[1/p])$ is closely related to the fact that $A_4(\mathbf{Q}_p)$  is compactly presented \cite{Ab2}, motivating the study of Abels' group over nondiscrete locally compact fields. 
Our main goal in this paper is to provide the following quantitative version of this result.

\begin{thm}\label{ak}
For every nondiscrete locally compact field $\mathbf{K}$, the group $A_4(\mathbf{K})$ has a quadratic Dehn function.
\end{thm}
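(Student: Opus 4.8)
The plan is to fix a convenient compact presentation of $A_4(\mathbf{K})$ and then to bound, for every null-homotopic word $w$ of length $n$, the number of relators needed to fill it, the goal being $O(n^2)$; since this quantity is, up to equivalence, independent of the chosen compact presentation, and $A_4(\mathbf{K})$ is compactly presented by Abels, this proves the theorem. First I would take a generating set adapted to the decomposition $A_4(\mathbf{K}) = T \ltimes U$, where $T \cong (\mathbf{K}^\times)^2$ is the diagonal torus (entries $t_{22},t_{33}$) and $U = U_4(\mathbf{K})$ is the unitriangular subgroup: the elementary generators $e_{ij}$ valued in a fixed compact symmetric neighbourhood $\mathcal{O}$ of $0$ in $\mathbf{K}$ containing $1$, together with the maximal compact subgroup $T_0$ of $T$ and one hyperbolic element in each coordinate of $T$ (a uniformizer, respectively $2$). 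The defining relators are the bounded instances of: additivity $e_{ij}(x)e_{ij}(y) = e_{ij}(x+y)$; commutation $[e_{ij}(x),e_{k\ell}(y)] = 1$ for non-consecutive indices; the four Steinberg relations $[e_{12}(x),e_{23}(y)] = e_{13}(xy)$, $[e_{23}(x),e_{34}(y)] = e_{24}(xy)$, and $[e_{12}(x),e_{24}(y)] = [e_{13}(x),e_{34}(y)] = e_{14}(xy)$; and the torus relations $\tau\, e_{ij}(x)\,\tau^{-1} = e_{ij}(\alpha_{ij}(\tau)x)$, where $\alpha_{ij}$ is the associated weight, so in particular $e_{14}$ is central.

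The heart of the proof is to fill $w$ by first reducing it, at a cost of $O(n^2)$ relators, to a null-homotopic word $w'$ of length $O(n)$ supported in the centre $Z = \{e_{14}(\cdot)\} \cong \mathbf{K}$. Two structural features of $A_4$ must be exploited. The first is the configuration of weights: writing the height homomorphism $T \to T/T_0 \cong \mathbb{Z}^2$ (or $\mathbb{R}^2$) in coordinates $(m_2,m_3)$, the five non-central weights are $-m_2, -m_3, m_2-m_3, m_2, m_3$, and the three weights of the generating unipotents $e_{12},e_{23},e_{34}$, namely $(-1,0),(1,-1),(0,1)$, are vectors whose convex hull contains the origin in its interior. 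This allows one, along the monotone segments of the torus-height of $w$, to transport each unipotent letter in a direction in which its weight does not increase, so that it is never rescaled out of $\mathcal{O}$; one can thereby clear the torus letters of $w$ while keeping every intermediate word of length $O(n)$, rather than the exponential length produced by naive normalisation — recall that $U$ is exponentially distorted in $A_4(\mathbf{K})$, e.g.\ $e_{24}(\pi^{-k})$ has length $O(k)$. I would set this up through a cone-like combinatorial model of the torus-height geometry. The second feature is the redundancy $[e_{12},e_{24}] = e_{14} = [e_{13},e_{34}]$: once the torus is cleared one is left with a null-homotopic word in $U$ of length $O(n)$, and in pushing it into $Z$ — reducing modulo the $2$-step nilpotent quotients on either side — it is precisely the two Steinberg expressions of the central coordinate that keep this reduction within a quadratic budget and leave only an $O(n)$-length residue in $Z$; this is the quantitative incarnation of the mechanism behind Abels' finite presentability.

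It then remains to fill $w'$. Since $Z \cong \mathbf{K}$ is abelian, a null-homotopic word of length $m$ supported there is filled with area $O(m^2)$ (indeed $O(m)$ unless $\mathbf{K} = \mathbf{C}$, where $Z \cong \mathbf{R}^2$ still gives $O(m^2)$); with $m = O(n)$ this contributes $O(n^2)$. Composing this with the $O(n^2)$-area reduction of the previous paragraph yields a van Kampen diagram for $w$ of area $O(n^2)$.

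The step I expect to be the main obstacle is the reduction to the centre, and this is so for a structural reason: the central quotient $A_4(\mathbf{K})/Z$ — just like $A_3$ over a nondiscrete local field — is not even compactly presented, so one cannot fill by ``projecting away the centre''; while $U$, and each of the $2$-step nilpotent subquotients one meets, is exponentially distorted inside $A_4(\mathbf{K})$, so a naive filling routed through a single such subgroup inherits that distortion and is exponential. What makes $A_4$ work is exactly that it lies where the convex-position property of the generating weights and the doubled relation for $e_{14}$ together let one route around both obstructions; making the non-expanding transport and the bookkeeping of word lengths precise — essentially, constructing and exploiting the cone-like model correctly — is the technical core, the rest being routine manipulation of van Kampen diagrams over the presentation above.
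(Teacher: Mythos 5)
Your outline correctly identifies the two structural features that make $A_4$ work (the weights of $e_{12},e_{23},e_{34}$ in convex position around the origin, and the two Steinberg expressions $[e_{12},e_{24}]=e_{14}=[e_{13},e_{34}]$ of the distorted centre), but the step that carries all the difficulty --- the $O(n^2)$-cost reduction of an arbitrary null-homotopic word to a short word supported in $Z$ --- is only named, not proved, and the mechanism you sketch for it does not work as stated. ``Transport each unipotent letter in a direction in which its weight does not increase'' cannot be done letterwise: in a subword $\tau^{-k}e_{12}(1)\tau^{k}$ (which is exactly how a word of length $n$ encodes the element $e_{12}(t^k)$, and which occurs unavoidably in length-$n$ null-homotopic words such as $[\,\tau^{-k}e_{12}(1)\tau^k,\;y\,]$ with $y$ representing an element commuting with $e_{12}(t^k)$), \emph{both} transport directions rescale the entry out of $\mathcal{O}$, and the direction that is good for $e_{12}$ is the bad one for $e_{24}$, so no choice of ``cone-like'' routing keeps all letters bounded while the torus letters are cleared. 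Moreover the intermediate target itself is suspect: in the non-Archimedean case the bounded elementary generators generate only the compact group $U(\mathcal{O}_\mathbf{K})$, so a ``null-homotopic word in $U$ of length $O(n)$ with bounded letters'' stays at bounded distance from the identity and is fillable with linear area for trivial reasons --- meaning your decomposition pushes the entire content of the theorem into the unproved clearing step, while the two steps you do justify (pushing into $Z$ and filling inside $Z\simeq\mathbf{K}$) carry essentially none of it.

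For comparison, the paper never clears the torus letters. It passes to a cocompact subgroup with diagonal $\mathbf{Z}^2$, writes every element of the $n$-ball in the normal form $d\prod_{i\le 9}t_is_it_i^{-1}$ \emph{retaining} the conjugators (Lemma \ref{generq}), and invokes Gromov's trick (\S\ref{gtrick}) to reduce the whole problem to filling words that are products of at most $27$ such conjugated generators (Proposition \ref{quadrapei}). The quadratic cost of every elementary manipulation with exponentially large entries --- additivity, commutations, the Steinberg relations --- is then certified not by transport but by Lemma \ref{quadratic_subgroup_claim}: each such relation lives in one of four proper subgroups $G_1,\dots,G_4$ that \emph{do} contain a contracting diagonal element and hence have quadratic Dehn functions, even though $G$ itself has none. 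Finally, the distorted centre is tamed by Hall's commutator identity (Claim \ref{hal}), which converts $[\hat e_{13}^{x},\hat e_{34}^{y}]$ into $[\hat e_{12}^{1},\hat e_{24}^{xy}]$ at quadratic cost; this identity, together with the quasi-additivity of $\hat e_{14}^{r}:=[\hat e_{12}^1,\hat e_{24}^{r}]$ (Claims \ref{1424}--\ref{e1414}), is the precise quantitative incarnation of the ``doubled relation'' you point to, and nothing in your proposal substitutes for it. To repair your argument you would need either to supply these three devices (normal form plus Gromov's trick, quadratic fillings inside contraction subgroups, the Hall-identity step) or to construct the ``cone-like model'' in enough detail to replace them --- which, as your own last paragraph concedes, is the whole theorem.
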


This means that loops of length $r$ in $A_4(\mathbf{K})$ can be filled with area in $O(r^2)$ when $r\to\infty$. This extends Abels' result \cite{Ab1,Ab2} that $A_4(\mathbf{K})$ is compactly presented if $\mathbf{K}$ has characteristic zero; besides, Abels's result is nontrivial only when $\mathbf{K}$ is ultrametric, while Theorem \ref{ak} is meaningful when $\mathbf{K}$ is Archimedean too. 

Recall that given a metric space $(X,d)$ and a nonprincipal ultrafilter $\omega$ on the set of positive integers, a certain metric space $\Cone_\omega(X)$ can be defined as an ``ultralimit" of the metric spaces $(X,\frac1n d)$ when $n\to\omega$, and is called the asymptotic cone of $X$ along $\omega$. (The precise definitions will be recalled in Section \ref{recall}.). The bilipschitz type of $\Cone_\omega(X)$ is a quasi-isometry invariant of $X$. 

By Papasoglu's theorem \cite{Pap}, a quadratic Dehn function implies that every asymptotic cone is simply connected, so we obtain

\begin{cor}\label{csco}
For every nondiscrete locally compact field $\mathbf{K}$ and every nonprincipal ultrafilter $\omega$, the asymptotic cone $\Cone_\omega(A_4(\mathbf{K}))$ is simply connected.
\end{cor}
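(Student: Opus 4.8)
The corollary is meant to be a formal consequence of Theorem~\ref{ak} together with Papasoglu's theorem \cite{Pap}, so the plan is simply to check that these two ingredients fit together in the locally compact setting. First I would recall the relevant framework: since $A_4(\mathbf{K})$ is compactly presented (which is part of, or an immediate consequence of, Theorem~\ref{ak}), it is in particular compactly generated, so fixing a compact symmetric generating subset $S$ with nonempty interior yields a word length and a left-invariant metric $d_S$, well defined up to quasi-isometry; the asymptotic cone $\Cone_\omega(A_4(\mathbf{K}))$ is built from $(A_4(\mathbf{K}),d_S)$ as in Section~\ref{recall}, and its bilipschitz type does not depend on the choice of $S$.

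Next I would pass to a discrete geodesic model. A standard construction (a Cayley--Abels graph, or equivalently the Cayley graph attached to a discrete combinatorial presentation extracted from a compact presentation) produces a connected, locally finite, vertex-transitive graph $X$ on which $A_4(\mathbf{K})$ acts properly and cocompactly; thus $X$ is quasi-isometric to $A_4(\mathbf{K})$, it is genuinely geodesic, and its combinatorial Dehn function is equivalent to the Dehn function of $A_4(\mathbf{K})$, hence quadratic by Theorem~\ref{ak}. Since the bilipschitz type of an asymptotic cone is a quasi-isometry invariant, $\Cone_\omega(X)$ is bilipschitz homeomorphic to $\Cone_\omega(A_4(\mathbf{K}))$, so it suffices to prove that $\Cone_\omega(X)$ is simply connected.

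Finally I would invoke Papasoglu's theorem for $X$: a loop in $\Cone_\omega(X)$ is represented by a sequence of loops $\gamma_n$ in $X$ of length $O(n)$; a quadratic van Kampen filling of $\gamma_n$ has area $O(n^2)$, and Papasoglu's diagram surgery replaces it by a filling whose $2$-cells have uniformly bounded diameter while keeping the quadratic area bound. Rescaling by $1/n$ and taking the ultralimit of these fillings then produces a filling disk for the limit loop in $\Cone_\omega(X)$, proving simple connectedness.

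I expect essentially no real obstacle here: all the mathematical content sits in Theorem~\ref{ak}. The only points requiring (routine) care are that Papasoglu's argument, stated in \cite{Pap} for finitely presented groups, transfers verbatim to the Cayley complex of the graph $X$ because it is purely combinatorial, and that the passage between $A_4(\mathbf{K})$ with $d_S$ and the graph $X$ is harmless since it changes the cone only up to bilipschitz homeomorphism.
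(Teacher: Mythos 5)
Your proposal is correct and follows exactly the paper's route: the authors deduce the corollary directly from Theorem \ref{ak} by citing Papasoglu's theorem that a quadratic Dehn function forces all asymptotic cones to be simply connected, treating the passage from the compactly presented locally compact group to a discrete geodesic model as routine. The extra details you supply (Cayley--Abels graph, quasi-isometry invariance of the cone, transfer of Papasoglu's combinatorial argument) are exactly the routine verifications the paper leaves implicit.
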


\subsection*{Asymptotic cones and central extensions}

Corollary \ref{csco} can be used to obtain various examples of unusual asymptotic cones, using generalities on central extensions, which we now partly describe (see Theorem \ref{pico} for a full version). 
Our main tool relates, for certain central extensions
\begin{equation}\label{zgq}1\to Z\to G\to Q\to 1,\end{equation}
the fundamental group $\pi_1(\Cone_\omega(Q))$ and the group $\Cone_\omega(Z)$, where $Z$ is endowed with the restriction of the metric of $G$. It can be viewed as an analogue of the connection between the fundamental group of a Lie group and the center of its universal cover. Notably, it implies the following

\begin{thm}[Corollary \ref{conab}]\label{gq}
Given a central extension as in (\ref{zgq}), if $G,Q$ are compactly generated locally compact groups, $\Cone_\omega(G)$ is simply connected and $\Cone_\omega(Z)$ is ultrametric, then $$\pi_1(\Cone_\omega(Q))\simeq\Cone_\omega(Z).$$
\end{thm}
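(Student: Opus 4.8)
The plan is to realize the isomorphism as an explicit \emph{holonomy map} $\Phi\colon\pi_1(\Cone_\omega(Q))\to\Cone_\omega(Z)$, making rigorous the heuristic that the central extension (\ref{zgq}), equipped with word metrics, is a ``coarse principal $Z$-bundle'', so that $\Cone_\omega(Z)\to\Cone_\omega(G)\to\Cone_\omega(Q)$ behaves like a fibration whose fibre $\Cone_\omega(Z)$, being ultrametric, is totally disconnected. Then $\Phi$ plays the role of the connecting map $\pi_1(\text{base})\to\pi_0(\text{fibre})$, and the hypotheses ``$\Cone_\omega(G)$ simply connected'' and ``fibre totally disconnected'' are precisely what forces it to be a group isomorphism. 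Concretely, fix compact generating sets of $G$ and $Q$ with $G\to Q$ sending one onto the other, and a bounded Borel section $s\colon Q\to G$; this lets one lift any discretized path in $Q$ to a path in $G$ of comparable length, the lift of a loop landing in $Z=\ker(G\to Q)$. Note that $Z$ is abelian and, being central, carries a bi-invariant induced metric, so $\Cone_\omega(Z)$ is a group.

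Construction of $\Phi$: a based loop in $\Cone_\omega(Q)$ is represented by a sequence of discretized loops $\gamma_n$ in $Q$ of length $O(n)$; lifting $\gamma_n$ via $s$ gives a path in $G$ of length $O(n)$ whose endpoint $z_n$ lies in $Z$ and satisfies $|z_n|_G=O(n)$, so $(z_n)$ represents a point $\Phi([\gamma])\in\Cone_\omega(Z)$. Because $Z$ is central, the lift of a concatenation $\gamma_n\gamma'_n$ has endpoint $z_nz'_n$ with no conjugation intervening, so $\Phi$ is a homomorphism into the abelian group $\Cone_\omega(Z)$. To see $\Phi$ is well defined on homotopy classes one shows it kills null-homotopic loops: a loop of length $O(n)$ that is trivial in $\Cone_\omega(Q)$ admits, $\omega$-almost surely, a decomposition into $N(\eps)$ subloops of length $\le\eps n$, for every $\eps>0$, with $N(\eps)$ independent of $n$ — a criterion in the spirit of Papasoglu's theorem \cite{Pap} (a genuine homotopy is handled by first lifting to $G$ and filling there, using that $\Cone_\omega(G)$ is simply connected). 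Each subloop of length $\le\eps n$ lifts to a $G$-path of length $O(\eps n)$, so its holonomy $z$ has $|z|_G=O(\eps n)$; since the induced metric on $Z$ is bi-invariant and $\Cone_\omega(Z)$ is ultrametric, the norm of $z_1\cdots z_{N(\eps)}$ in $\Cone_\omega(Z)$ is at most $\max_i$ of the norms of the $z_i$, i.e. $O(\eps)$. Letting $\eps\to0$ gives $\Phi([\gamma])=0$. This is the step where the ultrametric hypothesis is indispensable: without it the $N(\eps)$ small holonomies could add up to a nonzero linear amount.

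Injectivity and surjectivity: if $\Phi([\gamma])=0$ then the endpoint $z_n$ of the lift has $|z_n|_G=o(n)$; appending a $G$-geodesic of length $o(n)$ from $z_n$ back to $1$ turns the lift into a genuine loop $\widehat\gamma_n$ in $G$ of length $O(n)$. As $\Cone_\omega(G)$ is simply connected, $\widehat\gamma_n$ decomposes $\omega$-almost surely into $N(\eps)$ subloops of length $\le\eps n$; projecting this decomposition to $Q$ exhibits $\gamma_n$, up to the negligible $o(n)$ correction, as such a decomposition there, so by the same criterion $[\gamma]=0$ in $\pi_1(\Cone_\omega(Q))$. For surjectivity, given $(z_n)$ representing a point of $\Cone_\omega(Z)$ with $|z_n|_G=O(n)$, take a $G$-geodesic from $1$ to $z_n$ and project it to $Q$; since $z_n\in\ker(G\to Q)$ this is a loop $\beta_n$ of length $O(n)$, defining $[\beta]\in\pi_1(\Cone_\omega(Q))$, and $\Phi([\beta])=(z_n)$: one compares the chosen geodesic with the section-lift of its projection, which project to the same loop and hence differ by a controlled $Z$-valued gauge, or simply uses that $\Phi$ depends only on the class so that the geodesic itself may serve as the lift.

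The main obstacle is making the ``fibration'' picture precise with only coarsely Lipschitz maps: one needs a robust two-way dictionary between ``a discretized loop of length $O(n)$ is (null-)homotopic in the asymptotic cone'' and ``it splits $\omega$-almost surely into boundedly many loops of length $\le\eps n$'', and one must transport such splittings back and forth between $G$ and $Q$ through the lifting and the projection while keeping every intermediate $Z$-holonomy linearly, and ultimately $o(n)$-, controlled. A secondary technical point is the merely compactly generated, non-discrete setting: one must choose compact generating sets and a section so that the path metrics on $Z$, $G$, $Q$ are quasi-isometric to the restricted metrics, ensuring that all three asymptotic cones and the maps between them are the intended ones, and one must check that ``$\Cone_\omega(Z)$ ultrametric'', together with centrality, genuinely yields the inequality on norms that the whole argument rests on.
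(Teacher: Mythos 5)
Your holonomy map $\Phi$ is the right invariant (it is the combinatorial shadow of the paper's $\tilde\mu$), and the well-definedness step is sound in spirit: centrality makes the holonomy a homomorphism on free-group decompositions, insensitive to conjugator length, and the direction ``null-homotopic in the cone $\Rightarrow$ $\omega$-a.s.\ decomposition into $N(\eps)$ conjugated loops of length $\le\eps n$'' can indeed be proved by a grid/compactness argument on the homotopy disc. But two steps have genuine gaps. First, your injectivity argument uses the \emph{converse} implication: after projecting the decomposition of $\widehat\gamma_n$ from $G$ to $Q$ you conclude ``by the same criterion $[\gamma]=0$ in $\pi_1(\Cone_\omega(Q))$''. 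That converse is false precisely in the situation of this theorem: a decomposition into boundedly many loops of length $\le\eps n$ only shows that the limit class lies in the normal subgroup generated by classes of loops of size $O(\eps)$, and $\Cone_\omega(Q)$ is in general not semi-locally simply connected --- by the statement being proved, essential loops of arbitrarily small size exist as soon as $\Cone_\omega(Z)$ has nonzero elements of arbitrarily small norm (e.g.\ $Q=A_4(\mathbf{R})/Z(\mathbf{R})$, where $\Cone_\omega(Z)$ is the non-discrete group of Proposition \ref{car0c}). So smallness of the pieces does not kill the class; it only shows the class is at pseudo-distance $0$ from $1$ in the natural pseudo-metric on $\pi_1$, and triviality of such classes is essentially what has to be proved. (The step can be repaired by pushing the genuine continuous null-homotopy of $\lim_\omega\widehat\gamma_n$ in $\Cone_\omega(G)$ down through the projection $\Cone_\omega(G)\to\Cone_\omega(Q)$, but then the decomposition machinery does no work.) Second, $\Phi$ is only defined on classes represented by ultralimits of discrete loops of linear length, i.e.\ by rectifiable loops; an arbitrary element of $\pi_1(\Cone_\omega(Q))$ is the class of a continuous, possibly non-rectifiable loop, and you do not show that every class has such a representative --- this is not automatic in these cones, so as written you get at best an isomorphism of the subgroup of ``discretizable'' classes with $\Cone_\omega(Z)$. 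The announced ``two-way dictionary'' is therefore not just a technical chore: one direction needs proof and the other is false as stated.

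The paper avoids both problems by never discretizing the given loops: the key is Proposition \ref{p13}, that postcomposition with the projection gives a map $\psi\colon\mathcal{P}(\Cone_\omega(G))\to\mathcal{P}(\Cone_\omega(G/Z))$ of based path spaces which is a $(1/3,1)$-bilipschitz \emph{homeomorphism} when $\Cone_\omega(Z)$ is ultrametric --- injectivity because a continuous $\Cone_\omega(Z)$-valued ``difference'' of two lifts into a totally disconnected group must be constant, surjectivity via piecewise-geodesic approximate lifts (Lemmas \ref{liftdiscret}, \ref{psidense}, \ref{troislip}), the ultrametric inequality being what bounds the accumulated error by a maximum rather than a sum. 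Then $\tilde\mu$ is defined on every continuous loop as the endpoint of its unique lift, factors through $\pi_1$ because it is continuous into a totally disconnected space, and kernel/injectivity when $\Cone_\omega(G)$ is simply connected comes from lifting the loop to a loop and projecting its null-homotopy (Theorem \ref{pico}, Corollary \ref{conab}). This is your fibration picture made literal, with honest unique path lifting in the cones; that is what handles arbitrary continuous loops and homotopies and is exactly the ingredient your combinatorial route is missing.
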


As a direct application of Corollary \ref{csco} and Theorem \ref{gq}, we deduce

\begin{cor}[Corollary \ref{comoc}]\label{aba}
For every nondiscrete locally compact field $\mathbf{K}$ and nonprincipal ultrafilter $\omega$, the group $\pi_1(\Cone_\omega(A_4(\mathbf{K})/Z(\mathbf{K})))$ is an abelian group with continuum cardinality.
\end{cor}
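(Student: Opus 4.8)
The plan is to apply Theorem~\ref{gq} to the central extension
\[1\to Z(\mathbf{K})\to A_4(\mathbf{K})\to A_4(\mathbf{K})/Z(\mathbf{K})\to 1,\]
so that everything reduces to verifying its hypotheses and then identifying the group $\Cone_\omega(Z(\mathbf{K}))$.

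Two hypotheses are straightforward. The group $G=A_4(\mathbf{K})$ is a closed subgroup of $\mathrm{GL}_4(\mathbf{K})$, hence locally compact, and it is compactly generated (it is even compactly presented, by Theorem~\ref{ak}); the center $Z(\mathbf{K})\cong(\mathbf{K},+)$ is the closed subgroup of matrices that agree with the identity outside the $(1,4)$ entry, so $Q=A_4(\mathbf{K})/Z(\mathbf{K})$ is again a compactly generated locally compact group. That $\Cone_\omega(G)$ is simply connected is exactly Corollary~\ref{csco}.

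The substantive point is the last hypothesis: $\Cone_\omega(Z(\mathbf{K}))$ must be ultrametric, where $Z(\mathbf{K})$ carries the metric induced from a word metric of $A_4(\mathbf{K})$. I would first compute the distortion of $Z$. Write $e_{ij}(r)$ for the elementary matrices and $|\cdot|$ for the modulus of $\mathbf{K}$. On the one hand $e_{14}(r)=[e_{13}(1),e_{34}(r)]$, and $e_{34}(r)$ is the conjugate of $e_{34}(1)$ by $\mathrm{diag}(1,1,r,1)$, which has word length $O(\log(2+|r|))$ (a product of $O(\log(2+|r|))$ bounded diagonal factors); hence the word length of $e_{14}(r)$ is $O(\log(2+|r|))$. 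On the other hand, there is a constant $M\ge 1$ such that a word of length $\ell$ in a fixed compact generating set yields a matrix all of whose entries, and the inverses of whose relevant diagonal entries, have modulus at most $M^{\ell}$; applied to $e_{14}(r)$ this forces the word length of $e_{14}(r)$ to be at least a constant times $\log(2+|r|)$. Thus the metric induced on $Z(\mathbf{K})\cong\mathbf{K}$ agrees, up to a bilipschitz map and an additive constant, with $(x,y)\mapsto\log(2+|x-y|)$. This metric is \emph{coarsely ultrametric}: in every case one has $|a+b|\le C\max(|a|,|b|)$ for an absolute constant $C\ge1$ ($C=1$ if $\mathbf{K}$ is ultrametric, $C=2$ if $\mathbf{K}=\mathbf{R}$, $C=4$ if $\mathbf{K}=\mathbf{C}$), whence $\log(2+|a+b|)\le\log C+\max(\log(2+|a|),\log(2+|b|))$; since the additive term is scaled away in the ultralimit, $\Cone_\omega(Z(\mathbf{K}))$ satisfies the ultrametric inequality. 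This distortion estimate, together with the passage from ``coarsely ultrametric'' to ``ultrametric cone,'' is the step I expect to require the most care.

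Granting the three hypotheses, Theorem~\ref{gq} gives $\pi_1(\Cone_\omega(Q))\simeq\Cone_\omega(Z(\mathbf{K}))$, and it remains to describe the latter. It is the ultralimit of the abelian metric groups $(Z(\mathbf{K}),\tfrac1n d)$, hence an abelian group. Its cardinality is at most $2^{\aleph_0}$: indeed $|\mathbf{K}|=2^{\aleph_0}$, and the cone is a quotient of a subset of $\mathbf{K}^{\mathbf{N}}$, of cardinality $(2^{\aleph_0})^{\aleph_0}=2^{\aleph_0}$. For the reverse inequality, for each $t\in[0,1]$ choose $x^{(t)}_n\in\mathbf{K}$ with $|x^{(t)}_n|$ comparable to $e^{tn}$; this is possible since the modulus is unbounded and attains, up to a bounded multiplicative factor, any prescribed value. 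Then $\tfrac1n\log(2+|x^{(t)}_n|)\to t$, while for $s<t$ one has $\tfrac1n\log(2+|x^{(t)}_n-x^{(s)}_n|)\to t>0$ because the larger modulus dominates the difference; hence the points represented by $(x^{(t)}_n)_n$, $t\in[0,1]$, are pairwise distinct. Therefore $\Cone_\omega(Z(\mathbf{K}))$, and with it $\pi_1(\Cone_\omega(A_4(\mathbf{K})/Z(\mathbf{K})))$, is an abelian group of cardinality exactly $2^{\aleph_0}$, proving the corollary.
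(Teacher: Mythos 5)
Your overall route is the same as the paper's: apply Theorem~\ref{gq} (Corollary~\ref{conab}) to the central extension $1\to Z(\mathbf{K})\to A_4(\mathbf{K})\to A_4(\mathbf{K})/Z(\mathbf{K})\to 1$, use Corollary~\ref{csco} for simple connectedness of $\Cone_\omega(A_4(\mathbf{K}))$, and then identify $\Cone_\omega(Z(\mathbf{K}))$; your cardinality argument (continuum many scales $t\in[0,1]$, plus the trivial upper bound) is fine and robust to multiplicative constants. However, there is a genuine gap exactly at the step you yourself flagged: verifying that $\Cone_\omega(Z(\mathbf{K}))$ is \emph{ultrametric} for the metric actually required by Theorem~\ref{gq}, namely the restriction to $Z(\mathbf{K})$ of the metric of $A_4(\mathbf{K})$. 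Your distortion estimate only shows that this induced metric is comparable to $\delta(x,y)=\log(2+|x-y|)$ up to \emph{multiplicative} constants (indeed your upper bound via the commutator trick and your lower bound via entry growth come with different constants). Coarse ultrametricity with an \emph{additive} error does pass to the cone, and that is why $\Cone_\omega(Z(\mathbf{K}),\delta)$ is ultrametric; but ultrametricity is not a bilipschitz invariant, so from ``the induced metric is bilipschitz to $\delta$'' you may only conclude that $\Cone_\omega(Z(\mathbf{K}))$ is bilipschitz homeomorphic to an ultrametric space, not that it satisfies the ultrametric inequality. Since the proof of Theorem~\ref{pico} (Lemma~\ref{troislip}) uses the exact ultrametric inequality to propagate bounds along a subdivision, this hypothesis cannot be relaxed to ``bilipschitz to ultrametric'' without further argument, and your final sentence ``since the additive term is scaled away in the ultralimit, $\Cone_\omega(Z(\mathbf{K}))$ satisfies the ultrametric inequality'' silently transfers the property across a bilipschitz change of metric.

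The paper's fix (\S\ref{ula}) is to avoid the word metric altogether: it endows $A_4(\mathbf{K})$ (via the quasi-isometric embedding into $\SL_4(\mathbf{K})$) with the length $\ell(A)=\sup_{i,j}\log|A_{ij}|$ (plus a constant in the Archimedean case), which is \emph{equivalent} to the word metric on the whole group, and whose restriction to $Z(\mathbf{K})$ satisfies $d(x,z)\le\max(d(x,y),d(y,z))+\log 2$, i.e.\ is ultrametric up to an additive constant only; hence its cone is genuinely ultrametric, and Corollary~\ref{conab} applies with this metric (the conclusions about Dehn functions, cones and $\pi_1$ are unaffected by replacing the word metric by an equivalent one). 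To repair your argument you would either have to make this same change of metric on all of $A_4(\mathbf{K})$, or prove the much finer statement that the word length of $e_{14}(r)$ equals $\kappa\log(2+|r|)+O(1)$ for a single constant $\kappa$, which your two-sided estimates do not give.
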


\begin{rem}
If $\mathbf{K}$ is non-Archimedean, $Z(\mathbf{K})$ is not compactly generated, and it follows that $A_4(\mathbf{K})/Z(\mathbf{K})$ is not compactly presented. Similarly, if $\mathbf{K}$ is Archi\-medean, the exponential distortion of $Z(\mathbf{K})$ implies that the Lie group $A_4(\mathbf{K})/Z(\mathbf{K})$ has an exponential Dehn function.
\end{rem}

\subsection*{Application to discrete groups}

We further obtain, from a slight variant of Theorem \ref{ak}, results concerning discrete groups. The first corollary, proved in Section \ref{ewl}, is the following

\begin{cor}\label{cpoly}
There exists a polycyclic group $\Lambda$, namely $A_4(R)/Z(R)$ if $R$ is the ring of integers of a totally real number field of degree 3, such that $\Lambda$ has an exponential Dehn function, and for every $\omega$, the fundamental group $\pi_1(\Cone_\omega(\Lambda))$ is abelian and nontrivial, namely isomorphic to a $\mathbf{Q}$-vector space of continuum dimension.
\end{cor}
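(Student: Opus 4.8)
The plan is to instantiate the general machinery (Theorem \ref{gq} / Corollary \ref{aba}) in the arithmetic setting, and then separately verify the Dehn-function assertion. Let $R$ be the ring of integers of a totally real cubic number field $F$, with the three real embeddings $\sigma_1,\sigma_2,\sigma_3\colon F\hookrightarrow\mathbf{R}$, and set $\Lambda=A_4(R)/Z(R)$. First I would recall that $\Lambda$ is polycyclic: indeed $A_4(R)$ is polycyclic because $R$ is finitely generated as an abelian group and $R^\times$ is finitely generated by Dirichlet's unit theorem (here it is $\mathbf{Z}^2$ up to torsion), and the quotient of a polycyclic group is polycyclic. Next, the arithmetic embedding: the diagonal map $R\hookrightarrow\mathbf{R}^3$ via $(\sigma_1,\sigma_2,\sigma_3)$ extends to an embedding of $A_4(R)$ as a cocompact lattice in $G:=A_4(\mathbf{R})^3$ (cocompactness uses that $F$ is totally real, so there are no complex places to worry about, and the $S$-arithmeticity reduces to $S=\{\text{archimedean places}\}$; one checks the unipotent part via $R$ cocompact in $\mathbf{R}^3$ and the torus part via the unit group being cocompact in the norm-one subgroup of $(\mathbf{R}^\times)^3$). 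Passing to the central quotient, $\Lambda$ embeds as a cocompact lattice in $\bar G:=G/Z(\mathbf{R})^3=\bigl(A_4(\mathbf{R})/Z(\mathbf{R})\bigr)^3$, so $\Lambda$ is quasi-isometric to $\bar G$, and likewise $A_4(R)$ is quasi-isometric to $G$ and the central subgroup $Z(R)\simeq R$ (with the induced metric) is quasi-isometric to $Z(\mathbf{R})^3\simeq\mathbf{R}^3$ with the metric induced from $G$.

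With this set up, the asymptotic cone computation is almost immediate from the earlier results. By Theorem \ref{ak}, $A_4(\mathbf{R})$ has quadratic Dehn function, hence so does the finite product $G$, hence by Papasoglu's theorem $\Cone_\omega(G)$ is simply connected. For the central term: the restriction to $Z(\mathbf{R})\simeq\mathbf{R}$ of the word metric of $A_4(\mathbf{R})$ is the exponentially distorted metric (the $x_{14}$ entry is a product of commutators of off-diagonal entries scaled by powers of a torus coordinate, giving logarithmic length), so $\Cone_\omega(Z(\mathbf{R}))$ is a real tree — in particular ultrametric as a normed group — and the same holds for the three-fold product $Z(\mathbf{R})^3$, which one computes to be (as a topological group) $\Cone_\omega$ of $\mathbf{R}^3$ with an $\ell^\infty$-type exponentially distorted metric, a $\mathbf{Q}$-vector space of continuum dimension (each factor contributes a continuum-dimensional $\mathbf{Q}$-vector space, as in the proof of Corollary \ref{aba}). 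Then Theorem \ref{gq} applied to $1\to Z(\mathbf{R})^3\to G\to\bar G\to 1$ gives $\pi_1(\Cone_\omega(\bar G))\simeq\Cone_\omega(Z(\mathbf{R})^3)$, an abelian — indeed $\mathbf{Q}$-vector space — group of continuum dimension; since $\Lambda$ is quasi-isometric to $\bar G$, the same holds for $\pi_1(\Cone_\omega(\Lambda))$.

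It remains to show $\Lambda$ has an exponential Dehn function. The upper bound follows from general structure theory: $\Lambda$ is polycyclic, hence (being linear over $\mathbf{Z}$) has Dehn function at most exponential by the theorem of Gromov / the solvability of the word problem via the Magnus embedding, or more directly because every polycyclic group has a Dehn function bounded by an exponential. For the lower bound I would argue via the distortion of the center exactly as in the Remark following Corollary \ref{aba}: in $A_4(R)$ the central subgroup $Z(R)$ is exponentially distorted (elements of word length $\sim n$ have central "area cost" $\sim e^{cn}$ to be expressed, inherited from the exponential distortion in $A_4(\mathbf{R})$ via the quasi-isometry), and killing $Z(R)$ to form $\Lambda$ converts this distortion into area: a relator of length $\sim n$ in $\Lambda$ lifting to an element of $Z(R)$ of word-length $\sim e^{cn}$ in $A_4(R)$ needs $\gtrsim e^{cn}$ area to fill, by the standard central-extension lower bound for Dehn functions. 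The main obstacle in writing this rigorously is the cocompact-lattice claim — verifying carefully that $A_4(R)$ (and its central quotient) sits as a cocompact lattice in the real points, which requires combining the cocompactness of the ring of integers in the archimedean completions with Dirichlet's unit theorem for the diagonal torus, and checking that no extra $S$-arithmetic places intervene because $F$ is totally real; everything else is an assembly of Theorems \ref{ak}, \ref{gq} and the distortion computation for $Z$.
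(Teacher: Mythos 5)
There is a genuine gap at the very foundation of your setup: the claim that $A_4(R)$ embeds as a \emph{cocompact} lattice in $G=A_4(\mathbf{R})^3=A_4(\mathbf{R}^3)$ is false, and you in fact flag the reason yourself. By Dirichlet, $R^\times$ (rank $2$) is cocompact only in the norm-one subgroup $(\mathbf{R}^3)^\times_1=\{x:|x_1x_2x_3|=1\}$, which has noncompact quotient in $(\mathbf{R}^\times)^3$; consequently $A_4(R)\backslash A_4(\mathbf{R}^3)$ is not compact (it is not even of finite volume), and $\Lambda$ is \emph{not} quasi-isometric to $\bigl(A_4(\mathbf{R})/Z(\mathbf{R})\bigr)^3$. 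This breaks the key step of your plan: you cannot invoke Theorem \ref{ak} for $A_4(\mathbf{R})^3$ and Papasoglu to get simple connectedness of the cone relevant to $\Lambda$. The correct ambient group is $A_4(\mathbf{R}^3)_1$ (diagonal entries restricted to $(\mathbf{R}^3)^\times_1$), in which $A_4(R)$ \emph{is} cocompact, and one must prove separately that $A_4(\mathbf{R}^3)_1$ has a quadratic Dehn function (the paper's Theorem \ref{a41}). This is not a formal consequence of Theorem \ref{ak}: $A_4(\mathbf{R}^3)_1$ is a non-cocompact closed subgroup of $A_4(\mathbf{R}^3)$, and the contraction argument of Lemma \ref{quadratic_subgroup_claim} fails there because the determinant restriction leaves no globally contracting diagonal element; the fix is to use elements of $(\mathbf{R}^3)^\times_1$ that contract one archimedean place while dilating another (as in \cite{CT}), which is the content of the modified claim and lemma in Section \ref{ewl}.

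The remainder of your outline is essentially the paper's argument and is fine once the above is repaired: the cone of the exponentially distorted center is ultrametric (via the quasi-ultrametric inequality for $\log(1+|\cdot|)$ — though calling it a real tree is inaccurate, as it is totally disconnected), Theorem \ref{gq} applied to $1\to Z(\mathbf{R}^3)\to A_4(\mathbf{R}^3)_1\to A_4(\mathbf{R}^3)_1/Z(\mathbf{R}^3)\to 1$ identifies $\pi_1$ of the cone with $\Cone_\omega(Z(\mathbf{R}^3))$, a $\mathbf{Q}$-vector space of continuum dimension, and this transfers to $\Lambda$ since $\Lambda$ is a cocompact lattice in that quotient; the exponential upper bound for the Dehn function follows from Gromov's bound for connected solvable Lie groups (your appeal to the Magnus embedding is not the right tool, but is unnecessary), and the lower bound follows from the exponential distortion of the central $Z$ exactly as in the paper's remark.
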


The structure of $\pi_1(\Cone_\omega(\Lambda))$ as a {\em topological group} is given in Proposition \ref{car0c}.
In the case of characteristic $p$, we prove

\begin{cor}[Theorem \ref{a4r}]\label{a4p}
The group $A_4(\F_p[t,t^{-1},(t-1)^{-1}])$ is finitely presented and has a quadratic Dehn function.
\end{cor}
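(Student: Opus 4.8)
The plan is to deduce the statement for the discrete group $A_4(R)$, with $R=\F_p[t,t^{-1},(t-1)^{-1}]$, from the locally compact statement of Theorem \ref{ak}, by realizing $A_4(R)$ as an irreducible lattice-like object in a product of Abels' groups over completions of the function field $\F_p(t)$. First I would identify the relevant places: the ring $R$ is the ring of $S$-integers in the global field $k=\F_p(t)$ for the set $S=\{v_0,v_1,v_\infty\}$ consisting of the zero at $t=0$, the zero at $t=1$, and the pole at $t=\infty$, each completion $k_{v_i}$ being a local field of Laurent series over $\F_p$ (hence nondiscrete locally compact of characteristic $p$). By the standard $S$-arithmetic argument, $A_4(R)$ embeds diagonally as a discrete, cocompact subgroup of the product $G=\prod_{i}A_4(k_{v_i})$; cocompactness here is exactly the statement that the diagonal embedding $R\hookrightarrow\prod_i k_{v_i}$ has cocompact image (product formula / strong approximation away from $S$), applied entrywise to the finitely many matrix coordinates. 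Consequently $A_4(R)$ is compactly presented since $G$ is (being a finite product of compactly presented groups, each compactly presented by Abels or by the present paper), and finite presentability of the discrete group $A_4(R)$ follows.

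The heart of the matter is the Dehn function. Being a cocompact lattice, $A_4(R)$ is quasi-isometric to $G=\prod_i A_4(k_{v_i})$, and the Dehn function is a QI-invariant for compactly presented groups; so it suffices to show $G$ has quadratic Dehn function. For a direct product of compactly presented groups the Dehn function is controlled by the Dehn functions of the factors — more precisely $\delta_{G_1\times G_2}(r)\preceq \delta_{G_1}(r)+\delta_{G_2}(r)+r^2$, the extra $r^2$ coming from filling the commutators $[g_1,g_2]$ of bounded pieces, which is the standard van Kampen estimate for products. Since each $A_4(k_{v_i})$ has quadratic Dehn function by Theorem \ref{ak}, we get $\delta_G(r)\preceq r^2$, and the lower bound $\delta_G(r)\succeq r^2$ is automatic because $G$ is not hyperbolic (it contains, e.g., a $\mathbf{Z}^2$, or one invokes that a subquadratic Dehn function forces hyperbolicity). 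Hence $\delta_{A_4(R)}(r)\simeq r^2$.

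The step I expect to be the main obstacle is not any single inequality but making the reduction fully rigorous in the locally compact setting: one must check that the embedding $A_4(R)\hookrightarrow G$ is genuinely cocompact for \emph{this} particular $S$ (the three chosen places of $\F_p(t)$), which amounts to verifying that $\F_p[t,t^{-1},(t-1)^{-1}]$ is cocompact in $k_{v_0}\times k_{v_1}\times k_{v_\infty}$ — a one-line computation with partial fractions / the Laurent expansions at $0,1,\infty$, but one that genuinely uses that we localized at \emph{three} places so that $R$ has Krull dimension matching the rank needed for $t$ and $t-1$ to both be invertible while keeping the lattice cocompact. A secondary technical point is the precise statement of the product formula for Dehn functions of compactly generated locally compact groups and of QI-invariance of the Dehn function in that generality; I would either cite the paper's own preliminary section on compact presentations (the ``slight variant of Theorem \ref{ak}'' alluded to in the text) or reproduce the short van Kampen argument. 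Everything else — finite presentability, the explicit description of $R$ as an $S$-integer ring — is routine.
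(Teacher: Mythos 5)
There is a genuine gap at the very first step: the claimed cocompactness of $A_4(R)$ in $\prod_{v\in S}A_4(k_v)$ is false, and this is precisely the point the paper has to work around. Cocompactness of the diagonal embedding $R\hookrightarrow\prod_v k_v$ does handle the unipotent coordinates $x_{ij}$, but the diagonal coordinates $t_{22},t_{33}$ live in the unit group, and $R^\times\simeq\F_p^\times\times\mathbf{Z}^2$ (generated up to constants by $t$ and $t-1$) is \emph{not} cocompact in $\prod_v k_v^\times$: the latter modulo its maximal compact subgroup is $\mathbf{Z}^3$, and by the product formula (Dirichlet $S$-unit theorem) $R^\times$ is a rank-two lattice sitting inside the codimension-one ``norm one'' subgroup $\mathbf{K}^\times_1$ only. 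Hence $A_4(R)$ is discrete but of infinite covolume in $\prod_v A_4(k_v)$, it is not quasi-isometric to that product, and your reduction to Theorem \ref{ak} via the product formula for Dehn functions collapses. (The product estimate and QI-invariance of the Dehn function are fine in themselves; it is the geometric input that fails. The identification of the three places and the cocompactness of $R$ in $\F_p(\!(t)\!)^3$ as an additive group are also correct.)

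What the paper does instead: it observes exactly this obstruction and shows that $A_4(R)$ is cocompact in $A_4(\mathbf{K})_1$, the subgroup of $A_4(\mathbf{K})$, $\mathbf{K}=\F_p(\!(t)\!)^3$, whose two diagonal entries lie in $\mathbf{K}^\times_1$, and then proves directly (Theorem \ref{a41}) that this group has a quadratic Dehn function. This is not a formal consequence of Theorem \ref{ak}: $A_4(\mathbf{K})_1$ is not a product of the local groups, and because of the norm-one restriction the relevant subgroups no longer contain a single diagonal element contracting the whole unipotent part, so Lemma \ref{quadratic_subgroup_claim} cannot be quoted as is. The paper reruns the proof of Theorem \ref{ak} with a variant of that lemma, splitting each root group over the three local factors and using elements of $\mathbf{K}^\times_1$ that contract one factor while dilating another (as in \cite{CT}). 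To salvage your approach you would have to carry out this adaptation (or produce some other cocompact model of $A_4(R)$); invoking Theorem \ref{ak} for the three factors separately cannot suffice.
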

The finite presentation of this group seems to be a new result. Note that $A_4(\F_p[t])$ is not finitely generated, while $A_4(\F_p[t,t^{-1}])$ is finitely generated but not finitely presented (see Remark \ref{a4notfg}).
There were previous (substantially more complicated) examples of finitely presented solvable groups whose center contains an infinite-dimensional $\F_p$-vector space in \cite[\S 2.4]{BGS} and \cite[\S 2]{Kha}; those examples have an undecidable word problem, so they are not residually finite and their Dehn functions are not recursively bounded. Corollary \ref{a4p}, again combined with Theorem \ref{gq}, has the following three corollaries. The first proof of the existence of uncountably many non-quasi-isometric finitely generated groups was due to Grigorchuk \cite{Gri}, based on the word growth. Later, Bowditch \cite{Bo} used methods of small cancelation to construct uncountably many non-quasi-isometric finitely generated non-amenable groups. His approach also involves filling of loops, although not in the asymptotic cone.

\begin{cor}\label{contifg}
There exist continuum many pairwise non-quasi-isometric solvable (actually (3-nilpotent)-by-abelian) finitely generated groups. Namely, for each prime $p$, such groups can be obtained as quotients of $A_4(\F_p[t,t^{-1},(t-1)^{-1}])$ by central subgroups.
\end{cor}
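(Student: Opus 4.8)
\textbf{Proof plan for Corollary \ref{contifg}.}
The strategy is to produce, for a fixed prime $p$, a continuum-sized family of central subgroups $Z_i\le Z(\F_p[t,t^{-1},(t-1)^{-1}])$ inside $G:=A_4(\F_p[t,t^{-1},(t-1)^{-1}])$ and to distinguish the quotients $G/Z_i$ up to quasi-isometry by means of the fundamental group of their asymptotic cones. By Corollary \ref{a4p}, $G$ is finitely presented with quadratic Dehn function, so $\Cone_\omega(G)$ is simply connected for every $\omega$. Each $G/Z_i$ is finitely generated (a quotient of the finitely generated $G$), and it is (3-nilpotent)-by-abelian because already $G$ has this structure: the derived subgroup lies in the unipotent part, which is nilpotent of class $3$, while the quotient by the unipotent radical is abelian (a product of copies of the multiplicative group of the ring). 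So the structural claims are immediate; the whole content is the non-quasi-isometry.

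The main tool is Theorem \ref{gq}: for the central extension $1\to Z(\F_p)/Z_i\to G/Z_i\to G/Z(\F_p)\to 1$ — note $G/Z(\F_p)$ is the same for all $i$ — one would like to read off $\pi_1(\Cone_\omega(G/Z_i))$ from $\Cone_\omega(Z(\F_p)/Z_i)$ with the induced metric. But this is not quite the extension one wants to apply $\ref{gq}$ to, since the hypothesis there requires the \emph{total} group to have simply connected cone; instead one applies it to $1\to Z_i\to G\to G/Z_i\to 1$, whose total group $G$ has simply connected cone by Corollary \ref{a4p}, provided $\Cone_\omega(Z_i)$ is ultrametric. The subgroup $Z\cong \F_p[t,t^{-1},(t-1)^{-1}]$, metrically, is (word length in $G$ restricted to $Z$ is comparable to) the ring with the metric coming from its realization inside the unipotent radical; the key point, already implicit in the analysis behind Corollary \ref{a4p}, is that this metric on $Z$ — and on any subgroup $Z_i$ — is at most logarithmic in the ambient length, hence all its asymptotic cones are ultrametric (indeed locally isometric to an $\F_p$-vector space with an ultrametric). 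Granting that, Theorem \ref{gq} gives $\pi_1(\Cone_\omega(G/Z_i))\simeq \Cone_\omega(Z_i)$.

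It remains to choose the $Z_i$ so that the $\Cone_\omega(Z_i)$ are pairwise non-isomorphic — or rather, since quasi-isometric groups need only have bilipschitz cones for \emph{some} matched ultrafilters, to arrange that no quasi-isometry $G/Z_i\to G/Z_j$ can exist. The cleanest route: index by subsets $S$ of the primes of $\F_p[t]$ away from $\{t,t-1\}$, taking $Z_S$ to be the $\F_p[t,t^{-1},(t-1)^{-1}]$-submodule generated by the corresponding primes (or, more robustly, a family of subgroups whose ``growth profiles'' inside $G$ are pairwise incomparable). The distortion function of $Z_i$ in $G$ — equivalently the ``size'' of balls of $Z_i$ measured by the ambient metric — is a quasi-isometry invariant of the pair, and since $Z_i$ is central, a quasi-isometry $G/Z_i\to G/Z_j$ would have to match these profiles; choosing continuum many subgroups realizing pairwise non-equivalent profiles then finishes the argument. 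A standard counting remark gives that only countably many finitely generated groups can be quasi-isometric to a fixed one, so one does not even need an explicit invariant: it suffices to exhibit continuum many \emph{pairwise non-isomorphic} cones, which the module-theoretic construction supplies since $\Cone_\omega(Z_S)$ remembers, through its local structure as a graded $\F_p$-vector space, which primes were inverted. The main obstacle is the metric estimate on $Z_i$ inside $G$: one must show the restricted metric is genuinely logarithmic (so the cones are ultrametric and Theorem \ref{gq} applies) and, simultaneously, fine enough that distinct $Z_i$ give non-bilipschitz cones; both follow from the same explicit description of word length in $A_4$ over a Laurent-polynomial ring that underlies Corollary \ref{a4p}, which is why I would develop that description carefully first and then harvest both consequences from it.
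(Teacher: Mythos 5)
The first half of your plan coincides with the paper's: apply Theorem \ref{gq} (i.e.\ Corollary \ref{conab}) to the central extensions $1\to Z_J\to G\to G/Z_J\to 1$ with $G=A_4(\F_p[t,t^{-1},(t-1)^{-1}])$, using Corollary \ref{a4p} for simple connectedness of $\Cone_\omega(G)$ and the fact that the induced metric on central subgroups is logarithmic, hence has ultrametric cones; this gives $\pi_1(\Cone_\omega(G/Z_J))\simeq\Cone_\omega(Z_J)$, even bilipschitz. The paper's choice of subgroups is, however, different from yours and purely metric: $Z_J=\F_p^{(J)}$, an $\F_p$-subspace of the center spanned by basis elements whose ambient lengths form the prescribed set $J\subset\mathbf{N}$, so that $\Cone_\omega(Z_J)=\Cone_\omega\big(\F_p^{(J)}\big)$ is completely controlled by $J$ and $\omega$.

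The second half of your argument has genuine gaps. First, your concrete family does not exist as described: $\F_p[t,t^{-1},(t-1)^{-1}]$ is a principal ideal domain, so the submodule generated by two or more distinct primes is the whole ring; the subgroups must be taken as mere $\F_p$-subspaces selected by metric size, not ideals, and the vague claim that the cone ``remembers which primes were inverted'' is in fact contradicted by Theorem \ref{thm:cla}, which shows these cones realize only a short list of topological-group isomorphism types. Second, neither of your distinguishing devices is justified: a quasi-isometry $G/Z_i\to G/Z_j$ carries no reference to the pairs $(G,Z_i)$, so there is no reason it ``matches distortion profiles''; and the ``standard counting remark'' that only countably many finitely generated groups are quasi-isometric to a fixed one is not a theorem (a single quasi-isometry class can contain continuum many isomorphism classes, e.g.\ via finite central extensions of a group with uncountable $H^2(\cdot,\F_p)$), so exhibiting continuum many non-isomorphic groups or cones does not yield continuum many quasi-isometry classes. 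Finally, your premise that quasi-isometric groups only have bilipschitz cones ``for some matched ultrafilters'' is backwards: a quasi-isometry induces a bilipschitz homeomorphism of $\Cone_\omega$ for every fixed $\omega$, and this is precisely what the paper exploits. Its proof defines the quasi-isometry invariant $\scu(X)=\{\omega:\Cone_\omega(X)\ \text{is simply connected}\}$, notes via the identification above and Lemma \ref{cotr} that $\scu(\Gamma_J)=\{\omega:\Cone_\omega(J)=\{0\}\}$, and then invokes Proposition \ref{pleino2} to realize continuum many distinct values of this invariant; that is the step your proposal is missing (a bilipschitz-invariant route, closer to your instinct, is what the paper uses for Corollary \ref{resomany}, via Lemma \ref{bilcon}, but it too requires an explicit invariant rather than a counting argument).
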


To distinguish this many classes, we associate, to any metric space $X$, the subset of the set $\mathcal{U}_\infty(\mathbf{N})$ of nonprincipal ultrafilters on the integers
$$\nu(X)=\{\omega\in\mathcal{U}_\infty(\mathbf{N}):\;\Cone_\omega(X)\textnormal{ is simply connected}\};$$
the subset $\nu(X)\subset\mathcal{U}_\infty(\mathbf{N})$ is a quasi-isometry invariant of $X$, and we obtain the corollary by proving that $\nu$ achieves continuum many values on a certain class of groups (however, the subset $\nu(X)$ can probably not be arbitrary).

Corollary \ref{contifg} is proved in \S\ref{fpj}, as well as the following one, which relies on similar ideas.

\begin{cor}\label{littlepi}
There exists a (3-nilpotent)-by-abelian finitely generated group $R$, namely a suitable central quotient of $A_4(\mathbf{F}_p[t,t^{-1},(t-1)^{-1}])$, for which for every nonprincipal ultrafilter $\omega$, we have $\pi_1(\Cone_\omega(R))$ is isomorphic to either $\F_p$ (cyclic group on $p$ elements) or is trivial, and is isomorphic to $\F_p$ for at least one nonprincipal ultrafilter.
\end{cor}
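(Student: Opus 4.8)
The plan is to build the group $R$ as a central quotient of $A := A_4(\mathbf{F}_p[t,t^{-1},(t-1)^{-1}])$, using the machinery of Theorem~\ref{gq}. Recall from Corollary~\ref{a4p} that $A$ is finitely presented with quadratic Dehn function, so by Papasoglu's theorem $\Cone_\omega(A)$ is simply connected for every $\omega$. The center $Z(\mathbf{F}_p[t,t^{-1},(t-1)^{-1}])$ is an infinite-dimensional $\mathbf{F}_p$-vector space on which $A$ induces a certain word metric; the point is to choose a central subgroup $N \subset Z := Z(\mathbf{F}_p[t,t^{-1},(t-1)^{-1}])$ such that in the extension $1 \to Z/N \to A/N \to A/Z \to 1$ the quotient $Z/N$, with the induced metric, has asymptotic cone along every $\omega$ that is ultrametric and, moreover, ``at most one-dimensional'' in the sense of being isometric to a point or to a line over an ultrametric field of residue characteristic $p$ — concretely, so that $\Cone_\omega(Z/N)$ is either trivial or isomorphic (as an ultrametric group) to something whose underlying abelian group is $\mathbf{F}_p$. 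Setting $R = A/N$, Theorem~\ref{gq} then gives $\pi_1(\Cone_\omega(R)) \simeq \Cone_\omega(Z/N)$, which is $\mathbf{F}_p$ or trivial depending on $\omega$.

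First I would recall the explicit description of the metric on $Z$ induced from $A$: identifying $Z$ with $\mathbf{F}_p[t,t^{-1},(t-1)^{-1}]$, an element $P$ has word-length comparable to some "combinatorial" quantity — roughly the size of the support of $P$ once one accounts for the fact that multiplication by $t^{\pm 1}$ and $(t-1)^{\pm 1}$ costs $O(1)$, so that $P = t^k$ has length $O(|k|+1)$ and a general Laurent polynomial with spread of exponents $\sim n$ has length $\sim n$ (this is the standard Baumslag–Solitar / lamplighter-type estimate that underlies the finite presentation). This means $Z$ with the induced metric is quasi-isometric to a graph built out of the line(s) of exponents, and its asymptotic cones are well understood: they are $\mathbf{R}$-trees or $\mathbf{R}$-vector spaces over an appropriate valued field. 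Next I would choose $N$ so as to "kill" all but a bounded amount of this: take $N$ to be the subgroup spanned by a carefully chosen family of monomials (or differences of monomials) so that $Z/N$ is, metrically, a single "bounded geometry" copy of $\mathbf{F}_p$ sitting at an exponentially distorted scale — the model to keep in mind is the cyclic group $\mathbf{F}_p$ embedded in a lamplighter-like group with exponential distortion, whose asymptotic cone along $\omega$ is $\mathbf{F}_p$ or trivial according to whether $\omega$ "sees" the distortion scale. Concretely, arrange that the distortion function of $Z/N$ in $R$ is (comparable to) $p^n$ or $2^n$ along a sparse sequence, so that $\lim_\omega \frac{1}{n}(\text{diam of }Z/N \cap B(n)) \in \{0, c\}$: it is $0$ unless $\omega$ concentrates near the jump scale, in which case $Z/N$ collapses to its torsion, namely $\mathbf{F}_p$.

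Then I would verify the two hypotheses of Theorem~\ref{gq} for $1 \to Z/N \to R \to A/Z \to 1$: that $\Cone_\omega(R)$ is simply connected, and that $\Cone_\omega(Z/N)$ is ultrametric. For the first, $R = A/N$ is a quotient of the finitely presented group $A$ by a central subgroup, but quotients can destroy the quadratic Dehn function; instead I would argue directly that $R$ is finitely presented with quadratic Dehn function (as in the proof of Corollary~\ref{a4p}, the relevant relations are "visible" at bounded scale since $N$ is defined by monomial relations of the same combinatorial type as those already present in $A$), hence $\Cone_\omega(R)$ is simply connected by Papasoglu. For the second, ultrametricity of $\Cone_\omega(Z/N)$ follows because $Z/N$ is a torsion abelian group of exponent $p$ with a word metric in which, by the monomial structure, $d(x+y,0) \le \max(d(x,0),d(y,0)) + O(1)$ — the induced metric is coarsely ultrametric — so its cones are genuinely ultrametric. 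Finally, to see $\Cone_\omega(Z/N) \simeq \mathbf{F}_p$ for at least one $\omega$, pick $\omega$ concentrated on the sequence of jump scales and check the two elements $0$ and a generating monomial remain at distance exactly $1$ in the cone while everything else collapses onto them.

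The main obstacle I expect is the middle step: designing the central subgroup $N$ so that $Z/N$ has exactly the right metric behavior — a single "quantum of distortion" giving $\mathbf{F}_p$ — rather than either collapsing entirely (giving the trivial group, which is the easy direction) or retaining enough structure to produce a larger, $\omega$-dependent cone. This requires a precise understanding of the word metric on $Z = \mathbf{F}_p[t,t^{-1},(t-1)^{-1}]$ inside $A$, in particular a two-sided estimate for $\|P\|_A$ in terms of the geometry of the support of $P$, and then a verification that passing to $A/N$ does not introduce unexpected shortcuts that would deform the cone; establishing the needed two-sided metric estimate, and controlling it under the quotient, is where the real work lies. A secondary difficulty is confirming that $R$ genuinely has a quadratic (not just recursive) Dehn function, which is needed to apply Theorem~\ref{gq}; here I would lean on the same kind of argument used for $A$ itself, adapting the fillings to respect the extra relations coming from $N$.
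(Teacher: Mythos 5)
Your overall strategy (quotient $A=A_4(\F_p[t,t^{-1},(t-1)^{-1}])$ by a central subgroup and combine the central-extension machinery with a sparse sequence of distortion scales) is the right circle of ideas, but your invocation of Theorem \ref{gq} is misapplied, and this is a genuine gap rather than a technicality. In Theorem \ref{gq}, for a central extension $1\to Z\to G\to Q\to 1$, the conclusion computes $\pi_1(\Cone_\omega(Q))$ of the \emph{quotient}, under hypotheses on the \emph{middle} group $G$ (simply connected cone) and the \emph{kernel} $Z$ (ultrametric cone). You apply it to $1\to Z/N\to A/N\to A/Z\to 1$ and conclude $\pi_1(\Cone_\omega(R))\simeq\Cone_\omega(Z/N)$ for $R=A/N$; but with this extension the theorem would give $\pi_1(\Cone_\omega(A/Z))\simeq\Cone_\omega(Z/N)$, which says nothing about $R$, and it would moreover require proving $\Cone_\omega(A/N)$ simply connected. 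Your plan to show that $R=A/N$ has a quadratic Dehn function is in fact self-defeating: by Papasoglu that would force $\pi_1(\Cone_\omega(R))$ to be trivial for every $\omega$, contradicting the very statement to be proved (the group $R$ of the corollary cannot have a quadratic Dehn function). Your design of $N$ is reversed for the same reason: you try to arrange that $Z/N$ is metrically a single copy of $\F_p$, whereas it is the cone of the subgroup you kill that appears as the fundamental group of the cone of the quotient.

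The correct application, which is the paper's proof, uses the extension $1\to N\to A\to R\to 1$ with $N$ itself the small, sparse central subgroup: the hypotheses then concern $\Cone_\omega(A)$, already simply connected by Corollary \ref{a4p} (Theorem \ref{a4r}) plus Papasoglu, and $\Cone_\omega(N)$, which is ultrametric; Corollary \ref{conab} then gives directly $\pi_1(\Cone_\omega(R))\simeq\Cone_\omega(N)$. Concretely the paper takes $N=\F_p^{(K)}$ with $K=\{2^{2^n}:n\in\mathbf{N}\}$, using that $\F_p^{(\mathbf{N})}$ with the length $|u|=\sup\{n:u_n\neq 0\}$ is a bilipschitz embedded central subgroup of $A$ (this is the two-sided distortion estimate you rightly flag as necessary, but no control of the quotient metric on $Z/N$ is needed). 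Since consecutive elements of $K$ have ratios tending to infinity, $K$ has $\omega$-width at most $1$ for every $\omega$, and width exactly $1$ when $\omega$ contains $K$; by Proposition \ref{widf}, $\Cone_\omega\big(\F_p^{(K)}\big)$ is therefore either trivial or $\F_p$, and equals $\F_p$ for at least one $\omega$. No Dehn-function estimate for $R$, and no analysis of the metric on $Z/N$, is required.
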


To put Corollary \ref{littlepi} into perspective, recall that Gromov \cite[2.$\textnormal{B}_1$(d)]{Gro} asked whether an asymptotic cone of a finitely generated group always has trivial or infinitely generated fundamental group. In \cite{OOS}, a first counterexample was given, for which for some nonprincipal ultrafilter (in their language: for a fixed --quickly growing-- scaling sequence and all ultrafilters) the fundamental group is $\mathbf{Z}$. Here we provide 
the first example for which the fundamental group is finite and nontrivial.
Moreover we use the scaling sequence $(1/n)$, which reads all scales, so in our example the fundamental group is finite for all nonprincipal ultrafilters and scaling sequences. 

The first example of a finitely generated group with two non-homeomorphic asymptotic cones was obtained by Thomas and Velickovic \cite{TV}, and was improved to $2^{\aleph_0}$ non-homeomorphic asymptotic cones by Drutu and Sapir \cite{DS}. These examples are not solvable groups, although amenable examples (satisfying no group law) appear in \cite{OOS}. Corollary \ref{littlepi} provides the first examples of finitely generated solvable groups with two non-homeomorphic asymptotic cones. The next corollary, obtained in \S\ref{fpj} by a variation on the proof of the previous one, improves it to infinitely many non-homeomorphic asymptotic cones.

\begin{cor}\label{resomany}
There exist a (3-nilpotent)-by-abelian finitely generated group, namely a suitable central quotient of $A_4(\mathbf{F}_p[t,t^{-1},(t-1)^{-1}])$, with at least $2^{\aleph_0}$ non-bilipschitz homeomorphic, respectively at least $\aleph_0$ non-homeomorphic, asymptotic cones.
\end{cor}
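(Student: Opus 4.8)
The plan is to refine the construction behind Corollary \ref{littlepi}, where a single central quotient of $G = A_4(\F_p[t,t^{-1},(t-1)^{-1}])$ was produced with $\pi_1(\Cone_\omega(R))$ always equal to $\F_p$ or trivial. Recall that the central subgroup $Z(R)$ of $G$ (with $R=\F_p[t,t^{-1},(t-1)^{-1}]$) is a copy of the additive group of $R$, viewed as an infinite-dimensional $\F_p$-vector space equipped with the distorted metric inherited from $G$; by Theorem \ref{gq} (applicable since $\Cone_\omega(G)$ is simply connected by Corollary \ref{csco} via Corollary \ref{a4p}, and $\Cone_\omega(Z)$ is ultrametric — it is an $\F_p$-vector space with an ultrametric coming from the valuation-type word metric on $R$), for any central subgroup $N \le Z(R)$ we have $\pi_1(\Cone_\omega(G/N)) \simeq \Cone_\omega(Z(R)/N)$, the quotient being given its induced metric. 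So the whole problem is converted into: choose $N$ so that the family of pointed metric spaces $\Cone_\omega(Z(R)/N)$, as $\omega$ ranges over $\mathcal U_\infty(\mathbf N)$, realizes at least $2^{\aleph_0}$ bilipschitz types (and at least $\aleph_0$ homeomorphism types), and so that $G/N$ is still finitely presented — which, as in Corollary \ref{littlepi}, will follow from $N$ being finitely generated as a normal subgroup, equivalently here a suitable finite-codimension or otherwise nicely-presented subspace.

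The key step is to engineer the ultrametric vector space $V := Z(R)/N$ with its induced metric so that its asymptotic cone depends intricately on $\omega$. The metric on $Z(R)$ is (coarsely) the word metric, which on the additive group $R = \F_p[t,t^{-1},(t-1)^{-1}]$ grows roughly like $\exp(\text{degree})$ in each of the relevant ``directions'' $t$, $t^{-1}$, $(t-1)^{-1}$; thus $\Cone_\omega(Z(R))$ is a large ultrametric $\F_p$-vector space whose structure records, for each integer $n$, which monomials have word-length $\le n \cdot (\text{something})$, i.e. it encodes a filtration indexed by $\mathbf N$ and read at scale $\omega$. By passing to a quotient $V = Z(R)/N$ where $N$ is spanned by a carefully chosen infinite family of ``blocks'' of monomials — blocks whose sizes grow along a sparse, lacunary sequence $(a_k)$ — one can arrange that $\Cone_\omega(V)$ ``sees'' a different pattern of surviving blocks for ultrafilters $\omega$ concentrated near different lacunary scales. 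Two classical facts then do the counting: a lacunary sequence $(a_k)$ gives, via its characteristic subset of $\mathbf N$ read through $\omega$, continuum many pairwise non-comparable ``limit patterns,'' and distinct patterns yield ultrametric spaces of distinct bilipschitz type (different ``spectra'' of ball cardinalities / branching numbers) — this is exactly the mechanism that, for ordinary metric spaces, produced $2^{\aleph_0}$ non-bilipschitz and $\aleph_0$ non-homeomorphic asymptotic cones in \cite{DS} and in the solvable setting is here transplanted to the fundamental group via Theorem \ref{gq}. Concretely I would: (1) fix the presentation of $G$ and recall the induced metric on $Z(R)$; (2) pick the lacunary sequence and define $N$ as the span of the corresponding blocks, checking $N$ is finitely generated as a normal subgroup so $G/N$ is finitely presented with quadratic (hence, by Papasoglu, simply connected cones) Dehn function — wait, we do not need quadratic for $G/N$, only for $G$; we need $G/N$ finitely presented so that $\Cone_\omega(G/N)$ is a reasonable object, but finite presentability of $G/N$ follows from $N$ finitely normally generated inside the finitely presented $G$; (3) apply Theorem \ref{gq} to identify $\pi_1(\Cone_\omega(G/N)) \simeq \Cone_\omega(Z(R)/N)$; (4) analyze $\Cone_\omega(Z(R)/N)$ as an ultrametric $\F_p$-vector space and extract a quasi-isometry (indeed homeomorphism, resp. bilipschitz) invariant — such as the set of finite cardinalities of closed balls, or the collection of isomorphism types of ``levels'' — that takes $\aleph_0$ (resp. $2^{\aleph_0}$) values as $\omega$ varies; (5) conclude.

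The main obstacle I anticipate is step (4): controlling the asymptotic cone of $Z(R)/N$ precisely enough. The induced metric on the quotient is $d_V(\bar v) = \inf_{n \in N} d_{Z(R)}(v+n)$, and this infimum can collapse distances in ways that are delicate to track through the ultralimit — one must verify that the blocks defining $N$ genuinely ``disappear'' at the intended scales while the complementary blocks survive with their expected branching, i.e. that there is no unexpected cancellation between far-apart monomials under the word metric on $R$. This requires a reasonably sharp two-sided estimate on the word length in $Z(R) \le A_4(R)$ of an arbitrary element of $R$ written as a Laurent-type polynomial in $t, t^{-1}, (t-1)^{-1}$, matching the geometric estimates that underlie the proof of Theorem \ref{ak}. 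Once that estimate is in hand, separating $2^{\aleph_0}$ bilipschitz types via lacunarity is standard; separating only $\aleph_0$ homeomorphism types (the weaker statement) is easier and can serve as a warm-up, distinguishing cones by, e.g., which countable ordinals or which finite cardinals arise as local branching data. I would structure the section to prove the homeomorphism statement first and then upgrade the invariant to handle the bilipschitz count.
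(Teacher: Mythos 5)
Your plan founders on the application of Theorem \ref{gq}. For the central extension $1\to N\to G\to G/N\to 1$, the theorem identifies $\pi_1(\Cone_\omega(G/N))$ with $\Cone_\omega(N)$ --- the asymptotic cone of the \emph{kernel}, with the metric induced from $G$ --- not with $\Cone_\omega(Z(R)/N)$ as you assert. (To make $\Cone_\omega(Z(R)/N)$ appear as a fundamental group you would have to apply the theorem to $1\to Z(R)/N\to G/N\to G/Z(R)\to 1$, which would require $\Cone_\omega(G/N)$ to be simply connected --- false in the cases of interest --- and would in any case compute $\pi_1$ of the cone of the fixed group $G/Z(R)$, not of $G/N$.) This inverts the design of the construction: what must be engineered is the kernel, not the quotient of the center. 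The paper identifies $Z(R)$, with its induced metric, bilipschitz with $\F_p^{(\mathbf{N})}$ metrized by $|u|=\sup\{n:u_n\neq 0\}$, and takes the kernel to be $\F_p^{(K)}$ for a subset $K\subset\mathbf{N}$ of scales; then $\pi_1(\Cone_\omega(\Gamma_K))\simeq\Cone_\omega\big(\F_p^{(K)}\big)$ by (\ref{identif}), and all the delicate quotient-metric issues you anticipate (non-collapsing of blocks under the infimum metric, sharp two-sided word-length estimates in $Z(R)$) simply do not arise.

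Two further points. First, finite presentability of the quotient is neither claimed nor available: the kernels $\F_p^{(K)}$ needed here are infinitely generated central subgroups (hence not finitely generated as normal subgroups), and Corollary \ref{littlepi} likewise asserts only finite generation; so your step aiming at finite presentation of $G/N$ is both unnecessary and unachievable for the groups actually used. Second, even after correcting the identification, your distinguishing invariant in step (4) remains vague, whereas the counting in the paper rests on two concrete invariants: for the $2^{\aleph_0}$ bilipschitz count, the set $D_{\log}$ of logarithms of nonzero distances (which changes only within finite Hausdorff distance under bilipschitz homeomorphism), combined with Lemma \ref{bilcon} producing a single $K$ whose cones $\Cone_\omega\big(\F_p^{(K)}\big)$ realize continuum many such sets at pairwise infinite Hausdorff distance; and for the $\aleph_0$ homeomorphism count, the fundamental group itself (even just its cardinality), which by Propositions \ref{allk} and \ref{widf} takes the values $\F_p^{k}$ for all finite $k$ as $\omega$ varies. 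Without the correct central-extension identification and a precise invariant of this kind, the proposal does not yield the statement.
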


The fundamental group of a metric space has a natural bi-invariant pseudo-metric space structure, whose bilipschitz class is a bilipschitz invariant of the metric space, and the isomorphism in Theorem \ref{gq} is actually bilipschitz. This extra-feature is used in the proof of the bilipschitz statement of Corollary \ref{resomany}. Actually, this pseudo-metric structure on the fundamental group of the cone is used in a crucial way in the proof of Theorem \ref{gq} itself.

In view of Theorem \ref{gq}, Corollaries \ref{contifg}, \ref{littlepi}, and \ref{resomany} are all based on a systematic study in Section \ref{csds} of the asymptotic cone of the infinite direct sum $\F_p^{(\mathbf{N})}$, endowed with a suitable invariant ultrametric. In particular, Theorem \ref{thm:cla} provides a complete classification of these asymptotic cones up to isomorphism of topological groups.

\subsection*{Organization}

Section \ref{recall} recalls the definition of Dehn function and asymptotic cone. Section \ref{ak} is devoted to the proof of Theorem \ref{ak}. In order to carry over the results to discrete groups, a minor variant of Theorem \ref{ak} is proved in Section \ref{ewl}. Finally Section \ref{csds} deals with asymptotic cones of the metric group $\F_p^{(\mathbf{N})}$, whose description leads to the latter corollaries.

\medskip

\noindent {\bf Acknowledgments.}  The main results of the paper were obtained when Y.C.\ was visiting R.T.\ in Vanderbilt University. We thank people in the Math Department in Vanderbilt University,
especially Guoliang Yu, for their hospitality. We thank Denis Osin and Mark Sapir for interesting discussions about asymptotic cones. We are indebted to the referee for an extremely careful job.

\setcounter{tocdepth}{1}
\tableofcontents

\section{Dehn function, asymptotic cones}\label{recall}

\subsection{Dehn function}

Let $G$ be a locally compact group, generated by some compact symmetric subset $S$. It is {\em compactly presented} if for some $r$, the set $\mathcal{R}_r$ of relations of length at most $r$ between elements of $S$ generates the set of all relations, i.e.\ $G$ admits the presentation $\langle S|\mathcal{R}_r\rangle$.

If this holds, and if $w$ is a relation, i.e.\ an element of the kernel $K_S$ of the natural homomorphism $F_S\to G$ (where $F_S$ is the free group over $S$), the area $\alpha(w)$ of $w$ is defined as the length of $w$ with respect to the union of $F_S$-conjugates of $\mathcal{R}_r$. The Dehn function is then defined as
$$\delta(n)=\sup\{\alpha(w):\;w\in K_S,\,|w|_S\le n\}.$$
This function takes finite values. Although it depends on $S$ and $r$, its asymptotic behavior only depends on $G$. By convention, if $G$ is not compactly presented, we say it has an identically infinite Dehn function. Any two quasi-isometric locally compact compactly generated groups have asymptotically equivalent Dehn functions.

\subsection{Asymptotic cone}

Let $(X,d)$ be a metric space and $\omega$ a nonprincipal ultrafilter on the set $\mathbf{N}$ of positive integers. Define 
$$\Precone(X)=\{x=(x_n):x_n\in X,\limsup d(x_n,x_0)/n<\infty\}.$$
Define the pseudo-distance $$d_\omega(x,y)=\lim_\omega\frac1n d(x_n,y_n);$$
and $\Cone_\omega(X)$ as the Hausdorffication of $\Precone(X)$, endowed with the resulting distance $d_\omega$.

If $X=G$ is a group and $d$ is left-invariant, then $\Precone(G)$ (written $\Precone(G,d)$ if needed) is obviously a group, $d_\omega$ is a left-invariant pseudodistance, and $\Cone_\omega(G)=\Precone(G)/H$, where $H$ is the subgroup of elements at pseudodistance 0 from the identity. It is not normal in general. However in case $d$ is bi-invariant (e.g.\ when $G$ is abelian), it is normal and $\Cone_\omega(G,d)$ is then naturally a group endowed with a bi-invariant distance.

Taking asymptotic cones is a functor between metric spaces, mapping large-scale Lipschitz maps to Lipschitz maps and identifying maps at bounded distance. In particular, it maps quasi-isometries to bilipschitz homeomorphisms.

\section{Proof of Theorem \ref{ak}}\label{pak}

Fix a nondiscrete locally compact field $\mathbf{K}$, fix an absolute value $|\cdot|$ on $\mathbf{K}$, and write $G=A_4(\mathbf{K})$; let us show that $G$ has a quadratic Dehn function. The proof below partly uses some arguments borrowed from the metabelian case \cite{CT}, which apply to several subgroups of $G$ (see Lemma \ref{quadratic_subgroup_claim}), as well as Gromov's trick (see \S\ref{gtrick}). However, the presence of a distorted center entails bringing in several new arguments, gathered in \S\ref{ppq}.

\subsection{Generation of $G$}

Fix $t\in\mathbf{K}$ with $|t|>1$. Let $G\subset A_4(\mathbf{K})$ be the subgroup of elements whose diagonal $(1,t_{22},t_{33},1)$ consists of elements of the form $(1,t^{n_2},t^{n_3},1)$ with $(n_2,n_3)\in \mathbf{Z}^2$. So $G$ is closed and cocompact in $A_4(\mathbf{K})$ and we shall therefore stick to $G$.

Let $D$ be the set of diagonal elements in $G$ and let $T$ be the set of diagonal elements as above with $(n_1,n_2)\in\{(\pm 1,0),(0,\pm 1)\}$. Let $U_{ij}$ be the set of upper unipotent elements with all upper coefficients vanishing except $u_{ij}$, and $U_{ij}^1$ the set of elements in $U_{ij}$ with $|u_{ij}|\le 1$.

Define $S=T\cup W$, where $W=\bigcup_{1\le i<j\le 4,(i,j)\neq (1,4)}U_{ij}^1$. This is a compact, symmetric subset of $G$. We begin by the easy

\begin{lem}\label{gener}
The subset $S$ generates $G$. 
\end{lem}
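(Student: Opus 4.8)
The plan is to show that every element of $G$ can be written as a word in $S = T \cup W$, where $T$ generates the diagonal part and $W$ generates the needed unipotent parts. First I would record that $T$ generates $D$: an element of $D$ has diagonal $(1, t^{n_2}, t^{n_3}, 1)$, and since $T$ contains the four diagonal matrices realizing $(n_2,n_3) \in \{(\pm 1, 0), (0, \pm 1)\}$, repeated multiplication produces any $(n_2, n_3) \in \mathbf{Z}^2$. So $D \subseteq \langle S \rangle$.

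Next I would handle the unipotent entries. A general element $g \in G$ is a product of a diagonal matrix and an upper unipotent matrix $u$; since $D \subseteq \langle S\rangle$, it suffices to show every upper unipotent $u \in G$ lies in $\langle S\rangle$. The key mechanism is conjugation: for $(i,j) \neq (1,4)$, conjugating an element of $U_{ij}$ by a suitable element of $D$ rescales the entry $u_{ij}$ by a ratio of diagonal entries (a power of $t$, possibly with sign changes absorbed), so starting from $U_{ij}^1 \subseteq W$ and conjugating by elements of $D \subseteq \langle S\rangle$ we obtain all of $U_{ij}$ for each such $(i,j)$. The mild subtlety here is that $\mathbf{K}$ need not be $\mathbf{Q}_p$ or $\mathbf{R}$: in the Archimedean case $U_{ij}^1$ is a ball and its $D$-dilates fill $\mathbf{K}$ because $|t| > 1$; in the non-Archimedean case $U_{ij}^1$ is the ring of integers $\mathcal{O}$ and again the dilates by powers of $t$ exhaust $\mathbf{K}$ since $|t|>1$. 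One then multiplies such elements together; the commutation relations among the $U_{ij}$ only produce entries in $U_{ik}$ with $i < k$, which are again handled the same way (the entry $u_{14}$, the central direction, arises as a commutator $[U_{13}, U_{34}]$ or $[U_{12}, U_{24}]$, so it too is in $\langle S \rangle$ even though $U_{14}$ is deliberately not among the generators).

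Assembling these: given $g \in G$, factor it as $d \cdot u$ with $d \in D$ and $u$ upper unipotent; decompose $u$ as an ordered product of elementary unipotents in $U_{12}, U_{13}, U_{14}, U_{23}, U_{24}, U_{34}$ (via the standard Gaussian-type factorization of an upper-triangular unipotent matrix), and express each factor as a word in $S$ using the conjugation trick (and, for the $U_{14}$-factor, as a commutator of two such words). Hence $g \in \langle S \rangle$, proving $S$ generates $G$.

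The main obstacle, such as it is, is purely bookkeeping: verifying that the conjugation action of $D$ on each $U_{ij}$ genuinely surjects onto $\mathbf{K}$ uniformly over the two types of local field, and checking that the elementary-matrix factorization together with the Chevalley commutator relations does not require generators in the forbidden $(1,4)$ slot. Neither step is deep; this is why the lemma is labelled "easy."
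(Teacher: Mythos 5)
Your proposal is correct and follows essentially the same route as the paper: $T$ generates $D$, $D$-conjugation of $U_{ij}^1$ exhausts each $U_{ij}$ for $(i,j)\neq(1,4)$, the central direction $U_{14}$ is recovered as a commutator (e.g.\ inside $[U_{12},U_{24}]$), and the factorization of a general element into a diagonal times elementary unipotents finishes the argument. The paper's proof is just a more compressed statement of exactly these observations.
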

\begin{proof}
Observe that $T$ generates $D$, and that for all $(i,j)$ with $i<j$ and $(i,j)\neq (1,4)$, $D$ and $U_{ij}^1$ generates $DU_{ij}$. Moreover $U_{14}\subset [U_{12},U_{24}]$. So the subgroup generated by $S$ contains $D$ and $U_{ij}$ for all $i<j$. These clearly generate $G$.
\end{proof}

Actually, we need a more quantified statement. Let $[T]$ be the set of words in $T$, and $|\cdot|_T$ the word length in $[T]$.
\begin{lem}\label{generq}
There exists a constant $C>0$ such that every element in the $n$-ball $S^n$ can be written as 
\begin{equation}\label{peig}d\prod_{i=1}^9t_is_it_i^{-1},\end{equation} where $d,t_i\in [T]$, $s_i\in W$, and $|c|_T\le n$, $|t_i|_T|\le Cn$. 
\end{lem}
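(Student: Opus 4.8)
The plan is to prove Lemma~\ref{generq} by induction on $n$, rewriting a word of length $\le n$ in $S$ into the canonical form~(\ref{peig}) while keeping track of how the lengths of the ``diagonal pieces'' grow. First I would observe that $S = T \cup W$, so a word $w$ of length $\le n$ in $S$ is an alternating product of blocks from $[T]$ and single letters from $W$. The idea is to push all the $T$-letters to the left, at the cost of conjugating the $W$-letters by the accumulated diagonal word. Concretely, if $w = w' \cdot s \cdot w''$ with $s \in W$ and $w''$ a suffix, and if $d'$ denotes the product of all $T$-letters occurring in $w'$, then one moves $s$ past the non-$T$ letters in $w'$ only after first conjugating; the standard trick is to write $w = (\text{word in }[T] \text{ of length} \le \text{number of }T\text{-letters in }w) \cdot \prod_k u_k s_k u_k^{-1}$, where each $u_k \in [T]$ is a prefix-product of the $T$-letters appearing before the $k$-th $W$-letter, hence has $|u_k|_T \le n$.

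Next I would address the key subtlety: a priori this procedure produces up to $n$ conjugated $W$-terms $u_k s_k u_k^{-1}$, not the bounded number $9$ claimed. The reduction to $9$ comes from the group structure of $G = A_4(\mathbf{K})$: there are exactly $5$ root subgroups $U_{ij}$ with $(i,j)\ne(1,4)$ (namely $(1,2),(1,3),(2,3),(2,4),(3,4)$), plus $U_{14}$, and conjugation by a diagonal element preserves each $U_{ij}$ and acts on it by scaling. So after conjugating, each term $u_k s_k u_k^{-1}$ lies in some $U_{ij}$ but possibly with a large coefficient, i.e.\ it need not lie in $U_{ij}^1 = W \cap U_{ij}$. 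I would handle this by collecting, for each fixed $(i,j)$, all the conjugated terms landing in $U_{ij}$: since $U_{ij}$ is abelian (it is a one-parameter subgroup isomorphic to $(\mathbf{K},+)$) and normalized by $D$, their product is a single element $v_{ij} \in U_{ij}$, and one rewrites $v_{ij} = t_{ij}\, s_{ij}\, t_{ij}^{-1}$ with $s_{ij}\in U_{ij}^1$ and $t_{ij}\in[T]$ chosen so that conjugation by $t_{ij}$ scales $s_{ij}$ up to $v_{ij}$; the point is that $|v_{ij}|$ is at most exponential in $n$ (it is a sum of $\le n$ terms each of size at most exponential in $n$, coming from the conjugations by diagonal words of length $\le n$), so $|t_{ij}|_T = O(n)$. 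This accounts for $5$ terms; the term in $U_{14}$ is extra but $U_{14} \subset [U_{12},U_{24}]$, so after a bounded expansion (a commutator of two conjugated $W$-letters, contributing at most $4$ more terms of the form $t_i s_i t_i^{-1}$) one absorbs it, and the leading diagonal word $d$ collects all remaining diagonal contributions with $|d|_T \le n$. Tallying, one gets at most $5 + 4 = 9$ conjugated $W$-terms, which is the bound claimed (the statement's ``$9$'' and its use of ``$c$'' versus ``$d$'' suggest exactly such a bookkeeping count).

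Carrying this out cleanly, I would actually run the induction: the inductive hypothesis gives the form~(\ref{peig}) for words of length $\le n-1$, and multiplying on the right by one more generator $\sigma \in S$ either (if $\sigma \in T$) commutes past the last $W$-block after conjugating the final $t_9$, or (if $\sigma \in W$) adds one more term $t_{10}\sigma t_{10}^{-1}$ with $|t_{10}|_T \le$ the total $T$-length so far $\le Cn$, after which the ``collect within each $U_{ij}$'' normalization step brings the number of $W$-terms back down to $9$ and only multiplies the $t_i$-lengths by a controlled amount. The main obstacle I anticipate is precisely controlling the growth of $|t_i|_T$ through this normalization: one must verify that passing from ``$\le n$ conjugated terms, each $t$-length $\le n$'' to ``$\le 9$ conjugated terms'' does not blow the $t$-lengths past $O(n)$ — this rests on the fact that diagonal conjugation in $A_4$ scales root subgroup coefficients by factors $|t|^{m}$ with $|m| = O(n)$, so the ``worst'' coefficient appearing is $|t|^{O(n)}$, recoverable by a $T$-word of length $O(n)$. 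Everything else is routine matrix bookkeeping in the $4\times 4$ upper-triangular group, and the Archimedean versus ultrametric distinction plays no role here since we only use that conjugation scales coefficients multiplicatively.
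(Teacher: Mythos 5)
Your proposal is essentially correct and, in substance, uses the same three ingredients as the paper: a decomposition of $g\in S^n$ into a diagonal part times one factor from each root subgroup $U_{12},U_{23},U_{34},U_{13},U_{24},U_{14}$; the exponential bound $C^n$ on the entries of those factors, so that each can be written $\gamma s\gamma^{-1}$ with $s\in W$, $\gamma\in[T]$, $|\gamma|_T=O(n)$; and the identity $u_{14}=[v_{12},v_{24}]$, which converts the $(1,4)$-factor into four further conjugated $W$-letters, giving the count $5+4=9$. Where you differ is in how the decomposition is obtained. The paper simply invokes the unique normal form $g=d\,u_{12}u_{23}u_{34}u_{13}u_{24}u_{14}$ and bounds the entries of its factors directly: the entries of $g$ are at most $C^n$ because $g$ is a product of $n$ generators with bounded entries, and the factors' entries are explicit polynomial expressions in the entries of $g$. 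You instead re-derive the decomposition by an induction and a collection process on the word. Two remarks on that route. First, since the lemma is a statement about the group element alone, with no cost control, the word-rewriting apparatus and the induction are unnecessary overhead. Second, your collection step is stated too quickly: the conjugated terms lying in a fixed $U_{ij}$ are separated by terms in other root subgroups, and distinct root subgroups do not commute (e.g.\ $[U_{12},U_{23}]\subset U_{13}$ and $[U_{23},U_{34}]\subset U_{24}$), so gathering them produces correction terms in further root subgroups; ``since $U_{ij}$ is abelian'' does not by itself justify writing the unipotent part as a product of one element per root subgroup. This is repairable (the collection terminates by nilpotency of the unipotent radical, and the corrections are products of boundedly many exponentially bounded entries, so the $O(n)$ bound on the conjugators survives), but it must be argued or, as in the paper, bypassed by quoting the uniqueness of the normal form.
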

\begin{proof}
Start from an element $g$ in $G$. It can be written in a unique way $du_{12}u_{23}u_{34}u_{13}u_{24}u_{14}$ with $d\in D$, $u_{ij}\in U_{ij}$. In turn, $u_{14}$ can be uniquely written as $[v_{12},v_{24}]$, where the $(1,2)$-entry of $v_{12}\in U_{12}$ is 1 and $v_{24}\in U_{24}$.

If we assume that $|g|_T\le n$, then clearly $|d|_T\le n$, and there exists a universal constant $C>1$ (only depending on the absolute value of $t$) such that the nonzero upper-diagonal entry of the $u_{ij}$ or $v_{24}$ is at most $C^n$. So each $u_{ij}$ can be written as $\gamma_{ij}s_{ij}\gamma_{ij}^{-1}$, where $\gamma_{ij}\in [T]$ has length $\le C'n$ an $s_{ij}\in U_{ij}^1$. Similarly $v_{24}$ can be written this way. (Actually, $C'=1$ works if $\mathbf{K}$ is ultrametric.)
\end{proof}

\subsection{Subgroups of $G$ with contracting elements}

An easy way to prove quadratic filling is the use of elements whose action by conjugation on the unipotent part is contracting. 
Although $G$ itself does not contain such elements, we will show that it contains large enough such subgroups.  More precisely, the proof that $G$ has a quadratic Dehn function (implicitely) consists in showing that $G$ is an amalgamated product of finitely many subgroups containing contractions (Abels used a similar strategy to show that $G$ is compactly presented). 

\begin{lem}\label{quadratic_subgroup_claim}
Let $G_1$ (resp.\ $G_2$, resp.\ $G_3$, resp.\ $G_4$) be the subgroup of matrices
in $G$ such that $x_{14}=x_{24}=x_{34}=0$ (resp.\
$x_{12}=x_{13}=x_{14}=0$, resp.\ $x_{12}=x_{23}=x_{34}=x_{14}=0$, resp.\ $x_{13}=x_{14}=x_{23}=x_{24}=0$).
Then all $G_i$ have quadratic Dehn functions.
\end{lem}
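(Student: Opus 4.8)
The plan is to identify each $G_i$ as a group of the form $H \ltimes U$, where $U$ is a unipotent (abelian or nilpotent) normal subgroup on which the ``torus'' direction of $H$ acts with a contracting element, and then invoke the standard principle that a compactly generated locally compact group admitting such a contracting conjugation automorphism has at most quadratic Dehn function. Concretely: $G_1$ is the group of invertible upper-triangular $3\times 3$-type matrices sitting inside the top-left block together with the $(2,3),(1,3)$ and $(1,2)$ entries; removing the last row/column, it is essentially $A_3(\mathbf{K})$ restricted to the relevant diagonal lattice, which is a metabelian group of the type treated in \cite{CT}. Similarly $G_4$, where $x_{13}=x_{14}=x_{23}=x_{24}=0$, is governed by the $(1,2)$ and $(3,4)$ entries together with a two-parameter diagonal acting diagonalizably, so it too is (virtually) a product/semidirect product of metabelian pieces falling under Lemma \ref{quadratic_subgroup_claim}'s hypotheses via \cite{CT}. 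The groups $G_2$ and $G_3$ are the ones where one genuinely needs a $3$-dimensional or $4$-dimensional unipotent radical, but again the diagonal part contains a single element whose adjoint action contracts every relevant root space.

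First I would set up the contraction lemma precisely: if $L$ is a compactly generated locally compact group containing a compact generating set $S$, and $a\in L$ is such that $\bigcap_{n\ge 0} a^n K a^{-n}$ generates a relatively compact subgroup and $a^n K a^{-n}$ exhausts a cocompact normal subgroup for a suitable compact $K$, then loops can be filled quadratically by pushing them with powers of $a$ toward the ``compact core'' and filling there with linearly many bounded relations — this is exactly Gromov's trick alluded to in \S\ref{gtrick}, and for metabelian groups it is carried out in \cite{CT}. So the real content of the lemma is the bookkeeping: for each $i$, exhibit the semidirect product decomposition, exhibit the contracting element in the diagonal, and check compact generation (which follows from the $A_4(\mathbf{K})$-level generation statement, Lemma \ref{gener}, restricted to $G_i$).

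Next, for each of the four subgroups I would write the unipotent radical explicitly. For $G_1$: radical $U_{12}U_{23}U_{13}$ with the diagonal torus $D_1 = \{\mathrm{diag}(1,t_{22},t_{33},1)\}$; since all three root characters $x\mapsto x t_{22}^{-1}$, $x\mapsto t_{22}x t_{33}^{-1}$ (up to normalization), $x\mapsto x t_{33}^{-1}$ can be made simultaneously contracting by choosing the diagonal element $\mathrm{diag}(1,t,t^2,1)$ (or its inverse on the ultrametric side), $G_1$ has a contracting element; but in fact $G_1$ is metabelian and already covered by \cite{CT}. For $G_4$: radical $U_{12}U_{34}$, abelian, with $D$ acting through independent characters, again with a contracting element, and metabelian. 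For $G_2$ (entries $x_{23},x_{34},x_{24}$ nonzero, torus $D$): the radical is the Heisenberg-type group $U_{23}U_{34}U_{24}$, three-step relations are bounded, and $\mathrm{diag}(1,1,t,1)$ or an appropriate power contracts all of $U_{23},U_{34},U_{24}$ simultaneously. For $G_3$ (entries $x_{24},x_{34}$, or rather those not killed: here $x_{12}=x_{23}=x_{34}=x_{14}=0$ leaves $x_{13},x_{24}$ and the torus): its radical is abelian $U_{13}U_{24}$ and a single diagonal element contracts both. In each case the verification that the listed matrices form a subgroup closed under multiplication and that the commutation relations are the stated ones is a routine matrix computation which I would state but not grind through.

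The main obstacle, and where I would spend the most care, is $G_2$: unlike the others it has a genuinely nonabelian (indeed nilpotent of class $2$, Heisenberg-type) unipotent radical, so the pure metabelian machinery of \cite{CT} does not apply verbatim and one must check that the contracting-element argument still produces \emph{quadratic}, not just polynomial, filling in the presence of the central commutator $[U_{23},U_{34}] = U_{24}$ — i.e., that the relations used to ``slide'' a word past the center have bounded length and are needed only linearly often. This is where the hypothesis of a contracting element pays off: because conjugation by the chosen diagonal element shrinks all of $U_{23},U_{34},U_{24}$ at once (the central direction shrinks \emph{faster}, being a product of two shrinking characters), the standard combing argument goes through with the center causing no extra cost. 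I would therefore isolate a single clean lemma — ``a compactly generated locally compact group that is an ascending union $\bigcup_n a^n K a^{-n}$ of conjugates of a compact subgroup $K$ by a cyclic group $\langle a\rangle$, with $K$ bounded-by-design, has quadratic Dehn function'' — prove it once by Gromov's trick, and apply it to all four $G_i$, noting that for $G_1$ and $G_4$ one may alternatively just cite \cite{CT}.
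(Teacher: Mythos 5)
Your overall strategy --- exhibit, for each $G_i$, a diagonal element whose conjugation contracts the whole unipotent part, push loops toward the diagonal with it, and fill there --- is exactly the paper's argument, but two of your concrete claims fail, and one of them is at the step you yourself single out as the crux. For $G_2$ (nonzero entries $x_{23},x_{24},x_{34}$) the element $\mathrm{diag}(1,1,t,1)$ does \emph{not} contract the radical: conjugation multiplies the $(2,3)$-entry by $t^{-1}$ (good), but multiplies the $(3,4)$-entry by $t$ (it expands) and fixes the central $(2,4)$-entry, whose character here is the product of one contracting and one expanding character rather than ``a product of two shrinking characters''. No power of this element repairs this; one needs something like $\mathrm{diag}(1,t^{-2},t^{-1},1)$, the mirror image of your $G_1$ element, which is how the paper disposes of $G_2$ ``by symmetry'' with $G_1$. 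Since the whole pushing argument hinges on simultaneous contraction of all three root spaces, the proof of the $G_2$ case as you wrote it does not go through. Relatedly, your bookkeeping of which $G_i$ are metabelian is wrong: $G_1$ has unipotent radical the Heisenberg group $U_{12}U_{23}U_{13}$ with $[U_{12},U_{23}]=U_{13}$, so it is \emph{not} metabelian and is not covered by \cite{CT}; it is in the same situation as $G_2$ (only $G_3,G_4$ have abelian radicals). This does not sink $G_1$, because the contracting element $\mathrm{diag}(1,t,t^2,1)$ you also give is correct and is the one the paper uses, but the claimed alternative ``just cite \cite{CT}'' is only available for $G_4$.

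A secondary but substantive issue is the general lemma you propose to prove once and apply four times. The $G_i$ are not ascending unions of conjugates of a compact subset under powers of a single element, and the normal subgroup that \emph{is} exhausted in this way (the unipotent radical $U_i$) is not cocompact: $G_i/U_i\simeq D\simeq\mathbf{Z}^2$. So after $O(n)$ pushes of cost $O(n)$ the loop is only brought within bounded distance of $D$; it still has length comparable to $n$ and must then be filled inside (a bounded neighbourhood of) $D$, using that $\mathbf{Z}^2$ has quadratic Dehn function together with finitely many bounded relators to project the loop onto $D$. The paper carries out exactly this last step; your ``clean lemma'' omits both the correct hypothesis (contraction of a normal subgroup with quotient $\mathbf{Z}^2$, not a cocompact one) and this final filling, so as stated it neither applies to the $G_i$ nor by itself yields the quadratic bound.
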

\begin{proof}
All the verifications are based on a standard contraction argument. Let us start with $G_1$. Observe that in $G_1$, the left conjugation by the diagonal matrix $q=(1,t,t^2,1)$ maps $S$ into itself, so is 1-Lipschitz on $(G,d_S)$. Thus the right multiplication by $q^{-1}$ is 1-Lipschitz as well; it is moreover of bounded displacement (as any right multiplication), actually 3. Moreover it is contractive on the unipotent part. Since the 9-ball has nonempty interior, there exists a constant $C$ such that for any $n$, if $B(n)$ is the $n$-ball around 1 in $G$, for every $g\in B(n)$, $gq^{Cn}$ is at distance at most 9 from its projection to $D$.

So starting from a loop of size $n$, $Cn$ successive right multiplications by $q$ homotope it to a loop of the same size, in which each element is at distance at most 9 from its projection to $D$. Each of these multiplications has cost $\le 3n$, provided that for every $s\in S$, the relator $qsq^{-1}s'^{-1}$ is in $\mathcal{R}$, where $s'$ is the unique element of $S$ represented by $qsq^{-1}$ (which holds if all relations of length 8 are relators). This loop can be homotoped to its projection to $D$, provided all relations of size 20=9+1+9+1 are relators. Finally this loop in $D$ has quadratic area (provided the obvious commuting relator of $D\simeq\mathbf{Z}^2$ is a relator). 

By symmetry, $G_2$ is similar.

Finally, $G_3$ is also similar, using instead of $q$ the diagonal element $q'=(1,t^{-1},t,1)$, whose left conjugation contracts both $U_{13}$ and $U_{24}$, and $q'^{-1}$ works for $G_4$.
\end{proof}

\subsection{Gromov's trick}\label{gtrick}

Let $\mathcal{R}(C,n)$ be the set of null-homotopic words of the form
$$\prod_{i=1}^{27}t_is_it_i^{-1},$$
where $t_i\in [T]$, $|t_i|\le Cn$, and $s_i\in W$.

\begin{prop}\label{quadrapei}Let $M$ be large enough, and define the set of relators as the set of all words in $S$ of length $\le M$, that represent the trivial element of $G$.
For every $C$, there exists $C'$ such that every word in $\mathcal{R}(C,n)$ has area $\le C'n^2$.
\end{prop}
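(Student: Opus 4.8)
The plan is to run a collecting process on the twenty-seven conjugated generators $b_i=t_is_it_i^{-1}$. First note that each $s_i$ lies in some $U^1_{\alpha_i}$ with $\alpha_i\neq(1,4)$, so $b_i$ lies in the single root subgroup $U_{\alpha_i}$ with off-diagonal entry of absolute value $|t|^{O(n)}$; in particular each $b_i$ is already of the conjugated-generator form of Lemma~\ref{generq} and $|b_i|_S=O(n)$, whence $w=b_1\cdots b_{27}$ has length $O(n)$. The unipotent radical $U$ is nilpotent of class three, with abelianization $U_{12}\times U_{23}\times U_{34}$, with $[U,U]=U_{13}U_{24}U_{14}$, and with centre $U_{14}$. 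Fix the compatible order $12\prec23\prec34\prec13\prec24\prec14$, so that the commutator of two root subgroups is either trivial or lands in a strictly later one. A standard collection then rewrites $w$, in $O(1)$ elementary moves, into a normal form $w_{12}w_{23}w_{34}w_{13}w_{24}w_{14}$, where each $w_\alpha$ is a product of $O(1)$ conjugated generators in $U_\alpha$ (entries of absolute value $|t|^{O(n)}$, hence words of length $O(n)$ in $S$); and since $w=1$ in $G$ and the product map $\prod_\alpha U_\alpha\to U$ in this order is a bijection, each $w_\alpha$ represents $1$.

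The bulk of the area is supplied by Lemma~\ref{quadratic_subgroup_claim}. An elementary move replaces $b_\beta b_\alpha$ by $b_\alpha b_\beta c$, where $c$ is a conjugated generator of $[U_\alpha,U_\beta]$ (empty when $U_\alpha,U_\beta$ commute); it amounts to filling a null-homotopic word of length $O(n)$, and when $\alpha,\beta$ and the commutator root all differ from $(1,4)$ one checks that the three root subgroups involved, together with $D$, always lie in one of $G_1,\dots,G_4$ --- $G_1$ for $\{12,13,23\}$, $G_2$ for $\{23,24,34\}$, $G_3$ for the commuting pair $\{13,24\}$, $G_4$ for $\{12,34\}$ --- so each such move costs $O(n^2)$. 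Likewise, for $\alpha\neq(1,4)$ the relation $w_\alpha=1$ is a length-$O(n)$ relation in $D\ltimes U_\alpha$, which sits in one of the $G_k$, and costs $O(n^2)$. As there are only $O(1)$ moves and $O(1)$ monochromatic relations, this part contributes $O(n^2)$ in total.

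The main obstacle --- and the only genuinely new point --- is the exponentially distorted centre $U_{14}$. The two commutations producing it, $[U_{12},U_{24}]=U_{14}$ and $[U_{13},U_{34}]=U_{14}$, are exactly the ones for which $\langle U_\alpha,U_\beta,U_{14}\rangle D$ sits in no $G_k$; worse, no power of a diagonal element contracts both $U_{12}$ and $U_{24}$ (contracting one dilates the other), and similarly for $U_{13},U_{34}$, so the ``conjugate to the origin, fill a bounded residual relation'' trick of Lemma~\ref{quadratic_subgroup_claim} does not apply as is. The saving feature, which I would exploit, is that only $O(1)$ blocks of $U_{14}$ ever appear: no root subgroup carries more than $O(1)$ blocks during the collection, so a depth-three commutator is formed only boundedly often. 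Hence the residual central arithmetic involves boundedly many entries of absolute value $|t|^{O(n)}$ whose partial sums still have absolute value $|t|^{O(n)}$ --- automatic in the ultrametric case, and valid in the Archimedean case precisely because there are $O(1)$ summands. One then handles the creation, transport and final cancellation of these central blocks by writing every element of $U_{14}$ as a commutator $[v_{12},v_{24}]$ with $v_{12}\in U_{12}^1$, using free commutator identities (which cost no area) to peel the ``large'' factors onto $U_{24}$, where one is back inside the metabelian subgroup $D\ltimes U_{24}\subseteq G_2$, and invoking the contraction trick whenever the relevant product entry has absolute value at most $1$; each relation so produced has length $O(n)$ and area $O(n^2)$. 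This delicate analysis of the centre is what requires the extra arguments of \S\ref{ppq}. Assembling the $O(n^2)$ contributions from the elementary moves, from the monochromatic relations $w_\alpha=1$, and from the central relations then proves the proposition.
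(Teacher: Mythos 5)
Your reduction to a bounded collection process is sound and matches the paper's skeleton: the paper likewise shuffles the $27$ conjugated generators (Claims \ref{clai23}--\ref{c1234}), derives each elementary move of cost $O(n^2)$ from Lemma \ref{quadratic_subgroup_claim} via exactly the subgroup assignments you list, and ends up with $O(1)$ central blocks whose parameters must sum to zero. You also correctly isolate the distorted centre as the one genuinely new difficulty. But your treatment of that difficulty has a real gap. First, ``free commutator identities (which cost no area)'' cannot convert a central block created as $[\hat{e}_{13}^{x},\hat{e}_{34}^{y}]$ into one of the form $[\hat{e}_{12}^{1},\hat{e}_{24}^{xy}]$: these are different words in the free group, and equating them is a genuine relation of $G$ whose area must be paid for. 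The paper pays for it in Claim \ref{hal} by applying Hall's identity (\ref{Hallformula}) to $(a,b,c)=(\hat{e}_{12}^{1},\hat{e}_{23}^{x},\hat{e}_{34}^{y})$ — note the essential detour through the root $U_{23}$ — together with the quadratic-cost commutations of Claims \ref{clai23} and \ref{clai2}. Nothing in your sketch replaces this step, and without it the blocks coming from $[U_{13},U_{34}]$ and those coming from $[U_{12},U_{24}]$ cannot be merged or cancelled against each other at quadratic cost.

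Second, your appeal to ``$D\ltimes U_{24}\subseteq G_2$'' for the creation, transport and cancellation of the central blocks does not work: every relation you actually need here involves $U_{14}$ (e.g.\ $\hat{e}_{14}^{r}\hat{e}_{14}^{s}\rightsquigarrow_2\hat{e}_{14}^{r+s}$, $(\hat{e}_{14}^{r})^{-1}\rightsquigarrow_2\hat{e}_{14}^{-r}$, and the transport relation $[\hat{e}_{14}^{r},\hat{e}_{24}^{x}]\rightsquigarrow_2 1$), and $U_{14}$ lies in none of $G_1,\dots,G_4$, since each is defined by $x_{14}=0$; so Lemma \ref{quadratic_subgroup_claim} gives nothing directly about these relations, and the ``contraction trick'' is unavailable precisely here. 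In the paper these central facts are Claims \ref{1424} and \ref{e1414}, and their proofs again go through the conversion of Claim \ref{hal} (rewriting $\hat{e}_{14}^{r}=[\hat{e}_{12}^{1},\hat{e}_{24}^{r}]$ as $[\hat{e}_{13}^{1},\hat{e}_{34}^{r}]$ so that it visibly commutes, with quadratic cost, with $\hat{e}_{24}^{x}$) plus the identities $[a,b]=[b,a^{-1}]^{a}$ and $[a,bc]=[a,c][a,b]^{c}$. Your observation that only $O(1)$ central summands occur (so no Archimedean blow-up) is correct and is also used implicitly in the paper, but it does not by itself supply the missing quadratic-cost central arithmetic; as it stands the final cancellation of the $U_{14}$-blocks is asserted rather than proved.
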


By Lemmas \ref{gener} and \ref{generq}, as well as Gromov's trick \cite[Prop.\ 4.3]{CT}, to show that $G$ has a quadratic Dehn function, it is enough to show that null-homotopic words that are concatenation of three words of the form (\ref{peig}) have quadratic area. By an obvious conjugation and using that $\mathbf{Z}^2$ has a quadratic Dehn function, this follows from Proposition \ref{quadrapei}, which we now proceed to prove.

\subsection{Proof of Proposition \ref{quadrapei}}\label{ppq}

To simplify the exposition, we will adopt the following convenient
language: the phrase ``the word $w$ can be
replaced by the word $w'$, with a quadratic cost", means that
there are two universal constants $C_1,C_2$ such that
$\ell(w')\leq C_1\ell(w)$, and $\alpha(w^{-1}w')\leq C_2\ell(w)^2$. This
will be denoted, for short by: $w\rightsquigarrow_2 w'$.

Let $J=\{(i,j), 1\leq i<j\leq 4, (i,j)\neq (1,4)\}.$ 
For $(i,j)\in J$, let $e_{ij}^x$ be the elementary matrix with entry $(i,j)$ equal to $x$, and fix a word $\hat{e}_{ij}^x$ of minimal length in $T\cup U_{ij}^1$ representing $e_{ij}^x$. 

Using Lemma~\ref{quadratic_subgroup_claim} (or the even easier observation that $DU_{ij}$ has a quadratic Dehn function for all $(i,j)$), Proposition \ref{quadrapei} reduces to proving that, given $c>1$, words of the form
\begin{equation}w=\prod_{k=1}^{27}\hat{e}_{i_kj_k}^{x_k}\quad(\sup_k|x_k|\le c^n)\label{poum}\end{equation}
have area in $O_c(n^2)$.

Lemma~\ref{quadratic_subgroup_claim} has the following immediate consequence.

\begin{clai}\label{clai23}
We have
\begin{enumerate}
\item $[\hat{e}_{12}^{u},\hat{e}_{23}^{v}]\rightsquigarrow_2
\hat{e}_{13}^{uv}$ and $\hat{e}_{13}^{u}\rightsquigarrow_2
[\hat{e}_{12}^{1},\hat{e}_{23}^{u}]$ ;
\item
$[\hat{e}_{23}^{u},\hat{e}_{34}^{v}]\rightsquigarrow_2 \hat{e}_{24}^{uv}$ and $\hat{e}_{24}^{u}\rightsquigarrow_2
[\hat{e}_{23}^{u},\hat{e}_{34}^{1}]$;
\item $[\hat{e}_{23}^{u},\hat{e}_{24}^{v}]\rightsquigarrow_2 1$;
\item $[\hat{e}_{23}^{u},\hat{e}_{13}^{v}]\rightsquigarrow_2 1$.
\end{enumerate}
\end{clai}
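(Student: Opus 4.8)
The plan is to derive each of the four items of Claim~\ref{clai23} from Lemma~\ref{quadratic_subgroup_claim} by observing that the relevant commutator identity or single-generator rewriting lives entirely inside one of the four subgroups $G_1,\dots,G_4$ (or inside one of the easy two-parameter subgroups $DU_{ij}$), each of which has a quadratic Dehn function, so that any null-homotopic word supported in that subgroup of length $O(n)$ fills with area $O(n^2)$. Concretely, for item (1) the identity $[e_{12}^u,e_{23}^v]=e_{13}^{uv}$ (and its inverse form $e_{13}^u=[e_{12}^1,e_{23}^u]$) takes place in $G_4$, whose matrices have $x_{13}=x_{14}=x_{23}=x_{24}=0$ --- wait, rather in the subgroup generated by $U_{12},U_{23},U_{13}$ and $D$, which is contained in $G$ and has a quadratic Dehn function by the same contraction argument; for item (2) the identity $[e_{23}^u,e_{34}^v]=e_{24}^{uv}$ lives in the $U_{23},U_{34},U_{24}$ corner, handled by $G_3$ or its analogue; for items (3) and (4) the commutators $[e_{23}^u,e_{24}^v]$ and $[e_{23}^u,e_{13}^v]$ are literally trivial in $G$ because the corresponding off-diagonal slots don't interact, and the point is only that the null-homotopic word obtained by spelling this out in the generators $T\cup U_{ij}^1$ has length $O(n)$ when $|u|,|v|\le c^n$ and hence fills quadratically inside a subgroup with quadratic Dehn function.

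The key technical point common to all four items is the length bookkeeping: a generator $e_{ij}^x$ with $|x|$ as large as $c^n$ is \emph{not} short in the word metric of $S$ --- but the chosen representative $\hat e_{ij}^x$, being of minimal length in $T\cup U_{ij}^1$, has length $O(n)$ by the same estimate used in Lemma~\ref{generq} (conjugate the unit-ball generator by a diagonal word of length $O(\log_{|t|}|x|)=O(n)$). Thus each of the words appearing in the claimed rewritings $\rightsquigarrow_2$ has length $O(n)$, the relator realizing the rewriting is a null-homotopic word of length $O(n)$ supported in a fixed subgroup with quadratic Dehn function, and provided $M$ (the bound on the length of defining relators, as in Proposition~\ref{quadrapei}) is taken large enough so that all the bounded-length relations involved --- the contraction relations $q s q^{-1} s'^{-1}$ of length $\le 8$, the projection relations of length $\le 20$, and the basic commuting relations of the relevant $\mathbf Z^2$'s --- are genuine relators, Lemma~\ref{quadratic_subgroup_claim} yields area $O(n^2)$ directly. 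So the proof is essentially: name the ambient quadratic subgroup, check the representative lengths are $O(n)$, and invoke the lemma.

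I would carry this out in the order (1), (2), (3), (4), since (1) and (2) are the substantive ones (they produce new off-diagonal entries by commutation and require identifying the right three-dimensional-unipotent-plus-torus subgroup) while (3) and (4) are degenerate cases where the commutator vanishes on the nose and only the length estimate needs checking. The main obstacle --- really the only place requiring care --- is making sure the bound $|t_i|\le Cn$ style estimates survive the rewriting: one must check that after replacing $[\hat e_{12}^u,\hat e_{23}^v]$ by $\hat e_{13}^{uv}$ the entry $uv$ still satisfies $|uv|\le c'^n$ for a possibly larger $c'$, which is immediate since $|uv|\le|u|\,|v|\le c^{2n}$ (ultrametric case: $\le\max$), so $c'=c^2$ works; analogously for item (2). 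Once that is noted, everything reduces to Lemma~\ref{quadratic_subgroup_claim} with no further computation.
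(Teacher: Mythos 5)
Your proposal is essentially the paper's own argument: the paper obtains Claim~\ref{clai23} as an immediate consequence of Lemma~\ref{quadratic_subgroup_claim}, exactly as you do, by noting that each relation is a null-homotopic word of length $O(n)$ (thanks to the $O(n)$ bound on the length of $\hat e_{ij}^x$ for $|x|\le c^n$) supported in one of the subgroups already shown to have a quadratic Dehn function. The only correction is the naming of those subgroups: items (1) and (4) live in $G_1$ (which contains $D,U_{12},U_{23},U_{13}$) and items (2) and (3) in $G_2$ (which contains $D,U_{23},U_{34},U_{24}$), not in $G_3$ or $G_4$ as you first wrote, though your hedged descriptions identify the right subgroups in substance.
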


\begin{clai}\label{clai2}
We have
\begin{enumerate}
\item\label{2vii} for any $(i,j)\in J$,
$\hat{e}_{i j}^u\hat{e}_{i j}^v\rightsquigarrow_2 \hat{e}_{i j}^{u+v}$ and $(\hat{e}_{ij}^{u})^{-1}\rightsquigarrow_2 \hat{e}_{ij}^{-u}$;
\item $[\hat{e}_{1 2}^{u}, \hat{e}_{3 4}^{v}]\rightsquigarrow_2 1$;
\item\label{2iv} $[\hat{e}_{1 3}^{u},\hat{e}_{2 4}^{v}]\rightsquigarrow_2 1$;
\item  $[\hat{e}_{3 4}^{u},\hat{e}_{2 4}^{v}]\rightsquigarrow_2 1$;
\item $[\hat{e}_{1 2}^{u},\hat{e}_{1 3}^{v}]\rightsquigarrow_2 1$.
\end{enumerate}
\end{clai}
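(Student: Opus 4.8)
The plan is to deduce each of these relations from Claim \ref{clai23} together with the quadratic Dehn functions of the subgroups $G_i$ furnished by Lemma \ref{quadratic_subgroup_claim}, in roughly the order listed. First, for item (1): the relations $\hat{e}_{ij}^u\hat{e}_{ij}^v\rightsquigarrow_2\hat{e}_{ij}^{u+v}$ and $(\hat{e}_{ij}^u)^{-1}\rightsquigarrow_2\hat{e}_{ij}^{-u}$ take place inside $DU_{ij}$, which has a quadratic Dehn function; since the words involved have length $O(n)$ when $|u|,|v|\le c^n$ (each $\hat{e}_{ij}^x$ being a word of minimal length, hence of length $O(\log|x|)=O(n)$), the defining relator $\hat{e}_{ij}^u\hat{e}_{ij}^v(\hat{e}_{ij}^{u+v})^{-1}$ has length $O(n)$ and therefore area $O(n^2)$. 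Items (2)--(5) are each commutation relations between two root subgroups that commute in $G$; in every case the two subgroups together generate a subgroup conjugate into one of the $G_i$ (e.g.\ $\langle U_{12},U_{34}\rangle$ and $\langle U_{13},U_{24}\rangle$ sit inside $G$'s abelian-by-something pieces, and $\langle U_{34},U_{24}\rangle$, $\langle U_{12},U_{13}\rangle$ lie in $G_3$, $G_1$ respectively), so again the commutator word has length $O(n)$ and the quadratic Dehn function of that subgroup gives area $O(n^2)$.

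Alternatively — and this is the route I would actually write down, since it avoids re-examining which pair lands in which $G_i$ — I would derive the commutations (2)--(5) from the bracket identities of Claim \ref{clai23} by expanding one factor as an iterated commutator and pushing it through. For instance, to get $[\hat{e}_{13}^u,\hat{e}_{24}^v]\rightsquigarrow_2 1$ one writes $\hat{e}_{13}^u\rightsquigarrow_2[\hat{e}_{12}^1,\hat{e}_{23}^u]$ and $\hat{e}_{24}^v\rightsquigarrow_2[\hat{e}_{23}^v,\hat{e}_{34}^1]$, and then uses the Hall--Witt / commutator-collection machinery together with items (3),(4) of Claim \ref{clai23} (which say $\hat{e}_{23}$ commutes with $\hat{e}_{24}$ and with $\hat{e}_{13}$) and item (2) of the present claim; each elementary move rewrites a subword of length $O(n)$ at quadratic cost, and only boundedly many moves are needed, so the total cost stays $O(n^2)$. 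The relations (4) and (5) are of the same flavor but easier, as one of the two subgroups is already visibly abelian-normalized.

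The main obstacle I anticipate is purely bookkeeping: one must check that in each rewriting the intermediate words stay of length $O(n)$ — i.e.\ that no uncontrolled blow-up of the parameter $x$ occurs when commuting elementary matrices, e.g.\ that a term like $\hat{e}_{13}^{uv}$ appearing from a bracket still has $|uv|\le c^{O(n)}$ so that $|\hat{e}_{13}^{uv}|=O(n)$. Since $|u|,|v|\le c^n$ gives $|uv|\le c^{2n}$, this is fine, but it has to be tracked through each of the finitely many moves. There is no genuine mathematical difficulty here: items (2)--(5) are exactly the commutation relations that already hold at the level of the group $G$, and the content is only that they can be realized with quadratic filling area, which follows formally from Lemma \ref{quadratic_subgroup_claim} and Claim \ref{clai23}. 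I would therefore present the proof as: item (1) directly from the quadratic Dehn function of $DU_{ij}$; items (2)--(5) by the observation that each relevant pair of root subgroups generates a subgroup with quadratic Dehn function (citing the parenthetical remark in the statement of Lemma \ref{quadratic_subgroup_claim}), noting in each case that the words have length $O(n)$.
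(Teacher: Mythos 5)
Your first argument (item (1) inside $DU_{ij}$, items (2)--(5) inside a subgroup from Lemma \ref{quadratic_subgroup_claim} containing the two root groups and $D$, with all words of length $O(n)$) is exactly the paper's intended proof: the paper states Claim \ref{clai2}, like Claim \ref{clai23}, as an immediate consequence of Lemma \ref{quadratic_subgroup_claim}, with no further argument. Two corrections to your bookkeeping: no conjugation is needed, and the correct containments are $\langle D,U_{12},U_{34}\rangle\subset G_4$, $\langle D,U_{13},U_{24}\rangle\subset G_3$, $\langle D,U_{34},U_{24}\rangle\subset G_2$ (not $G_3$, which kills $x_{34}$), and $\langle D,U_{12},U_{13}\rangle\subset G_1$.

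The ``alternative'' route you say you would actually write down is the one to avoid. Expanding, say, $[\hat{e}_{13}^{u},\hat{e}_{24}^{v}]$ via $\hat{e}_{13}^{u}\rightsquigarrow_2[\hat{e}_{12}^{1},\hat{e}_{23}^{u}]$ and Hall--Witt inevitably produces commutators of the form $[\hat{e}_{12}^{\ast},\hat{e}_{24}^{\ast}]$ or $[\hat{e}_{13}^{\ast},\hat{e}_{34}^{\ast}]$, i.e.\ words of type $\hat{e}_{14}$; these are not negligible, and the fact that they can be manipulated (commuted past other generators, added, inverted) at quadratic cost is precisely the content of Claims \ref{hal}, \ref{1424} and \ref{e1414}, which are proved \emph{later} and use Claim \ref{clai2}(\ref{2iv}) as an input. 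So deriving (2)--(5) from Claim \ref{clai23} by commutator collection is either unsubstantiated or circular; the whole difficulty of the paper is that relations touching the distorted center $U_{14}$ cannot be handled by such formal manipulations until the commutations of Claim \ref{clai2} are in place. Since your final presentation reverts to the subgroup argument, the proof you commit to is correct and coincides with the paper's; just drop the alternative.
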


\begin{clai}\label{clai3}
For every relation $w$ as in (\ref{poum}), we have
\begin{equation}w\rightsquigarrow_2 w_1w_2,\; \textnormal{ where }\;
w_1=\hat{e}_{13}^{u_1}\hat{e}_{34}^{v_1}\ldots \hat{e}_{13}^{u_q}\hat{e}_{34}^{v_q},\;
\textnormal{ and }\; w_2=\hat{e}_{12}^{u'_1}\hat{e}_{24}^{v'_1}\ldots
\hat{e}_{12}^{u'_{q'}}\hat{e}_{24}^{v'_{q'}},\label{13341224}\end{equation}
with $q+q'\le 27$.
\end{clai}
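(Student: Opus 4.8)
The plan is to move all the matrix entries of a word $w=\prod_{k=1}^{27}\hat{e}_{i_kj_k}^{x_k}$ into a normal form in which the only surviving letters are of type $(1,3),(3,4)$ (collected in $w_1$) and of type $(1,2),(2,4)$ (collected in $w_2$), using the commutation and concatenation relations of Claims~\ref{clai23} and \ref{clai2}. The key observation making this possible with only a quadratic cost is that the letters $\hat{e}_{23}^u$ are the only ``dangerous'' ones (they interact with both $(1,2)$ on the left and $(3,4)$ on the right, and their coefficients can be as large as $c^n$), so the bulk of the argument is to eliminate all $(2,3)$-letters first.

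First I would use Claim~\ref{clai23}(1)--(2) to trade each letter $\hat{e}_{23}^u$ appearing in $w$ for a commutator $[\hat{e}_{12}^1,\hat{e}_{23}^u]$ — wait, that reintroduces a $(2,3)$; rather, the right move is the reverse: whenever a $\hat{e}_{23}^u$ sits next to a $\hat{e}_{12}^v$ or $\hat{e}_{34}^v$, absorb it via $[\hat{e}_{12}^v,\hat{e}_{23}^u]\rightsquigarrow_2\hat{e}_{13}^{vu}$ and $[\hat{e}_{23}^u,\hat{e}_{34}^v]\rightsquigarrow_2\hat{e}_{24}^{uv}$, turning $(2,3)$-mass into $(1,3)$- and $(2,4)$-mass. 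So the strategy is: push all $(1,2)$-letters to the far left and all $(3,4)$-letters to the far right (legal up to quadratic cost by Claim~\ref{clai2}(2) and the fact that $(1,2)$ commutes with $(1,3)$, and $(3,4)$ with $(2,4)$, by Claim~\ref{clai2}(5),(4) and Claim~\ref{clai23}(4),(3)), then repeatedly commute each remaining $(2,3)$-letter leftward past $(1,3)$ and $(2,4)$ (which it commutes with, Claim~\ref{clai23}(3),(4)) until it meets a $(1,2)$-letter and gets converted to a $(1,3)$-letter, or rightward past $(2,4),(1,3)$ until it meets a $(3,4)$-letter and becomes a $(2,4)$-letter. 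Since there are at most $27$ letters to begin with, only boundedly many commutation/conversion steps are needed, each step has cost quadratic in the current word length, and the word length stays $O(n)$ throughout (bounded number of letters, each of length $O(n)$), so the total cost is $O(n^2)$.

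After the $(2,3)$-letters are gone, the word is a bounded-length product of letters of types $(1,2),(1,3),(2,4),(3,4)$. Now I would sort: push $(1,2)$ and $(1,3)$ letters to the left and $(2,4),(3,4)$ letters to the right. The only obstruction to a full sort is that $(1,2)$ does not a priori commute with $(3,4)$ or $(2,4)$ — but $[\hat{e}_{12}^u,\hat{e}_{34}^v]\rightsquigarrow_2 1$ (Claim~\ref{clai2}(2)) and the commutator $[\hat{e}_{12}^u,\hat{e}_{24}^v]$ lands in $U_{14}$, which is exactly what is forbidden in $w$ being a relation — here one uses that $w$ is null-homotopic, so after collecting, the net $(1,4)$-contribution must vanish and can be killed at quadratic cost (this is where Lemma~\ref{quadratic_subgroup_claim}, via the subgroup $G_1$ containing $U_{14}$ with contraction, or directly the fact that $DU_{14}$ has quadratic Dehn function, is invoked). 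Similarly interleaving within each side: $(1,3)$ past $(1,2)$ costs a $(1,2)$-shift, fine; $(2,4)$ past $(3,4)$ likewise. One then merges same-type letters by Claim~\ref{clai2}(1), arriving at the alternating form $w_1w_2$ with $q+q'\le 27$ (in fact the merging only decreases the count, so the bound is inherited from the original $27$ letters).

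I expect the main obstacle to be bookkeeping the order in which the $(2,3)$-letters are eliminated and ensuring that the conversion of a $(2,3)$-letter into a $(1,3)$- or $(2,4)$-letter does not create new $(2,3)$-letters or force a $(1,3)$/$(2,4)$ letter back across a $(1,2)$/$(3,4)$ letter in a way that loops — i.e. proving the termination of the rewriting in a bounded number of steps. The clean way around this is to process the $(2,3)$-letters one at a time from, say, left to right: for the leftmost $\hat{e}_{23}^u$, commute it leftward (past $(1,3)$, $(2,4)$ letters, which are the only other types possibly to its left after the preliminary $(1,2)$-to-the-left sort is done only for letters left of it) until it reaches the block of $(1,2)$-letters, convert, and never touch it again; since each such pass moves past at most $27$ letters at cost $O(n^2)$, and there are at most $27$ such letters, the whole elimination costs $O(n^2)$. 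The subtlety that the converted $(1,3)^{vu}$ now has coefficient of size $c^n\cdot c^n$ is harmless since $\hat{e}_{13}^{c^{2n}}$ still has length $O(n)$.
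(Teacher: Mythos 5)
Your plan breaks at two points. First, the elimination of the $(2,3)$-letters. A letter $\hat{e}_{23}^{u}$ standing next to $\hat{e}_{12}^{v}$ is a two-letter subword, not a commutator: the relation $[\hat{e}_{12}^{v},\hat{e}_{23}^{u}]\rightsquigarrow_2\hat{e}_{13}^{vu}$ only lets you interchange the two letters at the price of an extra $(1,3)$-letter, and the $(2,3)$-letter survives. It must survive: the $(2,3)$-entry of the group element represented by the word is the sum of the $(2,3)$-exponents, so no sequence of relator applications can ``convert'' a single $\hat{e}_{23}^{u}$ with $u\neq 0$ into letters of other types; the $(2,3)$-letters can only disappear by cancelling against one another. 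This is exactly how the paper argues: shuffle all $\hat{e}_{23}^{x_k}$ to one side using Claim \ref{clai23} (picking up at most $27^2$ extra $(1,3)$- and $(2,4)$-letters), aggregate them into a single $\hat{e}_{23}^{x}$ by Claim \ref{clai2}(\ref{2vii}), and then use the null-homotopy of $w$ to conclude $x=0$. Your proposal never invokes null-homotopy at this step, and without it the ``conversion'' step is impossible.

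Second, your sorting partition is the wrong one and quietly assumes the hardest part of Proposition \ref{quadrapei}. Pushing $(1,2)$ and $(1,3)$ to the left while $(2,4)$ and $(3,4)$ go right forces crossings of the non-commuting pairs $(1,2)/(2,4)$ and $(1,3)/(3,4)$, whose commutators lie in $U_{14}$. There is no letter for $U_{14}$ (indeed $(1,4)\notin J$), $U_{14}=Z(\mathbf{K})$ is exponentially distorted, none of the subgroups of Lemma \ref{quadratic_subgroup_claim} contains $U_{14}$ ($G_1$ is defined by $x_{14}=x_{24}=x_{34}=0$, so your parenthetical appeal to it is unfounded, and ``$DU_{14}$ has quadratic Dehn function'' does not give fillings in $G$ since $U_{14}$ is not undistorted there). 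Disposing of products of commutators $[\hat{e}_{12}^{u},\hat{e}_{24}^{v}]$ at quadratic cost is precisely the content of the rest of the paper's proof (Claims \ref{c1234}, \ref{hal}, \ref{e1414} and the conclusion, via Hall's identity), not something available when proving Claim \ref{clai3}. The whole point of the normal form (\ref{13341224}) is that it never separates $(1,2)$ from $(2,4)$, nor $(1,3)$ from $(3,4)$: after the $(2,3)$-letters are gone, one moves the family $\{(1,2),(2,4)\}$ as a block past the family $\{(1,3),(3,4)\}$, and every crossing this requires is one of the free commutations of Claim \ref{clai2}(2)--(5), so no $(1,4)$-terms are created at this stage at all.
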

\begin{proof} In $w$, we can, using Claim \ref{clai23}, shuffle all subwords of the form $\hat{e}_{23}^{x_k}$ to the left. Some subwords of the form $\hat{e}_{13}^{y}$ or $\hat{e}_{24}^{y}$ appear: at most $27^2=729$ (although we can do much better). We can now aggregate all terms $\hat{e}_{23}^{x_k}$ to a single term $\hat{e}_{23}^x$, with quadratic cost by Claim \ref{clai2}(\ref{2vii}); since $w$ is null-homotopic, necessarily $x=0$, so we got rid of all elements $\hat{e}_{23}^{x_k}$. 

Next, using Claim \ref{clai2}, we can shuffle all subwords ($\hat{e}_{12}^{x_k}$ or $\hat{e}_{24}^{x_k}$) to the left, since they commute with quadratic cost with the subwords of the form ($\hat{e}_{13}^{x_k}$ or $\hat{e}_{34}^{x_k}$). We can aggregate when necessary consecutive subwords of the form $\hat{e}_{ij}^{x_k}$ using Claim \ref{clai2}(\ref{2vii}). Since there are at most 27 subwords of the form $\hat{e}_{12}^{x_k}$ or $\hat{e}_{34}^{x_k}$ (or 12 instead of 27 with some little effort), the claim is proved.
\end{proof}

We are led to reduce words as in (\ref{13341224}). We first need the following general formula. For commutators, we use the convention
$$[x,y]=x^{-1}y^{-1}xy.$$
In any group and for any elements $x_1,\dots y_k$, we have, denoting 
$x_{ij}=x_i x_{i+1}\dots x_j$, (if $i\le j$) and defining similarly $y_{ij}$
\begin{equation}\label{crocro}x_1y_1\dots x_ky_k\end{equation} $$=x_{1k}y_{1k}[y_{1k},x_{2k}][x_{2k},y_{2k}][y_{2k},x_{3k}]\dots [x_{(k-1)k},y_{(k-1)k}][y_{(k-1)k},x_{kk}][x_{kk},y_{kk}]$$

Thus, given $w=w_1w_2$ as in (\ref{13341224}), applying (\ref{crocro}) to each of $w_1$ and $w_2$ and using Claim \ref{clai2}(\ref{2vii}) several times, it can be reduced, with quadratic cost, to a word of the form

\[\left(\hat{e}_{13}^x\hat{e}_{34}^z\prod_{i=1}^{2(q-1)}[\hat{e}_{13}^{x_i},\hat{e}_{34}^{z_i}]^{(-1)^i}\right)\left(\hat{e}_{12}^y\hat{e}_{24}^t\prod_{i=1}^{2(q'-1)}[\hat{e}_{12}^{y_i},\hat{e}_{24}^{t_i}]^{(-1)^i}\right)\]
(observing that the number of brackets in (\ref{crocro}) is $2(k-1)$).

By projection, we have $x=y=z=t=0$. Thus we have obtained the word
\begin{equation}\prod_{i=1}^{r}[\hat{e}_{13}^{x_i},\hat{e}_{34}^{z_i}]^{(-1)^i}\prod_{i=1}^{r'}[\hat{e}_{12}^{y_i},\hat{e}_{24}^{t_i}]^{(-1)^i}.\label{1234c}\end{equation}
with $r+r'\le 52$. We have proved

\begin{clai}\label{c1234}
Every relation $w$ as in (\ref{poum}), can be reduced with quadratic cost to a relation as in (\ref{1234c}).
\end{clai}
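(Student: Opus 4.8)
The plan is to chain together the reductions already established. We start from an arbitrary null-homotopic word $w$ as in (\ref{poum}), i.e.\ a product of $27$ factors $\hat{e}_{i_kj_k}^{x_k}$ with $\sup_k|x_k|\le c^n$. First I would invoke Claim \ref{clai3}: it says $w\rightsquigarrow_2 w_1w_2$ with $w_1$ a word in the letters $\hat{e}_{13},\hat{e}_{34}$ and $w_2$ a word in the letters $\hat{e}_{12},\hat{e}_{24}$, of total ``length'' $q+q'\le 27$. Since all constants hidden in $\rightsquigarrow_2$ are universal and depend only on $c$ (through the bound $c^n$), composing finitely many $\rightsquigarrow_2$ steps stays within the quadratic regime.

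Next I would apply the commutator rearrangement identity (\ref{crocro}) separately to $w_1$ and to $w_2$. Expanding $w_1=\hat{e}_{13}^{u_1}\hat{e}_{34}^{v_1}\cdots\hat{e}_{13}^{u_q}\hat{e}_{34}^{v_q}$ via (\ref{crocro}) produces the ``diagonal'' term $\hat{e}_{13}^{x}\hat{e}_{34}^{z}$ (with $x=\sum u_i$, $z=\sum v_i$) followed by $2(q-1)$ bracket terms $[\hat{e}_{13}^{x_i},\hat{e}_{34}^{z_i}]^{(-1)^i}$; here I use Claim \ref{clai2}(\ref{2vii}) repeatedly to aggregate the collected $\hat{e}_{13}$'s and the collected $\hat{e}_{34}$'s into single powers, each such aggregation costing quadratically. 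The same applies to $w_2$, yielding the diagonal term $\hat{e}_{12}^{y}\hat{e}_{24}^{t}$ and $2(q'-1)$ brackets. The crucial point is that all the exponents appearing are (bounded sums of at most $27$ of) the original $x_k$, hence still bounded by a constant times $c^n$, so Claim \ref{clai2}(\ref{2vii}) applies with the uniform constants.

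Then I would project to the abelianization (equivalently, read off the relevant matrix entries): since the original $w$ is null-homotopic and the $(1,3)$, $(3,4)$, $(1,2)$, $(2,4)$ matrix entries of a product only see the corresponding elementary letters additively once the commutator terms are segregated, we get $x=z=y=t=0$. Hence the diagonal prefactors $\hat{e}_{13}^{x}\hat{e}_{34}^{z}$ and $\hat{e}_{12}^{y}\hat{e}_{24}^{t}$ can be deleted with quadratic (indeed bounded) cost, leaving exactly a word of the form (\ref{1234c}), with $r+r'=2(q-1)+2(q'-1)\le 2\cdot 27-4\le 52$. The step I expect to be the most delicate bookkeeping — though not conceptually hard — is keeping track through the expansion (\ref{crocro}) that every exponent remains $O_c(c^n)$ so that the uniform constants in Claims \ref{clai23} and \ref{clai2} remain valid; and making sure the projection argument legitimately kills all four diagonal terms, for which one should note that $[\hat{e}_{13},\hat{e}_{34}]$ lands in $U_{14}$ (the center) and $[\hat{e}_{12},\hat{e}_{24}]$ likewise, so these brackets do not interfere with the $(1,3),(3,4),(1,2),(2,4)$ entries being forced to vanish.
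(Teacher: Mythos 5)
Your proposal is correct and follows essentially the same route as the paper: invoke Claim \ref{clai3}, expand $w_1$ and $w_2$ via the identity (\ref{crocro}) with aggregation by Claim \ref{clai2}(\ref{2vii}), and then kill the aggregated diagonal terms $x,z,y,t$ by projection (reading off the relevant matrix entries, which the commutator terms in $U_{14}$ do not affect). Your extra remarks on the exponent bounds and on why the projection forces $x=z=y=t=0$ are exactly the bookkeeping the paper leaves implicit.
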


We now recall the following formula due to Hall, valid in
any group.
\begin{equation}\label{Hallformula}
 [a^b,[b,c]]\cdot [b^c,[c,a]]\cdot [c^a,[a,b]]=1,
\end{equation}
where $a^b=b^{-1}ab.$ We also recall the simpler formula
\begin{equation}\label{proco}
 [ab,c]=[a,c]^b\,[b,c].
\end{equation}

\begin{clai}\label{hal}We have 
\begin{itemize}
\item $[\hat{e}_{13}^{x},\hat{e}_{34}^{y}] \rightsquigarrow_2 [\hat{e}_{12}^{1},\hat{e}_{24}^{xy}]$
\item $[\hat{e}_{12}^{x},\hat{e}_{24}^{y}] \rightsquigarrow_2 [\hat{e}_{13}^{1},\hat{e}_{34}^{xy}]$
\end{itemize}
\end{clai}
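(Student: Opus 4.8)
The plan is to exploit Hall's identity (\ref{Hallformula}) inside the group $G$ with a carefully chosen triple $(a,b,c)$ so that two of the three Hall commutators become exactly the brackets appearing in the statement, while the third one collapses (at quadratic cost) to a commutator already known to be trivial-with-quadratic-cost by Claims \ref{clai23} and \ref{clai2}. Concretely, for the first bullet I would take $a=\hat{e}_{12}^{1}$, $b=\hat{e}_{23}^{x}$, $c=\hat{e}_{34}^{y}$ (so that $[b,c]=[\hat{e}_{23}^{x},\hat{e}_{34}^{y}]\rightsquigarrow_2\hat{e}_{24}^{xy}$ and $[a,b]=[\hat{e}_{12}^{1},\hat{e}_{23}^{x}]\rightsquigarrow_2\hat{e}_{13}^{x}$, by Claim \ref{clai23}(1)-(2)). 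Then, modulo quadratic cost:
\begin{itemize}
\item $[c^a,[a,b]]\rightsquigarrow_2[\hat{e}_{34}^{y},\hat{e}_{13}^{x}]$, because $c=\hat{e}_{34}^{y}$ commutes with $a=\hat{e}_{12}^1$ with quadratic cost (Claim \ref{clai2}(2)), so $c^a\rightsquigarrow_2 c$; and this bracket is $\rightsquigarrow_2 1$ since $U_{34}$ and $U_{13}$ commute (they multiply to a $U_{14}$-term, but $[\hat e_{34},\hat e_{13}]$ lies in the subgroup $G_4$-type situation — alternatively it is handled directly by the contraction argument as in Claim \ref{clai2}).
\item $[a^b,[b,c]]\rightsquigarrow_2[\hat{e}_{12}^{1},\hat{e}_{24}^{xy}]$: here $a^b=(\hat{e}_{23}^{x})^{-1}\hat{e}_{12}^1\hat{e}_{23}^{x}\rightsquigarrow_2\hat{e}_{12}^1\hat{e}_{13}^{-x}$ (since $[\hat{e}_{12}^1,\hat{e}_{23}^x]\rightsquigarrow_2\hat{e}_{13}^x$), and $[b,c]\rightsquigarrow_2\hat{e}_{24}^{xy}$, so using (\ref{proco}) and the fact that $\hat{e}_{13}$ commutes with $\hat{e}_{24}$ with quadratic cost (Claim \ref{clai2}(\ref{2iv})), we get $[a^b,[b,c]]\rightsquigarrow_2[\hat{e}_{12}^1\hat{e}_{13}^{-x},\hat{e}_{24}^{xy}]\rightsquigarrow_2[\hat{e}_{12}^1,\hat{e}_{24}^{xy}]$.
\end{itemize}
Feeding these three reductions into (\ref{Hallformula}) and solving for the middle term $[b^c,[c,a]]=[(\hat{e}_{34}^{y})^{-1}\hat{e}_{23}^{x}\hat{e}_{34}^{y},[\hat{e}_{34}^{y},\hat{e}_{12}^1]^{-1}]$ — which I would first simplify, using $[\hat{e}_{34}^y,\hat{e}_{12}^1]\rightsquigarrow_2 1$ and $[\hat{e}_{23}^x,\hat{e}_{34}^y]\rightsquigarrow_2\hat{e}_{24}^{xy}$, to $[\hat{e}_{23}^x\hat{e}_{24}^{xy},1]\rightsquigarrow_2 1$ — yields $[\hat{e}_{13}^{x},\hat{e}_{34}^{y}]^{-1}\cdot[\hat{e}_{12}^{1},\hat{e}_{24}^{xy}]\rightsquigarrow_2 1$ after also using Claim \ref{clai23}(1) to replace $[\hat e_{13}^x,\hat e_{34}^y]$ appropriately; care with the precise arrangement of the three factors gives the first bullet. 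The second bullet is entirely symmetric under the ``transpose-antidiagonal'' symmetry of $A_4$ that swaps the indices $(12)\leftrightarrow(34)$, $(13)\leftrightarrow(24)$, $(23)\leftrightarrow(23)$ (the same symmetry already invoked in the proof of Lemma \ref{quadratic_subgroup_claim} and Claim \ref{clai23}), so it requires no separate argument.

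The one genuinely delicate point is bookkeeping the quadratic cost: each use of (\ref{proco}) to expand a commutator $[uv,w]$ introduces a conjugate $[u,w]^v$, and one must check that all the conjugating words $t_i\in[T]$ that arise (from the $\hat{e}_{ij}$'s and their products) have length $O(n)$ — which they do, because all entries $x,y$ in sight are bounded by $c^n$ and hence each $\hat{e}_{ij}^x$ decomposes, by the argument in Lemma \ref{generq}, as $\gamma s\gamma^{-1}$ with $|\gamma|_T=O(n)$, so every bracket appearing in the manipulation is of the form handled by $\rightsquigarrow_2$ with a uniform constant. The main obstacle is therefore not any single identity but organizing the sequence of substitutions in (\ref{Hallformula}) so that no intermediate word has length blowing up beyond $O(n)$; once the order of operations is fixed as above, each step is covered by Claims \ref{clai23} and \ref{clai2}, and the total cost is a bounded sum of $O(n^2)$ terms.
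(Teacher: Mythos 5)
Your overall strategy (Hall's identity applied to $(a,b,c)=(\hat{e}_{12}^{1},\hat{e}_{23}^{x},\hat{e}_{34}^{y})$, with $[a^b,[b,c]]$ converted to $[\hat{e}_{12}^{1},\hat{e}_{24}^{xy}]$ via (\ref{proco}) and Claim \ref{clai2}(\ref{2iv})) is the same as the paper's, and that part of your plan is sound. But there is a genuine error in how you dispose of the Hall terms: you assert that $[c^a,[a,b]]\rightsquigarrow_2[\hat{e}_{34}^{y},\hat{e}_{13}^{x}]\rightsquigarrow_2 1$ ``since $U_{34}$ and $U_{13}$ commute.'' They do not commute: $[e_{13}^{x},e_{34}^{y}]=e_{14}^{\pm xy}$, a nontrivial central element, so the word $[\hat{e}_{34}^{y},\hat{e}_{13}^{x}]$ is not even null-homotopic and no reduction $\rightsquigarrow_2 1$ can exist; nothing in Claims \ref{clai23} or \ref{clai2} covers this pair, and indeed the whole point of Claim \ref{hal} is that this commutator is the distorted central piece one must re-express. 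Your plan then kills all three Hall commutators (you also kill the middle term $[b^c,[c,a]]$, which \emph{is} legitimate since $[c,a]=[\hat{e}_{34}^{y},\hat{e}_{12}^{1}]\rightsquigarrow_2 1$), which together with your correct identification $[a^b,[b,c]]\rightsquigarrow_2[\hat{e}_{12}^{1},\hat{e}_{24}^{xy}]$ would force the false conclusion $[\hat{e}_{12}^{1},\hat{e}_{24}^{xy}]\rightsquigarrow_2 1$; your closing sentence, which keeps a factor $[\hat{e}_{13}^{x},\hat{e}_{34}^{y}]^{-1}$ in the balance, is inconsistent with your first bullet.

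The fix is exactly the point you skipped: do \emph{not} kill $[c^a,[a,b]]$. Since $[c,a]\rightsquigarrow_2 1$, one gets $[c^a,[a,b]]\rightsquigarrow_2[c,[a,b]]$, and since $\hat{e}_{13}^{x}\rightsquigarrow_2[a,b]$ (Claim \ref{clai23}), this term is, with quadratic cost, the inverse of the left-hand side $[\hat{e}_{13}^{x},\hat{e}_{34}^{y}]\rightsquigarrow_2[[a,b],c]=[c,[a,b]]^{-1}$. Hall's identity with only the middle term annihilated then gives $[c,[a,b]]^{-1}\rightsquigarrow_2[a^b,[b,c]]$, and your treatment of $[a^b,[b,c]]$ finishes the first bullet; the second bullet is handled by the analogous (symmetric) computation. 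As written, your argument has a step that is mathematically false and an assembly that does not follow from the steps stated, so it does not establish the claim.
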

\begin{proof}
Both verifications are similar, so we only prove the first one.
Define $(a,b,c)=( \hat{e}_{12}^1, \hat{e}_{23}^x, \hat{e}_{34}^y)$. 
First observe that using Claim \ref{clai23}, $\hat{e}_{13}^{x}\rightsquigarrow_2 [a,b]$, so 
$$[\hat{e}_{13}^{x},\hat{e}_{34}^{y}]\rightsquigarrow_2 [[a,b],c]=[c,[a,b]]^{-1}.$$

Since $[c,a]\rightsquigarrow_2 1$, we have $[b^c,[c,a]]\rightsquigarrow_2 1$ and 
$[c,[a,b]]\rightsquigarrow_2 [c^a,[a,b]]$; thus applying Hall's formula (\ref{Hallformula}) to $(a,b,c)$ we get
$$[c,[a,b]]^{-1}\rightsquigarrow_2 [a^b,[b,c]].$$

On the other hand, using (\ref{proco}), $$[a^b,[b,c]]=[[b,a^{-1}]a,[b,c]]=[[b,a^{-1}],[b,c]]^{a}\cdot[a,[b,c]].$$
Thus using Claim \ref{clai23} and Claim \ref{clai2}(\ref{2iv}), we get
$$[a^b,[b,c]]\rightsquigarrow_2 [ \hat{e}_{13}^x, \hat{e}_{24}^{xy}]^{a}\cdot[ \hat{e}_{12}^1, \hat{e}_{24}^{xy}]\rightsquigarrow_2 [ \hat{e}_{12}^1, \hat{e}_{24}^{xy}].$$
So we proved $[\hat{e}_{13}^{x},\hat{e}_{34}^{y}] \rightsquigarrow_2 [\hat{e}_{12}^{1},\hat{e}_{24}^{xy}]$. 
\end{proof}

Define $\hat{e}_{14}^r=[ \hat{e}_{12}, \hat{e}_{24}^r]$. 

\begin{clai}\label{1424}
We have $[ \hat{e}_{14}^r, \hat{e}_{24}^x]\rightsquigarrow_2 1$.
\end{clai}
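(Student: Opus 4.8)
\textbf{Plan of proof of Claim \ref{1424}.}

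The plan is to reduce the computation of $[\hat e_{14}^r,\hat e_{24}^x]$ to facts already established in the preceding claims, without ever appealing to a contracting element. Recall that $\hat e_{14}^r$ is by definition the word $[\hat e_{12},\hat e_{24}^r]$, so I must estimate the area of $[[\hat e_{12},\hat e_{24}^r],\hat e_{24}^x]$. First I would expand the outer commutator using the identity \eqref{proco}, together with the fact that, in the group $G$, the element $e_{14}^r$ is central; hence $[\hat e_{14}^r,\hat e_{24}^x]$ represents the trivial element of $G$, and the only issue is to bound its area by $O(\ell^2)$ where $\ell$ is the total length (which is $O(\log|r|+\log|x|)$, hence $\rightsquigarrow_2$ is the right framework). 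So the task is genuinely a filling estimate, not an algebraic triviality.

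The key steps, in order: (i) write $\hat e_{14}^r \rightsquigarrow_2 [\hat e_{12},\hat e_{24}^r]$ by definition, so that $[\hat e_{14}^r,\hat e_{24}^x]\rightsquigarrow_2 [[\hat e_{12},\hat e_{24}^r],\hat e_{24}^x]$; (ii) apply \eqref{proco} (or Hall's formula \eqref{Hallformula}) to the triple $(\hat e_{12},\hat e_{24}^r,\hat e_{24}^x)$ to rewrite this double commutator in terms of commutators of the generators $\hat e_{12}$, $\hat e_{24}^r$, $\hat e_{24}^x$ among themselves and with $\hat e_{14}$-type terms; (iii) observe that $[\hat e_{24}^r,\hat e_{24}^x]\rightsquigarrow_2 1$ is a special case of Claim \ref{clai2}(\ref{2vii}) (both lie in $U_{24}$, which is abelian, and this is the content of the quadratic Dehn function of $DU_{24}$), and that $[\hat e_{12},\hat e_{24}^x]=\hat e_{14}^x$ has controlled length; (iv) collect terms, using Claim \ref{clai2}(\ref{2vii}) to aggregate $U_{24}$-subwords and the already-proved commutator relations to move things past each other, each move costing quadratically. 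The point is that every bracket that appears after expansion is either trivial modulo a quadratic filling (by Claim \ref{clai23} or Claim \ref{clai2}) or is of the form $\hat e_{14}^\ast$, and the $\hat e_{14}^\ast$-terms must cancel by the projection/centrality argument (project to $G/Z(\mathbf K)\cdot(\text{lower terms})$, i.e. read off the $(1,4)$-entry), just as in the proofs of Claims \ref{clai3} and \ref{c1234}.

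The main obstacle I expect is bookkeeping of the $\hat e_{14}$-terms: when one expands $[[\hat e_{12},\hat e_{24}^r],\hat e_{24}^x]$ via \eqref{proco}, one produces conjugates such as $[\hat e_{12},\hat e_{24}^x]^{g}$ for various short words $g$ built from $\hat e_{24}^r$ and $\hat e_{12}$, and one must check that conjugating an $\hat e_{14}$-term by such a $g$ changes it only up to a quadratic-cost error — this needs that $\hat e_{14}^\ast$ commutes (with quadratic cost) with $\hat e_{24}^\ast$ and with $\hat e_{12}$, the former being precisely the statement being proved (so care is needed to avoid circularity: one should expand far enough that only the genuinely simpler relations of Claims \ref{clai23} and \ref{clai2} are invoked, treating $[\hat e_{14}^r,\hat e_{24}^x]$ as the single unknown and solving for it). Concretely, I would use \eqref{proco} in the form $[ab,c]=[a,c]^b[b,c]$ with $a=\hat e_{12}^{-1}$-conjugate decomposition chosen so that the recursion terminates, exactly mirroring the structure of the proof of Claim \ref{hal}; once the algebra is arranged so that the only nontrivial residual bracket is $[\hat e_{14}^r,\hat e_{24}^x]$ itself and it equals a product of quadratically-fillable pieces, the projection to the $(1,4)$-coordinate forces it to be null-homotopic with quadratic area, which is the claim.
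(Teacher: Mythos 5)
There is a genuine gap: your plan stays inside the sub-alphabet $\{\hat e_{12},\hat e_{24}\}$, expanding $[[\hat e_{12}^1,\hat e_{24}^r],\hat e_{24}^x]$ by (\ref{proco}), but within that alphabet the only available relations (Claim \ref{clai2}(\ref{2vii}) for aggregation in $U_{24}$, and the commutations listed in Claims \ref{clai23} and \ref{clai2}) never let a $\hat e_{12}$-letter pass a $\hat e_{24}$-letter: the missing relation is exactly a controlled form of $[\hat e_{12},\hat e_{24}^x]$-commutation, i.e.\ the claim itself. You notice this circularity, but the proposed escape (``expand far enough\dots treat $[\hat e_{14}^r,\hat e_{24}^x]$ as the single unknown and solve for it'') does not work: area is only subadditive, so an identity expressing the unknown in terms of fillable pieces times a conjugate of itself yields no bound; and there is no analogue of Lemma \ref{quadratic_subgroup_claim} for the subgroup generated by $D,U_{12},U_{24},U_{14}$, since $U_{14}$ is centralized by the diagonal and hence cannot be contracted, so one should not expect a purely internal filling there. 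Your final appeal to ``projection to the $(1,4)$-coordinate'' also conflates two things: projection shows the word is null-homotopic (which is obvious, $e_{14}$ being central), but gives no area estimate, and the area estimate is the entire content of the claim.

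The missing idea, which is the paper's proof, is to change the representation of $\hat e_{14}^r$ before commuting: by Claim \ref{hal}, $\hat e_{14}^r=[\hat e_{12}^1,\hat e_{24}^r]\rightsquigarrow_2[\hat e_{13}^1,\hat e_{34}^r]$, and now \emph{both} letters $\hat e_{13}^1$ and $\hat e_{34}^r$ commute with $\hat e_{24}^x$ at quadratic cost by Claim \ref{clai2}(\ref{2iv}) and Claim \ref{clai2}(4); hence $[[\hat e_{13}^1,\hat e_{34}^r],\hat e_{24}^x]\rightsquigarrow_2 1$, and so $[\hat e_{14}^r,\hat e_{24}^x]\rightsquigarrow_2 1$. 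The detour through the $(1,3),(3,4)$ root groups is not a convenience but the essential mechanism; without it the argument cannot close.
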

\begin{proof}
We have $[ \hat{e}_{14}^r, \hat{e}_{24}^x]\rightsquigarrow_2 [[ \hat{e}_{13}^1, \hat{e}_{34}^r], \hat{e}_{24}^x]$. Since $[ \hat{e}_{13}^1, \hat{e}_{24}^x]\rightsquigarrow_2 1$ and $[ \hat{e}_{34}^r, \hat{e}_{24}^x]\rightsquigarrow_2 1$, it follows that $[[ \hat{e}_{13}^1, \hat{e}_{34}^r], \hat{e}_{24}^x]\rightsquigarrow_2 1$.
\end{proof}

In the following claim, we use the identities, true in an arbitrary group: $[a,b]=[b,a^{-1}]^a$ and $[a,bc]=[a,c][a,b]^c$.

\begin{clai}\label{e1414}
We have $( \hat{e}_{14}^r)^{-1}\rightsquigarrow_2  \hat{e}_{14}^{-r}$ and $ \hat{e}_{14}^r \hat{e}_{14}^s\rightsquigarrow_2  \hat{e}_{14}^{r+s}$.
\end{clai}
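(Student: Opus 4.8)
The plan is to prove Claim \ref{e1414} by exploiting that $\hat{e}_{14}^r$ is defined as the commutator $[\hat{e}_{12},\hat{e}_{24}^r]$, together with the commutator identities just recalled and Claim \ref{1424}, which tells us that $\hat{e}_{14}^r$ commutes (with quadratic cost) with all $\hat{e}_{24}^x$. I would also freely use Claim \ref{clai23}, Claim \ref{clai2}, and Claim \ref{hal}, which already give us the basic calculus of the relevant commutators up to $\rightsquigarrow_2$.

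For the inverse statement, first I would write $(\hat{e}_{14}^r)^{-1} = ([\hat{e}_{12},\hat{e}_{24}^r])^{-1} = [\hat{e}_{24}^r,\hat{e}_{12}]$, and then I want to recognize this as $\hat{e}_{14}^{-r} = [\hat{e}_{12},\hat{e}_{24}^{-r}]$. The cleanest route is probably: use $(\hat{e}_{24}^r)^{-1} \rightsquigarrow_2 \hat{e}_{24}^{-r}$ from Claim \ref{clai2}(\ref{2vii}) inside the commutator, so $\hat{e}_{14}^{-r} \rightsquigarrow_2 [\hat{e}_{12},(\hat{e}_{24}^r)^{-1}]$; then apply the identity $[a,b^{-1}] = ([a,b]^{-1})^{b^{-1}}$ (which follows from the recalled identities) with $a=\hat{e}_{12}$, $b=\hat{e}_{24}^r$, to get $[\hat{e}_{12},(\hat{e}_{24}^r)^{-1}] = ((\hat{e}_{14}^r)^{-1})^{(\hat{e}_{24}^r)^{-1}}$; and finally use Claim \ref{1424} — which says $\hat{e}_{14}^r$ (and hence its inverse) commutes with $\hat{e}_{24}^x$ with quadratic cost — to remove the conjugation, yielding $(\hat{e}_{14}^r)^{-1} \rightsquigarrow_2 \hat{e}_{14}^{-r}$.

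For the additivity statement, I would start from $\hat{e}_{14}^r\hat{e}_{14}^s = [\hat{e}_{12},\hat{e}_{24}^r]\,[\hat{e}_{12},\hat{e}_{24}^s]$ and aim for $\hat{e}_{14}^{r+s} = [\hat{e}_{12},\hat{e}_{24}^{r+s}] \rightsquigarrow_2 [\hat{e}_{12},\hat{e}_{24}^r\hat{e}_{24}^s]$ (using Claim \ref{clai2}(\ref{2vii}) inside the commutator). Now expand the right-hand side with the identity $[a,bc] = [a,c]\,[a,b]^c$ and $a = \hat{e}_{12}$, $b = \hat{e}_{24}^r$, $c = \hat{e}_{24}^s$: this gives $[\hat{e}_{12},\hat{e}_{24}^r\hat{e}_{24}^s] = [\hat{e}_{12},\hat{e}_{24}^s]\,[\hat{e}_{12},\hat{e}_{24}^r]^{\hat{e}_{24}^s} = \hat{e}_{14}^s\,(\hat{e}_{14}^r)^{\hat{e}_{24}^s}$. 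By Claim \ref{1424}, $\hat{e}_{14}^r$ commutes with $\hat{e}_{24}^s$ with quadratic cost, so $(\hat{e}_{14}^r)^{\hat{e}_{24}^s} \rightsquigarrow_2 \hat{e}_{14}^r$, and hence $\hat{e}_{14}^{r+s} \rightsquigarrow_2 \hat{e}_{14}^s\hat{e}_{14}^r$. Reversing the roles of $r$ and $s$ (or commuting the two $\hat{e}_{14}$ factors, which is also legitimate with quadratic cost since each is a product of bounded length in the $\hat{e}_{ij}$ whose commutators we control) gives $\hat{e}_{14}^r\hat{e}_{14}^s \rightsquigarrow_2 \hat{e}_{14}^{r+s}$, as desired.

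The main thing to be careful about is bookkeeping of the $\rightsquigarrow_2$ relation under the operations used: substituting one subword by another inside a commutator, and conjugating a short word. Each such move composes a bounded number of quadratic-cost replacements, and since $\hat{e}_{14}^r$ has length $O(\log|r|) = O(n)$ when $|r|\le c^n$, every intermediate word has length $O(n)$, so each elementary step costs $O(n^2)$ and the total is $O(n^2)$. The one genuinely new input beyond routine commutator algebra is Claim \ref{1424}; everything else is formal manipulation with the recalled identities. No serious obstacle is expected — this claim is essentially the ``$Z(\mathbf{K})$ is abelian and its generators behave additively'' statement needed downstream, and the hard analytic work has already been done in Claims \ref{clai23}--\ref{1424}.
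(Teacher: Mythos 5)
Your proof is correct and takes essentially the same route as the paper: both parts rewrite $\hat{e}_{14}^{\pm r}$ through its defining commutator, move the inverse/sum onto the $\hat{e}_{24}$-entry via Claim \ref{clai2}(\ref{2vii}), expand with the identity $[a,bc]=[a,c][a,b]^c$ (resp.\ its inverse variant), and strip the resulting conjugation by a power of $\hat{e}_{24}$ using Claim \ref{1424}. The differences are only cosmetic (direction of the chain of $\rightsquigarrow_2$'s and the choice among equivalent commutator identities).
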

\begin{proof}
$$( \hat{e}_{14}^r)^{-1}=[ \hat{e}_{12}^1, \hat{e}_{24}^r]^{-1}=[ \hat{e}_{24}^r, \hat{e}_{12}^1]=[ \hat{e}_{12}^1,( \hat{e}_{24}^r)^{-1}]^{ \hat{e}_{24}^r}$$
Therefore, by Claims \ref{clai2}(\ref{2vii}) and \ref{1424}, 
$$( \hat{e}_{14}^r)^{-1}\rightsquigarrow_2 [ \hat{e}_{12}^1, \hat{e}_{24}^{-r}]^{ \hat{e}_{24}^r}\rightsquigarrow_2 [ \hat{e}_{12}^1, \hat{e}_{24}^{-r}]= \hat{e}_{14}^{-r}.$$

For the addition, using Claim \ref{clai2}(\ref{2vii}) and Claim \ref{1424}
\[ \hat{e}_{14}^{r+s}=[ \hat{e}_{12}^1, \hat{e}_{24}^{r+s}]\rightsquigarrow_2 [ \hat{e}_{12}^1, \hat{e}_{24}^r \hat{e}_{24}^{s}]=[ \hat{e}_{12}^1, \hat{e}_{24}^{s}][ \hat{e}_{12}^1, \hat{e}_{24}^r]^{ \hat{e}_{24}^{s}}= \hat{e}_{14}^s( \hat{e}_{14}^r)^{ \hat{e}_{14}^s}\rightsquigarrow_2  \hat{e}_{14}^s \hat{e}_{14}^r.\qedhere\]
\end{proof}

\begin{proof}[Conclusion of the proof of Proposition \ref{quadrapei}]
By Claim \ref{c1234}, we start from a word as in (\ref{1234c}). By Claim \ref{hal}, it can be reduced with quadratic cost to a word of the form $\prod_{i=1}^{52}( \hat{e}_{14}^{r_i})^{(-1)^i}$. The inverse reduction in Claim \ref{e1414} reduces this word to $\prod_{i=1}^{52} \hat{e}_{14}^{(-1)^ir_i}$. The second one reduces it to $ \hat{e}_{14}^s$, with $s=\sum(-1)^ir_i$. Since this is a null-homotopic word, $s=0$ and we are done. 
\end{proof}

\section{Asymptotic cones and central extensions}

\subsection{Topology on the fundamental group}

In order to determine the fundamental group of some asymptotic cones, it will be useful to equip it with a group topology, and actually better, with a bi-invariant metric. We will see two possible choices for such a metric, both being potentially interesting as they provide more refined quasi-isometry invariants than the fundamental group alone.  We shall use them in order to state and prove Theorem \ref{pico} (which holds for both choices of metric). 

Let $X$ be a topological space with base-point $x_0$ with a basis of neighbourhoods $\mathcal{V}$.
A naive way to define a topology on $\pi_1(X,x_0)$ is as follows.
For every $V\in \mathcal{V}$, define $K_V$ as the set of elements representable as a loop with image in $V$; this is a subgroup of $\pi_1(X,x_0)$. However, there is, in general, no group topology on $\pi_1(X,x_0)$ such that $(K_V)_{V\in\mathcal{V}}$ is a basis of neighborhoods of~1: indeed, it is not necessarily true (e.g.\ if $X=\mathbf{R}^2-\mathbf{Q}^2$) that if $g$ is fixed and $g_i\to 1$, then $gg_ig^{-1}\to 1$ for this topology.

A natural solution is simply to replace $K_V$ by its normal closure. In other words, define $L_V$ as the set of elements in $\pi_1(X,x_0)$ that can be represented by a finite product $\prod_{i=1}^kc_i\gamma_ic_{i}^{-1}$, where $c_i,\gamma_i$ are loops based at $x_0$ and $\gamma_i$ has image in $V$. Clearly $L_V$ is a normal subgroup in $\pi_1(X,x_0)$, and  $L_V\cap L_W\supset L_{V\cap W}$ for all $V,W\in\mathcal{V}$.
It follows that the cosets of $L_V$, for $V\in\mathcal{V}$, form a basis of open (actually clopen) sets for a topology on $\pi_1(X,x_0)$, which is a group topology.

Equivalently, $\pi_1(X,x_0)$ is endowed with the topology induced by the homomorphic mapping into $\prod_{V\in\mathcal{V}}\pi_1(X,x_0)/L_V$, each $\pi_1(X,x_0)/L_V$ being discrete and the product being endowed with the product topology. Then $(X,x_0)\mapsto\pi_1(X,x_0)$ is a functor from the category of pointed topological spaces to the category of topological group. In particular, any two homeomorphic pointed topological spaces have their fundamental group isomorphic as topological groups.

If the topology of $X$ is defined by a metric $d$, this topology is pseudo-metrizable, where the pseudo-distance of a loop $\gamma$ to the identity is defined as $\inf\{\eps>0:\gamma\in L_{B(\eps)}\}$, where $B(\eps)$ is the closed $\eps$-ball around $x_0$. This pseudo-distance is bi-invariant and satisfies the ultrametric inequality. Then $(X,x_0)\mapsto\pi_1(X,x_0)$ is a functor from the category of pointed metric spaces with pointed isometric (resp. Lipschitz) maps, to the category of pseudo-metric groups. In particular, any two isometric (resp. bilipschitz) pointed metric spaces have their fundamental groups isometrically (resp. bilipschitz) isomorphic as pseudo-metric groups.

\subsection{Central subgroups and liftings}

Let $G$ be a locally compact compactly generated group and $Z$ a closed central subgroup. Fix a nonprincipal ultrafilter $\omega$ on $\mathbf{N}$ once and for all. Assume that $\Cone_\omega(Z)$ is totally disconnected, where $Z$ is always endowed with the word metric from $G$.

If $X$ is a metric space with base-point $x_0$, denote by $\mathcal{P}(X)$ the set of paths in $X$ based at $x_0$, i.e. of continuous bounded maps from $\mathbf{R}_+$ to $X$ mapping $0$ to $x_0$ (in the sequel since the considered metric spaces will be homogeneous, the choice of $x_0$ won't matter and therefore will be kept implicit).  This is a metric space with the sup distance.

There is an obvious 1-Lipschitz map $\psi:\mathcal{P}(\Cone_\omega(G))\to \mathcal{P}(\Cone_\omega(G/Z))$. As $Z$ is abelian, $\Cone_\omega(Z)$ is a topological abelian group in the natural way; moreover $Z$ being central, the action of $Z$ on $G$ by (left) multiplication induces an action of $\Cone_\omega(Z)$ on $\Cone_\omega(G)$ such that $\Cone_\omega(G/Z)$ identifies with the set of $\Cone_\omega(Z)$-orbits under this action\footnote{It turns out that this identification holds as metric spaces: the distance on $\Cone_\omega(G/Z)$ coincides with the distance between $\Cone_\omega(Z)$-orbits in $\Cone_\omega(G)$.}. Moreover, and once again because $Z$ is central, for every $x\in \Cone_\omega(G)$ and $z,z'\in \Cone_\omega(Z)$, we have
$$d(z,z')=d(zx,z'x).$$ 
In particular the action is free.

The fundamental observation is the following proposition.

\begin{prop}\label{p13}
If $\Cone_\omega(Z)$ is ultrametric, then the above map $\psi$ is a $(1/3,1)$-bilipschitz homeomorphism.
\end{prop}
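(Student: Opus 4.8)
The plan is to show that $\psi\colon\mathcal{P}(\Cone_\omega(G))\to\mathcal{P}(\Cone_\omega(G/Z))$ is surjective and that it is $(1/3,1)$-bilipschitz, that is, for paths $\alpha,\beta\in\mathcal{P}(\Cone_\omega(G))$ one has $\frac13 d(\alpha,\beta)\le d(\psi(\alpha),\psi(\beta))\le d(\alpha,\beta)$, with the additional property that $\psi$ is a homeomorphism onto its image (which will be all of $\mathcal{P}(\Cone_\omega(G/Z))$). The upper bound $d(\psi(\alpha),\psi(\beta))\le d(\alpha,\beta)$ is immediate since the quotient map $\Cone_\omega(G)\to\Cone_\omega(G/Z)$ is $1$-Lipschitz and distances are sup-distances on the path spaces. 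So the real content is the lower bound together with surjectivity.

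For surjectivity and the lower bound the key point is a path-lifting statement: any path $\bar\gamma$ in $\Cone_\omega(G/Z)$ lifts to a path $\gamma$ in $\Cone_\omega(G)$, and given a path already lifted over a subinterval, the lift extends. The mechanism is the free action of the topological group $A:=\Cone_\omega(Z)$ on $\Cone_\omega(G)$ with $\Cone_\omega(G/Z)=\Cone_\omega(G)/A$ and the crucial isometric relation $d(z,z')=d(zx,z'x)$ for $z,z'\in A$, $x\in\Cone_\omega(G)$; combined with the hypothesis that $A$ is \emph{ultrametric}. First I would fix a point $x\in\Cone_\omega(G)$ above $\bar\gamma(0)$ and, for each $t$, pick some $y_t\in\Cone_\omega(G)$ above $\bar\gamma(t)$; the fiber over $\bar\gamma(t)$ is exactly the orbit $A y_t$. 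The difficulty is that a naive pointwise choice need not be continuous. Here is where ultrametricity does the work: if $|t-t'|$ is small then $d(\bar\gamma(t),\bar\gamma(t'))$ is small, so there is some $a\in A$ with $d(ay_t,y_{t'})$ small, i.e.\ we can slide within the fiber by a \emph{small} element of $A$ to make the lifts close. Because the displacement is by a group element of small norm and $A$ is ultrametric, these local corrections can be \emph{patched together globally}: one builds the lift on a fine partition $0=t_0<t_1<\dots<t_N$ of an interval, choosing at each step an element $a_k\in A$ of norm $\le\max(\text{earlier norms},\,\varepsilon_k)$ so that consecutive lifted values stay within the desired tolerance, and the ultrametric property guarantees the accumulated correction stays controlled rather than summing up. Passing to the limit over finer partitions (and using completeness of $\Cone_\omega$ and the fact that continuous maps on $\mathbf R_+$ with values in $\Cone_\omega(G)$ are bounded and uniformly continuous on compacta) produces an honest continuous lift $\gamma$ with $\psi(\gamma)=\bar\gamma$.

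Granting path-lifting, the lower bound follows: given $\alpha,\beta\in\mathcal{P}(\Cone_\omega(G))$, let $\delta=d(\psi(\alpha),\psi(\beta))=\sup_t d(\bar\alpha(t),\bar\beta(t))$. Fix $t$; then $\bar\beta(t)$ lies in the ball of radius $\delta$ about $\bar\alpha(t)$ downstairs, so there is $z_t\in A$ with $d(z_t\alpha(t),\beta(t))\le\delta$ (using that the quotient distance equals the distance between $A$-orbits, cf.\ the footnote in the text). The issue is that $z_t$ may vary wildly with $t$, but $\beta$ and $z_{t_0}\alpha$ are both paths based near each other at one point $t_0$: using the lifting/extension of the homotopy or rather applying the lifting statement to the single path $\bar\beta=\psi(\beta)$ with prescribed starting lift close to $\alpha(0)$, one gets a \emph{single} $z\in A$ with $d(z\alpha(t),\beta(t))\le 2\delta$ for all $t$ — the factor $2$ coming from one application of the triangle inequality when transferring the local sliding elements to a global one. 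Hence $d(\alpha,z^{-1}\beta)\le 2\delta$ after translating, and a further triangle-inequality step bounding $d(\alpha,\beta)\le d(\alpha, z\text{-translate of }\beta)+(\text{norm of }z)$ with the norm of $z$ itself $\le\delta$ (again since $d(z x, x)$ realizes, over $x$ on the path, a value $\le\delta$ at the basepoint where $\alpha(0)=x_0$ and using $A$ ultrametric so the norm is governed by a single sup rather than a sum) yields $d(\alpha,\beta)\le 3\delta$, i.e.\ $\frac13 d(\alpha,\beta)\le d(\psi(\alpha),\psi(\beta))$. Finally, surjectivity was shown, and injectivity of $\psi$ on path-components together with the bilipschitz bounds make $\psi$ a $(1/3,1)$-bilipschitz homeomorphism onto $\mathcal{P}(\Cone_\omega(G/Z))$.

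The main obstacle I anticipate is the global patching in the lifting argument — turning the collection of \emph{local} sliding elements of $A$ into a single coherent continuous lift — and it is precisely the ultrametric hypothesis on $\Cone_\omega(Z)$ that makes this possible, since otherwise the corrections at successive scales could add up and destroy both continuity and the constant $3$. Everything else (the $1$-Lipschitz upper bound, the orbit-space description of $\Cone_\omega(G/Z)$, freeness of the action, the identity $d(z,z')=d(zx,z'x)$) is already in place in the text and only needs to be invoked.
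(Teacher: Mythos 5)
Your strategy (slide lifts around by elements of $\Cone_\omega(Z)$ acting freely and isometrically, and use ultrametricity to keep the accumulated corrections small, arriving at the constant $3$) is in the right spirit, but the two steps that carry all the weight are asserted rather than proved. First, the exact path-lifting: you construct candidate lifts over finer and finer partitions and then ``pass to the limit over finer partitions'', but you never show that these finite-stage constructions converge uniformly to a continuous path; completeness only helps once you know the approximants are Cauchy in the sup metric, and nothing in your patching scheme provides that estimate. The paper deliberately avoids claiming exact lifting at the outset: it proves only that $\psi$ has \emph{dense} image (Lemma \ref{psidense}), by lifting the finitely many values $v(t_i)$ along a partition (Lemma \ref{liftdiscret}) and interpolating by geodesics, and then proves that $\psi^{-1}$ is $3$-Lipschitz on this dense set of piecewise-geodesic lifts (Lemma \ref{troislip}); that $3$-Lipschitz bound is exactly what makes the approximate lifts Cauchy, so $\psi^{-1}$ extends by completeness to all of $\mathcal{P}(\Cone_\omega(G/Z))$ and surjectivity comes out as the \emph{conclusion} of the extension argument. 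Your sketch has the logical order reversed: the lifting you invoke to prove the Lipschitz bound itself needs a Lipschitz-type (Cauchy) estimate in order to exist.

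Second, your lower bound rests on producing a \emph{single} $z\in\Cone_\omega(Z)$ with $d(z\alpha(t),\beta(t))\le 2\delta$ for all $t$; no argument is given, and this is both stronger than needed and not what the ultrametric argument delivers. In Lemma \ref{troislip} one gets, along a good partition, a \emph{family} of corrections $z_i$ (with $z_iu(t_i)$ in the same fiber as $u(t_i)$ and within $\sigma$ of $v(t_i)$); the chain estimate $d(z_i,z_{i+1})\le 2\sigma+2\eps$ plus the ultrametric inequality gives the uniform bound $d(1,z_i)\le 2\sigma+2\eps$, and then $d(u(t_i),v(t_i))\le 3\sigma+2\eps$ by one triangle inequality through $z_iu(t_i)$. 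Note also that your bookkeeping does not close: since both paths are based at $x_0$, your single $z$ would only satisfy $d(1,z)\le 2\delta$ (not $\le\delta$), yielding $4\delta$ rather than $3\delta$. Finally, injectivity of $\psi$ (needed to know the extended map really is an inverse) is never established in your write-up; the paper gets it from total disconnectedness of $\Cone_\omega(Z)$ via the Lipschitz map $s$ defined on the graph of the orbit equivalence relation, and you would need this (or deduce it from a correctly proved lower bound) to conclude that $\psi$ is a $(1/3,1)$-bilipschitz homeomorphism.
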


Thus we have to lift paths from $\Cone_\omega(G/Z)$ to $\Cone_\omega(G)$. The easier part is uniqueness, i.e. injectivity of $u$.

Let $X\subset\Cone_\omega(G)^2$ be the graph of the equivalence relation of the action of $\Cone_\omega(Z)$. Then there is a map $s:X\to\Cone_\omega(Z)$ mapping $(x,y)$ to the unique $z$ such that $zx=y$. This map is Lipschitz: indeed if $s(x,y)=z$ and $s(x',y')=z'$, then
$$d(z,z')=d(zx,z'x)\le d(zx,z'x')+d(z'x',z'x)=d(y,y')+d(x,x').$$
We can now prove

\begin{lem}
The map $\psi$ is injective.
\end{lem}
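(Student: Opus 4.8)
The plan is to show that $\psi$ is injective by using the group action structure together with the fact that $\Cone_\omega(Z)$ is totally disconnected (being ultrametric). Suppose $\gamma_0,\gamma_1\in\mathcal{P}(\Cone_\omega(G))$ are two paths (based at the same point $x_0$ in the fibre, since we may as well assume they start at the same lift) with $\psi(\gamma_0)=\psi(\gamma_1)$. Then for every $u\ge 0$ the points $\gamma_0(u)$ and $\gamma_1(u)$ lie in the same $\Cone_\omega(Z)$-orbit, so there is a well-defined $z(u)=s(\gamma_0(u),\gamma_1(u))\in\Cone_\omega(Z)$ with $z(u)\gamma_0(u)=\gamma_1(u)$; here I invoke the Lipschitz section $s:X\to\Cone_\omega(Z)$ constructed just above. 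Since $\gamma_0$ and $\gamma_1$ are continuous, so is $u\mapsto(\gamma_0(u),\gamma_1(u))$ into $X$, and hence $z:\mathbf{R}_+\to\Cone_\omega(Z)$ is continuous.

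The key point is then that a continuous map from the connected space $\mathbf{R}_+$ into the totally disconnected space $\Cone_\omega(Z)$ is constant, so $z(u)\equiv z(0)$. But $\gamma_0(0)=\gamma_1(0)=x_0$, and the action is free, so $z(0)=1$ (the identity of $\Cone_\omega(Z)$). Therefore $z(u)=1$ for all $u$, i.e. $\gamma_1(u)=\gamma_0(u)$ for all $u$, and $\gamma_0=\gamma_1$. This proves injectivity.

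I should be slightly careful about the base-point convention: $\psi$ maps paths based at $x_0$ to paths based at the image $\bar x_0$ of $x_0$, so two paths with the same image under $\psi$ automatically start at the same point $x_0$ (the choice of lift of $\bar x_0$ is fixed once and for all), which is exactly what is used above to conclude $z(0)=1$. The only ingredients are: the Lipschitz (in particular continuous) map $s$ just constructed, the freeness of the $\Cone_\omega(Z)$-action on $\Cone_\omega(G)$ (noted above, as a consequence of $Z$ being central), and the hypothesis that $\Cone_\omega(Z)$ is totally disconnected — which holds since it is assumed ultrametric, and is in fact part of the standing assumption on $Z$ in this subsection.

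The main (and essentially only) obstacle is the passage from "$z$ is continuous" to "$z$ is constant", which needs total disconnectedness of $\Cone_\omega(Z)$; this is where the ultrametric (equivalently, totally disconnected) hypothesis is genuinely used, and without it injectivity of $\psi$ simply fails — which is precisely the reason this hypothesis is imposed. Everything else is formal manipulation with the free action and the section $s$.
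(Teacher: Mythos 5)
Your proof is correct and follows essentially the same route as the paper: both define the continuous map $t\mapsto s(u(t),v(t))$ into $\Cone_\omega(Z)$ via the Lipschitz section $s$, note it equals $1$ at $t=0$, and conclude it is constant because $\Cone_\omega(Z)$ is totally disconnected (the paper's standing assumption in this subsection, implied in particular by the ultrametric hypothesis). No gaps.
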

\begin{proof}
Assume that $\psi(u)=\psi(v)$. This means that $(u(t),v(t))\in X$ for all $t$.
So there is a well-defined continuous map $\sigma:t\mapsto s(u(t),v(t))$ with values in $\Cone_\omega(Z)$, with $\sigma(0)=1$. As the latter is assumed to be totally disconnected, the map $\sigma$ has to be constant, hence equal to 1, i.e. $u=v$.
\end{proof}

\begin{lem}
The map $\psi$ has dense image.\label{psidense}
\end{lem}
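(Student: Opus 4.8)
The plan is to prove that $\psi:\mathcal P(\Cone_\omega(G))\to\mathcal P(\Cone_\omega(G/Z))$ has dense image by a direct discretization-and-lifting argument. Fix a path $\bar\gamma\in\mathcal P(\Cone_\omega(G/Z))$ and $\eps>0$; I want to produce $\gamma\in\mathcal P(\Cone_\omega(G))$ with $\sup_t d(\psi(\gamma)(t),\bar\gamma(t))\le\eps$. Since $\bar\gamma$ is continuous with bounded image and $\mathbf R_+$ is covered (for the purposes of a sup-distance approximation) by a compact interval $[0,T]$ outside of which $\bar\gamma$ is within $\eps$ of a constant, I first replace $\bar\gamma$ by a piecewise-constant-in-scale approximation: choose $0=t_0<t_1<\dots<t_N=T$ with $d(\bar\gamma(t_{k}),\bar\gamma(t_{k+1}))\le\eps/3$ for all $k$, using uniform continuity on $[0,T]$.

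Next I lift the vertices. The projection $\Cone_\omega(G)\to\Cone_\omega(G/Z)$ is surjective (it is induced by the surjection $G\to G/Z$, and a point of $\Cone_\omega(G/Z)$ is represented by a sequence $(\bar g_n)$, each $\bar g_n$ lifting to some $g_n\in G$ with controlled length since $G\to G/Z$ is a quotient of compactly generated groups, so the sequence $(g_n)$ lies in $\Precone(G)$ after adjusting by a bounded amount; alternatively this surjectivity is already implicit in the preceding discussion, where $\Cone_\omega(G/Z)$ is identified with the orbit space of $\Cone_\omega(Z)$ acting on $\Cone_\omega(G)$). Pick lifts $y_0=\text{(basepoint)}$ and, inductively, $y_{k+1}\in\Cone_\omega(G)$ lifting $\bar\gamma(t_{k+1})$; by the Lipschitz property of the orbit-distance identification recorded just before Proposition~\ref{p13}, and after translating $y_{k+1}$ by a suitable element of $\Cone_\omega(Z)$, I may arrange $d(y_k,y_{k+1})\le\eps/3 + d(\bar\gamma(t_k),\bar\gamma(t_{k+1}))\le \eps$ (here I use precisely that the distance in the quotient is the distance between $\Cone_\omega(Z)$-orbits, so a near-optimal representative exists). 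Now I join consecutive $y_k$'s by geodesics (or near-geodesics) in $\Cone_\omega(G)$—which exists because asymptotic cones of compactly generated groups are geodesic—parametrized on $[t_k,t_{k+1}]$, and extend constantly on $[T,\infty)$; call the resulting path $\gamma$. Then $\psi(\gamma)$ is a path through the same vertices $\bar\gamma(t_k)$, and along each segment both $\psi(\gamma)$ and $\bar\gamma$ stay within $\eps$ of the endpoint $\bar\gamma(t_k)$, so $\sup_t d(\psi(\gamma)(t),\bar\gamma(t))\le 2\eps$ (rescale $\eps$).

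The one subtlety, and the step I expect to be the main obstacle, is the surjectivity-with-control of $\Cone_\omega(G)\to\Cone_\omega(G/Z)$: I must lift a point of the cone of $G/Z$ to a point of the cone of $G$ without the word length of the lifts blowing up faster than linearly. This is where compact generation of both $G$ and $G/Z$ enters—any element of $G/Z$ of length $\ell$ lifts to an element of $G$ of length $O(\ell)$ (a standard fact for quotient maps of compactly generated locally compact groups)—so a sequence $(\bar g_n)$ with $d_{G/Z}(\bar g_n, \bar g_0)=O(n)$ lifts to $(g_n)$ with $d_G(g_n,g_0)=O(n)$, hence $(g_n)\in\Precone(G)$, giving the required point. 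With that in hand, every other step is routine. (Note that we do not even need the ultrametric hypothesis for density; it is Proposition~\ref{p13}, i.e.\ the quantitative bilipschitz bound, that will use it, not this lemma.)
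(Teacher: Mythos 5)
Your overall strategy is the same as the paper's: discretize the target path, lift the discretization vertex by vertex with controlled distances, and interpolate by geodesics in $\Cone_\omega(G)$. The step you rightly single out as the crux — lifting with linear control — is exactly the paper's Lemma \ref{liftdiscret}, and your word-by-word justification (an element of $G/Z$ of word length $\ell$ w.r.t.\ the image of $S$ lifts to an element of $G$ of word length $\le\ell$ w.r.t.\ $S$, applied to the difference $\bar g_n^{-1}\bar g_n'$) is the correct one; it is preferable to invoke this directly rather than the footnote identifying $d_{G/Z}$ with the distance between $\Cone_\omega(Z)$-orbits, since the nontrivial half of that identification is precisely this lifting statement. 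You are also right that ultrametricity of $\Cone_\omega(Z)$ plays no role in the density lemma.

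There is, however, one step that fails as written: the truncation to a compact interval $[0,T]$. Elements of $\mathcal{P}(\Cone_\omega(G/Z))$ are only assumed to be \emph{bounded} continuous maps on $\mathbf{R}_+$; they need not be within $\eps$ of a constant outside any compact interval (consider a path oscillating forever between two points), and the distance on $\mathcal{P}$ is the supremum over all of $\mathbf{R}_+$, so your extension of $\gamma$ by a constant on $[T,\infty)$ gives no control on the tail. The repair is immediate and is exactly what the paper does: using uniform continuity on each interval $[n,n+1]$, choose an infinite subdivision $0=t_0<t_1<\dots$ tending to infinity with each $v([t_i,t_{i+1}])$ of diameter $\le\eps$, run your inductive lifting along this infinite sequence (nothing in the induction requires finitely many steps), and interpolate geodesically on every segment; then $\sup_{t\ge 0} d(\psi(\gamma)(t),v(t))\le 2\eps$ holds globally. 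With that modification your argument coincides with the paper's proof.
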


To prove this we first need the following lemma.

\begin{lem}
Let $G$ be a group endowed with a word metric with respect to some generating subset $S$. Let $N$ be a closed normal subgroup, and endow $G/N$ with the word metric with respect to the image of $S$. Fix $\eps>0$. Consider $x\in\Cone_\omega(G)$ and $y,y'\in\Cone_\omega(G/N)$ satisfying $d(y,y')\le\eps$ and $p(x)=y$, where $p$ is the natural projection. Then there exists $x'\in\Cone_\omega(G)$ with $d(x,x')\le\eps$ and $p(x')=y'$.\label{liftdiscret}
\end{lem}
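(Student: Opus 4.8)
The plan is to work at the level of representatives in the precones and use the fact that the word metric on $G$ surjects onto the word metric on $G/N$ with control on distances. Concretely, represent $x$ by a sequence $(g_m)$ in $\Precone(G)$ and $y'$ by a sequence $(h_m)$ in $\Precone(G/N)$. Since $p(x)=y$, the sequence $(p(g_m))$ represents $y$, so $\lim_\omega \tfrac1m d_{G/N}(p(g_m),h_m) = d(y,y') \le \eps$. For each $m$, pick a lift $\tilde h_m \in G$ of $h_m$ — I want one that is close to $g_m$. The key point is that for a word metric, the quotient projection $p\colon (G,d_S)\to(G/N,d_{\bar S})$ has the property that any element of $G/N$ at distance $k$ from $p(g_m)$ admits a preimage at distance exactly $k$ from $g_m$: indeed, write $h_m = p(g_m)\bar s_1\cdots \bar s_k$ with $\bar s_i$ in the image of $S$ and $k = d_{\bar S}(p(g_m),h_m)$; lifting each $\bar s_i$ to $s_i\in S$ and setting $\tilde h_m = g_m s_1\cdots s_k$ gives $p(\tilde h_m)=h_m$ and $d_S(g_m,\tilde h_m)\le k$.

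With this choice, $\limsup d_S(\tilde h_m, g_0)/m \le \limsup d_S(g_m,g_0)/m + \limsup k_m/m < \infty$ (using that $(g_m)\in\Precone(G)$ and $k_m/m = d_{\bar S}(p(g_m),h_m)/m$ is bounded along $\omega$, hence the $\limsup$ can be taken finite after adjusting on an $\omega$-small set, or simply by passing to the $\omega$-essential supremum), so $(\tilde h_m)$ defines a point $x' \in \Cone_\omega(G)$. By construction $p(x') = y'$ since $(p(\tilde h_m)) = (h_m)$ represents $y'$. Finally,
$$d(x,x') = \lim_\omega \frac1m d_S(g_m,\tilde h_m) \le \lim_\omega \frac{k_m}{m} = \lim_\omega \frac1m d_{\bar S}(p(g_m),h_m) = d(y,y') \le \eps,$$
which is exactly what is required.

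The only genuinely delicate point is the membership $\tilde h_m \in \Precone(G)$, i.e.\ that the sequence of lifts does not escape to infinity faster than linearly: this is handled by the distance bound $d_S(g_m,\tilde h_m)\le k_m$ together with $k_m/m$ being $\omega$-essentially bounded (it converges to $d(y,y')\le\eps$ along $\omega$), so after modifying the sequence on a set not in $\omega$ we may assume $k_m \le (\eps+1)m$ for all $m$, which keeps the $\limsup$ finite; modifying on an $\omega$-small set changes neither the point of the cone it represents nor any of the computed distances. Everything else is a direct unwinding of the definitions of $\Precone$, $\Cone_\omega$, and the word metric, plus the surjectivity-with-isometric-lift property of $p$ on generators.
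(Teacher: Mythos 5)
Your proposal is correct and follows essentially the same route as the paper: represent the points by sequences, and lift a geodesic word in the image generators letter by letter to get a preimage at no greater (rescaled) distance; the paper merely shortens this by first using homogeneity to reduce to $x=1$ and then lifting $y'$ with the same word length, while you perform the equivalent relative lift from $g_m$ and spell out the precone-membership check.
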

\begin{proof}
By homogeneity, we can suppose that $x=1$. Write $y'$ as a sequence $(y_n)$ with $\lim_\omega d(y_n,1)\le 1$, and lift $y_n$ to an element $x_n$ of $G$ with the same word length. Then $(x_n)$ defines an element $x'$ of $\Cone_\omega(G)$ with the required properties.
\end{proof}

\begin{proof}[Proof of Lemma \ref{psidense}]
Let $v$ be an element of $\mathcal{P}(\Cone_\omega(G/Z))$ and fix $\eps>0$. There exists a sequence $0=t_0<t_1<\dots$ tending to infinity such that every segment $[t_i,t_{i+1}]$ is mapped by $v$ to a set of diameter at most $\eps$. By applying inductively Lemma \ref{liftdiscret}, there exists a sequence $(x_i)$ in $\Cone_\omega(G)$ such that $x_0=1$, $p(x_i)=v(t_i)$ and $d(x_i,x_{i+1})\le\eps$ for all $i$. As $\Cone_\omega(G)$ is geodesic, we can find a continuous function $u:\mathbf{R}_+\to\Cone_\omega(G)$ such that $u(t_i)=x_i$ for all $i$ and $u$ is geodesic on every segment $[t_i,t_{i+1}]$ (in the sense that for some constant $c_i\ge 0$ and all $t,t'$ in this segment, $d(u(t),u(t'))=c_i|t-t'|$). Then $d(p\circ u,v)\le 2\eps$, and observe that $\psi(u)=p\circ u$ by definition of $\psi$.
\end{proof}

Let $\mathcal{P}_g$ denote the set of elements $u$ of $\mathcal{P}(\Cone_\omega(G))$
such that there exists $0=t_0<t_1<\dots$ tending to infinity such that $u$ is geodesic in restriction to each $[t_i,t_{i+1}]$ and such that the projection $$p:\Cone_\omega(G)\to\Cone_\omega(G/Z)$$ is isometric in restriction to $u([t_i,t_{i+1}])$. We summarize this as: the sequence $0=t_0<t_1<\dots$ is $u$-good; if moreover $d(u(t_i),u(t_{i+1}))\le\eps$ for all $i$, we call it $(u,\eps)$-good; note that given $\eps>0$, any $u$-good sequence can be refined to a $(u,\eps)$-good sequence. The proof of Lemma \ref{psidense} actually shows that $\psi(\mathcal{P}_g)$ is dense in $\mathcal{P}(\Cone_\omega(G/Z))$.

\begin{lem}
Suppose that $\Cone_\omega(Z)$ is ultrametric. Then the map $\psi^{-1}$ is 3-Lipschitz on $\psi(\mathcal{P}_g)$.\label{troislip}
\end{lem}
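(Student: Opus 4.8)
The plan is to take two paths $u,v\in\mathcal{P}_g$ with the \emph{same} good subdivision $0=t_0<t_1<\dots$ (after refining, we may always assume a common subdivision which is simultaneously $u$-good and $v$-good and, for a prescribed $\eps$, $(u,\eps)$- and $(v,\eps)$-good), and to estimate $d_\infty(u,v)$ from above by $3\,d_\infty(\psi(u),\psi(v))$. Set $\eta=d_\infty(\psi(u),\psi(v))=\sup_t d(p(u(t)),p(v(t)))$. The key point is to control, on each subdivision interval, the ``vertical'' displacement, i.e.\ the element of $\Cone_\omega(Z)$ relating $u$ and $v$.

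First I would record the following local fact: on a segment $[t_i,t_{i+1}]$ on which $p$ is isometric in restriction to both $u([t_i,t_{i+1}])$ and $v([t_i,t_{i+1}])$ and on which both are geodesic, the pair $(u(t),v(t))$ lies in $X$ iff $p(u(t))=p(v(t))$; in general it need not, so the first step is to reduce to the case where $u$ and $v$ do differ by a $\Cone_\omega(Z)$-translate at the subdivision points $t_i$. I would instead argue directly: fix $i$, and write $z_i=s(u(t_i),v(t_i))$ whenever $(u(t_i),v(t_i))\in X$; the problem is that this need not hold. So the cleaner route is: since $p$ is isometric on $u([t_i,t_{i+1}])$, and $d(p(u(t)),p(v(t)))\le\eta$ for all $t$, on each such interval the path $t\mapsto v(t)$ stays within $\eta$ of the $p$-isometric geodesic segment $u([t_i,t_{i+1}])$; lifting via Lemma~\ref{liftdiscret} / the freeness of the $\Cone_\omega(Z)$-action, there is a unique $z_i\in\Cone_\omega(Z)$ with $d(z_iu(t_i),v(t_i))\le\eta$ and moreover $d(z_iu(t),v(t))\le \eta$ throughout $[t_i,t_{i+1}]$ (here one uses that $p$ restricted to $u([t_i,t_{i+1}])$ is isometric, so a point of $\Cone_\omega(G)$ within $\eta$ in the base lifts to one within $\eta$ upstairs along that segment, by the explicit formula $d(z,z')=d(zx,z'x)$ together with the triangle inequality $d(a,b)\le d(p(a),p(b))+\,d(\text{fibre displacement})$).

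The heart of the argument is then an ultrametric telescoping: the $z_i$ satisfy $d(z_i,z_{i+1})\le d(z_iu(t_{i+1}),v(t_{i+1}))+d(v(t_{i+1}),z_{i+1}u(t_{i+1}))\le 2\eta$ for all $i$, using the identity $d(z,z')=d(zu(t_{i+1}),z'u(t_{i+1}))$ and that both $z_iu(t_{i+1})$ and $z_{i+1}u(t_{i+1})$ are within $\eta$ of $v(t_{i+1})$. Since $\Cone_\omega(Z)$ is \emph{ultrametric} and $z_0=1$ (as $u(0)=v(0)=x_0$), the ultrametric inequality gives $d(z_i,1)\le 2\eta$ for all $i$, no matter how many subdivision points there are — this is precisely where ultrametricity is essential and an ordinary triangle inequality would only give a bound linear in the number of intervals. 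Consequently $d(u(t),v(t))\le d(u(t),z_iu(t))+d(z_iu(t),v(t))= d(1,z_i)+d(z_iu(t),v(t))\le 2\eta+\eta=3\eta$ for $t\in[t_i,t_{i+1}]$, so $d_\infty(u,v)\le 3\,d_\infty(\psi(u),\psi(v))$, which is the claim.

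The main obstacle I expect is the second step: constructing the fibre elements $z_i$ and propagating the estimate $d(z_iu(t),v(t))\le\eta$ \emph{across} each interval rather than just at its endpoints. This requires using in an essential way that $p$ is isometric in restriction to the geodesic piece $u([t_i,t_{i+1}])$ — that is the only mechanism by which closeness downstairs forces closeness upstairs with the same constant — together with the freeness and the translation-invariance $d(z,z')=d(zx,z'x)$ of the $\Cone_\omega(Z)$-action established above; one also needs that such $z_i$ exists at all, which follows from Proposition~\ref{p13} (or directly from Lemma~\ref{liftdiscret} applied in the cone) since $\psi$ is already known to be a bijection onto its dense image. Once the $z_i$ are in hand, the ultrametric collapse $d(z_i,1)\le 2\eta$ and the final triangle inequality are immediate.
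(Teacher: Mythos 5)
Your overall architecture is the paper's: a common $(u,\eps)$- and $(v,\eps)$-good subdivision, fibre elements $z_i$ produced from Lemma~\ref{liftdiscret} and the freeness of the $\Cone_\omega(Z)$-action, an ultrametric telescoping starting from $z_0=1$ that bounds $d(1,z_i)$ independently of the number of subdivision points, and a final triangle inequality giving the factor $3$. However, the step you yourself single out as the heart of the argument is not established: the claim that there is a (unique) $z_i$ with $d(z_iu(t),v(t))\le\eta$ for \emph{all} $t\in[t_i,t_{i+1}]$, justified by the fact that $p$ is isometric in restriction to $u([t_i,t_{i+1}])$. That hypothesis only controls distances between points \emph{of} the segment $u([t_i,t_{i+1}])$; it gives no upper bound on the distance from $v(t)$, which in general does not lie in a fibre over $p(u([t_i,t_{i+1}]))$, to that segment. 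What Lemma~\ref{liftdiscret} provides is that \emph{some} point $w$ of the fibre over $p(v(t))$ satisfies $d(z_iu(t),w)\le\eta$; nothing forces $w=v(t)$, and $v(t)$ may differ from $w$ by a large element of $\Cone_\omega(Z)$. Since $p\circ u\neq p\circ v$, there is no continuous fibre-displacement function along the interval to which a connectedness/total-disconnectedness argument (as in the injectivity lemma) could be applied. The uniqueness of $z_i$ fails for the same reason (several fibre points over $p(u(t_i))$ may be $\eta$-close to $v(t_i)$), and invoking Proposition~\ref{p13} at this point would be circular, since that proposition is deduced from the present lemma; only Lemma~\ref{liftdiscret} is legitimate here.

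The good news is that the unproved propagation is unnecessary, and the repair is exactly the paper's proof. Keep the $z_i$ defined only at the subdivision points, by $w_i=z_iu(t_i)$ with $p(w_i)=p(u(t_i))$ and $d(w_i,v(t_i))\le\eta$ (Lemma~\ref{liftdiscret}), $w_0=1$. Then, using $d(z_i,z_{i+1})=d(z_iu(t_i),z_{i+1}u(t_i))$ and inserting $v(t_i)$, $v(t_{i+1})$ and $z_{i+1}u(t_{i+1})$, left-invariance and the $\eps$-fineness of the subdivision give $d(z_i,z_{i+1})\le 2\eta+2\eps$ instead of your exact $2\eta$. Ultrametricity then yields $d(1,z_i)\le 2\eta+2\eps$ for all $i$, hence $d(u(t_i),v(t_i))\le 3\eta+2\eps$ and $d(u(t),v(t))\le 3\eta+O(\eps)$ on each interval; since you fixed $\eps$ at the outset but never actually let it enter your estimates, the missing last move is simply to let $\eps\to 0$, which gives the $3$-Lipschitz bound. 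Note that in this lemma the isometric-projection feature of $\mathcal{P}_g$ is not what does the work; what is used is only the existence of arbitrarily fine common good subdivisions together with the ultrametric collapse.
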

\begin{proof}
Let $u,v$ belong to $\mathcal{P}_g$ with $d(p\circ u,p\circ v)=\sigma$. Fix $\eps>0$. Consider a sequence $0=t_0<t_1<\dots$ which is both $(u,\eps)$ and $(v,\eps)$-good. By Lemma \ref{liftdiscret}, there exists $w_i$ such that $p(w_i)=p(u(t_i))$ and $d(w_i,v(t_i))\le\sigma$ (we choose $w_0=1$). Set $z_i=s(u(t_i),w_i)$, i.e. $w_i=z_iu(t_i)$. Then $$d(z_i,z_{i+1})=d(z_iu(t_i),z_{i+1}u(t_i))\le d(z_iu(t_i),v(t_i))+d(v(t_i),v(t_{i+1}))+$$ $$d(v(t_{i+1}),z_{i+1}u(t_{i+1}))+d(z_{i+1}u(t_{i+1}),z_{i+1}u(t_i))$$
$$\le\sigma+\eps+\sigma+\eps.$$
As $\Cone_\omega(Z)$ is ultrametric, we obtain that $d(1,z_i)\le 2\sigma+2\eps$ for all $i$. So $d(w_i,u(t_i))\le 2\sigma+2\eps$ for all $i$, and so $d(u(t_i),v(t_i))\le 3\sigma+2\eps$ for all $i$. Accordingly, $d(u(t),v(t))\le 3\sigma+3\eps$ for all $t$. As $\eps$ is arbitrary, we obtain  $d(u(t),v(t))\le 3\sigma$.
\end{proof}

\begin{proof}[Proof of Proposition \ref{p13}]It follows from Lemma \ref{troislip} that $\psi^{-1}$ extends to a 3-Lipschitz map $\varphi$ defined on the closure of $\psi(\mathcal{P}_g)$, which is by Lemma \ref{psidense} all of $\mathcal{P}(\Cone_\omega(G/Z))$; moreover by density and continuity, $\psi\circ\varphi$ is the identity on $\mathcal{P}(\Cone_\omega(G/Z))$. So $\psi$ is surjective and is duly $(1/3,1)$-bilipschitz.
\end{proof}

If $X$ is a metric space with base-point $x_0$, denote by $\mathcal{L}(X)$ the set of continuous loops based on $x_0$. It can be naturally viewed as a closed metric subspace of $\mathcal{P}(X)$ by extending all functions by the constant function equal to the base-point on $[1,\infty[\,$. In particular, all the above can be applied.

Take again $G,Z\dots$ as above and keep assuming that $\Cone_\omega(Z)$ is ultrametric. If $u\in \mathcal{L}(\Cone_\omega(G/Z))$, define $\mu(u)=\varphi(u)(1)$. This defines a $3$-Lipschitz map $$\mu:\mathcal{L}(\Cone_\omega(G/Z))\to\Cone_\omega(Z).$$
In particular, being continuous and mapping to a totally disconnected space, it factors through a map $$\tilde{\mu}:\pi_1(\Cone_\omega(G/Z))\to\Cone_\omega(Z).$$
Clearly, $\mu=1$ in restriction to $\psi(\mathcal{L}(\Cone_\omega(G)))$. Therefore the following composition is trivial
$$\pi_1(\Cone_\omega(G))\to\pi_1(\Cone_\omega(G/Z))\stackrel{\tilde{\mu}}\to\Cone_\omega(Z).$$

The map $\tilde{\mu}$ is surjective: this is a trivial consequence of the path-connectedness of $\Cone_\omega(G)$.

The lifting map $\varphi=\psi^{-1}$ allows to lift homotopies and as a direct consequence we get the injectivity of the map $\pi_1(\Cone_\omega(G))\to\pi_1(\Cone_\omega(G/Z))$.

Finally, if $u$ is in the kernel of $\tilde{\mu}$, then this means that $\varphi(u)$ is a loop of which $u$ is the image.
So we get an exact sequence of groups
\begin{equation}1\to\pi_1(\Cone_\omega(G))\to\pi_1(\Cone_\omega(G/Z))\stackrel{\tilde{\mu}}\to\Cone_\omega(Z)\to 1.\label{supse}\end{equation}

It is actually, in a reasonable sense, an exact sequence in the context of metric groups with Lipschitz maps. 

\begin{defn}
Given three metric groups (groups endowed with left-invariant pseudometrics), we call an exact sequence $$1\to N\stackrel{\iota}\longrightarrow G\stackrel{p}\longrightarrow Q\to 1$$ {\em Lipschitz-exact}\footnote{We do not require $\iota$ to be a bilipschitz embedding, so this could be called ``right Lipschitz-exact exact sequence"; however we shall not use this stronger notion of being Lipschitz-exact.}
 if $\iota$ is Lipschitz, and there exist constants $C,C'>0$ such that \begin{equation}Cd(g,\text{Ker}(p))\le d(1,p(g))\le C'd(g,\text{Ker}(p))\label{cccc}\end{equation} for all $g\in G$.
\end{defn}
Inequality (\ref{cccc}) says that the distance between elements in $Q$ is bi-Lipschitz equivalent to the distance between corresponding $N$-cosets in $G$. 
In particular the right-hand inequality in (\ref{cccc}) means that $p$ is $C'$-Lipschitz. Observe that if $N$  is trivial, (\ref{cccc}) just means that $p:G\to Q$ is a bilipschitz isomorphism. 

In our case, the exact sequence (\ref{supse}) is Lipschitz-exact with constants 1 and 3. The right-hand inequality follows from the combined facts that $Z$ is commutative (hence conjugations disappear in the image) and ultrametric (so that a large product of small loops is still small). 

To check the non-trivial left-hand case, take $u\in\pi_1(\Cone_\omega(G/Z))$. Fixing a representing element in $\mathcal{L}(\Cone_\omega(G/Z))$, lift it (through $\varphi$), extend it to a closed loop via a geodesic of length $d(1,\tilde{\mu}(u))$, and take the image by $p$. We get an element of $\text{Ker}(\tilde{\mu})$ at distance $\le d(1,\tilde{\mu}(u))$ of $u$.

Finally we get

\begin{thm}\label{pico}
Let $G$ be a locally compact, compactly generated group and $Z$ a closed, central subgroup. Endow $G$ with a word length with respect to a compact generating subset and let $Z$ be endowed with the restriction of this word length. Given a nonprincipal ultrafilter $\omega$, assume that $\Cone_\omega(Z)$ is ultrametric. Then the sequence
$$1\to\pi_1(\Cone_\omega(G))\to\pi_1(\Cone_\omega(G/Z))\stackrel{\tilde{\mu}}\longrightarrow\Cone_\omega(Z)\to 1$$
is a Lipschitz-exact sequence of metric groups.
\end{thm}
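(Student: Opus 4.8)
The statement synthesizes the material developed in this subsection, so the plan is to assemble it, checking that nothing has been left implicit. First I would record that the sequence is exact as a sequence of abstract groups: surjectivity of $\tilde{\mu}$ comes from path-connectedness of $\Cone_\omega(G)$ (given $z\in\Cone_\omega(Z)$, a geodesic from $x_0$ to $z\cdot x_0$ projects to a loop of $\Cone_\omega(G/Z)$ with $\mu$-image $z$); triviality of the composition $\pi_1(\Cone_\omega(G))\to\pi_1(\Cone_\omega(G/Z))\stackrel{\tilde{\mu}}\longrightarrow\Cone_\omega(Z)$ is the observation that $\mu$ vanishes on $\psi(\mathcal{L}(\Cone_\omega(G)))$; injectivity of the first map is the homotopy-lifting property of $\varphi=\psi^{-1}$ furnished by Proposition~\ref{p13}; and exactness at the middle holds because if $\tilde{\mu}(u)=1$ and $\alpha$ is a loop representing $u$, then $\varphi(\alpha)$ is a path in $\Cone_\omega(G)$ from $x_0$ to the point of the fibre over $x_0$ representing $\tilde{\mu}(u)=1$, hence a loop, and $\psi(\varphi(\alpha))=\alpha$ exhibits $u$ in the image.

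Next I would upgrade this to a Lipschitz-exact sequence, with constants $C=1$ and $C'=3$. The map $\pi_1(\Cone_\omega(G))\to\pi_1(\Cone_\omega(G/Z))$ is induced by the $1$-Lipschitz projection $p\colon\Cone_\omega(G)\to\Cone_\omega(G/Z)$, hence is $1$-Lipschitz by functoriality of $\pi_1$ on pseudo-metric groups. For the right-hand inequality of (\ref{cccc}), with $\tilde{\mu}$ in the role of the quotient map, represent an element $u$ as a product $\prod_i c_i\gamma_i c_i^{-1}$ of conjugates of loops $\gamma_i$ supported in the $\eps$-ball of $\Cone_\omega(G/Z)$ about $x_0$; applying the homomorphism $\tilde{\mu}$ and using that $\Cone_\omega(Z)$ is \emph{abelian} makes the conjugating factors disappear, leaving $\tilde{\mu}(u)=\prod_i\mu(\gamma_i)$, where $\mu(\gamma_i)=\varphi(\gamma_i)(1)$ lies within $3\eps$ of $x_0$ since $\varphi$ is $3$-Lipschitz and carries constant paths to constant paths. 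As $\Cone_\omega(Z)$ is \emph{ultrametric} with bi-invariant metric, $d(1,\tilde{\mu}(u))\le 3\eps$; taking the infimum over such decompositions and over representatives of cosets of $\mathrm{Ker}(\tilde{\mu})$ (which do not change $\tilde{\mu}(u)$) yields $d(1,\tilde{\mu}(u))\le 3\,d(u,\mathrm{Ker}(\tilde{\mu}))$.

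For the left-hand inequality, given $u$ pick a representing loop $\alpha$ in $\Cone_\omega(G/Z)$, lift it through $\varphi$ to a path in $\Cone_\omega(G)$ from $x_0$ to the point of the fibre over $x_0$ corresponding to $\tilde{\mu}(u)$ under the identification of this fibre with $\Cone_\omega(Z)$ given by the free action. Since $\Cone_\omega(G)$ is geodesic, close this path up by a geodesic segment of length $d(1,\tilde{\mu}(u))$, producing a loop $\beta$; its class maps under the first map to a class in $\mathrm{Ker}(\tilde{\mu})$, while $p\circ\beta$ differs from $\alpha$ only by a loop supported in the ball of radius $d(1,\tilde{\mu}(u))$ about $x_0$, so this class is at distance $\le d(1,\tilde{\mu}(u))$ from $u$. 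Hence $d(u,\mathrm{Ker}(\tilde{\mu}))\le d(1,\tilde{\mu}(u))$, which together with the previous paragraph establishes (\ref{cccc}) and completes the proof.

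The one genuinely substantive ingredient is Proposition~\ref{p13} (the $(1/3,1)$-bilipschitz lifting $\psi$), which is already in hand; the rest is bookkeeping. The step I expect to require the most care is the left-hand inequality: one must keep straight the identification of the fibre $p^{-1}(x_0)$ with $\Cone_\omega(Z)$ together with the compatibility of the two metrics under it, verify that the projection of the geodesic closure really is a loop supported in the ball of the stated radius, and confirm that the class of $\beta$ maps into $\mathrm{Ker}(\tilde{\mu})$ rather than merely into $\pi_1(\Cone_\omega(G/Z))$.
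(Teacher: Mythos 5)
Your proposal is correct and follows essentially the same route as the paper: exactness of the underlying sequence of groups via the bilipschitz lifting map $\varphi=\psi^{-1}$ of Proposition \ref{p13} (triviality of the composite, surjectivity from path-connectedness, injectivity by lifting homotopies, middle exactness from the kernel description of $\tilde{\mu}$), then the right-hand inequality with constant $3$ from commutativity of $\Cone_\omega(Z)$ plus the ultrametric property, and the left-hand inequality with constant $1$ by closing the lift with a geodesic of length $d(1,\tilde{\mu}(u))$ and projecting. Your added care about the isometric identification of the fibre with $\Cone_\omega(Z)$ and the infimum over coset representatives only makes explicit what the paper leaves implicit.
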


\begin{cor}\label{conab}
If $\Cone_\omega(G)$ is simply connected, then 
$$\pi_1(\Cone_\omega(G/Z))\stackrel{\tilde{\mu}}\longrightarrow\Cone_\omega(Z)$$
is a bilipschitz isomorphism of metric groups.
\end{cor}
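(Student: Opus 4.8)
The plan is to derive Corollary~\ref{conab} as an immediate consequence of Theorem~\ref{pico}. Assume $\Cone_\omega(G)$ is simply connected; I must produce this hypothesis into the Lipschitz-exact sequence
$$1\to\pi_1(\Cone_\omega(G))\to\pi_1(\Cone_\omega(G/Z))\stackrel{\tilde{\mu}}\longrightarrow\Cone_\omega(Z)\to 1$$
supplied by Theorem~\ref{pico}. Since $\Cone_\omega(G)$ is simply connected, $\pi_1(\Cone_\omega(G))$ is the trivial group, so the sequence reads $1\to 1\to\pi_1(\Cone_\omega(G/Z))\stackrel{\tilde\mu}\to\Cone_\omega(Z)\to 1$. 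Exactness then says $\tilde\mu$ is both injective (kernel is the image of the trivial group) and surjective, hence a group isomorphism.

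It remains to upgrade this abstract isomorphism to a bilipschitz isomorphism of metric groups. Here I invoke the final observation recorded in the excerpt before the definition of Lipschitz-exactness: the sequence~(\ref{supse}) is Lipschitz-exact \emph{with constants $1$ and $3$}. When $N$ is trivial, the remark immediately following the definition of Lipschitz-exactness states that condition~(\ref{cccc}) just means $p\colon G\to Q$ is a bilipschitz isomorphism; applying this with $N=\pi_1(\Cone_\omega(G))=1$, $G=\pi_1(\Cone_\omega(G/Z))$, $Q=\Cone_\omega(Z)$, and $p=\tilde\mu$, we conclude that $\tilde\mu$ is a bilipschitz isomorphism of metric groups. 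Concretely, with $\mathrm{Ker}(\tilde\mu)$ trivial, inequality~(\ref{cccc}) becomes $d(1,u)\le d(1,\tilde\mu(u))\le 3\,d(1,u)$ for all $u\in\pi_1(\Cone_\omega(G/Z))$, which is exactly the statement that $\tilde\mu$ and $\tilde\mu^{-1}$ are $3$-Lipschitz; together with the already-established algebraic isomorphism property this is the desired conclusion.

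There is essentially no obstacle: the corollary is a direct specialization of Theorem~\ref{pico} to the case where the kernel term $\pi_1(\Cone_\omega(G))$ vanishes. The only point worth spelling out, for completeness, is why the exact sequence genuinely degenerates—namely that simple connectedness of $\Cone_\omega(G)$ forces $\pi_1(\Cone_\omega(G))=1$, and that triviality of the left term in a Lipschitz-exact sequence turns the middle map into a bilipschitz isomorphism via the remark on~(\ref{cccc}). Thus the proof is a one-line deduction: apply Theorem~\ref{pico}, note $\pi_1(\Cone_\omega(G))=1$, and invoke the interpretation of Lipschitz-exactness with trivial kernel.
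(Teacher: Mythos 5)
Your proposal is correct and matches the paper's intent exactly: the corollary is stated as an immediate specialization of Theorem \ref{pico}, using that simple connectedness kills the kernel term and that Lipschitz-exactness with trivial kernel (via the remark on (\ref{cccc}), with constants $1$ and $3$) is precisely the statement that $\tilde\mu$ is a bilipschitz isomorphism. Nothing further is needed.
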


\subsection{Ultrametric on Abels' group}\label{ula}

Let $\mathbf{K}$ be a nondiscrete locally compact field. On $\SL_d(\mathbf{K})$, define a left-invariant pseudometric $d(g,h)=\ell(g^{-1}h)$, where $\ell$ is the length defined as follows
\begin{itemize}
\item If $\mathbf{K}$ is ultrametric, $\ell(A)=\sup_{i,j}\log|A_{ij}|$;
\item if $\mathbf{K}$ is Archimedean, $\ell(1)=0$ and $\ell(A)=\sup_{i,j}\log|A_{ij}|+C$, where $C$ is a large enough constant ($C\ge\log d$ works).
\end{itemize}
This length is equivalent to the word length with respect to a compact generating subset. Moreover, the embedding $A_4(\mathbf{K})\subset \SL_4(\mathbf{K})$ is quasi-isometric, therefore we can endow $A_4(\mathbf{K})$ (or any cocompact lattice therein) with the restriction of this distance, which is equivalent to the word distance. 

We immediately see that this distance is ultrametric in restriction to $Z(\mathbf{K})\subset A_4(\mathbf{K})$ if $\mathbf{K}$ is ultrametric, and quasi-ultrametric in the case of $\mathbf{K}$ Archimedean, namely satisfies $d(x,z)\le\max(d(x,y),d(y,z))+\log(2)$. Thus, in all cases, $\Cone_\omega(Z(\mathbf{K}))$ is ultrametric.

Thus, Corollary \ref{conab} can be applied along with Theorem \ref{ak}. This yields Corollary \ref{aba}.

\begin{cor}\label{comoc}
All asymptotic cones of the group $G_\mathbf{K}=A_4(\mathbf{K})/Z(\mathbf{K})$ have an abelian fundamental group with continuum cardinality. Precisely, the fundamental group of $\Cone_\omega(G_\mathbf{K})$ is isomorphic, as an abstract group, to $\mathbf{F}^{(\mathbf{R})}$ (direct sum of continuum copies of $\mathbf{F}$), where $\mathbf{F}$ is the prime field of the same characteristic as $\mathbf{K}$, namely $\mathbf{Q}$ or $\F_p$.
\end{cor}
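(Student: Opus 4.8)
The plan is to reduce everything to an abstract identification of $\Cone_\omega(Z(\mathbf{K}))$. By Corollary~\ref{conab} together with \S\ref{ula} --- exactly as in the derivation of Corollary~\ref{aba}: $A_4(\mathbf{K})$ is compactly generated, $Z(\mathbf{K})$ is closed and central, $\Cone_\omega(A_4(\mathbf{K}))$ is simply connected by Corollary~\ref{csco}, and $\Cone_\omega(Z(\mathbf{K}))$ is ultrametric by \S\ref{ula} --- the map $\tilde\mu$ is a bilipschitz, in particular an abstract group, isomorphism $\pi_1(\Cone_\omega(G_\mathbf{K}))\simeq\Cone_\omega(Z(\mathbf{K}))$. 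Identifying $Z(\mathbf{K})$ with $(\mathbf{K},+)$ via the $(1,4)$-entry, the restriction to it of the $\SL_4(\mathbf{K})$-length is $x\mapsto\log^+|x|$ up to an additive constant invisible to the cone (and is genuinely ultrametric when $\mathbf{K}$ is non-Archimedean); so $\Cone_\omega(Z(\mathbf{K}))$ is the abelian group of sequences $(x_n)$ in $\mathbf{K}$ with $\frac1n\log^+|x_n|$ bounded, modulo those with $\lim_\omega\frac1n\log^+|x_n|=0$, the distance of $(x_n)$ to $0$ being $\lim_\omega\frac1n\log^+|x_n|$.

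I would then equip $\Cone_\omega(Z(\mathbf{K}))$ with a structure of vector space over the prime field $\mathbf{F}$ of $\mathbf{K}$. For a fixed nonzero $\lambda\in\mathbf{F}$, the crude bound $\log^+|x|\le\log^+|\lambda x|+\bigl|\log|\lambda|\bigr|$ and its symmetric counterpart show that multiplication by $\lambda$ preserves the boundedness condition and sends null sequences to null sequences, hence descends to the cone. In characteristic $p$, $(\mathbf{K},+)$ is already an $\F_p$-vector space and this structure passes to the cone. In characteristic $0$, the same bound with $\lambda=m\in\mathbf{Z}\setminus\{0\}$ gives torsion-freeness, while dividing sequences termwise by $m$ gives divisibility; being uniquely divisible and torsion-free, $\Cone_\omega(Z(\mathbf{K}))$ is a $\mathbf{Q}$-vector space. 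Thus in all cases it is a vector space over $\mathbf{F}$, namely $\mathbf{Q}$ or $\F_p$ according to the characteristic.

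It remains to determine the dimension by a cardinality count. From $\Precone(Z(\mathbf{K}))\subseteq\mathbf{K}^{\mathbf{N}}$ and the fact that a nondiscrete locally compact field has cardinality $2^{\aleph_0}$ we get $|\Cone_\omega(Z(\mathbf{K}))|\le 2^{\aleph_0}$; conversely, writing for each real $r\ge0$ an explicit sequence with $\lim_\omega\frac1n\log^+|x_n|=r$ (powers of a uniformizer, resp.\ $x_n=e^{rn}$, in the non-Archimedean resp.\ Archimedean case) shows that the distance-to-$0$ map $\Cone_\omega(Z(\mathbf{K}))\to[0,\infty)$ is surjective, so $|\Cone_\omega(Z(\mathbf{K}))|=2^{\aleph_0}$. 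Since $\mathbf{F}$ is countable, an $\mathbf{F}$-vector space of cardinality $2^{\aleph_0}$ has dimension $2^{\aleph_0}$, whence $\Cone_\omega(Z(\mathbf{K}))\simeq\mathbf{F}^{(\mathbf{R})}$; combined with the first paragraph this proves the statement, and in particular shows that $\pi_1(\Cone_\omega(G_\mathbf{K}))$ is abelian of continuum cardinality.

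I do not expect a real obstacle, the analytic substance being already packaged in Corollary~\ref{conab}. The two places that need a little care are the verification that the prime-field scalar multiplication genuinely survives the passage to the cone --- in particular unique divisibility when $\mathbf{K}$ is non-Archimedean of characteristic $0$, where $|\lambda|$ may be $<1$, which is exactly what the asymmetric-looking bound above is designed to handle --- and the surjectivity of the distance-to-$0$ map used for the lower cardinality bound.
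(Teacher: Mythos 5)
Your argument is correct, and its first half is exactly the paper's: Corollary \ref{conab} applied with Corollary \ref{csco} and the ultrametricity of $\Cone_\omega(Z(\mathbf{K}))$ from \S\ref{ula} gives the (bilipschitz, hence abstract) isomorphism $\pi_1(\Cone_\omega(G_\mathbf{K}))\simeq\Cone_\omega(Z(\mathbf{K}))$, with $Z(\mathbf{K})\simeq(\mathbf{K},+)$ carrying the length $\log^+|\cdot|$ up to an additive constant. Where you diverge is in identifying $\Cone_\omega(Z(\mathbf{K}))$: the paper defers this to Section \ref{csds}, where Corollary \ref{carpc} (characteristic $p$, via the quasi-isometry with $\F_p^{(\mathbf{N})}$ and the classification Theorem \ref{thm:cla}) and Proposition \ref{car0c} (characteristic $0$) pin down the \emph{topological} group $(\mathbf{F}^{(\epsilon_1)})^{\epsilon_0}$, of which the abstract statement of the corollary is a consequence; the cardinality bounds there come from Lemma \ref{upca} and Lemma \ref{cardco}-type ultraproduct arguments. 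You instead prove only what the corollary asserts, by a direct elementary argument: the cone is an $\mathbf{F}$-vector space (immediate in characteristic $p$; in characteristic $0$ your asymmetric bound $\log^+|x|\le\log^+|\lambda x|+\log^+|\lambda^{-1}|$ correctly gives torsion-freeness and unique divisibility even when $|\lambda|<1$ in the non-Archimedean case), of cardinality exactly $2^{\aleph_0}$ (upper bound from $|\mathbf{K}|=2^{\aleph_0}$, lower bound from surjectivity of the distance-to-$0$ map onto $[0,\infty)$), hence of dimension $2^{\aleph_0}$ over the countable field $\mathbf{F}$, i.e.\ $\mathbf{F}^{(\mathbf{R})}$. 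This is shorter and self-contained, avoiding the machinery of Section \ref{csds}, but it yields only the abstract isomorphism type, whereas the paper's route also determines the topological group structure announced right after the corollary.
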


Actually, the fundamental group is isomorphic, as a {\em topological} group, to $(\mathbf{F}^{(\epsilon_1)})^{\epsilon_0}$, a countable product of the direct sum of continuum copies of $\mathbf{F}$. This is established in Corollary \ref{carpc} when $\mathbf{K}$ has characteristic $p$, and Proposition \ref{car0c} the remaining case $\mathbf{F}=\mathbf{Q}$.

\section{Examples with lattices}\label{ewl}

Let $R$ be either $\mathbf{F}_p[t,t^{-1},(t-1)^{-1}]$, or the ring of integers of a totally real number field of degree 3.

\begin{thm}\label{a4r}
The group $A_4(R)$ is finitely presented and has a quadratic Dehn function.
\end{thm}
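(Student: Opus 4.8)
\medskip

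The plan is to deduce the theorem from the already-established quadratic Dehn function of $A_4(\mathbf{K})$ over suitable local fields, via an arithmetic-to-geometric argument in the spirit of Abels' proof of compact presentation. First I would treat the two cases of $R$ in parallel as much as possible. In the number field case, let $K$ be the totally real cubic field with ring of integers $R$; then $R$ embeds diagonally in $\mathbf{K}_\infty := \prod_{v\mid\infty}K_v \simeq \mathbf{R}^3$, and $A_4(R)$ embeds as a lattice in $A_4(\mathbf{K}_\infty) = A_4(\mathbf{R})^3$. In the positive-characteristic case, let $R=\mathbf{F}_p[t,t^{-1},(t-1)^{-1}]$, and observe that $R$ embeds diagonally as a cocompact lattice (by a product formula / strong approximation argument) in the product of the three completions of $\mathbf{F}_p(t)$ at the places $t=0$, $t=\infty$, $t=1$, each of which is a nondiscrete locally compact field of characteristic $p$; so $A_4(R)$ embeds as a lattice in $A_4(\mathbf{K}_0)\times A_4(\mathbf{K}_\infty)\times A_4(\mathbf{K}_1)$. (This is exactly the setup under which Abels and later Bieri--Strebel analyzed $A_n$; the three places are forced by the fact that the relevant $\Sigma$-invariant / Borel--Serre type condition needs the units $t$ and $t-1$ to become both large and small.)

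\medskip

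The key step is then: \emph{a cocompact lattice in a compactly presented locally compact group is finitely presented, with Dehn function asymptotically bounded by that of the ambient group}. This is standard (a cocompact lattice is quasi-isometric to the ambient group, and by the remarks in Section~\ref{recall} the Dehn function is a QI-invariant among compactly generated locally compact groups — here one must invoke the version of this fact valid for discrete-by-(locally compact) situations, or simply the Milnor--Švarc lemma together with the observation that being compactly presented with quadratic Dehn function passes to spaces quasi-isometric to it). A finite product of groups with quadratic Dehn function again has quadratic Dehn function (the Dehn function of a product is controlled by the sum, hence by the max, of the factors — for quadratic this is clean). Combining: $A_4(\mathbf{K}_0)\times A_4(\mathbf{K}_\infty)\times A_4(\mathbf{K}_1)$ has quadratic Dehn function by Theorem~\ref{ak} applied three times, and $A_4(R)$, being a cocompact lattice in it, is finitely presented with quadratic Dehn function.

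\medskip

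The main obstacle — and the part requiring genuine care rather than citation — is establishing that $A_4(R)$ is indeed \emph{cocompact} in the relevant product of local groups. For the number field case this is classical reduction theory: $R$ is cocompact in $\mathbf{K}_\infty=\mathbf{R}^3$, and the unit group $R^\times$ (of rank $2$ by Dirichlet, matching the rank of the diagonal torus directions we care about) is cocompact in the norm-one subgroup of $(\mathbf{K}_\infty^\times)$; assembling these along the unipotent and diagonal coordinates of $A_4$ gives cocompactness, and one should check the Iwasawa-type decomposition of $A_4$ makes this assembly work coordinate by coordinate. For the function-field case the analogous statement is that $R = \mathbf{F}_p[t,t^{-1},(t-1)^{-1}]$ is cocompact in the product of completions at $\{0,\infty,1\}$ — this follows because $R$ is the ring of $S$-integers of $\mathbf{F}_p(t)$ for $S=\{0,\infty,1\}$ and such $S$-integer rings are cocompact lattices in $\prod_{v\in S}K_v$ (the additive group statement, combined with the $S$-unit theorem giving the diagonal torus directions). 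Once cocompactness is in hand, everything else is a formal consequence of Theorem~\ref{ak} and the QI-invariance of quadratic Dehn functions; the finite presentability is then automatic rather than separately proved. I would also remark (as the paper does elsewhere) that this is genuinely sharp in the sense that dropping any one of the three places destroys finite presentability — but that is not needed for the statement.
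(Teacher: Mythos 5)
Your reduction breaks down at the cocompactness claim, and this is precisely the point the paper has to work around. By Dirichlet's unit theorem (resp.\ the $S$-unit theorem), $R^\times$ has rank $2$, whereas $\mathbf{K}^\times$ modulo its maximal compact subgroup has rank $3$ (here $\mathbf{K}=\mathbf{R}^3$ or $\mathbf{F}_p(\!(t)\!)^3$). So $R^\times$ is \emph{not} cocompact in $\mathbf{K}^\times$, and consequently $A_4(R)$ is not cocompact in $A_4(\mathbf{K}_0)\times A_4(\mathbf{K}_\infty)\times A_4(\mathbf{K}_1)=A_4(\mathbf{K})$ -- it is discrete there, but the quotient is not even of finite volume, so it is not a lattice either. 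Hence the step ``apply Theorem \ref{ak} three times to the product and transfer by quasi-isometry'' is not available. That this is a fatal rather than cosmetic issue is shown by Abels' original group: $A_4(\mathbf{Z}[1/p])$ is likewise a discrete subgroup of $A_4(\mathbf{Q}_p)\times A_4(\mathbf{R})$, each factor has quadratic Dehn function by Theorem \ref{ak}, yet $A_4(\mathbf{Z}[1/p])$ has an \emph{exponential} Dehn function (Remark \ref{a4notfg}); your scheme, if valid, would prove it quadratic.

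What is true, and what the paper uses, is that $R^\times$ is cocompact in the norm-one subgroup $\mathbf{K}^\times_1$ (elements whose multiplication preserves Haar measure on $\mathbf{K}$), so that $A_4(R)$ is cocompact in $A_4(\mathbf{K})_1$, the subgroup with both diagonal entries in $\mathbf{K}^\times_1$. One then needs the separate statement (Theorem \ref{a41}) that $A_4(\mathbf{K})_1$ has a quadratic Dehn function; this does not follow from Theorem \ref{ak} or from any product decomposition, since $A_4(\mathbf{K})_1$ is a proper, non-cocompact subgroup of $A_4(\mathbf{K})$. Its proof redoes the argument of Section \ref{pak}, and the genuinely new point is that the subgroups of Lemma \ref{quadratic_subgroup_claim} no longer contain contracting elements once the determinant restriction is imposed; instead one works with smaller subgroups of the form $DU_{ijm}U_{k\ell n}$ indexed by pairs of places and uses a norm-one diagonal element contracting $\mathbf{K}^m$ while dilating $\mathbf{K}^n$ (as in \cite{CT}) -- this is exactly where having three places (unit rank $2$) is used, and why the two-place case behaves differently. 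Your write-up correctly identifies the relevant places and the role of the $S$-unit rank, but the passage from ``$R^\times$ cocompact in the norm-one subgroup'' to ``$A_4(R)$ cocompact in the full product'' is a non sequitur, and the missing quadratic filling for the norm-one group is the actual content of the paper's proof.
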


The proof consists of embedding $R$ as a cocompact lattice in a certain larger group. There is a natural cocompact embedding $R\subset\mathbf{K}$, where $\mathbf{K}$ is the locally compact ring $\mathbf{R}^3$ or $\mathbf{F}_p(\!(t)\!)^3$, given by 

\[f(z) =	\left\{ \begin{array}{rcl} P(t)\mapsto (P(t),P(1-t),P(t^{-1})) & \mbox{if }  R=\mathbf{F}_p[t,t^{-1},(t-1)^{-1}]; \\ 
x\mapsto (\sigma_1(x),\sigma_2(x),\sigma_3(x)) & \mbox{in the real case,} 
\end{array}\right.\]
where in the second case $\sigma_1,\sigma_2,\sigma_3$ are the three distinct real field embeddings of the fraction field of $R$ into $\mathbf{R}$.

 Note that $A_4(R)$ is not cocompact in $A_4(\mathbf{K})$, because $R^\times$ is not cocompact in $\mathbf{K}^\times$. However $R^\times$ is cocompact in $\mathbf{K}^\times_1$, the closed subgroup of elements in $\mathbf{K}^\times$ for which the multiplication preserves the Haar measure in $\mathbf{K}$. Let $A_4(\mathbf{K})_1$ be the set of elements of $A_4(\mathbf{K})$, both of whose diagonal entries are in $\mathbf{K}^\times_1$, so $A_4(R)$ is cocompact in $A_4(\mathbf{K})_1$. Theorem \ref{a4r} follows from 

\begin{thm}\label{a41}
$A_4(\mathbf{K})_1$ has a quadratic Dehn function.
\end{thm}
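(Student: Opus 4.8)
The plan is to adapt the proof of Theorem \ref{ak} to the group $A_4(\mathbf{K})_1$, where now $\mathbf{K}$ is a product of fields ($\mathbf{R}^3$ or $\mathbf{F}_p(\!(t)\!)^3$) rather than a single field, and the diagonal torus is replaced by the norm-one subgroup $\mathbf{K}^\times_1$. The point is that essentially all of Section \ref{pak} goes through verbatim once one arranges a suitable generating set with contracting elements. First I would fix generators of $\mathbf{K}^\times_1$: since $\mathbf{K}^\times_1$ is compactly generated (it is cocompact in a product of the three $\mathbf{K}_i^\times$ cut out by a single norm equation), I would choose a compact symmetric generating set $T$ of the image of the diagonal part of $A_4(\mathbf{K})_1$, together with $W=\bigcup_{(i,j)\neq(1,4)}U_{ij}^1$ exactly as before, so that $S=T\cup W$ is compact, symmetric, and generates $A_4(\mathbf{K})_1$ (Lemma \ref{gener} carries over since $U_{14}\subset[U_{12},U_{24}]$ and the diagonal is generated by $T$). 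The quantified version, Lemma \ref{generq}, also carries over: any $g\in S^n$ decomposes as $d\,u_{12}u_{23}u_{34}u_{13}u_{24}u_{14}$, the diagonal part costs $\le n$, and the off-diagonal entries are bounded by $C^n$ in each of the three places of $\mathbf{K}$, hence each $u_{ij}$ is a conjugate $\gamma_{ij}s_{ij}\gamma_{ij}^{-1}$ with $|\gamma_{ij}|_T\le C'n$ and $s_{ij}\in U_{ij}^1$.

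The crucial point is to reproduce Lemma \ref{quadratic_subgroup_claim}: the subgroups $G_1,\dots,G_4$ (defined by the same vanishing conditions, but now inside $A_4(\mathbf{K})_1$) must have quadratic Dehn functions. For this I need, for each $i$, a diagonal element of $A_4(\mathbf{K})_1$ whose conjugation action contracts the relevant unipotent coordinates. In the single-field case one used $q=(1,t,t^2,1)$ and $q'=(1,t^{-1},t,1)$ with $|t|>1$; here the obstruction is that a genuinely contracting diagonal element must lie in $\mathbf{K}^\times_1$, i.e. it must expand in some coordinates while contracting in others so that the product of absolute values over the three places is $1$. \textbf{This is the main obstacle and the only genuinely new point.} The resolution is that one does not need a single element contracting \emph{all} of $W$ simultaneously; as in the original proof it suffices, for each $G_i$, to find a $q_i\in\mathbf{K}^\times_1$ whose conjugation is $1$-Lipschitz on $(S,d_S)$ (maps $S$ into itself up to the word metric) and is \emph{strictly} contracting on the unipotent coordinates surviving in $G_i$. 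Because $G_1$ only involves $U_{12},U_{13},U_{23}$, and in each of the three places $\mathbf{K}_i$ of $\mathbf{K}$ one has access to a local uniformizer (or to scalars of absolute value $>1$ and $<1$), one can build $q_1$ place-by-place: pick in each place a diagonal $(1,\alpha_i,\alpha_i^2,1)$ with $|\alpha_i|>1$, then correct the last coordinate if necessary so that the global product of absolute values is $1$; shrinking/expanding an entry not involved in $G_1$ (namely the $(4,4)$ or $(1,1)$ slot is fixed, but one has freedom in $t_{22},t_{33}$ across the three places) does not affect contractivity on $U_{12},U_{13},U_{23}$. One checks that the resulting $q_1$ still normalizes $S$ up to bounded word length, which is what is needed for the "$Cn$ successive right-multiplications" argument. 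The elements $q_2,q_3,q_4$ are constructed symmetrically (for $G_3,G_4$ one wants contraction of $U_{13},U_{24}$, so one uses $(1,\alpha_i^{-1},\alpha_i,1)$-type elements corrected to norm one). Finally the quotient by the unipotent part is a copy of $\mathbf{K}^\times_1$-lattice image, which is virtually $\mathbf{Z}^2$ (the rank is $3-1=2$ in both cases), hence has quadratic Dehn function.

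With Lemma \ref{quadratic_subgroup_claim} in hand, the rest is automatic: Gromov's trick (\S\ref{gtrick}) applies word for word, reducing the problem to Proposition \ref{quadrapei}, i.e. to bounding the area of words $\prod_{k=1}^{27}\hat e_{i_kj_k}^{x_k}$ with $\sup_k|x_k|\le c^n$. All of Claims \ref{clai23}--\ref{e1414} use \emph{only} the quadratic Dehn function of the $DU_{ij}$'s and of the $G_i$'s, together with the commutation structure of the unipotent group $\mathrm{U}_4$, which is identical here; in particular the Hall identity manipulations collapsing $\prod[\hat e_{13}^{x_i},\hat e_{34}^{z_i}]\prod[\hat e_{12}^{y_i},\hat e_{24}^{t_i}]$ to a power of $\hat e_{14}$ and then to the identity go through unchanged. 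Hence $A_4(\mathbf{K})_1$ has quadratic Dehn function. I would present the write-up as: (1) state the generating set and note Lemmas \ref{gener}, \ref{generq} hold; (2) prove the analogue of Lemma \ref{quadratic_subgroup_claim}, spending the bulk of the effort on the explicit construction of the contracting elements $q_i\in\mathbf{K}^\times_1$; (3) observe that \S\ref{gtrick} and \S\ref{ppq} apply verbatim, and conclude.
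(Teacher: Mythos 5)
Your step (2) — the construction of contracting elements $q_i\in\mathbf{K}^\times_1$ for the subgroups $G_1,\dots,G_4$ inside $A_4(\mathbf{K})_1$ — does not work, and this is precisely the point where the norm-one restriction bites. In $A_4$ the $(1,1)$ and $(4,4)$ diagonal entries are identically $1$, so the only adjustable data are $t_{22},t_{33}$, and in $A_4(\mathbf{K})_1$ \emph{each} of them separately must lie in $\mathbf{K}^\times_1$, i.e.\ satisfy $\prod_{m=1}^3|t^{(m)}|_m=1$. Conjugation by $\mathrm{diag}(1,t_{22},t_{33},1)$ acts on the $(1,2)$-entry by $x\mapsto x\,t_{22}^{-1}$, so contracting $U_{12}$ at all three places forces $|t_{22}^{(m)}|_m>1$ for $m=1,2,3$, contradicting the product formula; there is no ``last coordinate'' to correct with, and playing $t_{33}$ against $t_{22}$ does not help since the constraint on $t_{22}$ alone is already violated (and even under a single global determinant condition, contracting $U_{12},U_{13},U_{23}$ simultaneously would force $\prod_m|t_{22}^{(m)}t_{33}^{(m)}|_m>1$). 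So the subgroups $G_i\cap A_4(\mathbf{K})_1$ admit no contracting diagonal element, which is exactly the difficulty the paper flags: it explicitly declines to generalize Lemma \ref{quadratic_subgroup_claim} for this reason.

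The paper's actual route is to refine the decomposition according to the three factors of $\mathbf{K}=\mathbf{K}^1\times\mathbf{K}^2\times\mathbf{K}^3$: it introduces the groups $U_{ijm}$ of elementary matrices whose $(i,j)$-entry lies in the single factor $\mathbf{K}^m$, restates Claims \ref{clai23} and \ref{clai2} for the corresponding words $\hat e_{ijm}^x$, and proves them via a \emph{variant} of Lemma \ref{quadratic_subgroup_claim}: the groups $DU_{ijm}U_{k\ell n}$ (with $(j,m)\neq(k,n)$) and $DU_{ijm}U_{jkm}U_{ikm}$ (with $(i,k)\neq(1,4)$) have quadratic Dehn functions. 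For these smaller groups a contracting element in $\mathbf{K}^\times_1$ does exist, because one only needs to contract one place, possibly while dilating a different place (the cases $(1,j,m)$, $(j,4,n)$ with $m\neq n$), which is possible under the norm-one constraint and is handled as in \cite{CT}. After this place-by-place substitute, the rest of the machinery (Gromov's trick, the Hall-identity reductions of \S\ref{ppq}) does carry over as you say; but as written your proposal's central lemma is false, so the argument has a genuine gap rather than being a routine transcription. (Minor additional slip: the diagonal subgroup of $A_4(\mathbf{K})_1$ is $(\mathbf{K}^\times_1)^2$, of rank $4$, not virtually $\mathbf{Z}^2$; this is harmless since it is still abelian.)
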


\noindent {\it On the proof.} The proof is strictly analogous to that of $A_4$ of a nondiscrete locally compact field, so we do not repeat all the technical details. The only difference lies in the adaptation of the proof of Claims \ref{clai23} and \ref{clai2}. It could be proved by using the natural generalization of Lemma \ref{quadratic_subgroup_claim}. However this would be more difficult as there is no ``contracting element" in the subgroups involved, because of the restriction on the determinant of the diagonal elements. So we use a variant of Lemma \ref{quadratic_subgroup_claim} rather than its generalization. 

Recall that $\mathbf{K}$ is now a product of three nondiscrete locally compact fields, say $\mathbf{K}=\mathbf{K}^1\times\mathbf{K}^2\times\mathbf{K}^3$. Recall from \S\ref{ppq} that $J=\{(i,j), 1\leq i<j\leq 4, (i,j)\neq (1,4)\}$. 
Define $J'=J\times\{1,2,3\}$, and let, for $(i,j,m)\in J'$, $U_{ijm}$ be the set of elements in $U_{ij}(\mathbf{K})$ whose $(i,j)$-entry is in $\mathbf{K}^m$. 
The adapted version of Claims \ref{clai23} and \ref{clai2} is the following:
\begin{clai}
For all $(i,j,m)\in J'$, we have
\begin{itemize}
\item $\hat{e}_{ijm}^x\hat{e}_{ijm}^y \rightsquigarrow_2\hat{e}_{ijm}^{x+y}$; $(\hat{e}_{ijm}^x)^{-1}\rightsquigarrow_2\hat{e}_{ijm}^{-x}$;
\item if $(k,\ell,n)\in J'$ with $(j,m)\neq (k,n)$, $[\hat{e}_{ijm}^x,\hat{e}_{k\ell n}^y]\rightsquigarrow_21$;
\item if $(j,k,m)\in J'$ and $(i,k)\neq(1,4)$, $[\hat{e}_{ijm}^x,\hat{e}_{jkm}^y]\rightsquigarrow_2\hat{e}_{ikm}^{xy}$.
\end{itemize}
\end{clai}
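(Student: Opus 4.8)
The plan is to run the argument of \S\ref{ppq} essentially verbatim, the only substantial new ingredient being a replacement for Lemma \ref{quadratic_subgroup_claim} suited to the determinant-one constraint. In \S\ref{ppq} each auxiliary subgroup $G_a$ ($a\in\{1,2,3,4\}$) carried a diagonal element whose conjugation contracts all its root subgroups at once; the difficulty now is that no element $t=(t^1,t^2,t^3)$ of $\mathbf{K}^\times_1=\{t:|t^1||t^2||t^3|=1\}$ scales a root subgroup $U_{ij}(\mathbf{K})$ by a factor of absolute value $<1$ in all three coordinates simultaneously. The key point is that it is enough to contract one coordinate at a time: for $m\in\{1,2,3\}$ let $G_a^m\le A_4(\mathbf{K})_1$ be obtained from $G_a$ by constraining every free upper entry to lie in the factor $\mathbf{K}^m$, so that $G_1^m$ has support $\{(1,2),(1,3),(2,3)\}$, $G_2^m$ support $\{(2,3),(3,4),(2,4)\}$, $G_3^m$ support $\{(1,3),(2,4)\}$ and $G_4^m$ support $\{(1,2),(3,4)\}$ (all entries in $\mathbf{K}^m$), and let $D_1\le A_4(\mathbf{K})_1$ be the diagonal subgroup.

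First I would prove the variant of Lemma \ref{quadratic_subgroup_claim}: each $G_a^m$ has a quadratic Dehn function. Conjugation by $q=\mathrm{diag}(1,t_{22},t_{33},1)\in D_1$ scales $U_{k\ell m}$ by a factor whose log-absolute-value is $\lambda_k-\lambda_\ell$, where $(\lambda_1,\lambda_2,\lambda_3,\lambda_4)=(0,\log|t_{22}^m|,\log|t_{33}^m|,0)$; requiring $q$ to contract every root subgroup of $G_a^m$ is thus a finite system of strict linear inequalities in $(\log|t_{22}^m|,\log|t_{33}^m|)$, and I would check it is feasible in each case — for instance $G_1^m$ asks $\log|t_{22}^m|>0$, $\log|t_{33}^m|>0$, $\log|t_{22}^m|<\log|t_{33}^m|$, and $G_2^m$ asks $\log|t_{22}^m|<\log|t_{33}^m|<0$. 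Having fixed admissible valuations in slot $m$, one extends $t_{22},t_{33}$ to genuine elements of $\mathbf{K}^\times_1$ by choosing their components in the other two slots so that the two products of absolute values equal $1$ (possible since the fields are nondiscrete), and — passing, as in \S\ref{pak}, to a suitable cocompact subgroup — one may take $q$ in the resulting lattice. With such a $q$ the contraction argument of Lemma \ref{quadratic_subgroup_claim} applies word for word: $O(n)$ right multiplications by $q^{-1}$, each of linear cost, push a loop of length $n$ to one lying inside $D_1$ with consecutive vertices at bounded distance, and $D_1$ modulo a compact subgroup is $\mathbf{Z}^4$ (ultrametric case) or $\mathbf{R}^4$ (Archimedean case), which has a quadratic Dehn function. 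As a byproduct $\hat{e}_{ijm}^{x}$ has word length $O(n)$ whenever $|x|\le c^n$ (conjugate a bounded generator by $O(n)$ powers of an expanding diagonal, as in the proof of Lemma \ref{generq}), so any null-homotopic word of length $O(n)$ supported on a single $G_a^m$ has area $O(n^2)$ with the bounded-length relators of $A_4(\mathbf{K})_1$.

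Then I would deduce the three bullets exactly as Claims \ref{clai23} and \ref{clai2} followed from Lemma \ref{quadratic_subgroup_claim}. The first bullet holds because $D_1U_{ijm}$ lies in some $G_a^m$ (equivalently, admits a contracting element on its own). The only triples $i<j<k$ allowed by $(i,k)\neq(1,4)$ are $(1,2,3)$ and $(2,3,4)$, and the relations $[\hat{e}_{12m}^{x},\hat{e}_{23m}^{y}]\rightsquigarrow_2\hat{e}_{13m}^{xy}$ and $[\hat{e}_{23m}^{x},\hat{e}_{34m}^{y}]\rightsquigarrow_2\hat{e}_{24m}^{xy}$ take place inside $G_1^m$ and $G_2^m$ respectively, which gives the third bullet. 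For the second bullet — which concerns pairs $U_{ijm},U_{k\ell n}$ that already commute as matrices — when $m=n$ each such pair is a pair of root subgroups of a common $G_a^m$, so the vanishing relation has quadratic area there; when $m\neq n$ the pair commutes for free and the bounded-length commuting relator, conjugated by a diagonal element that contracts $U_{ijm}$ in slot $m$ and $U_{k\ell n}$ in slot $n$ (independent, hence trivially compatible, constraints), gives the quadratic bound. Finally, the reductions from Claim \ref{clai3} through the end of Proposition \ref{quadrapei} use only Claims \ref{clai23}, \ref{clai2}, Hall's identity and formal group manipulations, so they transcribe to the trivariate indexing unchanged.

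The hard part — and the only place the determinant restriction genuinely bites — is the feasibility check in the second step: one must verify, for each configuration of root subgroups that actually occurs, that a single element of $\mathbf{K}^\times_1$ contracts all of them. This goes through because every such configuration constrains at most two of the three ``slots'' $(\log|t_{22}^m|,\log|t_{33}^m|)$, leaving a full slot free to absorb the two constraints $\sum_m\log|t_{22}^m|=\sum_m\log|t_{33}^m|=0$; it is precisely the attempt to contract an entire root subgroup $U_{ij}(\mathbf{K})$, which would force all three slots in the same direction, that is infeasible — which is why one argues slice by slice rather than invoking the naive generalization of Lemma \ref{quadratic_subgroup_claim}.
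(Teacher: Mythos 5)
Your proposal is correct and follows essentially the same route as the paper: its substitute for Lemma \ref{quadratic_subgroup_claim} proves quadratic Dehn functions for the small sliced subgroups $DU_{ijm}U_{k\ell n}$ and $DU_{ijm}U_{jkm}U_{ikm}$ by exhibiting a contracting element of the determinant-one diagonal, feasible exactly because each configuration constrains at most two of the three slots (your feasibility check), and your groups $G_a^m$ coincide with or slightly enlarge these, your cross-slot commuting pairs being precisely the paper's remaining cases $(1,j,m)$, $(j,4,n)$ with $m\neq n$, handled by an element of $\mathbf{K}^\times_1$ contracting one slot and dilating the other. The deduction of the three bullets and the verbatim transcription of the rest of \S\ref{ppq} are likewise as in the paper.
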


It is proved by showing that each of these relations hold in a smaller subgroup with a quadratic Dehn function, using the following substitute for Lemma \ref{quadratic_subgroup_claim}:

\begin{lem}
For all $(i,j,m),(k,\ell,n)\in J'$, such that $(j,m)\neq (k,n)$, $DU_{ijm}U_{k\ell n}$ has a quadratic Dehn function; for all $(i,j,m),(j,k,m)\in J'$ with $(i,k)\neq (1,4)$, $DU_{ijm}U_{jkm}U_{ikm}$ has a quadratic Dehn function.
\end{lem}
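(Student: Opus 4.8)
The plan is to reduce each of these Dehn function assertions to the presence of a contracting element in the relevant subgroup, exactly as in the proof of Lemma~\ref{quadratic_subgroup_claim}, but adapted to the constraint that diagonal elements of $A_4(\mathbf{K})_1$ must preserve Haar measure. The key point is that although no single diagonal element of $A_4(\mathbf{K})_1$ contracts all of $U_{ijm}$ simultaneously (the determinant constraint forbids it), the subgroups appearing in the statement involve only \emph{two} unipotent directions, and there is enough room in the rank-$2$ torus $D\cap A_4(\mathbf{K})_1$ to contract both.

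Concretely, I would first fix, for each of the two relevant off-diagonal positions, the way a diagonal matrix acts by conjugation: conjugating $U_{ij}$ by $\mathrm{diag}(t_1,t_2,t_3,t_4)$ (with $\prod t_i$ preserving Haar measure in the appropriate coordinate) scales the $(i,j)$-entry by $t_i/t_j$. So for $DU_{ijm}U_{k\ell n}$ I must find $q\in D\cap A_4(\mathbf{K})_1$ with $|q_i/q_j|<1$ and $|q_k/q_\ell|<1$ simultaneously, and similarly, for $DU_{ijm}U_{jkm}U_{ikm}$, with $|q_i/q_j|<1$, $|q_j/q_k|<1$ (which then automatically forces $|q_i/q_k|<1$). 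A short case check over the finitely many allowed $(i,j),(k,\ell)$ and $(i,j),(j,k)$ pairs in $J$ shows this is always possible: the three conditions $q_1>q_2$, $q_2>q_3$, $q_3<q_1$ etc.\ are compatible with the only constraints, namely $q_1=q_4=1$ (from the shape of $A_4$) and the Haar-measure normalization, which only fixes the product $q_2 q_3$ up to a compact set and leaves a full $\mathbf{Z}$-worth of freedom in, say, $q_2/q_3$. One must also remember to work coordinate-by-coordinate: the element $q$ can be taken trivial in the two coordinates $\neq m,n$, so that it acts trivially there and the contraction argument only needs to see the relevant coordinate(s).

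With the contracting element $q$ in hand, the argument is then verbatim that of Lemma~\ref{quadratic_subgroup_claim}: left conjugation by $q$ maps the compact generating set of the subgroup into itself (up to enlarging it by finitely many elements), hence is $1$-Lipschitz; right multiplication by $q^{-1}$ is $1$-Lipschitz of bounded displacement and contracts the unipotent part; so $Cn$ successive right multiplications, each of cost $O(n)$, homotope a loop of length $n$ to its projection into the torus $D\cap A_4(\mathbf{K})_1\simeq\mathbf{Z}^2$, which has quadratic Dehn function; this requires only that relators up to some bounded length are declared relators, which is fine by taking $M$ large. The total cost is $O(n^2)$.

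The main obstacle I anticipate is purely bookkeeping: verifying that for \emph{every} admissible pair of unipotent directions one really can choose the signs of the exponents $n_2,n_3$ in the diagonal to contract both simultaneously under the determinant constraint, and — more subtly — checking that when $m\neq n$ the contracting element can be chosen to act on the two relevant coordinates of $\mathbf{K}=\mathbf{K}^1\times\mathbf{K}^2\times\mathbf{K}^3$ \emph{independently}, since the determinant constraint couples the coordinates. In practice one handles this by allowing $q$ to have nontrivial entries in both coordinates $m$ and $n$ and observing that the contraction in coordinate $m$ of $U_{ijm}$ and in coordinate $n$ of $U_{k\ell n}$ can be arranged by choosing the torus parameter large in the appropriate direction in each coordinate; the finitely many resulting sign conditions are always jointly satisfiable. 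Everything else transfers without change from Section~\ref{pak}, which is why the paper only sketches it.
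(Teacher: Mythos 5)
Your argument is essentially the paper's: reduce each of these subgroups to the contraction scheme of Lemma~\ref{quadratic_subgroup_claim}, observe that the only genuinely new cases are the pairs $(1,j,m)$, $(j,4,n)$ with $m\neq n$, where one needs a diagonal entry in $\mathbf{K}^\times_1$ contracting $\mathbf{K}^m$ and dilating $\mathbf{K}^n$ (the paper quotes \cite{CT} for this well-known fact), and then run the filling argument verbatim. One small correction that does not affect the proof: in $A_4(\mathbf{K})_1$ each of the two diagonal entries lies in $\mathbf{K}^\times_1$ \emph{separately} (the normalization is not a condition on the product $q_2q_3$), so the relevant freedom is the rank-two group of absolute-value triples of a single entry and $D$ has rank four rather than being a rank-two $\mathbf{Z}^2$; the finitely many sign conditions are still satisfiable and $D$ still has an at most quadratic Dehn function, so your conclusion stands.
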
 
\begin{proof}[Proof (sketched)]
The point is, for each of these groups, to find in $D$ a contracting element. Most cases were already considered in the proof of Lemma \ref{quadratic_subgroup_claim}. The only remaining ones are $(1,j,m)$ and $(j,4,n)$ with $m\neq n$ and $j\in\{2,3\}$. To do this, it is enough to find an element in $\mathbf{K}^\times_1$ contracting $\mathbf{K}^m$ and dilating $\mathbf{K}^n$. This is well-known and was already done in \cite{CT}.
\end{proof}

\medskip

\begin{proof}[Proof of Corollary \ref{cpoly}]
The proof of Corollary \ref{comoc}, without changes, proves that $\pi_1(\Cone_\omega(A_4(\mathbf{R}^3)_1/Z(\mathbf{R}^3)))$ is isomorphic, as an abstract group, to $\mathbf{Q}^{(\mathbf{R})}$. Since $\Lambda$ is a lattice in $A_4(\mathbf{R}^3)_1/Z(\mathbf{R}^3)$, the same result holds for $\pi_1(\Cone_\omega(\Lambda))$.

By \cite[Corollary 3.$\textnormal{F}'_5$]{Gro}, every connected solvable Lie group has an at most exponential Dehn function, so every polycyclic group has an at most exponential Dehn function. Since $Z(\mathbf{R}^3)$ is central and exponentially distorted, for every path $\gamma$ of linear size in $A_4(\mathbf{R}^3)_1$ joining the identity to an element of exponential size in $Z(\mathbf{R}^3)$, the image of $\gamma$ in $A_4(\mathbf{R}^3)_1/Z(\mathbf{R}^3)$ has an (at least) exponential area. Thus $A_4(\mathbf{R}^3)_1/Z(\mathbf{R}^3)$, and therefore $\Lambda$ as well, has an at least exponential Dehn function. Finally, we conclude that $\Lambda$ has an exactly exponential Dehn function. 
\end{proof}

Corollary \ref{a4p} follows from Theorem \ref{a41}, because $A_4(\F_p[t,t^{-1},(t-1)^{-1}])$ is a cocompact lattice in $A_4(\mathbf{F}_p(\!(t)\!)^3)_1$.

\begin{rem}\label{a4notfg}
The group $A_4(\F_p[t])$ is not finitely generated. Indeed, consider its subgroup $H$ of matrices $(a_{ij})$ satisfying $a_{22}=1$. Since the group of invertible elements in $\F_p[t]$ is reduced to $\F_p^\times$, $H$ has finite index (namely, $p-1$) in $A_4(\F_p[t])$. There is a surjective homomorphism $H\to\F_p[t]$, mapping a matrix as above to $a_{12}$. Since $\F_p[t]$ is not finitely generated as a group, it follows that $H$ is not finitely generated, nor is its overgroup of finite index $A_4(\F_p[t])$.

For similar but more elaborate reasons, $A_4(\F_p[t,t^{-1}])$ is not finitely presented (its finite generation is an easy exercise). Indeed, define $L$ as those matrices $(a_{ij})$ for which $a_{22}$ is some power of $t$. Since the group of invertible elements in $\F_p[t,t^{-1}]$ consists of $ut^k$ for $u\in\F_p^\times$ and $k\in\mathbf{Z}$, $L$ has index $p-1$ in $A_4(\F_p[t,t^{-1}])$. There is a homomorphism $\phi$ from $L$ to $\textnormal{GL}_2(\F_p[t,t^{-1}])$, mapping a matrix to its $2\times 2$ northwest block, so that $$\phi(L)=\left\{\begin{pmatrix}1 & Q\\ 0 & t^k\end{pmatrix}:\;Q\in\F_p[t,t^{-1}],k\in\mathbf{Z}\right\}.$$
The latter group is isomorphic to the lamplighter group $\F_p\wr\mathbf{Z}$ and is not a quotient of a finitely presented solvable group \cite{Bau,BS}. So $L$ is not finitely presented, nor is its overgroup of finite index $A_4(\F_p[t,t^{-1}])$.

Similarly, using that the Baumslag-Solitar group $\mathbf{Z}[1/p]\rtimes\mathbf{Z}$ is a retract of a subgroup of finite index in Abels' original group $A_4(\mathbf{Z}[1/p])$, and using that $\mathbf{Z}[1/p]\rtimes\mathbf{Z}$ has an exponential Dehn function (see for instance \cite{GH}), it follows that $A_4(\mathbf{Z}[1/p])$ has an at least exponential Dehn function. Actually, it is not hard to check that the Dehn function of $A_4(\mathbf{Z}[1/p])$ is exponential.

Let us also mention that all the results given here about $A_4$ extend with mechanical changes to higher dimensional Abels groups $A_d$ for $d\ge 4$; on the other hand $A_3(\mathbf{K})$ is not compactly presented if $\mathbf{K}$ is a non-Archimedean local field, again using \cite{BS}.
\end{rem}

\section{Cones of subgroups of  $\F_p^{(\mathbf{N})}$}\label{csds}

Let $\mathbf{N}$ be the set of positive integers (so $0\notin\mathbf{N}$).
Consider the group $\F_p^{(\mathbf{N})}$, with basis $(\delta_n)_{n\in\mathbf{N}}$ as a vector space over $\F_p$. We endow it with the left-invariant ultrametric distance defined by the length $|u|=\sup\{n:u_n\neq 0\}$. Observe that the ball of radius $n$ in $\F_p^{(\mathbf{N})}$ is equal to $\F_p^{\{1,\dots,n\}}$ and has cardinality $p^n$.
For every subset $J$ of $\mathbf{N}$, we endow the subgroup $\F_p^{(J)}$ with the induced metric.

The purpose of this section is to study the metric groups $\Cone_\omega \big(\F_p^{(J)}\big)$ in terms of properties of the subset $J$ and of the ultrafilter $\omega$. This analysis notably includes a classification of these asymptotic cones up to isomorphism of topological groups (see Theorem \ref{thm:cla}).

The relevance of this study comes from Corollary \ref{conab}: indeed, since $\F_p^{(J)}$ can be viewed as a bilipschitz embedded central subgroup of $A_4(\F_p[t,t^{-1},(t-1)^{-1}])$, there is a bilipschitz group isomorphism
\begin{equation}  \pi_1\left(\Cone\left(\Gamma_J\right)\right)\simeq \Cone_\omega \left(\F_p^{(J)}\right),
\label{identif}\end{equation}
where
\[\Gamma_J=A_4(\F_p[t,t^{-1},(t-1)^{-1}])/\F_p^{(J)}.\]

\subsection{Subsets of $\mathbf{N}$}
In this part, $J$ usually denotes a nonempty subset of $\mathbf{N}$ and is endowed with the induced distance from the reals.
Here we study asymptotic metric properties of $J$, which will be needed in \S\ref{fpj} in the study of the metric group $\F_p^{(J)}$.

Throughout the sequel, we use that the inclusion $J\subset\mathbf{R}$ induces an isometric 
identification of $\Cone_\omega(J)$ with a closed subset of $\mathbf{R}$ containing 0, mapping $(u_n)$ to $\lim_\omega u_n/n$.
 The motivation is the fact that the set of distances in the metric group $\Cone\big(\F_p^{(J)}\big)$ is exactly the asymptotic cone $\Cone_\omega(J)$ (see \S\ref{fpj}).

If $x\in\mathbf{N}$, define 
\[\sigma_J(x)=\inf_{y\in J}|\log(x/y)|.\] It measures the ``multiplicative distance" from $x$ to $J.$

\begin{lem}\label{critcone2}
Let $J$ be a nonempty subset of $\mathbf{N}$ and $\omega$ a nonprincipal ultrafilter. Then $\Cone_\omega(J)=\{0\}$ if and only if $\lim_{n\to\omega}\sigma_J(n)=\infty$.
\end{lem}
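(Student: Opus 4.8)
The plan is to unwind both directions directly from the isometric identification $\Cone_\omega(J)\cong\{\lim_\omega u_n/n : (u_n)\in\Precone(J)\}\subset\mathbf{R}$, so that $\Cone_\omega(J)=\{0\}$ precisely means: for every sequence $(y_n)$ in $J$ with $\limsup y_n/n<\infty$, one has $\lim_\omega y_n/n=0$. I would first record the contrapositive of ``$\Cone_\omega(J)=\{0\}$'': it fails iff there is a sequence $(y_n)\in\Precone(J)$ and some $\lambda>0$ with $\lim_\omega y_n/n=\lambda$ (one may assume the limit exists along $\omega$ since $\omega$ is an ultrafilter and the sequence is bounded). The quantity $\sigma_J(x)=\inf_{y\in J}|\log(x/y)|$ should be thought of as: $\sigma_J(x)$ is small iff $J$ comes multiplicatively close to $x$, i.e.\ $J$ meets an interval $[x e^{-\eps}, x e^{\eps}]$.

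For the direction ``$\lim_{n\to\omega}\sigma_J(n)=\infty \Rightarrow \Cone_\omega(J)=\{0\}$'', I would argue by contraposition. Suppose $\Cone_\omega(J)\neq\{0\}$, so there is $(y_n)\in\Precone(J)$ with $\lim_\omega y_n/n=\lambda>0$. For each $n$, $y_n\in J$, so $\sigma_J(n)\le|\log(n/y_n)|=|\log(y_n/n)|$. Along $\omega$, $y_n/n\to\lambda$, hence $|\log(y_n/n)|\to|\log\lambda|<\infty$ along $\omega$, so $\lim_\omega\sigma_J(n)\le|\log\lambda|<\infty$, contradicting $\lim_{n\to\omega}\sigma_J(n)=\infty$. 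Thus the hypothesis forces $\Cone_\omega(J)=\{0\}$.

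For the converse ``$\lim_{n\to\omega}\sigma_J(n)\not=\infty \Rightarrow \Cone_\omega(J)\neq\{0\}$'', note that if $\sigma_J$ does not tend to $\infty$ along $\omega$ then there is a constant $M$ and a set $A\in\omega$ with $\sigma_J(n)\le M$ for all $n\in A$. For each $n\in A$ pick $y_n\in J$ with $|\log(n/y_n)|\le M+1$, so $e^{-(M+1)}\le y_n/n\le e^{M+1}$; for $n\notin A$ set $y_n$ to be any fixed element of $J$ (a constant, so it does not affect the $\omega$-limit). Then $(y_n)$ lies in $\Precone(J)$ since $y_n/n$ is bounded, and along $\omega$ the ratio $y_n/n$ stays in the compact set $[e^{-(M+1)},e^{M+1}]$, which is bounded away from $0$. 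Passing to the $\omega$-limit, $\lambda:=\lim_\omega y_n/n\in[e^{-(M+1)},e^{M+1}]$, in particular $\lambda>0$, so $(y_n)$ represents a nonzero point of $\Cone_\omega(J)$, i.e.\ $\Cone_\omega(J)\neq\{0\}$.

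There is no real obstacle here; the only point requiring a little care is the bookkeeping of $\omega$-limits versus ordinary $\limsup$ in the definition of $\Precone$ — specifically, ensuring that the sequence $(y_n)$ constructed in the converse genuinely has $\limsup_n y_n/n<\infty$ (which it does, being globally bounded by $\max(e^{M+1},|y_{n_0}|)$ for the fixed choice $y_{n_0}$ used off $A$) and that choosing $y_n$ arbitrarily off the $\omega$-large set $A$ is harmless because $\omega$-limits ignore complements of sets in $\omega$. I would also explicitly invoke the identification of $\Cone_\omega(J)$ with a subset of $\mathbf{R}$ via $(u_n)\mapsto\lim_\omega u_n/n$ stated just before the lemma, so that ``$\Cone_\omega(J)=\{0\}$'' is literally the statement that every such limit vanishes.
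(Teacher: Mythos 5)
Your proof is correct and follows essentially the same route as the paper: in one direction you use a point $y_n\in J$ witnessing $\sigma_J(n)\le M$ on an $\omega$-large set to build a sequence with $\omega$-limit of $y_n/n$ in a compact interval away from $0$, and in the other you use the entries of a nonzero element of the precone (which lie in $J$) to bound $\sigma_J$ along $\omega$. The only cosmetic difference is that the paper bounds $\sigma_J$ via explicit constants $|\log(c/2)|,|\log C|$ where you invoke continuity of $\log$ at the positive limit; both are fine.
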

\begin{proof}
Suppose that $\lim_{\omega}\sigma_J=\ell<\infty$. So there is $M\in\omega$ with $\sigma_J(n)\le 2\ell$ for all $n\in M$. For all $n\in M$, let $u(n)$ be an element of $J$ with $|\log(n/u(n))|\le 2\ell$, in other words, $u(n)\in [e^{-2\ell}n,e^{2\ell}n]$. If $n\notin M$ define $u(n)=\inf(J)$. So $|u(n)|\le e^{2\ell}n$ for all $n$, so that $(u(n))$ defines an element of $\Cone_\omega(J)$; moreover its absolute value is at least $e^{-2\ell}$ so it is a nonzero element.

Conversely, assume that $\Cone_\omega(J)\neq\{0\}$; let $(u(n))$ be a nonzero element, so $u(n)\le Cn$ for all $n$ and $\lim_\omega |u(n)|/n=c>0$. 
Define $M=\{n\ge 1:|u(n)|\ge cn/2\}$, so $M\in\omega$. For $n\in M$, $u(n)\in J$ and thus $\sigma_J(n)\le\max(|\log(c/2)|,|\log C|)$, thus $\lim_\omega\sigma_J<\infty$.
\end{proof}

Consider now any quickly growing set of positive integers, i.e., $\{\alpha_m:m\ge 0\}$, where $\alpha_{m+1}/\alpha_m\to\infty$. Let $(I_n)_{n\in \mathbf{N}}$ be an infinite partition of this set into infinite sets. For every $n\in \mathbf{N},$ let $\omega_n$ be an ultrafilter supported by $I_n$. 

\begin{prop}\label{pleino2}
The map
\begin{eqnarray*}
\Phi:\{\textnormal{Subsets of $\mathbf{N}$}\} & \to & \{\textnormal{Subsets of $\mathbf{N}$}\}\\ J & \mapsto & \{n:\textnormal{Cone}_{\omega_n}(J)=\{0\}\}
\end{eqnarray*}
is surjective.
\end{prop}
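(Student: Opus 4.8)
\textbf{Proof plan for Proposition \ref{pleino2}.} The goal is to show that every subset $A\subseteq\mathbf{N}$ is of the form $\Phi(J)$ for some $J\subseteq\mathbf{N}$. The natural strategy is to build $J$ by deciding, index-block by index-block, whether to include a ``clump'' of integers near $\alpha_m$. Concretely, I would prescribe $J$ as a union of intervals of the quickly growing sequence: for each $m$, pick $\alpha_m$ itself (or a short multiplicative interval around it) and decide to throw it into $J$ exactly when $m\in I_n$ for some $n\in A$. In other words, set $J=\{\alpha_m : m\in I_n \text{ for some } n\in A\}$ (equivalently $J=\bigcup_{n\in A} I_n$, viewing the $I_n$ as subsets of the quickly growing set). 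The point is that $J$ is then "dense at scale $\omega_n$" precisely when $n\in A$, and "invisible at scale $\omega_n$" precisely when $n\notin A$.

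The verification rests entirely on Lemma \ref{critcone2}: $\Cone_{\omega_n}(J)=\{0\}$ iff $\lim_{k\to\omega_n}\sigma_J(k)=\infty$. Here $\sigma_J(x)=\inf_{y\in J}|\log(x/y)|$. First suppose $n\in A$. Then $I_n\subseteq J$, and $\omega_n$ is supported on $I_n$, so for $k\in I_n$ we have $\sigma_J(k)=0$; hence $\lim_{k\to\omega_n}\sigma_J(k)=0\neq\infty$, so $\Cone_{\omega_n}(J)\neq\{0\}$, i.e.\ $n\notin\Phi(J)$. Conversely suppose $n\notin A$. Then $J\subseteq\bigcup_{n'\in A}I_{n'}$, all of whose elements lie in the quickly growing set but miss $I_n$. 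I need $\lim_{k\to\omega_n}\sigma_J(k)=\infty$. Since $\omega_n$ concentrates on $I_n=\{\alpha_{m}:m\in S_n\}$ for the appropriate index set $S_n$, a typical $k$ is some $\alpha_m$ with $m\in S_n$; the nearest element of $J$ multiplicatively is some $\alpha_{m'}$ with $m'\neq m$, and because $\alpha_{m+1}/\alpha_m\to\infty$, the ratio $\alpha_m/\alpha_{m'}$ (or its reciprocal) tends to infinity as $m\to\infty$, so $|\log(\alpha_m/\alpha_{m'})|\to\infty$. One must be slightly careful: I should argue that for any fixed bound $N$, the set $\{k:\sigma_J(k)\le N\}$ meets $I_n$ in only finitely many elements, so its complement is in $\omega_n$; this follows because $\sigma_J(\alpha_m)\le N$ forces some $\alpha_{m'}\in J$ with $e^{-N}\le\alpha_m/\alpha_{m'}\le e^N$, and for $m$ large the gaps between consecutive $\alpha$'s exceed $e^N$, leaving $m'=m$, which is excluded since $\alpha_m\notin J$. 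Hence $\lim_{k\to\omega_n}\sigma_J(k)=\infty$ and $n\in\Phi(J)$.

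The main obstacle — really the only delicate point — is the quantitative control in the second case: making sure that "$J$ misses $I_n$ and consists of other blocks of the quickly growing sequence" genuinely forces $\sigma_J$ to go to infinity along $\omega_n$, uniformly enough that the argument doesn't leak. This is where the hypothesis $\alpha_{m+1}/\alpha_m\to\infty$ (rather than mere growth) is essential: it guarantees that distinct members of the quickly growing set are eventually arbitrarily far apart multiplicatively, so a point of $I_n$ cannot be multiplicatively approximated by any point of $J$ once $m$ is large. Everything else is bookkeeping: $\Phi$ is defined on all subsets, and by construction $\Phi(J)=A$, giving surjectivity.
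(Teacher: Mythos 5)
Your construction and verification follow essentially the same route as the paper: take $J$ to be a union of blocks $I_n$ of the quickly growing set and apply Lemma \ref{critcone2}, noting that $\sigma_J=0$ on every included block and that the hypothesis $\alpha_{m+1}/\alpha_m\to\infty$ forces $\sigma_J\to\infty$ along $\omega_n$ whenever $I_n$ is disjoint from $J$ (your ``only finitely many exceptions in $I_n$'' argument is a correct and somewhat more detailed version of the paper's one-line appeal to fast growth). The only flaw is a complementation slip at the end: for $J=\bigcup_{n\in A}I_n$ your own verification shows $n\in A\Rightarrow n\notin\Phi(J)$ and $n\notin A\Rightarrow n\in\Phi(J)$, i.e.\ $\Phi(J)=\mathbf{N}\smallsetminus A$, which contradicts your closing claim that ``by construction $\Phi(J)=A$.'' This does not endanger surjectivity, since $A\mapsto\mathbf{N}\smallsetminus A$ is a bijection on subsets of $\mathbf{N}$; but to hit a prescribed target $A$ directly you should instead take $J=\bigcup_{n\notin A}I_n$, which is exactly what the paper does when it checks $\Phi(J(\mathbf{N}\smallsetminus M))=M$ for $J(M)=\bigcup_{n\in M}I_n$.
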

\begin{proof}
Let $M$ be a subset of $\mathbf{N}$ and $J(M)=\bigcup_{n\in M}I_n$. Let us check that 
\[\{n\in\mathbf{N}:\textnormal{Cone}_{\omega_n}(J(M))\neq\{0\}\}=M,\]
so that $\Phi(J(\mathbf{N}\smallsetminus M))=M$.
 Indeed, if $n\in M$ then $I_n\subset J(M)$, so $\sigma_{J(M)}(m)=0$ for all $m\in I_n$
so $\lim_{\omega_n}\sigma_{J(M)}=0$ and $\Cone_{\omega_n}J(M)$ is nonzero by Lemma \ref{critcone2}. Conversely, if $n\notin M$, then
 $\lim_{m\in I_n,m\to\infty}\sigma_{J(M)}(m)=\infty$ (because $I$ is a fastly growing set), so $\lim_{\omega_n}\sigma_{J(M)}=\infty$, so $\Cone_{\omega_n}J(M)$ is zero, again by Lemma \ref{critcone2}. 
\end{proof}

For any subset $J\subset\mathbf{N}$, $C>1>c$, and $n\in\mathbf{N}$, let
\begin{itemize}
\item $\rho_C(n)$ be the smallest element in $J\cap [Cn,\infty\mathclose[\cup\{\infty\}$;
\item
$\lambda_c(n)$ be the largest element in $\{0\}\cup J\cap [0,cn]$.
\end{itemize}

\begin{defn}
We say that $J$ is $\omega$-{\em bounded} if $\lim_\omega\rho_C(n)/n=\infty$ for some $C>1$, and is $\omega$-{\em discrete} if $\lim_\omega\lambda_c(n)/n=0$ for some $c<1$.
\end{defn}

\begin{lem}\label{cp_bound2}
Let $J$ be a nonempty subset of $\mathbf{N}$. Then each of the following properties hold
\begin{enumerate}
\item $\Cone_\omega(J)$ is bounded if and only if $J$ is $\omega$-bounded, 
\item 0 is isolated in $\Cone_\omega(J)$ if and only if 
$J$ is $\omega$-discrete.
\end{enumerate}
\end{lem}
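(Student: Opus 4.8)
The two statements are parallel, so I would handle them together, using the isometric identification of $\Cone_\omega(J)$ with a closed subset of $\mathbf{R}$ containing $0$ via $(u_n)\mapsto\lim_\omega u_n/n$ (recalled just before Lemma~\ref{critcone2}). The overall strategy is: ``$\Cone_\omega(J)$ bounded'' means $\limsup$ over the cone of $\lim_\omega u_n/n$ is finite, which should translate into a uniform bound on how large elements of $J$ can be relative to $n$ along $\omega$; similarly ``$0$ isolated'' should translate into a uniform gap below $n$. In both cases one direction produces a nonzero (resp.\ small nonzero) element of the cone from a failure of the combinatorial condition, and the other direction extracts the combinatorial condition from such an element, exactly as in the proof of Lemma~\ref{critcone2}.

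\emph{Boundedness.} Suppose $J$ is not $\omega$-bounded, i.e.\ for every $C>1$ we have $\liminf_\omega \rho_C(n)/n<\infty$; applying this with, say, $C=2$, there is a constant $K$ and a set $M\in\omega$ with $\rho_2(n)\le Kn$ for $n\in M$, so that $J\cap[2n,Kn]\neq\emptyset$ for $n\in M$. Pick $u(n)$ in this intersection for $n\in M$ and $u(n)=\inf(J)$ otherwise; then $|u(n)|\le Kn$ always, so $(u(n))$ defines an element of $\Cone_\omega(J)$, and since $u(n)\ge 2n$ on $M\in\omega$, its value $\lim_\omega u(n)/n\ge 2$. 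Scaling this construction (replacing $C=2$ by larger $C$, or multiplying by large integers) produces elements of arbitrarily large norm, so $\Cone_\omega(J)$ is unbounded. Conversely, if $J$ is $\omega$-bounded, fix $C>1$ with $\lim_\omega\rho_C(n)/n=\infty$; I claim $\Cone_\omega(J)\subset[-C,C]$ — actually $[0,C]$. Given a nonzero element represented by $(u(n))\in\F_p^{(J)}$-terms, i.e.\ $u(n)\in J$ with $\lim_\omega u(n)/n=a>0$, on a set in $\omega$ we have $u(n)\ge an/2$; if $a>C$ then on a set in $\omega$, $u(n)\ge Cn$, so $\rho_C(n)\le u(n)$, contradicting $\rho_C(n)/n\to\infty$ along $\omega$ since $u(n)/n$ stays bounded. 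Hence every element has norm $\le C$, and $\Cone_\omega(J)$ is bounded.

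\emph{Isolation of $0$.} This is dual. If $J$ is $\omega$-discrete, fix $c<1$ with $\lim_\omega\lambda_c(n)/n=0$; I claim no element of $\Cone_\omega(J)$ lies in the punctured interval $(0,c)$. Indeed a nonzero element with value $a\in(0,c)$ would be represented by $u(n)\in J$ with $\lim_\omega u(n)/n=a$, so on a set in $\omega$, $u(n)\le cn$, whence $u(n)\le\lambda_c(n)$; but then $a=\lim_\omega u(n)/n\le\lim_\omega\lambda_c(n)/n=0$, a contradiction. So $0$ is isolated. Conversely, if $J$ is not $\omega$-discrete, then for every $c<1$, $\liminf_\omega\lambda_c(n)/n>0$; taking $c=1/2$ there is $\eps>0$ and $M\in\omega$ with $\lambda_{1/2}(n)\ge\eps n$ for $n\in M$, so $J\cap[\eps n,n/2]\neq\emptyset$. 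Choosing $u(n)$ in this set for $n\in M$ (and $u(n)=\inf J$ otherwise) gives an element of $\Cone_\omega(J)$ with value $\lim_\omega u(n)/n\in[\eps,1/2]$; but to show $0$ is \emph{not} isolated I need arbitrarily small nonzero values, which I get by the same argument with $c$ replaced by $c_k\to 0$: non-$\omega$-discreteness gives, for each $k$, a nonzero element of norm $\le c_k$. Hence $0$ is not isolated.

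\textbf{Main obstacle.} The genuine care needed is in handling the $\liminf$ versus $\lim$ along the ultrafilter: the definitions of $\omega$-bounded and $\omega$-discrete are phrased with $\lim_\omega$, so negating them requires knowing that $\rho_C(n)/n$ (resp.\ $\lambda_c(n)/n$) fails to tend to $\infty$ (resp.\ $0$) along $\omega$ for \emph{every} $C$ (resp.\ $c$), and then passing to a suitable subset in $\omega$ on which the relevant quantity is bounded (resp.\ bounded below). Also one should double-check the monotonicity ``$u(n)\le cn\Rightarrow u(n)\le\lambda_c(n)$'' and ``$u(n)\ge Cn\Rightarrow u(n)\ge\rho_C(n)$'' hold with the given one-sided definitions of $\lambda_c,\rho_C$ (they do, directly from the definitions as largest/smallest elements of the respective truncations), and verify the scaling step that upgrades a single nonzero element to an unbounded (resp.\ arbitrarily small) family. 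Everything else is a routine adaptation of the proof of Lemma~\ref{critcone2}.
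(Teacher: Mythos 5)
Your proposal is correct and takes essentially the same route as the paper, which simply declares these verifications straightforward: you carry out, in the spirit of the proof of Lemma~\ref{critcone2}, the direct translation between points of $\Cone_\omega(J)\subset\mathbf{R}$ and the behaviour of $\rho_C(n)/n$, $\lambda_c(n)/n$ along $\omega$, with the negations of the $\lim_\omega$ conditions handled correctly. The only slip is the parenthetical ``multiplying by large integers'' (the cone need not be stable under scaling), but it is harmless since your primary argument, applying non-$\omega$-boundedness to each $C>1$, already yields elements of arbitrarily large norm.
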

\begin{proof}
Suppose that $\lim_\omega\rho_C(n)/n=\infty$. Then $\Cone_\omega\big(\F_p^{(J)}\big)$ is equal to its closed $C$-ball. Conversely if $\Cone_\omega\big(\F_p^{(J)}\big)$ is equal to its open $C$-ball, then $\lim_\omega\rho_C(n)/n=\infty$. The verifications are straightforward and the other equivalence is similar.
\end{proof}

Recall that two subsets $A,B$ of $\mathbf{R}$ are at finite Hausdorff distance if there exists $c\ge 0$ such that $A\subset B+[-c,c]$ and $B\subset A+[-c,c]$.

The following lemma is motivated by the observation that if two metric spaces $X,Y$ are bilipschitz homeomorphic, then the set of logarithms of their nonzero distances are at finite Hausdorff distance (see \S\ref{fpj}). It is easy to construct continuum many closed discrete subsets of $\mathbf{R}$ at pairwise infinite Hausdorff distance. The following lemma gives a little improvement of this.

\begin{lem}\label{bilcon}
There exists a subset $K\subset\mathbf{N}$ such that the subsets
\[\log(\Cone_\omega(K)\smallsetminus\{0\})\subset\mathbf{R}\]
achieve continuum many subsets of $\mathbf{R}$ at pairwise infinite Hausdorff distance, when $\omega$ ranges over nonprincipal ultrafilters.
\end{lem}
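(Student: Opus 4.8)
The plan is to build $K$ as a union of blocks chosen so that the ``multiplicative structure'' of $K$ near scale $n$ can be prescribed independently at a continuum of different scale-sequences, and to read this structure off from $\Cone_\omega(K)$. Concretely, I would fix a very fast-growing sequence of integers $N_1 < N_2 < \dots$ (say $N_{k+1} \ge 2^{N_k}$) and, for each $k$, place inside the ``window'' $[N_k, N_k^{10}]$ (say) a finite geometric-type progression whose set of logarithms, rescaled by $1/N_k$, approximates a prescribed finite subset $F_k \subset [1,2]$ chosen from a fixed countable dense family of finite subsets of $[1,2]$. Since $\Cone_\omega(K)$ is, by Lemma~\ref{critcone2} and Lemma~\ref{cp_bound2}, controlled by which scales $n$ have elements of $K$ comparable to $n$ (i.e. by $\sigma_K$ and the functions $\rho_C,\lambda_c$), the nonzero part of $\Cone_\omega(K)$ will be (up to the diagonal action of $\lim_\omega$) essentially $\bigcup$ over the $k$ that are $\omega$-frequent of the rescaled window data $F_k$. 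The key point is that by choosing the windows exponentially separated, at most one value of $k$ is relevant for any given $\omega$ — more precisely, the ratio $N_{k+1}/N_k^{10}\to\infty$ forces $\Cone_\omega(K)$ to ``see'' at most one window — so $\log(\Cone_\omega(K)\smallsetminus\{0\})$ is (up to a bounded additive shift) an ultralimit of the sets $\log F_k$.

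\textbf{Key steps, in order.} First I would set up the coordinates: choose the fast sequence $(N_k)$, define the windows $W_k = K\cap[\text{interval around }N_k]$ as finite sets whose logarithm-set, divided by $N_k$, is within Hausdorff distance $1/k$ of a target finite set $F_k\subset[1,2]$, and let $K=\bigcup_k W_k$. Second, I would prove the ``one-window'' lemma: for every nonprincipal ultrafilter $\omega$, either $\Cone_\omega(K)=\{0\}$ (when $\omega$ concentrates on scales between windows, using Lemma~\ref{critcone2} as in Proposition~\ref{pleino2}), or there is a single $k=k(\omega)$ and a constant $a=a(\omega)\in[1,2]$ such that $\Cone_\omega(K)\smallsetminus\{0\}$ equals $a\cdot\bigl(\lim_\omega \tfrac1{N_k}W_\bullet\bigr)$ up to the scaling — here one uses the exponential gap to rule out contributions from two distinct windows simultaneously, and uses that an ultralimit along $\omega$ of the finite sets $(1/k)$-close to $F_k$ is one of the $F_k$ (or a limit thereof, which I would arrange to still lie in the prescribed family by taking the $F_k$ to enumerate a set closed under the relevant ultralimits, e.g. all finite subsets of $\mathbf{Q}\cap[1,2]$ does not suffice, so instead I would enumerate a countable set of finite subsets of $[1,2]$ and accept that $\Cone_\omega(K)$ realizes each as a limit). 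Third, given a continuum-sized family $(M_\alpha)_{\alpha}$ of infinite subsets of $\mathbf{N}$ that are ``almost disjoint'' in the sense that the sets $\{\log F_k : k\in M_\alpha\}$ have pairwise infinite Hausdorff distance in a robust way, I would, for each $\alpha$, pick an ultrafilter $\omega_\alpha$ supported on the scale-windows $\bigcup_{k\in M_\alpha}[N_k,N_k^{10}]$ concentrating on the windows themselves (so $\Cone_{\omega_{\alpha}}(K)\neq\{0\}$), and conclude that $\log(\Cone_{\omega_\alpha}(K)\smallsetminus\{0\})$ is within bounded Hausdorff distance of some $\log F_{k}$, $k\in M_\alpha$, hence at infinite Hausdorff distance from the corresponding set for $\alpha'\neq\alpha$. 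The combinatorial input is that one can choose countably many finite sets $F_k\subset[1,2]$ and a continuum of infinite index sets $M_\alpha$ so that the ``achievable'' logarithm-sets $\{(\text{bounded shift of }\log F_k): k\in M_\alpha\}$ are pairwise at infinite Hausdorff distance across $\alpha$; this is arranged by spacing the $F_k$ themselves so that distinct $F_k$ have large Hausdorff distance and the $M_\alpha$ form an almost-disjoint family refined enough that no single $F_k$ is seen by two different $\alpha$'s.

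\textbf{Main obstacle.} The delicate point is the ``one-window'' lemma and, relatedly, controlling what the ultralimit of the window data actually is: a priori $\Cone_\omega(K)$ could mix scales from the interior of a single window in a way that produces a limit set not equal to any $F_k$, and across windows the (bounded) additive shift $\log a(\omega)$ and the possibility of $\omega$ ``straddling'' the edge of a window need care. I expect to handle this by making each window $W_k$ itself self-similar enough (a genuine finite geometric progression with ratio close to $1$, so that for every scale $n$ inside the window $\sigma_K(n)$ is uniformly small and the local rescaled picture is an interval, or a controlled finite set) that the ultralimit is forced, and by making the inter-window gaps so large (ratio $\to\infty$) that Lemma~\ref{critcone2} kills any $\omega$ not concentrating on a single window. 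The remaining bookkeeping — extracting a continuum of pairwise-infinite-Hausdorff-distance sets from countably many $F_k$'s via an almost-disjoint family of $M_\alpha$'s — is the standard construction alluded to in the paragraph preceding the lemma (``It is easy to construct continuum many closed discrete subsets of $\mathbf{R}$ at pairwise infinite Hausdorff distance''), transplanted to this setting; the genuinely new content is that these sets are all realized inside $\Cone_\omega(K)$ for a \emph{single} $K$ as $\omega$ varies.
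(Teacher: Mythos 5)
Your underlying mechanism --- encode prescribed local pictures of $K$ at widely separated scales and use ultrafilters concentrating on chosen scale sequences to select which picture $\Cone_\omega(K)$ sees --- is indeed the paper's mechanism (Lemma \ref{ispec2}, used as in Proposition \ref{pleino2}). But the endgame of your plan fails: the sets you realize are \emph{bounded}, and any two nonempty bounded subsets of $\mathbf{R}$ are at finite Hausdorff distance. In your construction each $\omega_\alpha$ produces a set lying within bounded Hausdorff distance of (a bounded shift of) $\log F_k$ for a single finite $F_k\subset[1,2]$, hence within bounded Hausdorff distance of a subset of $[0,\log 2]$; so any two of your realized sets are at finite Hausdorff distance, no matter how you ``space'' the $F_k$ inside $[1,2]$ or how you choose the almost disjoint family $(M_\alpha)$ (your phrase ``the sets $\{\log F_k:k\in M_\alpha\}$ have pairwise infinite Hausdorff distance'' is already impossible for finite subsets of a fixed compact interval). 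Infinite Hausdorff distance is an at-infinity condition: each realized set $\log(\Cone_\omega(K)\smallsetminus\{0\})$ must be unbounded, and different ultrafilters must disagree along unbounded tails; \emph{which} bounded finite pattern appears is invisible to this invariant. (There is also a smaller inconsistency in your setup: a window in $[N_k,N_k^{10}]$ cannot have its rescaled picture contained in $[1,2]$; but this is cosmetic compared to the boundedness problem.)

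The paper's proof addresses exactly this point. It takes $K=I$ from Lemma \ref{ispec2}, so that at scale $m=m(n,J)$ the window $[n^{-1}m,nm]$ of $I$ is exactly $\{jm:j\in J\}$, and, for a fixed \emph{unbounded} target $L\subset\mathbf{N}$, uses the scales $m_n=m(n,L\cap\{1,\dots,n\})$: along any nonprincipal ultrafilter containing $\{m_n\}$ one gets $\Cone_\omega(I)=\{0\}\cup L$ exactly. Realizing $L$ through its increasing truncations also removes your worry about ultralimits of the patterns escaping the prescribed family. The targets are then $L=J(M)=\bigcup_{n\in M}I_n$, where $\{\alpha_m\}$ is a sequence with $\alpha_{m+1}/\alpha_m\to\infty$ partitioned into infinitely many infinite sets $I_n$: for $M\neq M'$, the set $J(M)$ contains infinitely many $\alpha_m\notin J(M')$, and the distance from $\log\alpha_m$ to $\log J(M')$ is at least the log-ratio of consecutive $\alpha$'s near $\alpha_m$, which tends to infinity; hence $\log J(M)$ and $\log J(M')$ are at infinite Hausdorff distance, and there are continuum many choices of $M$. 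If you want to keep your window formulation, the fix is to let the pattern encoded at the $k$-th relevant scale be a longer and longer truncation of a fixed unbounded, super-exponentially spaced target set --- which is the paper's construction in different clothing.
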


We need the following lemma, which will be used several times in the sequel.

\begin{lem}\label{ispec2}
There exists a subset $I\subset\mathbf{N}$ such that for every $n$ and every $J\subset\{1,\dots,n\}$, there is $m\in\mathbf{N}$ satisfying $m\ge n$ and $I\cap [n^{-1}m,nm]=\{jm:j\in J\}$.
\end{lem}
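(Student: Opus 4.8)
\textbf{Proof plan for Lemma \ref{ispec2}.} The plan is to build $I$ as a union of finite ``blocks'', where the $k$-th block is placed around a center $m_k$ chosen so astronomically large (compared to everything used at earlier stages) that the blocks are multiplicatively very far apart, ensuring that no window $[n^{-1}m, nm]$ can see two different blocks at once. Within the $k$-th block we will want to realize, for the relevant scale parameter, \emph{every} possible dilated subset $\{jm : j \in J\}$ with $J \subseteq \{1,\dots,n\}$; since for a fixed $n$ there are only finitely many such pairs $(n,J)$ and only countably many pairs overall, we can enumerate all requirements $(n_k, J_k)_{k\ge 1}$ (listing each pair infinitely often if desired, though once suffices) and devote block $k$ to requirement $k$.

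Concretely, I would proceed as follows. Enumerate all pairs $(n,J)$ with $n \in \mathbf{N}$ and $J \subseteq \{1,\dots,n\}$ as $(n_k,J_k)_{k\ge 1}$. Define the centers $m_k$ recursively: set $m_1 = n_1$, and having chosen $m_1 < \dots < m_{k-1}$, pick $m_k$ to be any integer with $m_k \ge n_k$ and $m_k \ge (n_k)^2 \cdot n_{k-1}^2 \cdots \cdot (\text{enough to dominate all earlier windows})$ — say $m_k > (2 n_k)^2 \cdot m_{k-1}$ together with $m_k > (2m_{k-1})^{n_{k-1}+2}$, so that $[n_k^{-1}m_k,\, n_k m_k]$ lies entirely above $[m_{k-1}^{-1} m_{k-1}, \dots]$, in fact above $n_j m_j$ for all $j<k$. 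Then set the $k$-th block to be $B_k = \{ j m_k : j \in J_k\}$ and let $I = \bigcup_{k\ge 1} B_k$. (One should also check $B_k \subseteq \mathbf{N}$, which is automatic since $j, m_k \in \mathbf{N}$.)

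It remains to verify the defining property: given $n$ and $J \subseteq \{1,\dots,n\}$, the pair $(n,J)$ equals some $(n_k,J_k)$, and I claim $m = m_k$ works, i.e. $I \cap [n^{-1}m_k, n m_k] = \{ j m_k : j \in J\} = B_k$. The inclusion $\supseteq$ is clear since $j \le n$ gives $j m_k \le n m_k$ and $j\ge 1$ gives $j m_k \ge m_k \ge n^{-1} m_k$. For $\subseteq$: any element of $I$ in this window lies in some block $B_\ell$, so it is $j' m_\ell$ with $j' \le n_\ell$. If $\ell > k$ then $j' m_\ell \ge m_\ell \ge n_\ell m_\ell \cdot n_\ell^{-1} > n_k m_k$ by the growth condition (since $m_\ell$ dominates $n_k m_k = n_\ell$-window stuff... more precisely $m_\ell > (2n_\ell)^2 m_{\ell - 1} \ge \dots > n_k m_k$), contradicting membership in $[n^{-1}m_k, n m_k] = [n_k^{-1}m_k, n_k m_k]$; if $\ell < k$ then $j' m_\ell \le n_\ell m_\ell < n_k^{-1} m_k$ again by the growth condition; hence $\ell = k$, so the element is $j' m_k$ with $j' \in J_k = J$, as desired.

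The main obstacle is getting the growth conditions on the $m_k$ exactly right so that both ``too large'' blocks ($\ell > k$) overshoot the top of the window and ``too small'' blocks ($\ell < k$) undershoot the bottom; the bottom of the window is $n_k^{-1} m_k$, which is still large, so one needs $n_\ell m_\ell < n_k^{-1} m_k$, i.e. $m_k$ must beat $n_k n_\ell m_\ell$ — this is why the recursion should carry forward a product of the $n_j$'s (or simply demand $m_k > (2 n_k)^{2} (2 m_{k-1})^{n_{k-1}+2}$, which comfortably dominates $n_k \cdot n_{k-1} \cdot m_{k-1} \ge n_k \cdot n_\ell m_\ell$ for every $\ell < k$ since the sequence $n_\ell m_\ell$ is increasing). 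Everything else is bookkeeping.
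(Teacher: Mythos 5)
Your proposal is correct and is exactly the construction the paper has in mind — its proof is the one-line "construct $I$ by an obvious induction, after enumerating finite subsets of $\mathbf{N}$," which your block-by-block enumeration with rapidly growing centers $m_k$ carries out in detail. (The intermediate chain "$m_\ell > (2n_\ell)^2 m_{\ell-1} \ge \dots > n_k m_k$" alone wouldn't suffice, but the stronger condition $m_k > (2n_k)^2(2m_{k-1})^{n_{k-1}+2}$ you settle on at the end, together with $m_k\ge n_k$, does make both the "$\ell<k$" and "$\ell>k$" cases go through.)
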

\begin{proof}
Construct $I$ by an obvious induction, after enumerating finite subsets of $\mathbf{N}$.
\end{proof}

\begin{proof}[Proof of Lemma \ref{bilcon}]
Let us use the notation of the proof of Proposition \ref{pleino2}. Let us first observe that for any two distinct subsets $M,M'$ of $\mathbf{N}$, the subsets $\log(J(M))$ and $\log(J(M'))$ are at infinite Hausdorff distance. 

So to prove the result, let us construct $I$ such that for every $M\subset\mathbf{N}$ there exists an ultrafilter satisfying $\Cone_\omega(I)=\{0\}\cup J(M)$. More generally, let us show that the set $I$ of Lemma \ref{ispec2} satisfies: for every subset $L$ of $\mathbf{N}$, there exists a nonprincipal ultrafilter $\omega$ such that $\Cone_\omega(I)=\{0\}\cup L$. Indeed, write, in Lemma \ref{ispec2}, $m=m(n,J)$. Fix $L$ and define $m_n=m(n,L\cap\{1,\dots,n\})$. Then $\frac{1}{m_n}I\cap [n^{-1},n]=L\cap\{1,\dots,n\}$. Therefore for every nonprincipal ultrafilter $\omega$ containing $\{m_n:n\in\mathbf{N}\}$, we have $\Cone_\omega(I)=\{0\}\cup L$. 
\end{proof}

We will also use some further asymptotic features of subsets $J$ of $\mathbf{N}$, relevant to the study of asymptotic cones of $\F_p^{(J)}$, that are not necessarily reflected in the asymptotic cones of $J$ itself.

\begin{defn}Let $J$ be a subset of $\mathbf{N}$.
We say that $J$ is
\begin{itemize}
\item $\omega$-{\em doubling} if for all $0<c<C<\infty$ we have 
\[\lim_\omega \#([cn,Cn]\cap J)<\infty\]
\item lower $\omega$-{\em semidoubling} if for some $C_0$, this is true for all $C\le C_0$ and all $c<C$;
\item upper $\omega$-{\em semidoubling} if for some $c_0$, this is true for all $c\ge c_0$ and all $C>c$. 
\end{itemize}
\end{defn}

(Note that the terminology here corresponds to the growth of $\log(J)$ rather than $J$. The point is that we are especially motivated by the growth in $\F_p^{(J)}$.) 

\begin{lem}
For every subset $J\subset\mathbf{N}$ and nonprincipal ultrafilter $\omega$, we have the implications
\begin{itemize}
\item $\omega$-doubling $\Rightarrow$ lower $\omega$-semidoubling $\Leftarrow$ $\omega$-discrete
\item $\omega$-doubling $\Rightarrow$ upper $\omega$-semidoubling $\Leftarrow$ $\omega$-bounded.
\end{itemize}
\end{lem}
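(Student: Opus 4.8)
The statement asserts four implications linking five asymptotic conditions on a subset $J\subset\mathbf{N}$. My plan is to unwind each definition directly, translating ``$\#([cn,Cn]\cap J)$ stays bounded along $\omega$'' into combinatorial statements about where elements of $J$ can lie near a given scale. The two ``$\omega$-doubling $\Rightarrow$ semidoubling'' implications are immediate: $\omega$-doubling asserts $\lim_\omega\#([cn,Cn]\cap J)<\infty$ for \emph{all} $0<c<C<\infty$, whereas lower (resp.\ upper) $\omega$-semidoubling asserts the same property only for $C\le C_0$ (resp.\ $c\ge c_0$) for some threshold; so any $C_0$ (resp.\ $c_0$) works, and the implication is just ``for all'' implies ``for all small/large''. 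These need essentially no argument beyond citing the definitions.

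\textbf{The content is in the other two implications.} For ``$\omega$-discrete $\Rightarrow$ lower $\omega$-semidoubling'': recall $\omega$-discrete means $\lim_\omega\lambda_c(n)/n=0$ for some $c<1$, i.e.\ along an $\omega$-large set of $n$, the largest element of $J$ below $cn$ is $o(n)$. I would fix this $c$ and set $C_0=c$. Then for any $C\le C_0=c$ and any $c'<C$, I must bound $\#([c'n,Cn]\cap J)$ along $\omega$. The key point: for $n$ in the relevant $\omega$-large set, $\lambda_c(n)\le\eps n$ eventually (in the $\omega$ sense) for every $\eps>0$, so in fact $[c'n,Cn]\cap J\subset[c'n,cn]\cap J=\emptyset$ once $\eps<c'$. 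Hence the count is $0$ along $\omega$, which certainly has finite $\omega$-limit. (One must be slightly careful: $\omega$-discreteness as stated gives $\lim_\omega\lambda_c(n)/n=0$, which is exactly the statement that $\{n:\lambda_c(n)\le\eps n\}\in\omega$ for each $\eps>0$; intersecting finitely many such sets keeps us $\omega$-large, and that suffices to compute the $\omega$-limit of the count.) Symmetrically, for ``$\omega$-bounded $\Rightarrow$ upper $\omega$-semidoubling'': $\omega$-bounded means $\lim_\omega\rho_C(n)/n=\infty$ for some $C>1$, so along an $\omega$-large set the smallest element of $J$ that is $\ge Cn$ is actually $\ge Rn$ for any prescribed $R$. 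Set $c_0=C$; then for $c\ge c_0=C$ and any $C'>c$, the interval $[cn,C'n]$ lies below the next element of $J$ above $Cn$ once $R>C'$, so $[cn,C'n]\cap J=\emptyset$ along $\omega$ and the count is $0$.

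\textbf{Assembling.} After these four one-paragraph arguments the lemma is proved; I would write it as a short proof handling the two trivial implications in one sentence and then giving the $\omega$-discrete and $\omega$-bounded cases, noting the second is symmetric to the first (replacing ``largest element below'' by ``smallest element above'' and reversing inequalities). The only mild subtlety to be careful about — and the closest thing to an obstacle — is bookkeeping with the ultrafilter: ``$\lim_\omega f(n)=0$'' must be correctly used as ``$\{n:f(n)<\eps\}\in\omega$ for all $\eps>0$'', and since I only need finitely many choices of $\eps$ (indeed one, determined by $c'$ or $C'$) for each fixed pair of constants, no diagonal argument over $\eps$ is required and the $\omega$-large sets combine harmlessly. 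There is no genuine difficulty here; the lemma is essentially an exercise in matching quantifiers across the definitions of ``$\omega$-doubling'', ``semidoubling'', ``$\omega$-discrete'', and ``$\omega$-bounded''.
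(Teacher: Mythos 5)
Your proof is correct, and it is a direct, self-contained verification: the two implications out of $\omega$-doubling are indeed pure quantifier weakenings, and your treatment of the other two is sound --- for $\omega$-discreteness with constant $c$ you take $C_0=c$, and for $C\le c$, $c'<C$ the hypothesis $\lim_\omega\lambda_c(n)/n=0$ (used with the single threshold $\eps<c'$) forces $J\cap[c'n,Cn]=\emptyset$ for $\omega$-almost all $n$; symmetrically, $\omega$-boundedness with constant $C$ gives $c_0=C$ and $J\cap[cn,C'n]=\emptyset$ $\omega$-almost surely once $\rho_C(n)\ge Rn$ with $R>C'$. This is, however, not the route the paper takes: the paper gives no direct argument at all, explicitly leaving the verification to the reader and instead noting that the lemma follows \emph{a posteriori} from the later results (Lemma \ref{cotr} and Proposition \ref{doubca}), which translate $\omega$-doubling, semidoubling, boundedness and discreteness of $J$ into properness, local compactness, $\sigma$-boundedness, boundedness and discreteness of $\Cone_\omega\big(\F_p^{(J)}\big)$; the lemma then reduces to the trivial metric-space implications ``proper $\Rightarrow$ locally compact $\Leftarrow$ discrete'' and ``proper $\Rightarrow$ $\sigma$-bounded $\Leftarrow$ compact''. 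Your approach buys an elementary proof depending only on the definitions (and would let the lemma be quoted before the cone analysis), while the paper's indirect derivation buys conceptual economy, explaining \emph{why} the implications hold by exhibiting them as shadows of standard relations between metric properties of the cone --- at the cost of resting on the substantially heavier Proposition \ref{doubca}. Either way the statement is established; your write-up only needs the quantifier bookkeeping you already flag (one $\eps$, resp.\ one $R$, per pair of constants), so there is no gap.
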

\begin{proof}
Since this lemma will not be used directly and will be obtained indirectly as a consequence of Lemma \ref{cotr} and Proposition \ref{doubca}, we leave the proof to the reader. Using these lemmas, it just corresponds to the implications for arbitrary metric spaces
\begin{itemize}
\item proper $\Rightarrow$ locally compact $\Leftarrow$ discrete;
\item proper $\Rightarrow$ $\sigma$-bounded  $\Leftarrow$ compact.\qedhere
\end{itemize}
\end{proof}

We also need the following notion

\begin{defn}
Let $J$ be a subset of $\mathbf{N}$ and $\omega$ a nonprincipal ultrafilter. Define the $\omega$-width of $J$ as
\[\wid_\omega(J)=\sup_{C>1}\lim_{n\to\omega}\#([C^{-1}n,Cn]\cap J)\;\in\{0,1,\dots,\infty\}.\]
 \end{defn}

For instance, $\mathbf{N}$ has infinite width for every nonprincipal ultrafilter; the same is true for the set of powers of 2. Thus finite width means very sparse in a certain sense.

By Lemma \ref{critcone2}, $\wid_\omega(J)=0$ if and only if $\Cone_\omega(J)=\{0\}$. The definition of the width can be essentially restated as

\begin{lem}\label{wid}
Let $J$ be a subset of $\mathbf{N}$ and $\omega$ a nonprincipal ultrafilter. Given $q\in\mathbf{N}\cup\{0\}$, we have
\begin{itemize}
\item  $\wid_\omega(J)\le q$ if and only if for every $C>1$ and $\omega$-almost all $n\in\mathbf{N}$ we have $\#([C^{-1}n,Cn]\cap J)\le q$;
\item $\wid_\omega(J)=q$ if and only if for every large enough $C$, for $\omega$-almost all $n$ we have $\#([C^{-1}n,Cn]\cap J)=q$;
\item $\wid_\omega(J)\ge q$ if and only if for some $C>1$, for $\omega$-almost all $n$ we have $\#([C^{-1}n,Cn]\cap J)\ge q$.
\end{itemize}
\end{lem}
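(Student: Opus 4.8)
The plan is to prove Lemma~\ref{wid} directly from the definition of $\wid_\omega(J)$ as the supremum over $C>1$ of the quantities $f(C):=\lim_{n\to\omega}\#([C^{-1}n,Cn]\cap J)$. The crucial structural observation is that $C\mapsto f(C)$ is monotone nondecreasing in $C$: if $C\le C'$, then $[C^{-1}n,Cn]\subseteq[C'^{-1}n,C'n]$ for every $n$, so the counting function is pointwise monotone and passing to the $\omega$-limit preserves the inequality. Hence $\wid_\omega(J)=\sup_{C>1}f(C)=\lim_{C\to\infty}f(C)$, an increasing limit in $\{0,1,\dots,\infty\}$. This monotonicity is what makes the three restatements go through, and it is essentially the only nontrivial ingredient — the rest is unwinding what ``$\lim_{n\to\omega}$ of an integer-valued quantity equals $q$'' means, namely that $\omega$-almost every $n$ has that quantity equal to $q$ (since an integer-valued function converges to an integer $q$ along $\omega$ iff it is $\omega$-almost-everywhere equal to $q$).

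First I would treat the first bullet: $\wid_\omega(J)\le q$ iff for every $C>1$ and $\omega$-almost all $n$ we have $\#([C^{-1}n,Cn]\cap J)\le q$. For the forward direction, $\wid_\omega(J)\le q$ means $f(C)\le q$ for all $C>1$; since $f(C)$ is the $\omega$-limit of an integer-valued function bounded (in the limit) by $q$, the set $\{n:\#([C^{-1}n,Cn]\cap J)\le q\}$ lies in $\omega$ — more carefully, $\{n:\#(\cdots)\le q\}$ is the complement of $\{n:\#(\cdots)\ge q+1\}$, and if the latter were in $\omega$ the $\omega$-limit would be $\ge q+1$. The converse is immediate: the hypothesis forces $f(C)\le q$ for each $C$, hence the sup is $\le q$. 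For the third bullet ($\wid_\omega(J)\ge q$ iff for some $C>1$, $\omega$-almost all $n$ satisfy $\#([C^{-1}n,Cn]\cap J)\ge q$), the forward direction uses monotonicity: $\wid_\omega(J)\ge q$ means $f(C)\ge q$ for $C$ large enough (since $f$ increases to the sup), and $f(C)\ge q$ forces $\{n:\#(\cdots)\ge q\}\in\omega$ by the same integer-valued-limit argument; conversely such a $C$ gives $f(C)\ge q$, so the sup is $\ge q$.

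Finally the middle bullet follows by combining the first and third: $\wid_\omega(J)=q$ iff $\wid_\omega(J)\le q$ and $\wid_\omega(J)\ge q$. The ``$\ge q$'' part gives some $C_0$ with $\omega$-almost all $n$ satisfying $\#([C_0^{-1}n,C_0 n]\cap J)\ge q$; the ``$\le q$'' part gives, for \emph{every} $C$ (in particular every $C\ge C_0$), that $\omega$-almost all $n$ satisfy $\#([C^{-1}n,C n]\cap J)\le q$. Intersecting the two $\omega$-large sets (using monotonicity of the interval in $C$ to carry the lower bound from $C_0$ up to any $C\ge C_0$) yields: for every $C\ge C_0$, $\omega$-almost all $n$ satisfy $\#([C^{-1}n,C n]\cap J)=q$, which is exactly the stated ``for every large enough $C$'' conclusion; the converse direction is again a direct reading off of $f(C)$.

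I do not expect any real obstacle here — the lemma is essentially a reformulation, and the only point requiring a line of care is the monotonicity of $C\mapsto f(C)$ together with the standard fact that an $\mathbf{N}$-valued net converges along an ultrafilter to $q\in\mathbf{N}$ precisely when it equals $q$ on an $\omega$-large set. The mild subtlety worth flagging explicitly in the write-up is that in the middle bullet one must quantify ``for every large enough $C$'' correctly: the lower bound $\ge q$ is witnessed by a single $C_0$, while the upper bound $\le q$ holds for all $C$, so the equality holds for all $C\ge C_0$, and one should note that enlarging $C$ beyond $C_0$ only enlarges the interval, hence cannot destroy the lower bound once combined with the (global) upper bound.
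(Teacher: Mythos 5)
Your proof is correct, and it is essentially the paper's approach: the paper simply declares the first bullet a restatement of the definition and the other two obvious, and your write-up is the straightforward verification of exactly that, using monotonicity of $C\mapsto\lim_{n\to\omega}\#([C^{-1}n,Cn]\cap J)$ and the fact that an integer-valued function has $\omega$-limit $q$ iff it equals $q$ $\omega$-almost everywhere. No gaps; the one point you flag (carrying the lower bound from a single $C_0$ to all $C\ge C_0$ via inclusion of intervals) is handled correctly.
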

\begin{proof}
The first assertion is a restatement of the definition, and the two others are obviously equivalent.
\end{proof}

\begin{lem}
Let $J$ be a subset of $\mathbf{N}$. Then $\wid_\omega(J)<\infty$ if and only if $J$ is $\omega$-doubling, $\omega$-bounded and $\omega$-discrete. Moreover, these conditions imply that $\Cone_\omega(J)$ has at most $\wid_\omega(J)+1$ elements.
\end{lem}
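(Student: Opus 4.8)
The plan is to prove both directions of the equivalence and then the cardinality bound. First I would observe that $\wid_\omega(J)<\infty$ is by Lemma \ref{wid} equivalent to: there exists a fixed $C_0>1$ and $q\in\mathbf{N}$ such that for $\omega$-almost all $n$, $\#([C_0^{-1}n,C_0n]\cap J)\le q$. Now I would show each of $\omega$-doubling, $\omega$-bounded, $\omega$-discrete follows. For $\omega$-doubling: given $0<c<C$, cover the interval $[cn,Cn]$ by boundedly many (say $N$, depending only on $c,C,C_0$) intervals of the form $[C_0^{-1}m,C_0m]$; then $\#([cn,Cn]\cap J)\le Nq$, so $\lim_\omega$ of this quantity is finite. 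For $\omega$-bounded: take $C=C_0$; we must show $\lim_\omega\rho_{C_0}(n)/n=\infty$, i.e. for each fixed $A>1$, $\omega$-almost surely $J\cap[C_0 n,An]=\emptyset$; but if this failed infinitely often (in $\omega$) one could, along such $n$, find arbitrarily many points of $J$ in a single dilated window by a telescoping/covering argument contradicting the uniform bound $q$ — more carefully, if $\wid_\omega(J)\ge q+1$ is false yet $\omega$-boundedness fails, pick $A$ large and note $[C_0^{-1}(C_0 n), C_0(An)]$ contains, for a suitable rescaled center, more than $q$ points. I would make this precise by the contrapositive: if $J$ is not $\omega$-bounded, then for every $C>1$, $\lim_\omega\rho_C(n)/n<\infty$ (along $\omega$), which produces, for each $C$, a point of $J$ in $[Cn, C'n]$ for some fixed $C'$; iterating with $C=C_0^2, C_0^4,\dots$ packs $q+1$ points of $J$ into one window $[C_0^{-1}m,C_0m]$ after rescaling, so $\wid_\omega(J)=\infty$. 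The $\omega$-discrete case is symmetric, using $\lambda_c$ and small dilations toward $0$.

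For the converse, assume $J$ is $\omega$-doubling, $\omega$-bounded, and $\omega$-discrete; I want $\wid_\omega(J)<\infty$, i.e. a single $C_0$ and $q$ that work. From $\omega$-boundedness fix $C_1>1$ with $\lim_\omega\rho_{C_1}(n)/n=\infty$, so $\omega$-almost surely $J\cap[C_1 n,\infty)$ starts far beyond $C_1 n$; in particular, for $\omega$-almost all $n$, $J\cap[C_1 n, C_1^2 n]=\emptyset$ once $n$ is large in $\omega$ — actually the cleanest statement is: there is $C_1$ such that $\omega$-almost surely the interval $[n,C_1 n]$ contains at most one point of... no; I would instead argue directly. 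Fix $C_1$ from $\omega$-boundedness and $c_1<1$ from $\omega$-discreteness so that $\omega$-almost surely $J\cap(c_1 n, C_1 n)$ is "all of $J$ near $n$" in the sense that the next point of $J$ above is $\ge$ something $\gg n$ and the previous is $\le$ something $\ll n$. Then the window $[C_1^{-1}n, C_1 n]\cap J$ is, for $\omega$-almost all $n$, sandwiched so that it equals $[c_1 n, C_1 n]\cap J$ up to the boundary, and $\omega$-doubling with the pair $(c_1, C_1)$ gives $\lim_\omega\#([c_1 n, C_1 n]\cap J) =: q<\infty$; hence by Lemma \ref{wid}, $\wid_\omega(J)\le q<\infty$. (One has to be slightly careful that $\omega$-doubling gives a finite $\omega$-limit of the cardinality, not merely boundedness along a set in $\omega$, but since the cardinality is integer-valued, a finite $\omega$-limit is exactly an $\omega$-almost-sure constant value, so this is automatic.)

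For the final cardinality claim, suppose these conditions hold and set $q=\wid_\omega(J)$, which is finite and by the above equals the $\omega$-almost-sure value of $\#([C^{-1}n,Cn]\cap J)$ for all large $C$. Recall $\Cone_\omega(J)$ embeds isometrically as a closed subset of $\mathbf{R}_{\ge0}$ containing $0$, via $(u_n)\mapsto\lim_\omega u_n/n$. I claim it has at most $q+1$ points, namely $0$ together with at most $q$ positive values. Indeed, suppose $\ell_1<\dots<\ell_{q+1}$ were distinct positive elements; choose $C$ large enough that $\ell_{q+1}/\ell_1 < C^2$ and $C$ exceeds the threshold in Lemma \ref{wid}. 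Each $\ell_k$ is represented by a sequence $(u_n^{(k)})$ with $u_n^{(k)}\in J$ and $\lim_\omega u_n^{(k)}/n=\ell_k$; then for $\omega$-almost all $n$, all of $u_n^{(1)},\dots,u_n^{(q+1)}$ lie in $[\ell_1 n/C, \ell_{q+1} n\cdot C]\subset [C^{-1}(\ell_1 C n), C(\ell_1 C n)]$ — rescaling the center to $m_n := \lceil \ell_1 C n\rceil$ we get $q+1$ distinct elements of $J$ in $[C^{-1}m_n, C m_n]$ (distinctness for $\omega$-almost all $n$ because the ratios $u_n^{(j)}/u_n^{(k)}\to\ell_j/\ell_k\ne1$), contradicting $\#([C^{-1}m_n,Cm_n]\cap J)\le q$. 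Hence $\Cone_\omega(J)$ has at most $q+1=\wid_\omega(J)+1$ elements. The main obstacle I anticipate is the bookkeeping in the first converse-type arguments — making precise how a failure of $\omega$-boundedness or $\omega$-discreteness forces, after a single rescaling of the window center, arbitrarily many points of $J$ into one fixed-ratio interval, so that $\wid_\omega(J)=\infty$; once the window-rescaling trick is set up carefully (it is the same trick used in the cardinality bound), everything else is routine manipulation of $\omega$-limits of integers.
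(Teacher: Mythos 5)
Your argument has a genuine gap, and it originates in the very first sentence: $\wid_\omega(J)<\infty$ is \emph{not} equivalent to the existence of a single $C_0>1$ and $q$ with $\#([C_0^{-1}n,C_0n]\cap J)\le q$ for $\omega$-almost all $n$. By the definition (and Lemma \ref{wid}), finiteness of the width means a bound $q$ valid for \emph{every} $C>1$; the single-$C_0$ statement is strictly weaker. For instance, take $J=\{2^{k^2}4^j:\,0\le j\le k\}$ and $\omega$ containing $\{2\cdot 2^{k^2}:k\ge 1\}$: the windows $[\tfrac23 n,\tfrac32 n]$ are $\omega$-a.s.\ disjoint from $J$, yet $\wid_\omega(J)=\infty$ and $J$ is not $\omega$-bounded. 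So the implication you try to prove in the forward direction (three properties from the weak single-$C_0$ hypothesis) is simply false, and no bookkeeping can rescue it. The device you rely on to compensate --- covering $[cn,Cn]$ by windows $[C_0^{-1}m,C_0m]$ with recentred $m\approx\alpha n$, and likewise ``rescaling the center to $m_n$'' in the $\omega$-boundedness and cardinality arguments --- fails for a second, independent reason: the hypothesis controls the count only for $\omega$-almost all centers $n$, and the image of an $\omega$-large set under $n\mapsto\lceil\alpha n\rceil$ need not be $\omega$-large (ultrafilters are not dilation-invariant), so you know nothing about windows centred at $\alpha n$. The same objection applies verbatim to the recentring step in your cardinality bound.

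With the correct quantifier the recentring is never needed, because for a fixed centre the counts are monotone in $C$: $\omega$-doubling is immediate from $[cn,Cn]\subset[C'^{-1}n,C'n]$ with $C'=\max(c^{-1},C)$; if $J$ is not $\omega$-bounded (resp.\ not $\omega$-discrete), iterating $\rho$ (resp.\ $\lambda$) with $C_1<C_2<\dots$ chosen so the resulting intervals are disjoint packs $q+1$ points of $J$ into $[D^{-1}n,Dn]$ for a suitable fixed $D$ and $\omega$-a.a.\ $n$, contradicting $\wid_\omega(J)\le q$ via Lemma \ref{wid}; your converse must sandwich $[C^{-1}n,Cn]\cap J\subset[c_1n,C_1n]$ $\omega$-a.s.\ \emph{for every fixed} $C$ (which $\omega$-boundedness and $\omega$-discreteness do give, with the a.s.\ set depending on $C$) before invoking $\omega$-doubling, rather than treating only the window of parameter $C_1$; and in the cardinality bound it suffices to take $C>\max(2/\ell_1,2\ell_{q+1})$ so that the $q+1$ representatives already lie in $[C^{-1}n,Cn]$ itself. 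Note finally that the paper does not prove this lemma directly: it deduces it from Proposition \ref{widf} together with Proposition \ref{doubca} and Lemma \ref{cotr} (finiteness of $\Cone_\omega\big(\F_p^{(J)}\big)$ being equivalent to properness, boundedness and discreteness), so a repaired direct argument along your lines would be a legitimate alternative route.
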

\begin{proof}
Since this lemma will not be used directly and will be obtained indirectly as a consequence of Proposition \ref{widf}, we omit the proof (the equivalence corresponding to the equivalence, for an arbitrary metric space: finite $\Leftrightarrow$ proper, discrete and bounded).
\end{proof}

The following proposition contains a list of behaviors, which is not comprehensive, but is enough to be applied to Theorem \ref{thm:cla}.

\begin{prop}\label{allk}
There exists a subset $K\subset\mathbf{N}$ satisfying: for {\em each} of the following conditions, there is a nonprincipal ultrafilter $\omega$ with the required properties:
\begin{enumerate}
\item\label{al1} $K$ has $\omega$-width $k\in\mathbf{N}\cup\{0\}$;
\item\label{al2} $K$ is $\omega$-discrete, $\omega$-doubling and not $\omega$-bounded;
\item\label{al3} $K$ is $\omega$-discrete, and not upper $\omega$-semidoubling;
\item\label{al4} $K$ is $\omega$-doubling, $\omega$-bounded, and not $\omega$-discrete;
\item\label{al5} $K$ is $\omega$-doubling, not $\omega$-bounded and not $\omega$-discrete;
\item\label{al6} $K$ is lower $\omega$-semidoubling, not $\omega$-discrete and not $\omega$-doubling;
\item\label{al7} $K$ is not lower $\omega$-semidoubling.
\end{enumerate}
\end{prop}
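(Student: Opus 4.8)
The plan is to build a single set $K \subset \mathbf{N}$ of the form produced by Lemma~\ref{ispec2} (a ``universal'' set $I$ with the self-similarity property that near infinitely many scales $m$ one sees any prescribed finite configuration $\{jm : j \in J\}$ inside the window $[m/n, nm]$), and then to manufacture, for each of the seven prescribed behaviors, a nonprincipal ultrafilter $\omega$ selecting the scales at which $K$ locally looks the way we want. This is exactly the strategy already used to prove Lemma~\ref{bilcon}: the extra content here is just that we need seven different local models rather than one, and that ``local behaviour at scale $m$'' has to be engineered to control $\#([cn,Cn]\cap K)$ uniformly over ranges of $c,C$ rather than at a single ratio.

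\textbf{Key steps, in order.} First I would fix $K = I$, the set from Lemma~\ref{ispec2}, and record the basic dictionary: for a window-ratio $C>1$, whether $\#([C^{-1}m, Cm]\cap K)$ is small, large, or growing depends only on which finite subset $J$ of $\{1,\dots,n\}$ (for $n$ slightly larger than $C$) was ``installed'' near scale $m$ by the inductive construction. So it suffices, for each item, to describe a sequence of finite subsets $J_n \subset \{1,\dots,n\}$, take the corresponding scales $m_n = m(n, J_n)$ from Lemma~\ref{ispec2}, and let $\omega$ be any nonprincipal ultrafilter containing $\{m_n : n \in \mathbf{N}\}$. Then for each item I would check the relevant counting and limiting conditions:
\begin{itemize}
\item For (1), width exactly $k$: take $J_n$ to be a fixed $k$-element set spread out multiplicatively, e.g. $\{1, 2, 4, \dots, 2^{k-1}\}$ rescaled into $\{1,\dots,n\}$, so that every window $[C^{-1}m_n, Cm_n]$ eventually contains exactly those $k$ points for all $C$ below the spreading gap, and no more; width $0$ is the case $K\cap[\text{window}] = \emptyset$, handled by $J_n = \emptyset$ and Lemma~\ref{critcone2}.
\item For (2), $\omega$-discrete + $\omega$-doubling + not $\omega$-bounded: put $J_n$ equal to an arithmetic progression $\{1, 2, \dots, \lfloor\sqrt n\rfloor\}$ (so bounded count in any fixed window $\Rightarrow$ doubling; a multiplicative gap below $1$ $\Rightarrow$ discrete), but arrange the progression to extend up to the top of the window so that $\rho_C(m_n)/m_n$ does not blow up $\Rightarrow$ not bounded.
\item For (3), $\omega$-discrete + not upper $\omega$-semidoubling: keep a multiplicative gap below $1$ (discrete), but for each fixed large $c$, install, above $c$, a configuration whose count in $[cn, Cn]$ grows with $n$ for some $C>c$, diagonalizing over $c$ so that no threshold $c_0$ works.
\item For (4), $\omega$-doubling + $\omega$-bounded + not $\omega$-discrete: take $J_n = \{1,\dots,K_0\}$ for a \emph{fixed} large $K_0$, shifted down so that points accumulate just below $n$: bounded count $\Rightarrow$ doubling; a fixed top $\Rightarrow$ bounded; $\lambda_c(m_n)/m_n \not\to 0$ for $c$ near $1$ $\Rightarrow$ not discrete.
\item For (5), $\omega$-doubling + not $\omega$-bounded + not $\omega$-discrete: combine the recipes of (2) and (4): a bounded-cardinality configuration in every fixed window (doubling) but points present both arbitrarily close below $1$ (not discrete) and arbitrarily far above $1$ (not bounded).
\item For (6), lower $\omega$-semidoubling + not $\omega$-discrete + not $\omega$-doubling: keep the count bounded in small windows $[c,C]$ with $C \le C_0$ (lower semidoubling) and keep mass just below $1$ (not discrete), while letting $\#([c_0 n, C_0' n]\cap K)$ grow for some large ratio (not doubling).
\item For (7), not lower $\omega$-semidoubling: for every candidate threshold $C_0$ arrange a scale at which $\#([C^{-1}n, Cn]\cap K)$ grows with $n$ for some $C < C_0$, diagonalizing over $C_0 \in \mathbf{N}$.
\end{itemize}
In each case the limiting statements ($\lim_\omega$ of a count is $<\infty$ or $=\infty$, $\lim_\omega \rho_C/n$, $\lim_\omega \lambda_c/n$) follow because $\omega$ concentrates on the engineered scales $m_n$, and at those scales the quantity in question is governed by $J_n$, which we chose explicitly.

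\textbf{Main obstacle.} The genuine difficulty is bookkeeping: a single $K$ must simultaneously support all seven ultrafilters, so the finite configurations installed at different scales must not interfere, and the window sizes $n$, the ratios $C$, and the thresholds $c_0, C_0$ being diagonalized over in items (3) and (7) must be interleaved consistently. Lemma~\ref{ispec2} is exactly designed to absorb this — it lets us prescribe, at a cofinal family of scales, any finite pattern on any finite window — so the argument is really a matter of organizing the countably many requirements into one induction and then citing Lemma~\ref{ispec2}. I would therefore present the proof by first stating the single reduction (``it suffices, for each item, to exhibit a sequence $(J_n, m_n)$ as above'') and then spelling out the seven explicit choices of $J_n$, leaving the routine limit computations to the reader as the paper does elsewhere.
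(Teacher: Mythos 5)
Your overall framework---fix a single universal set $K=I$ from Lemma \ref{ispec2} and, for each item, prescribe finite patterns $J_n$ and an ultrafilter concentrating on the corresponding scales---is exactly the paper's strategy. The gap is in how you choose the scales: you always take $\omega$ to contain the base scales $m_n=m(n,J_n)$ themselves. Relative to $m_n$, the window $[n^{-1}m_n,nm_n]$ of $K$ consists precisely of the points $jm_n$ with $j\in J_n\subset\{1,\dots,n\}$, i.e.\ points at \emph{integer} ratios $\ge 1$, while every other point of $K$ lies at ratio $\le 1/n$ or $\ge n$. Two consequences kill most of the items. First, $\lambda_c(m_n)/m_n\le 1/n\to 0$ for every $c<1$, so $K$ is automatically $\omega$-discrete for any such ultrafilter; hence items (4), (5), (6), which require ``not $\omega$-discrete'', cannot be realized by this scheme. (Your justification for (4), ``$\lambda_c(m_n)/m_n\not\to 0$ for $c$ near $1$'', also misreads the definition: not $\omega$-discrete means $\lim_\omega\lambda_c(n)/n>0$ for \emph{every} $c<1$, i.e.\ nonzero cone points arbitrarily close to $0$; points at ratios near $1$ do not provide this.) Second, for any fixed $0<c<C$ the number of points of $K$ in $[cm_n,Cm_n]$ is at most $C-c+1$, because the installed points sit at integer ratios; so all window counts stay bounded along $\omega$, $K$ is automatically $\omega$-doubling, and you can never produce the unbounded counts needed to violate upper or lower semidoubling---items (3) and (7) fail as well. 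Only (1) and (2) go through as you describe them.

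The missing idea is the recentering device in the paper's proof: apply Lemma \ref{ispec2} with $16^n$ in place of $n$ so that one can install large patterns such as geometric blocks $\{2^j: n\le j\le 3n\}$ or intervals of consecutive integers $\{2^n,2^n+1,\dots,8^n\}$, and then let $\omega$ concentrate not on $\mu_n=m(J_n,n)$ but on \emph{shifted} scales $2^n\mu_n$, $4^n\mu_n$, $8^n\mu_n$, $4^n\lambda_n$, etc., i.e.\ on points inside the installed configuration. Relative to such a center the pattern yields ratios strictly below $1$ (e.g.\ $2^{-j}$, or $1-k\,4^{-n}$) and relative spacings tending to $0$, which is exactly what produces non-discreteness, unbounded counts in fixed windows, and the failures of doubling and of the two semidoubling conditions required in (3)--(7). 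With that modification your reduction is sound and the seven explicit pattern choices can be completed along the paper's lines; without it, the construction as written proves only items (1) and (2).
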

\begin{proof}
Let $I$ be given by Lemma \ref{ispec2}.
It follows from its definition (applied to $16^n$ rather than $n$) that for every $n\in\mathbf{N}$ and $J\subset\{1,\dots,16^n\}$, there exists $m=m(J,n)$ such that $I\cap [2^{-n}m,16^nm]=\{jm:j\in J\}$.

Fix $k\ge 0$ and define, $m_n=m(\{1,\dots,k\},n)$ for $n\ge \lceil\log_{16}(k)\rceil$. Let $\omega$ be an ultrafilter containing $\{m_n:n\ge \lceil\log_{16}(k)\rceil\}$. Thus for every $n\ge \lceil\log_{16}(k)\rceil$ we have
\[\frac{1}{m_n}I\cap [2^{-n},2^n]=\{1,\dots,k\}.\]
Then by Lemma \ref{wid}, $I$ has $\omega$-width $k$ as required in (\ref{al1}).

Now set $K_n=\{2^j:n\le j\le 3n\}$ and $\mu_n=m(K_n,n)$. Thus for every $n\ge 1$ we have \[\frac{1}{\mu_n}I\cap [1,16^n]=\{2^j:n\le j\le 3n\}.\]
In particular, dividing by $2^n$, $4^n$ and $8^n$ we get
\begin{align}
\label{eee1}\frac{1}{2^n\mu_n}I\cap [2^{-n},2^n]=&\{2^j:0\le j\le n\};\\
\label{eee2}\frac{1}{4^n\mu_n}I\cap [2^{-n},2^n]=&\{2^j:-n\le j\le n\};\\
\label{eee3}\frac{1}{8^n\mu_n}I\cap [2^{-n},2^n]=&\{2^j:-n\le j\le 0\}.
\end{align}
Accordingly, let $\omega$ be a nonprincipal ultrafilter. If $\omega$ contains $\{\kappa^n\mu_n:n\ge 1\}$ for some $\kappa=2,4,8$, then 
\[\frac{1}{\kappa^n\mu_n}I\cap [2^{-n},2^n]\subset \{2^j:-n\le j\le n\};\]
and thus $I$ is $\omega$-doubling. In view of Lemma \ref{cp_bound2}, we furthermore obtain

\begin{itemize}
\item If $\omega$ contains $\{2^n\mu_n:n\ge 1\}$, using (\ref{eee1})
we see that $\Cone_\omega(I)=\{0\}\cup\{2^n:n\in\mathbf{N}\cup\{0\}\}$
and thus $I$ satisfies the conditions of (\ref{al2}); 

\item if $\omega$ contains $\{4^n\mu_n:n\ge 1\}$, using (\ref{eee2}), 
we see that $\Cone_\omega(I)=\{0\}\cup\{2^n:n\in\mathbf{Z}\}$
and thus $I$ satisfies the conditions of (\ref{al5});
\item if $\omega$ contains $\{8^n\mu_n:n\ge 1\}$, using (\ref{eee3}), we see that $\Cone_\omega(I)=\{0\}\cup\{2^n:n\in -\mathbf{N}\cup\{0\}\}$
and thus $I$ satisfies the conditions of (\ref{al4}).
\end{itemize}

Now set $L_n=\{2^n,2^n+1,\dots,8^n-1,8^n\}$ and $\lambda_n=m(L_n,n)$.
Dividing by $2^n$, $4^n$  we get
\begin{align}
\label{fee1}\frac{1}{2^n\lambda_n}I\cap [2^{-n},2^n]=&\{1,1+2^{-n},1+2.2^{-n},1+3.2^{-n},\dots,2^n-2^{-n},2^n\};\\
\label{fee3}\frac{1}{4^n\lambda_n}I\cap [2^{-n},2^n]=&\\
\notag \{2^{-n},\dots, 1-4^{-n}&,  \;1 \;,1+4^{-n},1+2.4^{-n},\dots ,2^n-4^{-n},2^n\}.
\end{align}
We deduce:

\begin{itemize}
\item If $\omega$ contains $\{2^n\lambda_n:n\ge 1\}$ then $\Cone_\omega(I)=\{0\}\cup [1,\infty\mathclose[$. We see that $J$ is $\omega$-discrete but not $\omega$-doubling as in (\ref{al3}).

\item if $\omega$ contains $\{4^n\lambda_n:n\ge 1\}$ then $\Cone_\omega(I)=[0,+\infty\mathclose[$ and $J$ is not lower $\omega$-semidoubling as in (\ref{al7}).
\end{itemize}

For the missing case, set $M_n=\{2^j:n\le j\le 2n\}\cup\{4^n,4^n+1,\dots 8^n-1,8^n\}$ and $\nu_n=m(M_n,n)$. We have 
\[\frac{1}{4^n\lambda_n}I\cap [2^{-n},2^n]=\{2^j:-2n\le j\le 0\}\cup \{1,1+4^{-n},1+2.4^{-n},\dots\}\]
Then if $\omega$ contains $\{4^n\nu_n:n\ge 1\}$, then $\Cone_\omega(I)=\{0\}\cup\{2^{-n}:n\in \mathbf{N}\}\cup [1,\infty\mathclose[$; we see that $I$ is not $\omega$-discrete, not $\omega$-doubling but is lower $\omega$-semidoubling as in (\ref{al6}).
\end{proof}

\subsection{Auxiliary lemmas}

This part contains some more basic lemmas that will be used in the sequel.

\subsubsection{Ultraproducts and cones}

\begin{lem}\label{cardco}Let $X$ be a metric space and $x_0$ a base-point. Let $c_n$ be the minimal number of closed $n$-balls needed to cover the closed $2n$-ball around $x_0$. If $\omega$ is a nonprincipal ultrafilter and $\lim_\omega c_n=\infty$, then $\Cone_\omega(X)$  has continuum cardinality.
\end{lem}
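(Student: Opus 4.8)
\textbf{Proof proposal for Lemma \ref{cardco}.}

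The plan is to build, for a suitable $\omega$-large set of indices $n$, a binary tree of points inside the $2n$-ball whose leaves at depth $k$ are pairwise at distance $\gtrsim n$, with $2^k$ leaves where $k=k_n\to\omega\,\infty$; then a diagonal/ultralimit argument over an uncountable set of branches produces continuum many points of $\Cone_\omega(X)$ at mutual distance bounded below. Concretely, fix $n$. Since $c_n\ge 2$ (eventually, as $\lim_\omega c_n=\infty$), the closed $2n$-ball cannot be covered by a single closed $n$-ball, so there exist two points in it at distance $>n$; more generally, I will iterate a covering/packing argument: if a closed $2n$-ball $B(x,2n)$ cannot be covered by $c$ closed $n$-balls, then after choosing a maximal $n$-separated subset of $B(x, n)$ (which has at least $2$ elements when $c_n$ is large, since an $n$-net is in particular an $n$-cover and $B(x,n)\subset B(x,2n)$), one gets branching. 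The cleanest route: let $k_n=\lfloor \log_2 c_n\rfloor$, so $k_n\to\omega\,\infty$. I claim one can find $2^{k_n}$ points $y_1,\dots,y_{2^{k_n}}$ in the closed $2n$-ball, pairwise at distance $> n/2$ (or some fixed fraction of $n$ — the precise constant does not matter). Indeed, take a maximal $(n/2)$-separated subset $F$ of the closed $2n$-ball; maximality makes $F$ an $(n/2)$-net, hence the closed $(n/2)$-balls about points of $F$ cover $B(x_0,2n)$; rescaling, closed $n$-balls about the same points a fortiori cover it, so $|F|\ge c_n \ge 2^{k_n}$. This gives the required packing with separation constant $n/2$ and $2^{k_n}$ points.

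Next I would pass to the cone. For each branch $\beta\in\{0,1\}^{\mathbf{N}}$ (uncountably many), define a point $z_\beta=(z_\beta(n))_n$ of $\Precone(X)$ by letting $z_\beta(n)$ be the point $y_{i}$ from the packing above indexed by the first $k_n$ coordinates of $\beta$ (read as a number $i\in\{1,\dots,2^{k_n}\}$); for indices $n$ where $c_n$ is too small (an $\omega$-null set, harmless, or just on a complement of some fixed $M\in\omega$ with $\lim_{n\to\omega}c_n=\infty$ witnessed on $M$) set $z_\beta(n)=x_0$. Since each $z_\beta(n)$ lies in the $2n$-ball, $z_\beta\in\Precone(X)$. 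If $\beta\neq\beta'$ differ in coordinate $j$, then for all $n$ with $k_n>j$ — an $\omega$-large set of $n$ since $k_n\to\omega\,\infty$ — the points $z_\beta(n)$ and $z_{\beta'}(n)$ carry different indices, hence are two distinct elements of the $(n/2)$-separated family, so $d(z_\beta(n),z_{\beta'}(n))>n/2$; therefore $d_\omega(z_\beta,z_{\beta'})=\lim_\omega d(z_\beta(n),z_{\beta'}(n))/n\ge 1/2>0$. Hence $\beta\mapsto z_\beta$ descends to an injection $\{0,1\}^{\mathbf{N}}\hookrightarrow\Cone_\omega(X)$, giving continuum cardinality (the reverse bound being automatic once $X$ is nonempty and one only claims ``has continuum cardinality'' as the conclusion; if an upper bound is also wanted it follows from $|X|\le$ continuum being \emph{not} assumed, so I read the statement as ``at least continuum'', which is what is used).

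The one point requiring care — the ``main obstacle'' — is the bookkeeping of indices: the branching number $k_n$ depends on $n$, so the tree is ragged, and I must make sure that for each fixed pair $\beta\neq\beta'$ the set of $n$ on which they are already separated is $\omega$-large, which is exactly the statement $\{n: k_n > j\}\in\omega$, i.e. $\lim_{n\to\omega}k_n=\infty$, i.e. $\lim_{n\to\omega}c_n=\infty$ — so the hypothesis is used precisely here and nowhere else in an essential way. A secondary subtlety is the choice of separation constant: a maximal $(n/2)$-separated subset of $B(x_0,2n)$ is an $(n/2)$-net, hence (since $n/2\le n$) its $n$-balls cover $B(x_0,2n)$, so it has $\ge c_n$ elements — I would state this covering/packing comparison as a one-line sublemma to keep the constants transparent. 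Everything else is routine.
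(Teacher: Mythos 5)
Your proof is correct and takes essentially the same route as the paper: for each $n$ you pack the closed $2n$-ball with at least $c_n$ points at mutual distance comparable to $n$ (the maximal-separated-set versus covering comparison you spell out is exactly what the paper leaves implicit when it picks its set $C_n$), and then sequences running through these separated families give continuum many points of $\Cone_\omega(X)$ at pairwise distance bounded below, the hypothesis $\lim_\omega c_n=\infty$ entering only to make the separation $\omega$-large for each fixed pair. The only difference is the cardinality bookkeeping — the paper maps the ultraproduct $\prod^\omega C_n$ onto $[0,1]$, while you inject $\{0,1\}^{\mathbf{N}}$ directly via your ragged binary indexing — and your reading of the conclusion as ``at least continuum'' matches how the lemma is used (the matching upper bound in the applications comes from Lemma \ref{upca}).
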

\begin{proof}
Let $C_n$ be a set of $c_n$ points in the $2n$-ball, at mutual distance at least $n$. There is a natural map from the ultraproduct $\prod^\omega C_n$ into $\Cone_\omega(X)$, which maps any two distinct elements to points at distance at least 1. If we show that $\prod^\omega C_n$ has continuum cardinality, we are done. To check the latter (which is well-known), since it is no longer related to the original distance on $C_n$, we can now identify $C_n$ with the subset $\{0,1/c_n,\dots,(c_n-1)/c_n\}$ of $[0,1]$, and map any $u\in\prod^\omega C_n$ to $\phi(u)=\lim_\omega u_n\in[0,1]$. Then this map $\prod^\omega C_n\to [0,1]$ is easily shown to be surjective.
\end{proof}

\begin{lem}\label{upca}
If $X$ is a metric space with a countable cobounded subset (e.g.\ $X$ is separable), then $\Cone_\omega(X)$ and $\pi_1(\Cone_\omega(X))$ have cardinality at most continuum.
\end{lem}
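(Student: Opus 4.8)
The plan is to bound the cardinality of $\Precone(X)$ directly, since $\Cone_\omega(X)$ is a quotient of it and $\pi_1$ of a metric space has cardinality at most that of the space of loops, which in turn injects into a set of sequences of the same cardinality. Let me set this up carefully. First, fix a countable cobounded subset $Y \subseteq X$, say with $X = \bigcup_{y \in Y} B(y, r_0)$ for some $r_0 \ge 0$; I take the base-point $x_0 \in Y$ without loss of generality. Given any $x = (x_n) \in \Precone(X)$, choose for each $n$ a point $y_n \in Y$ with $d(x_n, y_n) \le r_0$; then $y = (y_n) \in Y^{\mathbf{N}}$ and $d_\omega(x, y) = 0$, so the class of $x$ in $\Cone_\omega(X)$ is already represented by a $Y$-valued sequence. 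Moreover $(y_n)$ must lie in $\Precone(Y)$ (with the induced metric), and since $\limsup d(y_n, x_0)/n < \infty$, actually $\limsup d(y_n,y_0)/n<\infty$. Hence $\Cone_\omega(X)$ is the image of $\Precone(Y)$ under the natural map, so $|\Cone_\omega(X)| \le |\Precone(Y)| \le |Y^{\mathbf{N}}| = \aleph_0^{\aleph_0} = 2^{\aleph_0}$.

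Next I would handle $\pi_1(\Cone_\omega(X))$. A based loop in $\Cone_\omega(X)$ is in particular a map $[0,1] \to \Cone_\omega(X)$, and since $\Cone_\omega(X)$ is a metric space (hence Hausdorff with a countable dense-on-rationals description along any continuous path), a continuous map from $[0,1]$ is determined by its restriction to $\mathbf{Q} \cap [0,1]$, a countable set. Therefore the set of based loops has cardinality at most $|\Cone_\omega(X)|^{\aleph_0} \le (2^{\aleph_0})^{\aleph_0} = 2^{\aleph_0}$. Since $\pi_1$ is a quotient of the set of based loops (by the homotopy relation), we get $|\pi_1(\Cone_\omega(X))| \le 2^{\aleph_0}$ as well.

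Finally I would note the harmless reduction that a separable metric space has a countable dense — hence countable cobounded — subset, which justifies the parenthetical example in the statement. The argument involves no real obstacle; the only point requiring a line of care is the claim that a continuous map out of $[0,1]$ into a metric (or merely Hausdorff) space is determined by its values on a countable dense subset of the domain, which follows because two continuous maps agreeing on a dense set agree everywhere when the target is Hausdorff. I would also remark that since the asymptotic cone is path-connected and geodesic, one could alternatively bound loops by piecewise-geodesic approximations with rational breakpoints, but the dense-restriction argument is cleaner and needs no extra structure.
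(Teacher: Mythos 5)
Your proposal is correct and follows essentially the same route as the paper: represent cone points by sequences in the countable cobounded subset, and note that a loop in the (metric, hence Hausdorff) cone is determined by its values on a countable dense subset of its domain, then use $(2^{\aleph_0})^{\aleph_0}=2^{\aleph_0}$. The extra details you supply (the explicit $Y$-valued representative and the quotient-by-homotopy remark) are fine but add nothing beyond the paper's argument.
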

\begin{proof}
If $D$ is a countable cobounded subset, every element of $\Cone_\omega(X)$ is determined by a sequence in $D$, whence the conclusion. Also, every loop in $\Cone_\omega(X)$ is determined by a continuous map from a dense subset $C$ of the circle to $\Cone_\omega(X)$. Since $(2^{\aleph_0})^{\aleph_0}=2^{\aleph_0}$, we are done.
\end{proof}

\subsubsection{Topological groups}

\begin{lem}\label{projl}
Let $G$ be a group endowed with a biinvariant complete Hausdorff ultrametric distance; denote by $G_r$ its closed $r$-ball (which is an open normal subgroup). Then the natural continuous homomorphism from $G$ to the projective limit ${\underleftarrow{\lim}}_{r\to 0}G/G_r$ is a topological isomorphism. In particular, if $G_r$ has finite index for all $r>0$, then $G$ is compact.
\end{lem}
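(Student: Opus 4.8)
The plan is to verify the two standard properties that characterize a projective limit of discrete quotients, and then recognize that the hypotheses force them. Recall that the $G_r$ are open normal subgroups: they are subgroups because $d$ is bi-invariant and ultrametric (the unit ball of an ultrametric bi-invariant distance is a subgroup), they are normal by bi-invariance, and they are open because each is also closed and the distance is the group topology (an ultrametric ball of radius $r$ equals the open ball of any slightly larger radius). So the quotients $G/G_r$ are discrete groups, and the transition maps $G/G_{r'}\to G/G_r$ for $r\le r'$ are well-defined surjections, giving a genuine projective system indexed by $r\to 0$.

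Next I would exhibit the canonical map $\Theta\colon G\to {\underleftarrow{\lim}}_{r\to 0}G/G_r$ and check it is a topological isomorphism onto its image, then that the image is everything. Injectivity is immediate: $\bigcap_{r>0}G_r=\{1\}$ since $d$ is Hausdorff. Continuity and openness (onto the image): a basis of neighborhoods of $1$ in $G$ is exactly $\{G_r\}_{r>0}$, and a basis of neighborhoods of $1$ in the projective limit is given by the preimages of $\{1\}$ under the projections to each $G/G_r$, whose pullback to $G$ is again $G_r$; so $\Theta$ matches the two filtrations and is a homeomorphism onto its image. For surjectivity I would use completeness: given a compatible family $(g_rG_r)_r$, pick representatives $g_{1/n}$ for $r=1/n$; compatibility means $g_{1/n}^{-1}g_{1/m}\in G_{1/\min(n,m)}$, so $(g_{1/n})_n$ is Cauchy, and its limit $g$ satisfies $\Theta(g)=(g_rG_r)_r$. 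Hence $\Theta$ is a topological isomorphism.

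For the final sentence, if moreover every $G_r$ has finite index, then each discrete quotient $G/G_r$ is a finite discrete group, so the projective limit ${\underleftarrow{\lim}}_{r\to 0}G/G_r$ is a projective limit of finite groups, hence a profinite — in particular compact — topological group; transporting across the isomorphism $\Theta$, $G$ is compact.

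The only mildly delicate point, which I would want to state carefully rather than gloss, is that the projective system is indexed by the directed set of positive reals with the \emph{reverse} order $r\to 0$, and that cofinality of the countable subset $\{1/n\}$ lets me run the Cauchy-sequence argument for surjectivity; everything else is routine bookkeeping with ultrametric balls. There is no real obstacle here beyond keeping the ultrametric inequality in play at each step (it is what makes $G_r$ a subgroup and what makes the open and closed balls of "the same" radius coincide so that $G_r$ is clopen).
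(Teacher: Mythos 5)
Your proposal is correct and takes essentially the same route as the paper: both identify the closed balls $G_r$ as open normal subgroups whose filtration matches that of the projective limit, and both use completeness of $G$ to obtain surjectivity of the canonical map (with the finite-index case then giving a profinite, hence compact, group). The only cosmetic difference is that the paper equips the projective limit with a natural bi-invariant ultrametric and argues that the map is an isometry onto a dense image, whereas you run the equivalent Cauchy-sequence argument directly on compatible families of representatives along the cofinal set $r=1/n$.
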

\begin{proof}
Note that the fact that the distance on $G$ is ultrametric implies that $G_r$ is a subgroup, and its bi-invariance implies that $G_r$ is normal. The projective limit carries a natural bi-invariant ultrametric such that the closed ball of radius $r$ consists of $(g_s)_{s>0}$ such that $g_s=1_{G/G_s}$ for all $s>r$.  
This distance makes the natural continuous homomorphism $\psi$ from $G$ to the projective limit an isometry onto its image. But since the latter is dense and $G$ is complete, $\psi$ is an  isomorphism of metric groups.  
\end{proof}

If $\kappa$ is a cardinal, we denote by $\F_p^{(\kappa)}$ the discrete group defined as the direct sum (or restricted direct product) of $\kappa$ copies of the cyclic group $\F_p$, and the full product $\F_p^\kappa$ is endowed with the product topology, which makes it a compact group. 

\begin{lem}\label{expp2}
Let $G$ be a locally compact abelian group of exponent $p$. Then there exist cardinals $\kappa_0,\kappa_1$ such that $G\simeq \F_p^{\kappa_0}\times \F_p^{(\kappa_1)}$. 
\end{lem}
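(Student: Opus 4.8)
The plan is to prove the structure theorem for locally compact abelian groups of exponent $p$ by combining classical structure theory of LCA groups with the rigidity of the vector-space structure over $\mathbf{F}_p$. First I would invoke the standard structure theorem for compactly generated LCA groups, together with the well-known fact that an arbitrary LCA group $G$ has an open subgroup of the form $\mathbf{R}^a \times C$ with $C$ compact; since $G$ has exponent $p$ and $\mathbf{R}^a$ is torsion-free, we must have $a = 0$, so $G$ has a \emph{compact open} subgroup $C$. Thus $G$ is totally disconnected, and it suffices to understand the compact piece $C$ and the discrete quotient $G/C$ separately, then splice them back together.

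Next I would treat the compact part. A compact abelian group of exponent $p$ is, by Pontryagin duality, the dual of a discrete abelian group of exponent $p$, i.e. the dual of an $\mathbf{F}_p$-vector space $V$ of some dimension $\kappa_0$; hence $C \simeq \widehat{V} \simeq \F_p^{\kappa_0}$ with the product topology (using $\widehat{\F_p^{(\kappa_0)}} \simeq \F_p^{\kappa_0}$). Alternatively, and perhaps cleaner in the spirit of this paper, one can observe that $C$ is a compact group with a neighborhood basis at $1$ consisting of open subgroups $C_i$ of finite index (each being an $\mathbf{F}_p$-subspace of finite codimension, since $C/C_i$ is a finite abelian group of exponent $p$, hence a finite-dimensional $\F_p$-vector space), so $C \simeq \underleftarrow{\lim}\, C/C_i$ by an argument in the style of Lemma \ref{projl}; choosing a coherent basis shows this projective limit is $\F_p^{\kappa_0}$. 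For the discrete quotient $D = G/C$: it is just a discrete $\mathbf{F}_p$-vector space, hence $D \simeq \F_p^{(\kappa_1)}$ where $\kappa_1 = \dim_{\F_p} D$.

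Finally I would show the extension $1 \to C \to G \to D \to 1$ splits as topological groups. Algebraically this is immediate: $D$ is a free $\mathbf{F}_p$-module, so the extension of abstract groups splits, giving an abstract section $s: D \to G$. Since $D$ is discrete, $s$ is automatically continuous, and since $C$ is open the resulting bijective continuous homomorphism $C \times D \to G$ is open (it is a local homeomorphism near the identity: it restricts to the identity on the open subgroup $C \times \{0\}$), hence a topological isomorphism. This yields $G \simeq \F_p^{\kappa_0} \times \F_p^{(\kappa_1)}$ as desired.

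The main obstacle is really just correctly quoting and assembling the classical facts: that every LCA group has a compact open subgroup once one knows it is totally disconnected (which here follows from exponent $p$ killing any $\mathbf{R}^a$ factor), and the Pontryagin-duality identification of compact exponent-$p$ groups with products $\F_p^{\kappa_0}$. Neither step is deep, but one must be careful that the splitting is topological and not merely algebraic — this is where the openness of $C$ in $G$ is essential, since an arbitrary algebraic splitting of the underlying groups need not be continuous in general, and here it is precisely the discreteness of $D$ together with openness of $C$ that saves us. No genuinely hard estimate is involved; the content is organizational.
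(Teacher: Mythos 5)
Your proposal is correct and follows essentially the same route as the paper: a compact open subgroup obtained from the structure theory of LCA groups (with exponent $p$ ruling out $\mathbf{R}$-factors), Pontryagin duality for the compact piece, the discrete piece being an $\F_p$-vector space, and a topological splitting that works precisely because the compact subgroup is open and the complement/quotient is discrete. The only (cosmetic) difference is that you split the quotient sequence $1\to C\to G\to G/C\to 1$ by a continuous section, while the paper chooses an internal $\F_p$-linear complement $D\subset G$ and notes it is discrete since $D\cap C=\{0\}$ is open in $D$.
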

\begin{proof}
If $G$ is discrete, it is a vector space over $\F_p$, so if $\kappa$ is the cardinal of a basis then $G\simeq\F_p^{(\kappa)}$.

If $G$ is compact, its Pontryagin dual $\widehat{G}=\textnormal{Hom}(G,\mathbf{R}/\mathbf{Z})$ is a discrete locally compact abelian group of exponent $p$, so $\widehat{G}$ is isomorphic to $\F_p^{(\kappa_0)}$ for some cardinal $\kappa$, by the discrete case. Hence we have
\[\widehat{\widehat{G\,\!}}\simeq\textnormal{Hom}(\F_p^{(\kappa)},\mathbf{R}/\mathbf{Z})\simeq \textnormal{Hom}(\F_p,\mathbf{R}/\mathbf{Z})^\kappa\simeq\F_p^\kappa.\]
Pontryagin duality implies that $G$ is topologically isomorphic to $\widehat{\widehat{G}}$, so $G\simeq\F_p^\kappa$.

Now let $G$ be arbitrary (as in the statement of the lemma). Since $G$ is a locally compact abelian group with no closed subgroup isomorphic to $\mathbf{R}$, it follows from \cite[Theorem 9.8]{HR} that $G$ has an open compact subgroup $K$. Forgetting the topology and viewing $G$ as a vector space over $\F_p$, there exists a supplement subspace $D$, so that $G\simeq K\times D$ as a abstract group. Since $K$ is open, $D$ is discrete and therefore $G\simeq K\times D$ as a topological group. By the discrete and compact cases, we can write $D\simeq\F_p^{(\kappa_1)}$ and $K\simeq\F_p^{\kappa_0}$ for some cardinals $\kappa_0,\kappa_1$. This finishes the proof.
\end{proof}

\subsection{Cones of  $\F_p^{(J)}$}\label{fpj}

If $X$ is a metric space, define 
\[D(X)=\{d(x,y):\,(x,y)\in X^2\};\quad D_{\log}(X)=\big\{\log(t):\,t\in D(X)\smallsetminus\{0\}\big\}.\]
If $X,Y$ are bilipschitz homeomorphic metric spaces, then $D_{\log}(X)$ and $D_{\log}(Y)$ are at finite Hausdorff distance (as defined before Lemma \ref{bilcon}). It is straightforward that if $X$ is a nonempty metric space with transitive isometry group, we have \[D(\Cone_\omega(X))=\Cone_\omega(D(X)).\]

Let $p$ be a prime and for $J\subset\mathbf{N}$ let $\F_p^{(J)}$ be the metric group introduced at the beginning of this Section \ref{csds}.
Observe that for $J\subset\mathbf{N}$, we have $D(\F_p^{(J)})=\{0\}\cup J$. In particular, \[D(\Cone_\omega(\F_p^{(J)}))=\Cone_\omega(J).\]

If particular, whether $\Cone_\omega(\F_p^{(J)})$ is trivial, bounded, or discrete are reflected in $\Cone_\omega(J)$. Thus from Lemmas \ref{critcone2} and \ref{cp_bound2} we obtain

\begin{lem}\label{cotr}
For every nonempty subset $J$ of $\mathbf{N}$ and nonprincipal ultrafilter $\omega$ we have
\begin{itemize}
\item $\Cone_\omega\big(\F_p^{(J)}\big)=\{0\}$ $\Leftrightarrow$ $\Cone_\omega(J)=\{0\}$ $\Leftrightarrow$ $\lim_\omega\sigma_J=\infty$;
\item $\Cone_\omega\big(\F_p^{(J)}\big)$ is bounded $\Leftrightarrow$ $\Cone_\omega(J)$ is bounded $\Leftrightarrow$ $J$ is $\omega$-bounded;
\item $\Cone_\omega\big(\F_p^{(J)}\big)$ is discrete $\Leftrightarrow$ 0 is isolated in $\Cone_\omega(J)$ $\Leftrightarrow$ $J$ is $\omega$-discrete.
\end{itemize}
\end{lem}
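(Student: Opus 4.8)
The plan is to reduce the lemma entirely to facts already in hand: the identity $D\big(\Cone_\omega(\F_p^{(J)})\big)=\Cone_\omega(J)$ recorded just above, together with the observation that for a \emph{homogeneous} metric space the three properties in question — being trivial, bounded, or discrete — are completely encoded in its set of distances.

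First I would isolate the following elementary fact, valid for any metric space $X$ whose isometry group acts transitively: (i) $X$ is a single point precisely when $D(X)=\{0\}$; (ii) $X$ is bounded precisely when $D(X)$ is bounded; (iii) $X$ is discrete precisely when $0$ is isolated in $D(X)$. Parts (i) and (ii) are immediate from the definitions. For (iii): if $D(X)\cap\,]0,\eps[\,=\emptyset$ then every point of $X$ is isolated; conversely, if $X$ is discrete, choose $x_0\in X$ and $\eps>0$ with the closed $\eps$-ball around $x_0$ equal to $\{x_0\}$, and observe that, by transitivity of the isometry group, the same holds around every point, so no nonzero distance is $<\eps$, i.e. $0$ is isolated in $D(X)$. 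This homogeneity step is the only point requiring a moment of care, but it is routine; there is no substantial obstacle in the proof.

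Next I would apply this to $X=\Cone_\omega(\F_p^{(J)})$, which is a metric group and hence homogeneous, with $D(X)=\Cone_\omega(J)$ (using $D(\F_p^{(J)})=\{0\}\cup J$ and the general identity $D(\Cone_\omega(Y))=\Cone_\omega(D(Y))$ for $Y$ with transitive isometry group). This immediately yields the first equivalence in each of the three bullet points: $\Cone_\omega(\F_p^{(J)})=\{0\}$ iff $\Cone_\omega(J)=\{0\}$; it is bounded iff $\Cone_\omega(J)$ is bounded; it is discrete iff $0$ is isolated in $\Cone_\omega(J)$. Finally, the second equivalence in each bullet point is exactly the content of the earlier lemmas on subsets of $\mathbf{N}$: Lemma \ref{critcone2} gives $\Cone_\omega(J)=\{0\}$ iff $\lim_\omega\sigma_J=\infty$, and Lemma \ref{cp_bound2} gives that $\Cone_\omega(J)$ is bounded iff $J$ is $\omega$-bounded and that $0$ is isolated in $\Cone_\omega(J)$ iff $J$ is $\omega$-discrete. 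Concatenating the two chains of equivalences completes the proof.
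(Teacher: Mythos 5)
Your proposal is correct and follows essentially the same route as the paper, which deduces the lemma directly from the identity $D\big(\Cone_\omega\big(\F_p^{(J)}\big)\big)=\Cone_\omega(J)$ together with Lemmas \ref{critcone2} and \ref{cp_bound2}. The only difference is that you spell out the routine homogeneity argument (trivial/bounded/discrete being encoded in the distance set of a homogeneous space) that the paper leaves implicit in the phrase ``are reflected in $\Cone_\omega(J)$''.
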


\begin{proof}[Proof of Corollary \ref{contifg}]
Let $\mathcal{U}$ be the set of all nonprincipal ultrafilters on $\mathbf{N}$. We define a quasi-isometry invariant $\scu(X)$ of a metric space $X$ as the subset of $\mathcal{U}$ consisting of those $\omega$ such that $\Cone_\omega(X)$ is simply connected. By (\ref{identif}), if $J$ is a nonempty subset of $\mathbf{N}$ then by Lemma \ref{cotr}
\begin{align*}\scu(\Gamma_J)= & \{\omega\in\mathcal{U}:\Cone_\omega\big(\F_p^{(J)}\big)=\{0\}\}\\
= & \{\omega\in\mathcal{U}:\Cone_\omega(J)=\{0\}\}.\end{align*}
So by Proposition \ref{pleino2}, there are continuum many sets of nonprincipal ultrafilters on $\mathbf{N}$ that can be obtained as $\scu(\Gamma_J)$ for some $J$ and thus these are continuum many non-quasi-isometric $\Gamma_J$.
\end{proof}

\begin{proof}[Proof of Corollary \ref{resomany}]
Since for every $J$ we have 
\[D_{\log}\big(\Cone_\omega\big(\F_p^{(J)}\big)\big)=\log\Big(\Cone_\omega\big(\F_p^{(J)}\big)\smallsetminus\{0\}\Big),\] 
Lemma \ref{bilcon} states that for some $K\subset\mathbf{N}$ the subsets
$D_{\log}\big(\Cone_\omega\big(\F_p^{(K)}\big)\big)$ achieve continuum many subsets of $\mathbf{R}$ at pairwise infinite Hausdorff distance when $\omega$ ranges over nonprincipal ultrafilters; thus $\Cone_\omega\big(\F_p^{(K)}\big)$ achieves continuum many bilipschitz non-equivalent metric spaces and hence by (\ref{identif}), $\pi_1(\Cone_\omega(\Gamma_K))$ achieves continuum many non-bilipschitz spaces. In particular, $\Cone_\omega(\Gamma_K)$ achieves continuum many non-bilipschitz spaces when $\omega$ ranges over nonprincipal ultrafilters.
\end{proof}

Other metric properties of $\Cone_\omega\big(\F_p^{(J)})\big)$ can be characterized in terms of $J$.

\begin{prop}\label{doubca}
For every subset $J$ of $\mathbf{N}$ and nonprincipal ultrafilter $\omega$ we have
\begin{enumerate}
\item\label{iproper} $\Cone_\omega\big(\F_p^{(J)}\big)$ is proper (i.e.\ its closed bounded subsets are compact) if and only if $J$ is $\omega$-doubling; otherwise it contains a closed bounded discrete subgroup of continuum cardinality;
\item\label{ilc} $\Cone_\omega\big(\F_p^{(J)}\big)$ is locally compact if and only if $J$ is lower $\omega$-semidoubling;
\item\label{isb} $\Cone_\omega\big(\F_p^{(J)}\big)$ is $\sigma$-bounded (i.e.\ a countable union of bounded subsets) if and only if $J$ is upper $\omega$-semidoubling; otherwise every ball has continuum index in $\Cone_\omega\big(\F_p^{(J)}\big)$.
\end{enumerate}
\end{prop}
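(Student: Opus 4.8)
The plan is to reduce each of the three dichotomies to a combinatorial statement about the trace left by $J$ inside annuli $[C^{-1}n,Cn]$, using the fact (already established) that a point of $\Cone_\omega(\F_p^{(J)})$ is represented by a sequence $(u(n))$ with $u(n)\in\F_p^{\{1,\dots,Cn\}}$ for some $C$, and that the distance between two such points is $\lim_\omega$ of the largest coordinate at which the two sequences differ, divided by $n$. The key point is that the structure of the ball of radius $C$ around $0$ in $\Cone_\omega(\F_p^{(J)})$ is governed, at each scale $n$, by the finite $\F_p$-vector space $\F_p^{J\cap[1,Cn]}$, while the structure of the $C$-ball \emph{modulo} the $c$-ball is governed by $\F_p^{J\cap(cn,Cn]}$, i.e.\ by the annulus count $\#((cn,Cn]\cap J)$. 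Thus each topological property of the cone translates directly into an asymptotic growth condition on these annulus counts, which is precisely what the definitions of $\omega$-doubling and lower/upper $\omega$-semidoubling encode.

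For (\ref{iproper}), I would argue: the closed $C$-ball of $\Cone_\omega(\F_p^{(J)})$, modded by its closed $c$-ball, is (by the above) an ultraproduct of the finite groups $\F_p^{J\cap(cn,Cn]}$. If $J$ is $\omega$-doubling, $\lim_\omega\#((cn,Cn]\cap J)<\infty$, so this ultraproduct is finite for every $c<C$; since the $c$-balls form a neighborhood basis of $0$ (the metric is ultrametric), Lemma \ref{projl} identifies the $C$-ball with a projective limit of finite groups, hence compact — so the cone is proper. Conversely, if $J$ fails to be $\omega$-doubling, then for some $0<c<C$ the count $\#((cn,Cn]\cap J)$ tends to $\omega$-infinity; then the ultraproduct $\prod^\omega\F_p^{J\cap(cn,Cn]}$ has continuum cardinality (same argument as in Lemma \ref{cardco}, or directly), and its image in the quotient $C$-ball$/c$-ball is a closed discrete subgroup of continuum cardinality, which lifts (since the ultrametric quotient splits as in Lemma \ref{expp2}, or simply by choosing representatives with support in the annulus) to a closed bounded discrete subgroup of the cone of continuum cardinality; in particular the cone is not proper. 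I would handle local compactness (\ref{ilc}) the same way but only needing \emph{some} ball to be compact: local compactness of the (homogeneous) cone is equivalent to the $C_0$-ball being compact for some $C_0>0$, and by the projective-limit argument this holds iff $\#((cn,Cn]\cap J)$ stays $\omega$-bounded for all $c<C\le C_0$, which is the definition of lower $\omega$-semidoubling; the converse direction again produces a continuum discrete subgroup inside any fixed small ball when the condition fails.

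For (\ref{isb}), $\sigma$-boundedness of a homogeneous metric space is equivalent to the whole space being a countable union of balls, equivalently to every ball having countable index (as a coset space) in the group; the relevant quotients now are $\Cone_\omega(\F_p^{(J)})$ modulo its $C$-ball, i.e.\ ultraproducts of $\F_p^{J\cap(Cn,C'n]}$ as $C'\to\infty$, controlled by $\#((Cn,C'n]\cap J)$. Upper $\omega$-semidoubling says this stays $\omega$-finite for all $C\ge c_0$, giving countable (indeed, a nested union over $C'$ of finite) index; failure gives continuum index by the same ultraproduct cardinality argument. The main obstacle I anticipate is bookkeeping the passage between the cone, its balls, and the finite-dimensional annulus spaces cleanly — in particular making precise that the quotient of the $C$-ball by the $c$-ball really is the ultraproduct $\prod^\omega\F_p^{J\cap(cn,Cn]}$ (one must check that elements differing only below scale $cn$ are identified and that every element is represented with support in $[1,Cn]$), and that these identifications are compatible so that Lemma \ref{projl} applies to give compactness rather than merely a bijection of sets. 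Once that dictionary is set up, all three equivalences and the "otherwise" clauses are immediate.
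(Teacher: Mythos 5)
Your proposal is correct and follows essentially the same route as the paper: its Lemma \ref{jc} is exactly your annulus dictionary (a homomorphism from the ultraproduct $\prod^\omega \F_p^{(J\cap[cn,Cn])}$ into the closed $C$-ball, yielding finiteness of $G^\circ_C/G_c$ when the annulus count is $\omega$-finite and continuum many points pairwise $\ge c$ apart otherwise), followed by Lemma \ref{projl} for compactness of balls and homogeneity for the locally compact and $\sigma$-bounded cases. The bookkeeping issue you flag is resolved in the paper precisely by \emph{not} identifying the ball quotient with the ultraproduct (boundary effects at distances exactly $c$ or $C$ make that identification false in general) but only proving the two one-sided implications above, which is all your argument actually uses.
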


Denote by $G_r$ (resp.\ $G^\circ_r$) the closed (resp.\ open) $r$-ball in $\Cone_\omega\big(\F_p^{(J)}\big)$; both are open subgroups. We begin by the following lemma.

\begin{lem}\label{jc}
Let $J$ be a subset of $\mathbf{N}$. Fix $0<c<C<\infty$. Then we have
\begin{enumerate}
\item\label{jc_inf} 
If $\lim_\omega\#([cn,Cn]\cap J)=\infty$, then there exist continuum many points in $G_C$ at pairwise distance $\ge c$; in particular the open subgroup $G^\circ_c$ has continuum index and $G_C$ is not compact.
\item\label{jc_fin} If $\lim_\omega\#([cn,Cn]\cap J)<\infty$ then $G^\circ_C/G_c$ is finite.  
\end{enumerate}
\end{lem}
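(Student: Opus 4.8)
The two statements concern counting points in the closed $C$-ball $G_C$ of $\Cone_\omega\big(\F_p^{(J)}\big)$, and the key observation is that elements of this ball are represented by sequences $(u(n))_n$ with $u(n)\in\F_p^{(\mathbf N)}$ and $|u(n)|\le Cn$ for $\omega$-almost all $n$ — that is, $u(n)\in\F_p^{\{1,\dots,\lfloor Cn\rfloor\}}$. The coordinates of $u(n)$ with index below $cn$ contribute a distance $<c$ to the origin, so what matters ``at scale between $c$ and $C$'' is precisely the restriction of $u(n)$ to indices in $[cn,Cn]\cap J$ (coordinates outside $J$ are irrelevant since $\F_p^{(J)}$ only has those). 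So the relevant finite group controlling the quotient is $\F_p^{[cn,Cn]\cap J}$, whose cardinality is $p^{\#([cn,Cn]\cap J)}$.

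\textbf{Proof of part (\ref{jc_fin}).} Suppose $\lim_\omega\#([cn,Cn]\cap J)=q<\infty$. Then $M=\{n:\#([cn,Cn]\cap J)=q\}\in\omega$ (using that the count is $\omega$-a.e.\ equal to its $\omega$-limit, as in Lemma \ref{wid}). I claim the natural map $G^\circ_C/G_c\to \prod^\omega\F_p^{[cn,Cn]\cap J}$, sending the class of $(u(n))$ to the class of the restrictions $(u(n)|_{[cn,Cn]\cap J})$, is a well-defined injective homomorphism. Well-definedness: if $(u(n))\in G^\circ_C$ then $|u(n)|<Cn$ $\omega$-a.e., so the restriction makes sense; if $(u(n))\in G_c$ then $|u(n)|\le cn$ $\omega$-a.e., so the restriction vanishes $\omega$-a.e., hence maps to $1$. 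Injectivity: if the restrictions vanish $\omega$-a.e., then $\omega$-a.e.\ all coordinates of $u(n)$ with index in $[cn,Cn]$ are $0$; since also $|u(n)|<Cn$ $\omega$-a.e., this forces $|u(n)|\le cn$ $\omega$-a.e., i.e.\ $(u(n))\in G_c$. Since for $n\in M$ each $\F_p^{[cn,Cn]\cap J}$ has exactly $p^q$ elements, the ultraproduct is a group of order $p^q$, so $G^\circ_C/G_c$ is finite (of order dividing $p^q$).

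\textbf{Proof of part (\ref{jc_inf}).} Suppose $\lim_\omega\#([cn,Cn]\cap J)=\infty$. For each $n$, enumerate $[cn,Cn]\cap J$ and let $r_n=\#([cn,Cn]\cap J)$, so $r_n\to\infty$ along $\omega$. Following the idea of Lemma \ref{cardco}: for each $n$ pick a set $C_n$ of elements of $\F_p^{[cn,Cn]\cap J}\subset\F_p^{(J)}$ of cardinality $\min(r_n,\lfloor\text{anything}\rfloor)$ — in fact one can take $C_n$ to consist of the $r_n+1$ ``partial sums'' $v_0=0, v_1=\delta_{j_1}, v_2=\delta_{j_1}+\delta_{j_2},\dots$ where $j_1<j_2<\dots$ enumerates $[cn,Cn]\cap J$; any two distinct such elements differ in some coordinate with index $\ge cn$, hence are at distance $\ge c$ (in fact at distance equal to the largest index where they differ, which lies in $[cn,Cn]$), and all have length $\le Cn < Cn+1$. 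Each $C_n$ lies in the $C$-ball. Since $\#C_n=r_n+1\to\infty$ along $\omega$, the ultraproduct $\prod^\omega C_n$ has continuum cardinality by the standard surjectivity-onto-$[0,1]$ argument used in the proof of Lemma \ref{cardco}, and the induced map to $\Cone_\omega\big(\F_p^{(J)}\big)$ sends distinct classes to points at distance $\ge c$; all land in $G_C$. This yields continuum many points of $G_C$ at pairwise distance $\ge c$. In particular these points lie in pairwise distinct cosets of the open subgroup $G^\circ_c$, so $[G:G^\circ_c]\ge [G_C:G^\circ_c\cap G_C]\ge$ continuum; and $G_C$ cannot be covered by finitely (or even countably) many $c$-balls, so $G_C$ is not compact.

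\textbf{Main obstacle.} The routine part is the bookkeeping of ``which coordinates are visible at which scale''; the one genuinely delicate point is the $\omega$-a.e.\ reasoning tying together the two conditions $|u(n)|\le Cn$ and ``restriction to $[cn,Cn]\cap J$ vanishes'' to conclude $|u(n)|\le cn$ — one must be careful that the relevant sets all belong to $\omega$ simultaneously, which is fine since $\omega$ is a filter closed under finite intersections, and that the choice between open and closed balls ($C$ vs.\ $Cn$, $c$ vs.\ $cn$, strict vs.\ non-strict) is handled consistently. I would also note the harmless slack: in part (\ref{jc_inf}) the partial sums have length $\le Cn$, safely inside the closed $C$-ball.
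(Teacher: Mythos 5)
Your part (\ref{jc_inf}) is correct and is essentially the paper's argument: the paper maps the ultraproduct $\prod^\omega\F_p^{(J_n)}$, where $J_n=[cn,Cn]\cap J$, into $G_C$, observing that distinct classes land at distance $\ge c$, and invokes the cardinality argument of Lemma \ref{cardco}; your sets of partial sums are just a subset of $\F_p^{(J_n)}$ and change nothing.

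In part (\ref{jc_fin}), however, there is a genuine flaw at exactly the step you flag as delicate. You justify well-definedness of the map on the quotient $G^\circ_C/G_c$ by the claim that $(u(n))\in G_c$ forces $|u(n)|\le cn$ $\omega$-a.e. This is false: $G_c$ is the \emph{closed} $c$-ball, so membership only means $\lim_\omega|u(n)|/n\le c$, which gives $|u(n)|\le(c+\eps)n$ $\omega$-a.e.\ for every $\eps>0$ but not $|u(n)|\le cn$. For instance, with $J=\mathbf{N}$, $c=1$, $C=2$, the element represented by $u(n)=\delta_{n+\lceil\sqrt{n}\rceil}$ lies in $G_1$, yet its restriction to $[n,2n]\cap J$ is nonzero for every $n$, so it has nonzero image in the ultraproduct; hence your map does not descend to $G^\circ_C/G_c$. (The remark about $\omega$ being closed under finite intersections does not address this; the problem is the closed ball, not simultaneity of $\omega$-large sets.) The argument is repairable with a small restructuring: define the restriction homomorphism $\Psi$ on $G^\circ_C$ itself (any representative of a point of $G^\circ_C$ satisfies $|u(n)|<Cn$ $\omega$-a.e., and two representatives of the same point differ in support $<cn$ $\omega$-a.e., so $\Psi$ is well defined there); your ``injectivity'' computation then shows precisely $\ker\Psi\subseteq G_c$, whence $[G^\circ_C:G_c]\le[G^\circ_C:\ker\Psi]\le p^q<\infty$. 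The paper avoids the issue altogether by going in the opposite direction: it shows that the composite $\prod^\omega\F_p^{(J_n)}\to G_C\to G_C/G_c$ has image containing $G^\circ_C/G_c$, by splitting $u_n=v_n+w_n$ with $v_n$ supported in $J_n$ and $w_n$ supported below $cn$, so that no well-definedness question about a map out of the quotient ever arises.
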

\begin{proof}
Write $J_n=[cn,Cn]\cap J$. Then there is an obvious homomorphism $\phi$ from the ultraproduct $\prod^\omega\F_p^{(J_n)}$ to the closed $C$-ball $G_C$ in $\Cone_\omega\big(\F_p^{(\mathbf{N})}\big)$, mapping any two distinct elements to elements at distance $\ge c$.

Let us first prove (\ref{jc_inf}). Since $\lim_\omega\#J_n=\infty$, the ultraproduct $\prod^\omega \F_p^{(J_n)}$ has continuum cardinality (see e.g.\ the proof of Lemma \ref{cardco}). Thus the image of $\phi$ satisfies the required property.

Let us now prove (\ref{jc_fin}). Consider the composite homomorphism \[{\prod}^\omega\F_p^{(J_n)}\stackrel{\phi}\to G_C\to G_C/G_c\,.\] We claim that its image contains $G^\circ_C/G_c$. Indeed, let $(u_n)$ be in $G^\circ_C$; we can suppose that $|u_n|\le Cn$ for all $n$. Set $K_n= J\cap [0,cn\mathclose[$. Write $u_n=v_n+w_n$ with $v_n\in \F_p^{(J_n)}$ and $w_n\in \F_p^{(K_n)}$. Then $|w_n|\le cn$, so $(u_n)=(v_n)$ modulo $G_c$, while clearly $(v_n)$ is in the image of $\phi$. Since $\lim_\omega\#J_n=\infty$, the ultraproduct $\prod^\omega \F_p^{(J_n)}$ is finite. We deduce that $G_C/G^\circ_c$ is finite.
\end{proof}

\begin{proof}[Proof of Proposition \ref{doubca}]
Suppose that $J$ is $\omega$-doubling. Then for all $0<c<C<\infty$ we have $\lim_\omega\#([cn,Cn]\cap J)<\infty$. By Lemma \ref{jc}(\ref{jc_fin}), we deduce that for all $0<c<C<\infty$ we have $G^\circ_C/G_c$ finite. In particular, $G_C/G_c$ is finite (because it is contained in $G^\circ_{2C}/G_c$). It follows from Lemma \ref{projl} that $G_C$ is compact. Since this holds for all $C$, we deduce that $G$ is proper.

If $J$ is lower $\omega$-semidoubling, the same reasoning works for $C$ small enough and we deduce that $G_C$ is compact for small $C$. Thus $G$ is locally compact.

If $J$ is upper $\omega$-semidoubling, the we obtain that $G^\circ_C/G_c$ is finite for a given $c>0$ and all $C>c$. It follows that $G^\circ_C$ is a finite union of $c$-balls for all $C>c$ (namely cosets of $G_c$), and thus that $G$ is a countable union of bounded subsets.

Conversely if $G$ is proper, then for all $C$, the ball $G_C$ is compact and by the contraposition of Lemma \ref{jc}(\ref{jc_fin}) we deduce that $\lim_\omega\#([cn,Cn]\cap J)<\infty$ for all $c<C$. Thus $J$ is $\omega$-doubling.

If $G$ is locally compact, this holds for some given $C$ and all $c<C$ and this shows similarly that $J$ is lower $\omega$-semidoubling.

If $G$ is $\sigma$-bounded, then there exists $c$ such that $G$ is covered by countably many open $c$-balls; hence the index of $G^\circ_c$ in $G_C$ is at most countable for all $C>c$. The contraposition of Lemma \ref{jc}(\ref{jc_fin}) shows that $\lim_\omega\#([cn,Cn]\cap J)<\infty$ for all $C>c$. This means that $J$ is upper $\omega$-semidoubling.
\end{proof}

In the same vein, we have

\begin{prop}\label{widf}
Let $J$ be any subset of $\mathbf{N}$ and $\omega$ a nonprincipal ultrafilter. Then $\Cone_\omega\big(\F_p^{(J)}\big)$ is finite if and only if $J$ has finite $\omega$-width, which is then equal to the dimension of 
$\Cone_\omega\big(\F_p^{(J)}\big)$ over $\F_p$.
\end{prop}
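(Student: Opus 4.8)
The plan is to prove the sharper fact
\[\dim_{\F_p}\Cone_\omega\big(\F_p^{(J)}\big)=\wid_\omega(J),\]
read as ``both sides infinite'' when one of them is; the proposition then follows since the finite $\F_p$-vector spaces are exactly the finite ones. Write $G=\Cone_\omega\big(\F_p^{(J)}\big)$. Recall that $G$ is an $\F_p$-vector space equipped with a bi-invariant ultrametric, so its closed balls $G_r$ and open balls $G^\circ_r$ around $0$ are subspaces, and recall that $D(G)=\Cone_\omega(J)$. By Lemma~\ref{critcone2} the case $\wid_\omega(J)=0$ is exactly the case $\Cone_\omega(J)=\{0\}$, i.e.\ $G=\{0\}$, so from now on I assume $\wid_\omega(J)\ge 1$.

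\emph{Lower bound $\dim_{\F_p}G\ge\wid_\omega(J)$.} Fix an integer $q$ with $1\le q\le\wid_\omega(J)$. By Lemma~\ref{wid} there is $C>1$ such that for $\omega$-almost all $n$ the window $[C^{-1}n,Cn]\cap J$ has at least $q$ elements; choose such elements $j^{(1)}_n<\dots<j^{(q)}_n$ (set $j^{(i)}_n=\inf J$ for the other $n$). Each $(\delta_{j^{(i)}_n})_n$ defines an element $\xi_i\in G$, and for a nontrivial $\F_p$-linear combination $\sum_i\lambda_i\xi_i$ its representative $(\sum_i\lambda_i\delta_{j^{(i)}_n})_n$ has, for $\omega$-almost all $n$, nonempty support $\{j^{(i)}_n:\lambda_i\neq 0\}$ and therefore norm $\ge C^{-1}n$; hence $\|\sum_i\lambda_i\xi_i\|\ge C^{-1}>0$, so the $\xi_i$ are $\F_p$-linearly independent. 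Letting $q\to\wid_\omega(J)$ gives the bound; in particular $G$ finite forces $\wid_\omega(J)<\infty$.

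\emph{Reducing to a single ball.} Now assume $\wid_\omega(J)=q<\infty$, and fix (Lemma~\ref{wid}) a constant $C_0>1$ with $\#([C^{-1}n,Cn]\cap J)=q$ for every $C\ge C_0$ and $\omega$-almost all $n$. I claim $J$ is both $\omega$-bounded and $\omega$-discrete. If $J$ were not $\omega$-bounded, then $\Cone_\omega(J)$ would be unbounded (Lemma~\ref{cp_bound2}(1)), so there would be a sequence $(v_n)$ in $\{0\}\cup J$ with $\limsup_n v_n/n<\infty$ and $\lim_\omega v_n/n>2C_0$; for $\omega$-almost all $n$ one then has $v_n\in J$, $C_0n<v_n\le Dn$ for some fixed $D$, so $[C_0^{-1}n,Dn]\cap J$ contains the $q$ points of $[C_0^{-1}n,C_0n]\cap J$ together with the distinct point $v_n$ --- i.e.\ at least $q+1$ points, which after enlarging the window to scale $\max(C_0,D)$ contradicts $\wid_\omega(J)=q$ by Lemma~\ref{wid}. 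Similarly, if $J$ were not $\omega$-discrete, then $0$ would not be isolated in $\Cone_\omega(J)$ (Lemma~\ref{cp_bound2}(2)), yielding a sequence realizing some $v$ with $0<v<C_0^{-1}/2$ whose terms eventually fall below $C_0^{-1}n$, again contributing a $(q+1)$-st point in one window, the same contradiction. Hence $B:=\sup\Cone_\omega(J)<\infty$ and $c_*:=\inf\big(\Cone_\omega(J)\smallsetminus\{0\}\big)>0$, so that $G=G^\circ_{B+1}$ and $G_{c_*/2}=\{0\}$.

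\emph{Conclusion.} Choose $C\ge\max(C_0,\,B+1,\,2/c_*)$, so that $[c_*n/2,(B+1)n]\subseteq[C^{-1}n,Cn]$ and hence for $\omega$-almost all $n$ the set $J_n:=[c_*n/2,(B+1)n]\cap J$ has at most $q$ elements. By the proof of Lemma~\ref{jc}(\ref{jc_fin}) (with radii $c_*/2<B+1$), $G^\circ_{B+1}/G_{c_*/2}$ lies in the image of a homomorphism from $\prod^\omega\F_p^{(J_n)}$, which is an $\F_p$-vector space of dimension $\lim_\omega\#J_n\le q$; since $G^\circ_{B+1}/G_{c_*/2}=G$, this gives $\dim_{\F_p}G\le q$. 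Together with the lower bound this yields $\dim_{\F_p}G=q=\wid_\omega(J)$, so in particular $G$ is finite. I expect the third step to be the main obstacle: the width hypotheses constrain every scale $n$ but cannot be ``re-indexed'' at scale $Cn$, so $\omega$-boundedness and $\omega$-discreteness must be extracted by forcing both a standard window and a ``too-far'' (respectively ``too-close'') witnessing element into one larger window.
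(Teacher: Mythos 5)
Your proof is correct, and its first half coincides with the paper's: the lower bound $\dim_{\F_p}\Cone_\omega\big(\F_p^{(J)}\big)\ge\wid_\omega(J)$ is obtained in both cases by selecting $q$ points of $J$ in a window $[C^{-1}n,Cn]$ for $\omega$-almost all $n$ and checking that nontrivial combinations of the resulting elements have norm $\ge C^{-1}$. The upper bound is where you take a genuinely different route. The paper fixes $C_0$ with exactly $q$ points of $J$ in $[C_0^{-1}n,C_0n]$ for $\omega$-almost all $n$, observes that for every $C\ge C_0$ the scale-$C$ window contains the \emph{same} $q$ points, and then, for a given $x$ with representative of length $\le C_1n$, takes any $C\ge C_1$ and splits $x_n$ into the part supported on those $q$ points plus a remainder of length $<C^{-1}n$; thus $x$ lies within $C^{-1}$ of the finite (hence closed) span $V$ of the $q$ canonical elements for every large $C$, so $x\in V$. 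This argument lets the scale $C$ depend on $x$, so it never needs $\omega$-boundedness or $\omega$-discreteness, and it yields the slightly stronger conclusion that the explicit elements $u_i$ span the cone. You instead first deduce from finite width that $J$ is $\omega$-bounded and $\omega$-discrete, by forcing an extra far (resp.\ near) point of $J$ into one enlarged window alongside the $q$ standard ones — a correct counting argument — so that the distance set is contained in $\{0\}\cup[c_*,B]$, giving $G=G^\circ_{B+1}$ and $G_{c_*/2}=\{0\}$; you then invoke the ultraproduct homomorphism from the proof of Lemma \ref{jc}(\ref{jc_fin}) to get $\dim_{\F_p}G\le\lim_\omega\#J_n\le q$ (and you rightly appeal to the proof, i.e.\ the image containment, rather than the mere finiteness statement, since the dimension bound requires it). Both arguments hinge on the same key point — the $q$ points of the basic window persist unchanged in all larger windows — but yours buys, as a by-product, the structural facts about $\Cone_\omega(J)$ (boundedness and isolation of $0$) at the cost of an extra reduction and a dependence on Lemma \ref{jc}, while the paper's is more self-contained and directly exhibits the cone as the span of $q$ explicit elements.
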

\begin{proof}
For a nonnegative integer $k$, suppose that the $\omega$-width of $J$ is $\ge k$. Then for some $C>1$ and $M\in\omega$, for all $n\in M$ we have $\#(M\cap [C^{-1}n,Cn])\ge k$. Pick $k$ distinct elements $v_{n,1}<\dots<v_{n,k}$ in this set. For $1\le i\le k$, let $u_i$ be the element of $\Cone_\omega\big(\F_p^{(J)}\big)$ defined by the sequence $u_i(n)=\delta_{v_{n,i}}$ if $n\in M$ and $u_i(n)=0$ otherwise. Then it is readily seen that $(u_i)_{1\le i\le k}$ is $\F_p$-free in $\Cone_\omega\big(\F_p^{(J)}\big)$ and thus the dimension of the latter is $\ge k$.

Let us now suppose that the $\omega$-width of $J$ is exactly equal to $k$. There exists $M\in\omega$ and $C_0>1$ such that for every $C\ge C_0$ we have $\#(M\cap [C^{-1}n,Cn])=k$. So for $C\ge C_0$ the elements $u_i$ defined above are uniquely defined and do not depend on $C$. Let $x=(x_n)$ be an element in $\Cone_\omega\big(\F_p^{(J)}\big)$ and let us show that $x$ belongs to the subspace generated by the $u_i$, showing that the dimension of the cone is $\le k$ and finishing the proof. 

For some $C_1\ge C_0$ we have $x_n\le C_1n$ for all $n\in M$. Fix any $C\ge C_1$. For $n\in M$, write $x_n=w_n+e_n$ where $w_n$ is an $\F_p$-combination of the $v_{n,i}$ and $e_n$ has support in $[0,C^{-1}n\mathclose[$; for $n\notin M$ just define $w_n=x_n$ and $e_n=0$. This writes $x=w+e$, where $w$ is an $\F_p$-combination of the $u_i$ and the length of $e$ is $\le C^{-1}$. This shows that the distance of $x$ with the vector space $V$ generated by the $u_i$ is $\le C^{-1}$; since $C$ can be taken arbitrary large, this distance is zero; since $V$ is finite hence closed, this shows that $x\in V$.
\end{proof}

\begin{proof}[Proof of Corollary \ref{littlepi}]
Set $K=\{2^{2^n}:n\in\mathbf{N}\}$. Then it follows from Proposition \ref{widf} that $\Cone_\omega\big(\F_p^{(K)}\big)$ has rank at most~1 for every $\omega$, and that if moreover $\omega$ contains $K$, then it has rank~1. Therefore, using (\ref{identif}), the statement of the corollary holds for the group $\Gamma_K$. 
\end{proof}

We now classify the groups $\Cone_\omega\big(\F_p^{(J)}\big)$ up to isomorphism of topological groups. Denote by $\simeq$ the isomorphy relation within topological groups. 
For any topological group $H$, we denote by $H^{\kappa}$ the product of $\kappa$ copies of $H$, with the product topology; this topology is compact when $H$ is compact.
Let $\epsilon_0$ be the infinite countable cardinal, and $\epsilon_1=2^{\epsilon_0}$ be the continuum cardinal.

\begin{thm}\label{thm:cla}
Let $J\subset \mathbf{N}$ be a subset and $G=\Cone_\omega\big(\F_p^{(J)}\big)$. Then as a topological group, we can describe $G$ by the following discussion:
\begin{itemize}
\item If $G$ is discrete, then
 \begin{itemize}
 \item if $G$ is bounded and proper, $G\simeq \F_p^n$ for some finite $n$;
 \item if $G$ is unbounded and proper, $G\simeq \F_p^{(\epsilon_0)}$
 \item if $G$ is not proper, $G\simeq \F_p^{(\epsilon_1)}$;
 \end{itemize}
\item If $G$ is locally compact and not discrete, then
 \begin{itemize}
 \item if $G$ is compact (i.e.\ bounded and proper), $G\simeq \F_p^{\epsilon_0}$;
 \item if $G$ is unbounded and proper, $G\simeq \F_p^{\epsilon_0}\times \F_p^{(\epsilon_0)}$
 \item if $G$ is not proper, $G\simeq \F_p^{\epsilon_0}\times \F_p^{(\epsilon_1)}$;
 \end{itemize}
\item If $G$ is not locally compact, then $G\simeq \big(\F_p^{(\epsilon_1)}\big)^{\epsilon_0}$.
\end{itemize}
\end{thm}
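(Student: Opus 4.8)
The plan is to identify $G=\Cone_\omega\big(\F_p^{(J)}\big)$ as a topological group by combining the structural dichotomy of Lemma \ref{expp2} with the cardinality and ball-index computations of Lemmas \ref{cardco}, \ref{upca}, \ref{jc} and Propositions \ref{doubca}, \ref{widf}. The first observation is that $G$ is a topological abelian group of exponent $p$ endowed with a bi-invariant ultrametric, and, being a Hausdorffification of $\Precone$, it is complete. Moreover $\F_p^{(\mathbf{N})}$ is separable, so by Lemma \ref{upca} both $G$ and its balls have cardinality at most $\epsilon_1$.

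First I would dispose of the locally compact case. When $G$ is locally compact, Lemma \ref{expp2} gives $G\simeq \F_p^{\kappa_0}\times\F_p^{(\kappa_1)}$ for some cardinals $\kappa_0,\kappa_1$; since $G$ is complete and its compact part is $\F_p^{\kappa_0}$, the problem reduces to pinning down $\kappa_0,\kappa_1\in\{0,\text{finite},\epsilon_0,\epsilon_1\}$. The discrete subcase is $\kappa_0=0$: here $G$ is a discrete $\F_p$-vector space of some dimension $d$, and $d$ is finite exactly when $G$ is bounded (equivalently, by Proposition \ref{widf}, $J$ has finite $\omega$-width and $d=\wid_\omega(J)$); $d=\epsilon_0$ when $G$ is unbounded but proper (a proper discrete space is countable, and it must then be infinite-dimensional but countable — here one uses that each ball $G_C$ is finite by properness, yet there are infinitely many of them so $G=\bigcup_C G_C$ is countably infinite); and $d=\epsilon_1$ when $G$ is not proper, using Proposition \ref{doubca}(\ref{iproper}) which in the non-doubling case exhibits a closed bounded subgroup of continuum cardinality, forcing $d\geq\epsilon_1$, hence $d=\epsilon_1$ by Lemma \ref{upca}. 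For the non-discrete locally compact subcase, $\kappa_0\geq\epsilon_0$; since $G$ has a countable cobounded subset and the open compact subgroup $G_C$ is metrizable, $\kappa_0=\epsilon_0$ (a metrizable profinite group of exponent $p$ is $\F_p^{\epsilon_0}$, by Lemma \ref{projl} applied to the countably many balls $G_r$, each of finite index in $G_C$ by Lemma \ref{jc}(\ref{jc_fin})). Then $\kappa_1$ is the dimension of the discrete quotient $G/G_C$, which is $0$, $\epsilon_0$, or $\epsilon_1$ according to whether $G$ is bounded, unbounded-and-proper, or not-proper — exactly as in the discrete analysis, now applied to the quotient and again invoking Proposition \ref{doubca}(\ref{iproper}) for the non-proper case to produce continuum many cosets of $G_C$.

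The remaining case, $G$ not locally compact, is where the main work lies and is the step I expect to be the chief obstacle. Here I would show $G\simeq\big(\F_p^{(\epsilon_1)}\big)^{\epsilon_0}$ directly: write $G$ as the projective limit $\underleftarrow{\lim}_{r\to 0}G/G_r$ via Lemma \ref{projl}; since $\F_p^{(\mathbf{N})}$ is separable, cofinally countably many radii $r_1>r_2>\cdots\to 0$ suffice, so $G$ is a closed subgroup — in fact, one shows, all — of a countable product $\prod_k G_{r_k}/G_{r_{k+1}}$. Each successive quotient $G_{r_k}/G_{r_{k+1}}$ is a discrete $\F_p$-vector space; non-local-compactness means $J$ is not lower $\omega$-semidoubling, so by Lemma \ref{jc}(\ref{jc_inf}) infinitely many of these quotients (indeed, for every $c$ there is $C$ with $\lim_\omega\#([cn,Cn]\cap J)=\infty$) have continuum dimension, while by Lemma \ref{upca} none exceeds $\epsilon_1$. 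The delicate point is to go from "the quotients are $\F_p^{(\epsilon_1)}$ for cofinally many $k$, and $\F_p^{(d_k)}$ with $d_k\leq\epsilon_1$ for the rest" to the clean product formula $\big(\F_p^{(\epsilon_1)}\big)^{\epsilon_0}$: one must check the extensions split compatibly (automatic over $\F_p$ for each finite stage, but one needs the inverse limit to be a genuine product, which follows because each $G_{r_k}\to G_{r_{k+1}}$ admits an $\F_p$-linear continuous section since $G_{r_{k+1}}$ is open), and then absorb the finitely-many or small-dimensional quotients into neighbouring continuum-dimensional ones, using $\F_p^{(\epsilon_1)}\times\F_p^{(d)}\simeq\F_p^{(\epsilon_1)}$ for $d\leq\epsilon_1$ and the regrouping $\prod_{k}\F_p^{(\epsilon_1)}\simeq\big(\F_p^{(\epsilon_1)}\big)^{\epsilon_0}$. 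Care is needed that these regroupings are topological, not merely algebraic, isomorphisms — but since they respect the filtration by the open subgroups $G_{r_k}$, continuity in both directions is immediate from Lemma \ref{projl}.
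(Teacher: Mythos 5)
Your proposal is correct and follows essentially the same route as the paper: Lemma \ref{expp2} together with Propositions \ref{doubca} and \ref{widf} (and Lemmas \ref{upca}, \ref{projl}, \ref{jc}) settles the locally compact cases, and in the non-locally-compact case the paper likewise writes $G$ via Lemma \ref{projl} as a product of discrete $\F_p$-linear complements of a nested sequence of open balls, each isomorphic to $\F_p^{(\epsilon_1)}$. The only cosmetic difference is that the paper chooses the radii adaptively so that every successive quotient already has continuum cardinality, whereas you take an arbitrary cofinal sequence of radii and then regroup and absorb the small quotients afterwards.
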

The above metric properties of $G$ are characterized in Lemma \ref{cotr}, Propositions \ref{doubca} and \ref{widf}. Proposition \ref{allk} shows that all cases can be achieved, and can even be achieved by a single subset $K$ for different ultrafilters.

\begin{proof}
Note that $G$ is abelian of exponent $p$, and has cardinality $\le\epsilon_1=2^{\aleph_0}$ by Lemma \ref{upca}.

If $G$ is locally compact, there exist, by Lemma \ref{expp2}, cardinals $\kappa_0$ and $\kappa_1$ such that $G\simeq \F_p^{\kappa_0}\times \F_p^{(\kappa_1)}$. Of course we can suppose that if exactly one among $\kappa_0$ and $\kappa_1$ is infinite, then the other is 0.

Clearly, if $G$ is compact then $\kappa_1$ is finite and if $G$ is proper and not compact, then $\kappa_1=\epsilon_0$. Otherwise $G$ is not proper; 
Proposition \ref{doubca}(\ref{iproper}) then implies $\kappa_1=\epsilon_1$. 

In the same fashion, if $G$ is discrete then $\kappa_0$ is finite. Otherwise $G$ is not discrete and still assuming $G$ locally compact, some ball $G_R$ of $G$ is compact. By Lemma \ref{projl}, $G_R$ is the projective limit of the discrete groups $G_R/G_{2^{-n}R}$ when $n\to\infty$. Compactness implies that $G_R/G_{2^{-n}R}$ is finite. So $G_R$ is isomorphic to $\F_p^{\epsilon_0}$. It follows that $\kappa_0=\epsilon_0$ if $G$ is not discrete and locally compact. 

Combining all these observations, we obtain the full statement of the theorem in case $G$ is locally compact.

Finally assume that $G$ is not locally compact. By Proposition \ref{doubca}(\ref{ilc}), for every $0<C\le\infty$ there exists $0<c<C$ such that $G_C/G_c$ has continuum cardinality. Hence define a decreasing sequence $(\eps_n)$, tending to zero, such that $G_{\eps_n}/G_{\eps_{n+1}}$ has continuum cardinality for all $n$ (and $\eps_0=\infty$). So, again using Lemma \ref{projl}, $G$ is the projective limit of all $G/G_{\eps_n}$. Let $L_n$ be a complement subgroup of $G_{\eps_n}$ in $G_{\eps_{n-1}}$ (where $G_0=G$). So
$$G=G_{\eps_0}=L_1\oplus G_{\eps_1}=L_1\oplus L_2\oplus G_{\eps_2}=\dots=\bigoplus_{i=1}^kL_i\oplus G_{\eps_k}=\dots$$
This provides a homomorphism $\iota:G\to\prod L_i$, clearly injective. Also map a sequence $\ell=(\ell_i)$ in $\prod L_i$ to the sequence $\rho(\ell)=(\prod_{i=1}^k\ell_i)_k$, which belongs to the projective limit. This $\rho$ is a continuous homomorphism and $\iota\circ\rho$ is the identity, so the injectivity of $\iota$ implies that $\iota$ is a topological isomorphism. Since each $L_i$ is isomorphic to $\F_p^{(\epsilon_1)}$, we are done.
\end{proof}

\begin{cor}\label{carpc}
Let $\mathbf{K}$ be a nondiscrete locally compact field of characteristic $p>0$ and $|\cdot|$ an absolute value on $\mathbf{K}$. Then the group $G=\Cone_\omega(\mathbf{K},\log(1+|\cdot|))$ is topologically isomorphic to $(\F_p^{(\epsilon_1)})^{\epsilon_0}$. 
\end{cor}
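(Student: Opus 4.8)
The plan is to reduce $\Cone_\omega(\mathbf{K},\log(1+|\cdot|))$, through a short chain of cone-preserving simplifications, to $\Cone_\omega\big(\F_p^{(\mathbf{N})}\big)$ and then to invoke Theorem \ref{thm:cla}. First, by the structure theory of local fields, $\mathbf{K}\simeq\F_q(\!(t)\!)$ with $q=p^f$ a power of $p$, and (after choosing a uniformizer $t$) every absolute value on $\mathbf{K}$ is a positive power of the normalized one, so $|x|=c^{-v(x)}$ for some $c>1$, where $v$ is the $t$-adic valuation. In particular the valuation ring $\F_q[[t]]$ is the unit ball, hence has $\log(1+|\cdot|)$-diameter $\le\log 2$. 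Writing $x\in\mathbf{K}$ as $x_{+}+x_{-}$ with $x_{+}\in\F_q[[t]]$ and $x_{-}$ in the group $P=t^{-1}\F_q[t^{-1}]$ of principal parts, we get $\log(1+|x-x_{-}|)=\log(1+|x_+|)\le\log 2$, so the subgroup $P$ is $\log 2$-cobounded in the additive group $(\mathbf{K},+)$; since $\Cone_\omega$ is a functor that identifies maps at bounded distance and sends quasi-isometries to bilipschitz homeomorphisms, the inclusion $P\hookrightarrow\mathbf{K}$ induces an isomorphism of metric groups $\Cone_\omega(P)\xrightarrow{\sim}\Cone_\omega(\mathbf{K},\log(1+|\cdot|))$.

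Next I would identify $P$ with $\F_q^{(\mathbf{N})}$ via $\sum_{i\ge1}b_it^{-i}\mapsto(b_i)_i$. Under this identification a nonzero element has length $\log(1+c^{k})$, where $k=\max\{i:b_i\ne0\}$ is the standard length in $\F_q^{(\mathbf{N})}$; since $\big|\log(1+c^{k})-(\log c)k\big|\le\log 2$ for every integer $k\ge0$, and since two left-invariant lengths at bounded sup-distance define the same $\Precone$ and the same $d_\omega$, the cone is unchanged if we replace the length of $P$ by $(\log c)$ times the standard length of $\F_q^{(\mathbf{N})}$; rescaling the metric by the positive constant $\log c$ is a similarity, hence a topological group isomorphism of cones. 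It then remains to pass from $\F_q$ to $\F_p$: choosing an $\F_p$-basis $\epsilon_1,\dots,\epsilon_f$ of $\F_q$ and the bijection of $\mathbf{N}$ onto $\{(n,j):n\ge1,\,1\le j\le f\}$ determined by $m=f(n-1)+j$, one obtains an $\F_p$-linear isomorphism $\Phi:\F_q^{(\mathbf{N})}\to\F_p^{(\mathbf{N})}$ sending $\epsilon_j\delta_n$ (of length $n$) to $\delta_{f(n-1)+j}$ (of length in $[fn-f+1,fn]$); hence $\big|\,|\Phi(u)|-f|u|\,\big|\le f$ for all $u$, so $\Phi$ carries $\Precone\big(\F_q^{(\mathbf{N})}\big)$ bijectively onto $\Precone\big(\F_p^{(\mathbf{N})}\big)$ and multiplies $d_\omega$ by the constant $f$. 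Thus $\Cone_\omega(\Phi)$ is a similarity, and composing the three steps yields a topological group isomorphism $\Cone_\omega(\mathbf{K},\log(1+|\cdot|))\simeq\Cone_\omega\big(\F_p^{(\mathbf{N})}\big)$, valid for \emph{every} nonprincipal ultrafilter $\omega$.

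Finally I would apply Theorem \ref{thm:cla} with $J=\mathbf{N}$: for every $0<c<C$ one has $\#([cn,Cn]\cap\mathbf{N})\ge(C-c)n-2\to\infty$, so $\mathbf{N}$ is not lower $\omega$-semidoubling for any $\omega$; by Proposition \ref{doubca}(\ref{ilc}) the group $\Cone_\omega\big(\F_p^{(\mathbf{N})}\big)$ is not locally compact, and Theorem \ref{thm:cla} then gives $\Cone_\omega\big(\F_p^{(\mathbf{N})}\big)\simeq\big(\F_p^{(\epsilon_1)}\big)^{\epsilon_0}$, proving the corollary. There is no serious obstacle here: the substantive content is entirely inside Theorem \ref{thm:cla}, and the only thing to watch is the routine bookkeeping that each reduction — passage to a cobounded subgroup, a bounded perturbation of the length, a rescaling, and the $\F_q\!\rightsquigarrow\!\F_p$ relabelling — genuinely preserves the \emph{topological group} structure of the asymptotic cone. (Alternatively, one may bypass the explicit map $\Phi$ by noting that $\F_q$ has exponent $p$, so that Lemma \ref{cotr}, Propositions \ref{doubca} and \ref{widf}, and the proof of Theorem \ref{thm:cla} apply verbatim with $\F_q^{(J)}$ in place of $\F_p^{(J)}$, giving $\Cone_\omega\big(\F_q^{(\mathbf{N})}\big)\simeq\big(\F_q^{(\epsilon_1)}\big)^{\epsilon_0}=\big(\F_p^{(\epsilon_1)}\big)^{\epsilon_0}$ directly.)
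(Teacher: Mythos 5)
Your proposal is correct and follows essentially the same route as the paper, which simply observes that $(\mathbf{K},\log(1+|\cdot|))$ is quasi-isometric to $\F_p^{(\mathbf{N})}$ with its standard length and then invokes Theorem \ref{thm:cla} (non-locally-compact case, via Proposition \ref{doubca}). Your write-up merely makes explicit the details the paper leaves implicit — the cobounded subgroup $t^{-1}\F_q[t^{-1}]$, the bounded perturbation and rescaling of the length, and the $\F_q\rightsquigarrow\F_p$ relabelling — which usefully ensures the identification respects the group structure of the cones, not just their metric type.
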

\begin{proof}
We see that $(\mathbf{K},\log(1+|\cdot|))$ is quasi-isometric to $\mathbf{F}_p^{(\mathbf{N})}$ endowed the distance introduced at the beginning of this Section \ref{csds}. This yields the result.
\end{proof}

In a similar fashion, we have, in characteristic zero, the following proposition.

\begin{prop}\label{car0c}
Let $\mathbf{K}$ be a nondiscrete locally compact field of characteristic zero and $|\cdot|$ an absolute value on $\mathbf{K}$. Then the group $G=\Cone_\omega(\mathbf{K},\log(1+|\cdot|))$ is topologically isomorphic to $(\mathbf{Q}^{(\epsilon_1)})^{\epsilon_0}$ (where $\epsilon_1=2^{\epsilon_0}$ and $\epsilon_0$ is countable).
\end{prop}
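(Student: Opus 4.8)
The plan is to follow the scheme of Corollary \ref{carpc}, but since there is no ``$\mathbf{Q}$-coefficient version'' of Theorem \ref{thm:cla} available off the shelf, I would instead extract the relevant structural properties of $G=\Cone_\omega(\mathbf{K},\log(1+|\cdot|))$ and re-run, almost verbatim, the argument used for the ``not locally compact'' case in the proof of Theorem \ref{thm:cla}, with $\F_p$ systematically replaced by $\mathbf{Q}$. First I would record: $G$ is a complete abelian topological group whose metric is bi-invariant (translation invariance of $|\cdot|$) and satisfies the ultrametric inequality --- directly when $\mathbf{K}$ is non-Archimedean, and otherwise because $\log(1+|x+y|)\le\log 2+\max(\log(1+|x|),\log(1+|y|))$, the additive defect $\log 2$ being absorbed by the $1/n$-rescaling defining the cone. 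Hence the closed balls $G_r$ are open subgroups. Next, $G$ is a $\mathbf{Q}$-vector space: for a nonzero integer $m$ one has $\mathbf{Q}\subset\mathbf{K}$, the sequence $(x_k/m)_k$ lies in $\Precone(\mathbf{K})$ and represents an element $y$ with $my=x$, and since $\log(1+|m|^{\pm1}|x_k|)/k$ and $\log(1+|x_k|)/k$ have the same $\omega$-limit, multiplication by $m$ preserves the length $d(\cdot,0)$; this yields divisibility and torsion-freeness simultaneously, and shows each $G_r$ is a $\mathbf{Q}$-subspace. Finally $|G|\le\epsilon_1$ by Lemma \ref{upca}.

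The substantive point is to show that $G$ is \emph{not} locally compact, and more precisely that $[G_C:G_c]=\epsilon_1$ for all $0<c<C$. I would argue as in Lemma \ref{jc}: fix a Haar measure $\mu$ on the additive group $\mathbf{K}$; the ball $B_r=\{x:|x|\le e^r-1\}$ is compact (the absolute value is proper), and by the module-scaling of $\mu$ one has $\mu(\{|x|\le R\})\asymp R^{d}$ for a fixed $d>0$, so $\mu(B_{Cn})/\mu(B_{cn})\to\infty$. A greedy packing produces, for each $n$, a subset $P_n\subset B_{Cn}$ with $|P_n|\to\infty$ and pairwise distances $\ge cn$; the natural map $\prod^\omega P_n\to G_C$ (compare the proof of Lemma \ref{cardco}) has image of continuum cardinality consisting of points pairwise at distance $\ge c$, hence lying in distinct cosets of $G_{c'}$ for every $c'<c$. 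Thus $[G_C:G_{c'}]\ge\epsilon_1$, and $\le\epsilon_1$ by the cardinality bound; in particular no $G_C$ is compact, so $G$ is not locally compact, and $0$ is not isolated, so $G$ is not discrete. (Since char $0$ nondiscrete locally compact fields are exactly $\mathbf{R}$, $\mathbf{C}$, and finite extensions of $\mathbf{Q}_p$, this argument is uniform in $\mathbf{K}$.)

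Now the conclusion follows by the argument of the ``not locally compact'' case in the proof of Theorem \ref{thm:cla}. Take $\eps_0=\infty$ and $\eps_n=2^{-n}$, so that $G_{\eps_{n-1}}/G_{\eps_n}$ has continuum cardinality for all $n\ge1$; by Lemma \ref{projl}, $G=\varprojlim_n G/G_{\eps_n}$. For each $n$ choose a $\mathbf{Q}$-subspace $L_n$ of $G_{\eps_{n-1}}$ complementing $G_{\eps_n}$; then $L_n$ is discrete in $G$ (as $L_n\cap G_{\eps_n}=\{0\}$) and is a $\mathbf{Q}$-vector space of cardinality $\epsilon_1$, hence $L_n\simeq\mathbf{Q}^{(\epsilon_1)}$, a $\mathbf{Q}$-vector space of cardinality $\epsilon_1$ having dimension $\epsilon_1$. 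The homomorphisms $\iota\colon G\to\prod_n L_n$ and $\rho\colon\prod_n L_n\to\varprojlim_n G/G_{\eps_n}=G$ given by $\rho((\ell_n)_n)=\big(\prod_{i\le k}\ell_i\big)_k$ are mutually inverse topological isomorphisms exactly as in the cited proof, whence $G\simeq\prod_{n\ge1}L_n\simeq(\mathbf{Q}^{(\epsilon_1)})^{\epsilon_0}$.

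The main obstacle is bookkeeping rather than conceptual: one must check that every ingredient used for the exponent-$p$ case transports to the divisible torsion-free setting (balls are $\mathbf{Q}$-subspaces, vector-space complements of open subgroups are discrete, a $\mathbf{Q}$-vector space of continuum cardinality has continuum dimension), which is why I would isolate the structural properties of $G$ at the outset. The one genuinely quantitative step is the estimate $\mu(B_{Cn})/\mu(B_{cn})\to\infty$, which is precisely where the nondiscreteness of $\mathbf{K}$ is used.
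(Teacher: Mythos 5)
Your proposal is correct and follows essentially the same route as the paper: observe that the cone is an ultrametric $\mathbf{Q}$-vector space whose balls have continuum index in one another, and then rerun the non-locally-compact case of Theorem \ref{thm:cla} with $\F_p$ replaced by $\mathbf{Q}$. The paper leaves the continuum-index and $\mathbf{Q}$-structure points as "readily observed," whereas you supply explicit justifications (Haar-measure packing, the $x_k/m$ lifting), but this is a matter of detail rather than a different argument.
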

\begin{proof}
First observe that $G$ is naturally a vector space over $\mathbf{Q}$.
The metric defined by $\log(1+|\cdot|)$ on $\mathbf{K}$ is ultrametric in the non-Archimedean case, and in all cases satisfies the quasi-ultrametric condition $$d(x,z)\le\max(d(x,y),d(y,z))+\log(2),$$ so $G$ is ultrametric for all $\mathbf{K}$; it is readily observed that the index between any two balls $G_r\subset G_R$ of distinct radii is continuum, so that the quotient $G_R/G_r$ is isomorphic to $\mathbf{Q}^{(\epsilon_1)}$. So the proof of the non-locally-compact part of Theorem \ref{thm:cla} shows, without change, that $G$ is isomorphic to $(\mathbf{Q}^{(\epsilon_1)})^{\epsilon_0}$.
\end{proof}

\end{document}